\documentclass{memo-l}%
\usepackage{amsfonts}
\usepackage{amsmath}
\usepackage{amsthm}
\usepackage[v2]{xy}
\usepackage{amssymb}
\usepackage{graphicx, color}%
\usepackage{enumerate}
\usepackage{setspace}
\usepackage{amscd}

\includeonly{intro_08_09_11,notation_08_09_07,mproblem_08_09_11,affine_08_09_11,
measurable_08_09_11,kato_08_09_11, integral_08_09_11,spectrum_08_09_11,extensions_08_09_11}

\newtheorem{theorem}{Theorem}[chapter]
\newtheorem{lemma}[theorem]{Lemma}
\newtheorem{proposition}[theorem]{Proposition}
\newtheorem{corollary}[theorem]{Corollary}

\theoremstyle{definition}
\newtheorem{definition}[theorem]{Definition}
\newtheorem{example}[theorem]{Example}

\theoremstyle{remark}
\newtheorem{remark}[theorem]{Remark}

\numberwithin{section}{chapter}
\numberwithin{equation}{chapter}

\begin{document}
\frontmatter

\title{Iterated Function Systems, Moments, and Transformations of Infinite Matrices}
\author[P.E.T. Jorgensen]{Palle E. T. Jorgensen}
\address{Department of Mathematics, The University of Iowa, Iowa
City, IA 52242-1419, U.S.A.}
\email{jorgen@math.uiowa.edu}
\urladdr{http://www.math.uiowa.edu/\symbol{126}jorgen/}

\author[K.A. Kornelson]{Keri A. Kornelson}
\address{Department of Mathematics and Statistics,
Grinnell College, Grinnell, IA 50112-1690, U.S.A.}
\curraddr{Department of Mathematics, University of Oklahoma, Norman, OK 73019-0315, U.S.A.}
\email{kkornelson@math.ou.edu}
\urladdr{http://www.math.ou.edu/\symbol{126}kkornelson/}

\author[K.L. Shuman]{Karen L. Shuman}
\address{Department of Mathematics and Statistics,
Grinnell College, Grinnell, IA 50112-1690, U.S.A.}
\email{shumank@math.grinnell.edu}
\urladdr{http://www.math.grinnell.edu/\symbol{126}shumank/}
\date{September 11, 2008}

\subjclass[2000]{Primary 28A12, 34B45, 42C05, 42A82, 46E22, 47L30,
47L60, 47C10, 47S50, 54E70, 60J10, 60J20, 78M05, 81S30, 81Q15, 81T75,
82B44, 90B15}

\keywords{moments, measure, itereated function system, moment matrix, Hankel matrix
distribution, fractals, orthogonal polynomials, operators in Hilbert
space, Hilbert matrix, positive definite functions, spectral
representation, spectral measures, rank-one perturbations, spectrum,
absolutely continuous, Hermitian operator, self-adjoint operator, unbounded operator, Hilbert space, deficiency indices, self-adjoint extension}

\thanks{This material is based upon work partially supported by the
U.S. National Science Foundation under grants DMS-0457581,
DMS-0503990, and DMS-0701164, by the University of Iowa Department
of Mathematics NSF VIGRE grant DMS-0602242, and by the Grinnell
College Committee for Support of Faculty Scholarship.  The second
author was supported in part by the Woodrow Wilson Fellowship
Foundation and the Andrew W. Mellon Foundation.}

\begin{abstract}
 We study the moments of equilibrium measures for
iterated function systems (IFSs) and draw connections to operator theory.  Our main object of study
is the infinite matrix which encodes all the moment data of a Borel
measure on $\mathbb{R}^d$ or $\mathbb{C}$.  To encode the salient features of a given IFS into precise moment data, we establish an interdependence between IFS equilibrium measures, the encoding of the sequence of moments of these measures into operators, and a new correspondence between the IFS moments and this family of operators in Hilbert space. For a given IFS, our aim is to establish a functorial correspondence in such a way that the geometric transformations of the IFS turn into transformations of moment matrices, or rather transformations of the operators that are associated with them.  

We first examine the classical
existence problem for moments, culminating in a new proof of the existence
of a Borel measure on $\mathbb{R}$ or $\mathbb{C}$ with a specified list
of moments.  Next, we consider moment problems associated with affine and
non-affine IFSs.  Our main goal is to determine
conditions under which an intertwining relation is satisfied by the moment
matrix of an equilibrium measure of an IFS.  Finally,
using the famous Hilbert matrix as our prototypical example, we study
boundedness and spectral properties of moment matrices viewed as
Kato-Friedrichs operators on weighted $\ell^2$ spaces.
\end{abstract}

\maketitle

\tableofcontents

\section*{Preface}

Moments of Borel measures $\mu$ on $\mathbb{R}^d$ have numerous
uses both in analysis and in applications.   For example, moments are used 
in the computation of orthogonal polynomials, in inverse spectral problems, in wavelets, in the analysis of fractals, in physics, and in probability 
theory.  In this paper, we study some well-known and perhaps
 not-so-well-known aspects of moment theory which have been motivated by the study of both the classical literature on moments and the newer literature on iterated function systems (IFSs).  
 
 Over the last hundred years, since the time of Lebesgue, mathematicians have adopted two approaches to measures on a topological (Hausdorff) space $X$.  In the first, measures are treated as functions on some sigma-algebra of ``measurable'' subsets of $X$.  In the second, measures are realized as positive linear functionals on a suitable linear space $C$ of functions on $X$.  If we take $C$ to be the compactly supported continuous functions of $X$, then Riesz's theorem states that the two versions are equivalent.  Starting with a measure $\mu$ on the Borel sigma algebra, integration with respect to $\mu$ yields a functional $L$ defined on $C$, and conversely (by Riesz), every positive linear functional $L$ on $C$ takes the form of integration against some Borel measure $\mu$.  The conclusion from Riesz's theorem asserts the existence of $\mu$.  
 
 The moment problem is an analog, taking $X$ to be $\mathbb{R}^d$ and replacing $C$ with the linear space of polynomials in $d$ variables.  The issue, as before, is to construct a measure from a functional.  Since the polynomials are spanned by the monomials, a functional $L$ is then prescribed by a sequence of moments.  Hence, we have the moment problem: determine a measure $\mu$ from the sequence of moments.  Given the moments, one can ask about the existence of a measure having those moments, the uniqueness of the measure if one exists, and the process for constructing such a measure and determining some of its properties.  We will touch on each of these topics in this Memoir.   

In Chapter \ref{Sec:Notation} we introduce our notation, definitions, and conventions.  We have collected here for the reader's convenience some of the tools from operator theory and harmonic analysis which we will use throughout.  In Chapter \ref{Sec:MomentTheory}, we review the classical moment existence problem, which asks the following question:  Given a list of moments $\{m_k\}_{k \geq 0}$, does there exist a Borel measure $\mu$ such that the $k^{\textrm{th}}$ moment of $\mu$ is $m_k$?   Alternately, we can arrange the moment sequence in a Hankel matrix in order to apply operator-theoretic techniques to this problem---a theme we carry throughout the Memoir.   Using tools of Kolmogorov, Parthasarathy, and Schmidt, we provide a new approach to this old problem,  which was first settled by Riesz (see \cite{Rie23}) nearly $100$ years ago.   Our main result here is a new proof for the moment existence problem in both $\mathbb{R}$ and $\mathbb{C}$.  We return to this same theme---the classical moment uniqueness problem---in Chapter \ref{Ch:Extensions}.  There, we describe in detail the theory of self-adjoint extensions of symmetric operators, which helps us determine precisely when a list of moments has more than one associated measure.

In Chapters \ref{Sec:Exist} and \ref{Sec:ComputeA}, we explore the difficult problem of computing moments directly for an equilibrium measure arising from an iterated function system (IFS).  An IFS is a finite set of contractive transformations in a metric space.  A theorem of Hutchinson \cite{Hut81} tells us that for each IFS, there exists a unique normalized equilibrium measure $\mu$ which is the solution to a fixed point problem for measures.  We show that every IFS corresponds to a non-abelian system of operators and a fixed point problem for infinite matrices.  We then prove that the moment matrix $M=M^{(\mu)}$ is a solution to this matrix fixed point problem, and in turn we exploit this fixed point property to compute or approximate the moments for $\mu$ in a more general setting than the affine cases studied in \cite{EST06}.  

As shown in \cite{EST06}, it is not straightforward to compute the moments for even the simplest Cantor equilibrium measures, but we \textit{can} compute the moments of an equilibrium measure $\mu$ directly in the affine IFS case.  However, we generally have no choice but to approximate the moments in non-affine examples.  In particular, our results can be applied to real and complex Julia sets, in which there is much current interest.  The non-affine moment approximation problem is surprisingly subtle, and as a result, we turn to operator-theoretic methods.  Affine IFSs are considered in Chapter \ref{Sec:Exist}, while non-affine IFSs and operator theory are considered in Chapter \ref{Sec:ComputeA}.  In addition, there are associated results about spectral properties of moment matrices for equilibrium measures in Chapter \ref{Sec:Spectrum}.

Infinite Hankel matrices cannot always be realized directly by operators in the $\ell^2$ sequence space.  We have been able to apply operator theoretic results more widely by allowing matrices to be realized as operators on a renormalized Hilbert space when necessary.  We turn to this problem in Chapter \ref{Ch:Kato}, where we introduce the operator theoretic extensions of quadratic forms by Kato and Friedrichs \cite{Kat80}.  The quadratic form we use in this context is induced by the moment matrix $M^{(\mu)}$.  Using the Kato-Friedrichs theorem, we obtain a self-adjoint operator with dense domain.  This generally unbounded Kato-Friedrichs operator can be used to obtain a spectral decomposition which helps us understand the properties of the quadratic form which gave rise to the operator.  Often, renormalized or weighted spaces allow us to use the Kato-Friedrichs theorem in greater generality.   

We continue to use Kato-Friedrichs  theory in the remaining chapters to understand the spectral properties of the moment matrix operator.  In Chapter \ref{Sec:IntOperators}, we use the classical example of the Hilbert matrix and its generalizations from Widom's work \cite{Wid66} in order to explore spectral properties of the moment matrix $M^{(\mu)}$ for general measures.  In particular, we show that the moment matrix is unitarily equivalent to a certain integral operator.   In Chapter \ref{Sec:Spectrum}, we further explore the spectral properties of the moment matrix and present some detailed examples.  Finally, Chapter \ref{Ch:Extensions} uses spectral theory as a tool to reexamine the classical moment problem, this time considering not only the existence of a measure having prescribed moments, but also the uniqueness.

Readers not already familiar with the theory of moments may find some of the classical references useful.  They treat
both the theory and the applications of moments of measures, and they
include such classics as \cite{ShTa43}, \cite{Sho47}, and \cite{Akh65}. The early uses of moments in mathematics were motivated to a large degree by
applications to orthogonal polynomials \cite{Sze75}.  More recent applications of orthogonal polynomials are numerical analysis, and
random matrix theory, random products, and dynamics. These
applications are covered
in \cite{Lan87a}, \cite{Lan87b}, \cite{Dei99}, while the edited and delightful volume
\cite{Lan87b} includes additional applications to geometry, to signal
processing, to probability and to statistics.

\chapter*{Acknowledgements}
The authors thank Professor Christopher French for several helpful
conversations and the proof of Proposition \ref{Prop:French}. In
addition, one or more of the authors had helpful conversations
with Professors Ken Atkinson, Dorin Dutkay, Erin Pearse, and
Myung-Sin Song.

The third author is pleased to thank the orthogonal polynomial
workshop students and graduate students at the University of Iowa
2007 REU for their energy and enthusiasm. She especially thanks
Bobby Elam, Greg Ongie, and Mark Tucker, whose questions led the
authors to the paper \cite{EST06}.

The second author is grateful to the Mathematics Department at the
University of Oklahoma - Norman for their hospitality during her research leave from Grinnell College.

\mainmatter

\chapter{Notation}\label{Sec:Notation}
In this section, we introduce some notation, conventions, and
definitions needed throughout the paper.

\section{Hilbert space notation}
We use the convention that the inner
product, denoted $\langle u|v \rangle$, is linear in the second
position $v$ and conjugate linear in the first position $u$.  This
choice results in fewer matrix transpositions in the type of
products we will be computing and therefore yields more
straightforward computations.

We will use Dirac's notation of ``bras'' and ``kets''.  For
vectors $u$ and $v$ in a Hilbert space, the inner product is
denoted ``bra-ket''  $\langle u | v \rangle$.  In contrast, the
``ket-bra'' notation $|u\rangle\langle v| $ denotes the rank-one
operator which sends a vector $x$ in the Hilbert space to a scalar
multiple of $u$. Specifically, it is the operator given by $x
\mapsto \langle v|x \rangle u$.  The Dirac notation for this
operation is $$ |u\rangle \langle v|\, x\rangle  = \langle v|x
\rangle u, $$ or also is sometimes written to preserve the order
$|u\rangle \langle v| \,x\rangle = u \langle v|x\rangle $.

If we consider these Hilbert space operations from the point of
view of matrices and Euclidean vectors, we denote a column vector
by $|u\rangle$, which we call a ``ket''.  Similarly, we denote a
row vector by $\langle u |$ and call it ``bra''.  The notation for
the inner product and rank-one operators above now are consistent
with the actual matrix operations being performed.  Further, the
algebraic manipulations with operators and vectors are done by
simply merging ``bras'' and ``kets'' in the position they
naturally have as we write them.

\section{Unbounded operators}\label{subsec:unboundedop}
We state here some definitions and properties related to unbounded
operators on a Hilbert space.  For more details, see \cite{ReSi80, Con90}.

\begin{definition} An \textit{operator} $F$  which maps  a Hilbert space $\mathcal{H}_1$ to another Hilbert space $\mathcal{H}_2$ is a linear  map from a subspace of $\mathcal{H}_1$ (the \textit{domain} of $F$) to $\mathcal{H}_2$.
\end{definition}

It will be assumed here that the domain of $F$ is dense in $\mathcal{H}_1$ with respect to the norm arising from the inner product on $\mathcal{H}_1$.   If $F$ is not a bounded operator, we call it an \textit{unbounded operator}.  

\begin{definition}[\cite{Con90}]\label{Def:FClosed} An operator $F$ from $\mathcal{H}_1$ to $\mathcal{H}_2$ is \textit{closed} if the graph of $F$, $\{( x , Fx ) \subset {\mathcal{H}_1 \times \mathcal{H}_2}\::\: x  \in \mathrm{dom}(F) \}$ is a closed set in the Hilbert space $\mathcal{H}_1 \times \mathcal{H}_2$ with inner product $\langle (x_1,x_2) | (y_1,y_2) \rangle_{\mathcal{H}_1 \times \mathcal{H}_2} = \langle x_1 | y_1 \rangle_{\mathcal{H}_1} + \langle x_2 | y_2 \rangle_{\mathcal{H}_2}$.  If there exists a closed extension to the operator $F$, then $F$ is called \textit{closable}.   In that case, there exists a smallest closed extension, which is called the \textit{closure} of $F$ and is denoted $\overline{F}$.
\end{definition}

\begin{definition}\label{Def:Adjoint} Let $F$ be an unbounded operator with dense domain $\mathrm{dom}\,(F)$ from $\mathcal{H}_1$ to $\mathcal{H}_2$.  Let $D$ be the set of all $y  \in \mathcal{H}_2$ such that there exists an $x \in \mathcal{H}_1$ such that \[\langle Fz|y \rangle_{\mathcal{H}_2 }=
\langle z| x \rangle_{\mathcal{H}_1} \] for all $z \in \mathrm{dom}\,(F)$.  We define the operator $F^*$ on the domain $D$ by $
F^*y = x$.  Note that $x$ is uniquely determined because $F$ is densely defined.  $F^*$ is called the \textit{adjoint} of $F$.
\end{definition}

It follows from these definitions (see \cite{ReSi80}) that $F$ is closable if and only if $F^*$ is densely defined, and that $F^*$ is always closed.

\begin{definition}\label{Def:SelfAdjoint} An operator $F$ on a Hilbert space $\mathcal{H}$ is \textit{symmetric} if $\mathrm{dom}(F) \subset \mathrm{dom}(F^*)$ and $Fx = F^*x$ for all $x \in \mathrm{dom}(F)$.  $F$ is \textit{self-adjoint} if $F$ is symmetric and  $\mathrm{dom}(F) = \mathrm{dom}(F^*)$; i.e.  $F^* = F$.
\end{definition}

Often, the term \textit{hermitian} is also used for a symmetric operator.  We see that a symmetric operator $F$ must be closable, since the domain of $F^*$ contains the dense set $\mathrm{dom}(F)$, and is therefore dense.  

\begin{definition} A symmetric operator $F$ is \textit{essentially self-adjoint} if its closure $\overline{F}$ is self-adjoint.
\end{definition}
If an operator $F$ is bounded on its dense domain and symmetric, it is essentially self-adjoint.  

\section{Multi-index notation}\label{subsec:index}

In $\mathbb{R}^d$, for $d > 1$, we will need to use multi-index
notation.  Here, $\alpha$ and $\beta$ denote multi-indices
belonging to $\mathbb{N}_0^d$, where $\mathbb{N}_0^d$ is the
Cartesian product
\begin{equation}
\mathbb{N}_0^d = \underbrace{\mathbb{N}_0\times\cdots\times\mathbb{N}_0}_{d \textrm{ times }}.
\end{equation}
Following standard conventions, the sum $\alpha + \beta$ is
defined pointwise:
\begin{equation}
\alpha + \beta = (\alpha_i + \beta_i)_{i=1}^d.
\end{equation}
Using this notation, we have the following integral expression:
\begin{equation}
 \int_{\mathbb{R}^d} x^{\alpha}\,\mathrm{d}\mu(x) =
\int_{\mathbb{R}^d} x_1^{\alpha_1}x_2^{\alpha_2}\cdots
x_d^{\alpha_d}\,\mathrm{d}\mu(x).
\end{equation}

\section{Moments and moment matrices}\label{Subsec:moments}
  Let $X$ be one of $\mathbb{R}, \mathbb{C}, \mathbb{R}^d$  with Borel
measure $\mu$.  When $X=\mathbb{R}$, we define the
$i^{\mathrm{th}}$ order moment with respect to $\mu$ to be
\begin{equation} m_i = \int_{\mathbb{R}} x^i \,\mathrm{d}\mu (x).
\end{equation}  If the moments of all orders are finite, we will
denote by $M^{(\mu)}$ an infinite matrix called the \textit{moment
matrix}. The moment matrix in the real case has entries
\begin{equation}\label{Eqn:MomMx} M^{(\mu)}_{i,j} = m_{i+j} = \int
x^{i+j} \,\mathrm{d}\mu(x).\end{equation}  Throughout this monograph, the real moment matrices will be indexed by $\mathbb{N}_0 \times \mathbb{N}_0$ -- in particular, the row and column indexing both start with $0$.

\begin{definition}\label{Defn:HankelN0d} An infinite real matrix $M$ whose entries are indexed by
$\mathbb{N}_0 \times \mathbb{N}_0$ is called a \textit{Hankel
matrix} if
\[ M_{i,j} = M_{i+k,j-k} = M_{i-k,j+k}\] for all values of $k$ for
which these entries are defined.
\end{definition}

The moment matrix defined in Equation (\ref{Eqn:MomMx}) is a Hankel
matrix.  This justifies our notation above referencing the $(i,j)^{\mathrm{th}}$
entry of $M^{(\mu)}$ with $i+j$.  The following examples will be used throughout the paper to illustrate our techniques and results.

\begin{example}\label{Ex:Lebesgue} Lebesgue measure on $[0,1]$. \end{example}
Let $\mu$ be the Lebesgue measure supported on
$[0,1]$.  Then the moment matrix for $\mu$ is \[M^{(\mu)}_{i,j} =
\int_0^1 x^{i+j} \,\mathrm{d}x = \frac{1}{i+j+1}.\]  This matrix
is often called the \textit{Hilbert matrix}.  We will examine the properties of the Hilbert 
matrix in greater detail in Section \ref{Sec:Hilbert}. \hfill $\Diamond$

\begin{example}\label{Ex:PointMass} Dirac point mass measure $\mu = \delta_1$. \end{example}  Let $\mu$ be the Dirac  probability mass $\delta_1$ on $\mathbb{R}$.  Then the moments for $\mu$ are \[ M^{(\mu)}_{i,j} = \int x^{i+j} \,\mathrm{d}\mu = 1^{i+j} = 1,\]  so the moment matrix entries are all $1$.  In this case, we see that the moment matrix does not represent a bounded operator on $\ell^2$.  This example will be studied further in Chapters \ref{Ch:Kato} and \ref{Sec:Spectrum}.  \hfill $\Diamond$

\begin{example}\label{Ex:Laguerre} The measure $\mu = e^{-x}\,\mathrm{d}x$. \end{example}   Let $\mu$ be $e^{-x}\mathrm{d}x$ on $\mathbb{R}^+ = (0,\infty)$.  Using integration by parts, a quick induction proof shows that the moments for $\mu$ are\[ M^{(\mu)}_{i,j} = (i+j)!.\]  Again, we see that the moments increase rapidly as $i,j$ increase, so $M^{(\mu)}$ cannot be a bounded operator on $\ell^2$.  We will discover more properties for this matrix in Chapters \ref{Ch:Kato} and \ref{Ch:Extensions}.  \hfill$\Diamond$

\begin{example}\label{Ex:Gaussian}  Measures with moments from the gamma function.\end{example}
Let \[\Gamma(k) = \int_0^{\infty} s^{k-1}e^{-s}\,\mathrm{d}s = (k-1)! \] when $k \in \mathbb{Z}_+$.  Set $\mu = e^{-p^2x^2}\mathrm{d}x$.  $\mu$ is a Gaussian measure with support on $\mathbb{R}$.  We have the odd moments equal to zero, and the even moments are given by
\[ \int_{\mathbb{R}} x^{2k}e^{-p^2x^2}\mathrm{d}x = \frac{\Gamma(k)}{2p^{2k+1}} \] for $k \in \mathbb{Z}_+$. \hfill $\Diamond$

If $X=\mathbb{C}$, the moments are now indexed by $\mathbb{N}_0
\times \mathbb{N}_0$ and are given by \begin{equation}m_{ij} =
\int_{\mathbb{C}} \overline{z}^iz^j
\mathrm{d}\mu(z).\end{equation}  If every moment is finite, then
the complex moment matrix $M^{(\mu)}$ is given by
\begin{equation}\label{Eqn:MomMxComplex} M^{(\mu)}_{i,j} = m_{ij} =
\int_{\mathbb{C}} \overline{z}^iz^j
\,\mathrm{d}\mu(z).\end{equation}  Notice that a complex moment
matrix will not in general have the Hankel property that arises in
real measures.  We also observe that the complex moments are equivalent to the inner
products of monomials in the Hilbert space $L^2(\mu)$:
\begin{equation} m_{ij} = \int_{\mathbb{C}} \overline{z}^i z^j
\mathrm{d}\,\mu(z)  = \langle z^i | z^j
\rangle_{L^2(\mu)}. \end{equation}   Moments and moment matrices arise naturally in the
study of orthogonal polynomials in $L^2(\mu)$.

\begin{example} If $\mu$ is the Lebesgue measure supported on the unit circle $\mathbb{T}$ in $\mathbb{C}$, then
the moment matrix is the identity matrix: \[ M^{(\mu)}_{j,k} =
\int_{\mathbb{T}} \overline{z}^jz^k \,\mathrm{d}\mu = \int_0^1
e^{2\pi i (k-j)x} \,\mathrm{d}x = \delta_{jk}.\]  As we noted
above, the identity matrix is not Hankel.
\end{example} \hfill $\Diamond$

In the case where $X = \mathbb{R}^d$, we index the moments using
the multi-index notation defined in Section \ref{subsec:index}.
Given $\alpha \in \mathbb{N}_0^d$, we have the $\alpha$-moment
\begin{equation}m_{\alpha} = \int_{\mathbb{R}^d} x^{\alpha}
\mathrm{d}\mu(x).\end{equation} The moment matrix for
$\mathbb{R}^d$ is actually indexed by $\mathbb{N}_0^d \times
\mathbb{N}_0^d$ and has entries
\begin{equation}\label{Eqn:multi-moment} M^{(\mu)}_{\alpha,\beta} = m_{\alpha+\beta} =
\int_{\mathbb{R}^d} x^{\alpha + \beta} \mathrm{d}\mu(x)
.\end{equation}

When $M$ has real entries indexed by $\mathbb{N}_0^d \times
\mathbb{N}_0^d$, we will also call $M$ a Hankel matrix if
\[ M_{\alpha, \beta} = M_{\alpha-\gamma, \beta+\gamma} = M_{\alpha + \gamma, \beta - \gamma}\]
for all $\gamma$ for which these entries are defined. We see from
Equation (\ref{Eqn:multi-moment}) that the moment matrix for a
measure on $\mathbb{R}^d$ ($d > 1$) also has the Hankel property.

\section{Computations with infinite matrices}\label{Sec:InfMatrices}
   In a number of applications throughout this paper, we will have occasion to use infinite matrices. They will be motivated by the familiar correspondence between linear transformations and matrices from linear algebra.  Given a linear transformation $T$ between two Hilbert spaces, assumed to be infinite dimensional, then a choice of ONBs in the respective Hilbert spaces produces an infinite matrix which represents $T$.  Moreover a number of facts from (finite-dimensional) linear algebra carry over: for example composition of two transformations (and a choice of ONBs) corresponds to multiplication of the associated two infinite matrices. Moreover by taking advantage of the orthogonality, one sees in Lemma \ref{Lem:Product} that the matrix multiplication is convergent.

     Given an infinite matrix $M$, it is a separate problem to determine when there exists a linear transformation $T$ between two Hilbert spaces and a choice of ONBs such that $M$ represents $T$. This problem is only known to have solutions in special cases.  We will address this further in Chapters \ref{Ch:Kato}  and \ref{Sec:IntOperators}. We will encounter infinite matrices which cannot be realized directly by operators in the $\ell^2$-sequence spaces, but for which an operator representation may be found after a certain renormalization is introduced.

  We use the following standard
language and notation: if $G = G_{i,j}$ is an infinite matrix
indexed by $\mathbb{N}_0 \times \mathbb{N}_0$, and $x$ is an
infinite vector indexed by $\mathbb{N}_0$, we will define the
matrix-vector product $Gx$ componentwise using the usual rule,
provided the sums all converge.  $$(Gx)_i = \sum_{j \in
\mathbb{N}_0} G_{i,j}x_j.$$ Similarly, for infinite matrices $G$
and $H$, the matrix product $GH$ is also defined componentwise
using the same summation formula used for finite matrices,
provided these infinite sums all converge in the appropriate
sense. Specifically, the $(i,j)^{\mathrm{th}}$ entry of the matrix
$GH$ is given by
\begin{equation}\label{Eqn:MatrixProduct} (GH)_{i,j} = \sum_{k \in \mathbb{N}_0} G_{i,k}H_{k,j}.
\end{equation}

  As an aside, there is a parallel notion for such products which
applies to matrices indexed by $\mathbb{N}^d_0 \times
\mathbb{N}^d_0$ and vectors indexed by $\mathbb{N}_0^d$.  Given
$M$ indexed by $\mathbb{N}_0^d \times \mathbb{N}_0^d$, a vector
$c$, and given $\alpha \in \mathbb{N}_0^d$, we have

\begin{equation}\label{Eqn:MatrixMult}
(Mc)_{\alpha} = \sum_{\beta \in \mathbb{N}_0^d}
M_{\alpha,\beta}c_{\beta},
\end{equation}
provided that this sum converges absolutely.  We particularly need
absolute convergence here so that the sum can be appropriately
reordered to sum over $\mathbb{N}_0^d$.

\begin{definition}
When the formal summation rules for a product of infinite matrices
or a matrix-vector product yield convergent sums for each entry,
we say that the matrix operations are \textit{well defined}.
\end{definition}

Using our matrices $G$ and $H$ above, if the sums $\sum_{k \in
\mathbb{N}_0} G_{i,k}H_{k,j}$ converge for all $i,j \in
\mathbb{N}_0$, then the matrix product $GH$ is well defined.

If $G$ and $H$ are operators on a separable Hilbert space, we can determine necessary conditions on
their corresponding matrix representations so that the matrix products are well defined.  If we
take the Hilbert space to be $\mathcal{H} = \ell^2(\mathbb{N}_0)$ and let $\{e_j\,:\, j \in
\mathbb{N}_0\}$ be the standard orthonormal basis in $\ell^2(\mathbb{N}_0)$, i.e., $e_j(k) =
\delta_{j,k}$ for $j,k \in \mathbb{N}_0$, then we must first assume that $G$ and $H$ are defined on
a dense domain which includes $\{e_j\}$.   This allows us to write the matrix representations of
$G$ and $H$.  From there, we have the following result.
\begin{lemma}\label{Lem:Product} Let $G$ and $H$ be linear
operators densely defined on $\ell^2(\mathbb{N}_0)$ such that
$Ge_j$, $G^*e_j$, and $He_j$ are well defined and in
$\ell^2(\mathbb{N}_0)$ for every element of the standard
orthonormal basis $\{e_j\}_{j\in \mathbb{N}_0}$.  Then $G=
(G_{i,j})$ and $H=(H_{i,j})$, the infinite matrix
representations of $G$ and $H$ respectively, are defined and the
matrix product $((GH)_{i,j})$ is well defined.
\end{lemma}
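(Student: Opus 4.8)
The plan is to recognize each matrix entry as a coordinate of an $\ell^2$ vector, then to observe that the three hypotheses $Ge_j\in\ell^2$, $He_j\in\ell^2$, $G^*e_j\in\ell^2$ do exactly the following: the first two make the matrix entries of $G$ and $H$ well-defined scalars, while the third forces the \emph{rows} of $G$ to be square-summable. Once we know that a row of $G$ and a column of $H$ are both $\ell^2$ sequences, the entrywise product sum defining $GH$ is an inner product of two $\ell^2$ vectors, hence absolutely convergent by Cauchy--Schwarz.

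First I would set $G_{i,j}:=\langle e_i\,|\,Ge_j\rangle$ and $H_{i,j}:=\langle e_i\,|\,He_j\rangle$. These are finite because $e_i\in\ell^2(\mathbb{N}_0)$ and $Ge_j,He_j\in\ell^2(\mathbb{N}_0)$ by hypothesis, so the matrix representations are defined; moreover, for fixed $j$, the column sequence $(G_{i,j})_i$ is just the list of Fourier coefficients of $Ge_j$ in the standard ONB, so $\sum_i|G_{i,j}|^2=\|Ge_j\|^2<\infty$, and similarly $\sum_i|H_{i,j}|^2=\|He_j\|^2<\infty$.

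Next, the key step: I would bring in $G^*$ via the adjoint identity $\langle Ge_i\,|\,e_j\rangle=\langle e_i\,|\,G^*e_j\rangle$, valid since $e_i\in\mathrm{dom}(G)$. Using the convention that the inner product is conjugate-linear in the first slot, $G_{i,k}=\langle e_i\,|\,Ge_k\rangle=\overline{\langle Ge_k\,|\,e_i\rangle}=\overline{\langle e_k\,|\,G^*e_i\rangle}$, so the $i$-th \emph{row} of $G$ is the complex conjugate of the coordinate sequence of $G^*e_i$. Hence $\sum_k|G_{i,k}|^2=\|G^*e_i\|^2<\infty$ by the assumption that $G^*e_i\in\ell^2(\mathbb{N}_0)$.

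Finally, fixing $i,j$, I would write $(GH)_{i,j}=\sum_k G_{i,k}H_{k,j}=\sum_k\overline{\langle e_k\,|\,G^*e_i\rangle}\,\langle e_k\,|\,He_j\rangle=\langle G^*e_i\,|\,He_j\rangle$, and invoke Cauchy--Schwarz to get $\sum_k|G_{i,k}H_{k,j}|\le\|G^*e_i\|\,\|He_j\|<\infty$, so the defining sum converges absolutely and $GH$ is well-defined in the sense of the preceding definition. There is no real obstacle here; the only thing to be careful about is the bookkeeping of the conjugations forced by the ``linear in the second variable'' convention, and keeping straight that the hypothesis $G^*e_j\in\ell^2$ is what controls the rows of $G$ while $He_j\in\ell^2$ controls the columns of $H$ — it is the pairing of these two facts, not any joint assumption on both factors, that makes the product converge.
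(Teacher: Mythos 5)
Your proposal is correct and follows essentially the same route as the paper's proof: both identify the $i$-th row of $G$ with the coordinate sequence of $G^*e_i$ via the adjoint relation, control the columns of $H$ by $\|He_j\|_2$, and apply Cauchy--Schwarz to get absolute convergence of $\sum_k G_{i,k}H_{k,j}$, identifying the sum with $\langle G^*e_i\,|\,He_j\rangle$. The only cosmetic difference is that the paper carries the computation one step further, rewriting $\langle G^*e_i\,|\,He_j\rangle = \langle e_i\,|\,(GH)e_j\rangle$ by Parseval, which your argument could add in one line but is not needed for the well-definedness claim itself.
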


\begin{proof}  Recall
that the matrix representation of the operator $G$ is $G_{i,j} =
\langle e_i | Ge_j \rangle_{2}$ for $i,j \in \mathbb{N}_0$,
provided $Ge_j$ is in $\ell^2$ so that the inner product is
finite.  Our hypotheses imply, then, that the matrix
representations of $G$ and $H$ exist.

Given an operator $G$, the adjoint operator $G^*$  satisfies
$\langle G^*u|v\rangle = \langle u|Gv \rangle$ for all $u,v$ which
are in the appropriate domains and for which $G^*u$ and $Gv$ are
in $\ell^2(\mathbb{N}_0)$.  We now check for absolute convergence
of the matrix product sums from Equation
(\ref{Eqn:MatrixProduct}):
\begin{eqnarray*}  \sum_{k=0}^{\infty} \left| G_{ik}H_{kj}
\right| &=& \sum_{k=0}^{\infty} \Big| \langle e_i|Ge_k \rangle
\langle e_k|He_j \rangle\Big| \\ &=& \sum_{k=0}^{\infty} \left|
\langle G^*e_i|e_k \rangle \langle e_k|He_j \rangle\right|
\\&\leq& \left(\sum_{k=0}^{\infty} \left|\langle G^*e_i|e_k \rangle \right|^2
\right)^{1/2} \left(\sum_{l=0}^{\infty} \left|\langle e_l|He_j
\rangle \right|^2 \right)^{1/2} \\
&=& \|G^*e_i\|_2 \|He_j\|_2 < \infty.
\end{eqnarray*}
We use the Cauchy-Schwarz inequality above since we know the
vectors $G^*e_i$ and $He_j$ are in $\ell^2$.   Note that if $G,H$
are bounded operators, the sum above is bounded by
$\|G^*\|_{op}\|H\|_{op}$ for any choice of $i,j \in \mathbb{N}_0$.

Once we know the sums from Equation (\ref{Eqn:MatrixProduct}) are
absolutely convergent, they can be computed:

\begin{eqnarray*} \sum_{k=0}^{\infty} G_{i,k}H_{k,j} &=&
\sum_{k=0}^{\infty} \langle e_i|Ge_k \rangle \langle e_k|He_j
\rangle\\&=& \sum_{k=0}^{\infty} \langle G^*e_i|e_k \rangle
\langle e_k|He_j \rangle\\&=&  \langle G^*e_i|He_j \rangle \qquad
\text{by Parseval's Identity} \\&=& \langle e_i |(GH)e_j \rangle\\
&=& (GH)_{i,j}.
\end{eqnarray*}
\end{proof}

Note that the conditions in Lemma \ref{Lem:Product} imply that the
operator $G$ is automatically closable.  It is an immediate
corollary that bounded operators satisfy the hypotheses of Lemma
\ref{Lem:Product}, and thus their matrices will always have well
defined products.
\begin{corollary}  If $G$ and $H$ are bounded operators on
$\ell^2(\mathbb{N}_0)$, then the product of their matrix
representations is well defined.
\end{corollary}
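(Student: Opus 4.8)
The plan is to reduce the corollary directly to Lemma \ref{Lem:Product}, so the only real work is verifying that a bounded operator on $\ell^2(\mathbb{N}_0)$ satisfies the three hypotheses of that lemma: that $Ge_j$, $G^*e_j$, and $He_j$ are well defined and lie in $\ell^2(\mathbb{N}_0)$ for every standard basis vector $e_j$. First I would recall that a bounded operator $G$ on a Hilbert space is, by definition, everywhere defined on that space, so in particular its domain contains every $e_j$; thus $Ge_j$ and $He_j$ are well defined elements of $\ell^2(\mathbb{N}_0)$, with $\|Ge_j\|_2 \leq \|G\|_{op}\|e_j\|_2 = \|G\|_{op} < \infty$ and similarly for $He_j$.

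Next I would address the adjoint. The key classical fact is that a bounded operator $G$ on a Hilbert space always has a bounded adjoint $G^*$, also everywhere defined, with $\|G^*\|_{op} = \|G\|_{op}$. Hence $G^*e_j$ is likewise a well-defined element of $\ell^2(\mathbb{N}_0)$ for every $j$. This is the one point where one invokes more than the bare definition of boundedness, but it is entirely standard (it follows from the Riesz representation theorem applied to the functional $v \mapsto \langle Gv \mid u\rangle$), and it is legitimate to cite it from the references \cite{ReSi80, Con90} listed in Section \ref{subsec:unboundedop}.

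With those three conditions in hand, the hypotheses of Lemma \ref{Lem:Product} are met verbatim, so the lemma yields that the matrix representations $(G_{i,j})$ and $(H_{i,j})$ exist and that the product $((GH)_{i,j})$ is well defined. I would also remark, as the surrounding text already anticipates, that in the bounded case the computation inside the proof of Lemma \ref{Lem:Product} gives the sharper bound $\sum_{k}|G_{ik}H_{kj}| \leq \|G^*e_i\|_2\,\|He_j\|_2 \leq \|G\|_{op}\|H\|_{op}$ uniformly in $i$ and $j$, so the convergence is in fact uniform over all matrix entries.

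I do not anticipate a genuine obstacle here: the statement is essentially a sanity-check corollary whose entire content is that "bounded" supplies, for free, the domain and adjoint hypotheses of the preceding lemma. If there is any subtlety worth a sentence, it is simply flagging that boundedness is being used twice — once to place $Ge_j, He_j$ in $\ell^2$ and once, via the standard boundedness of the adjoint, to place $G^*e_j$ in $\ell^2$ — rather than any analytic difficulty.
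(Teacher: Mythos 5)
Your proposal is correct and follows the same route as the paper, which treats this as an immediate consequence of Lemma \ref{Lem:Product}: boundedness gives that $Ge_j$, $He_j$, and $G^*e_j$ all lie in $\ell^2(\mathbb{N}_0)$, so the lemma applies directly. Your added remark that the entrywise bound $\|G^*\|_{op}\|H\|_{op}$ is uniform in $i,j$ is also consistent with the observation already made inside the paper's proof of the lemma.
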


 In some of the applications which follow -
for example the cases for which both $G$ and $H$ are lower
triangular matrices - the range of the summation index will be
finite. (See, for example,  the
proof of Lemma \ref{Lemma:Amatrix}.)  But we will also have occasion to
compute products $GH$ for pairs of infinite matrices $G$ and $H$
where the range of the summation index is infinite. In those
cases, we must check that the necessary sums are convergent.

          There are two approaches to working with the product $GH$ of infinite
matrices $G$ and $H$.   One is the computational approach we have
described above, and the other involves associating the matrices
to operators on appropriate Hilbert spaces (see Chapters
\ref{Ch:Kato} and \ref{Sec:IntOperators}).  For many applications,
the first method is preferred.  In fact, there are no known
universal, or canonical, procedures for turning an infinite matrix
into an operator on a Hilbert space (see e.g., \cite{Hal67} and
\cite{Jor06}), so often the computational approach is the only one
available.

\section{Inverses of infinite matrices}\label{Sec:Inverses}

     Computations in later sections will require the notion of inverse for infinite
matrices.  Let  $G$ be an infinite matrix indexed, as discussed in Section \ref{subsec:index}, by the set $\mathbb{N}_0^d  \times \mathbb{N}_0^d$.  We will now make
precise our (admittedly abusive) use of the notation $G^{-1}$ for
an inverse.  First,  in order to discuss computations, the index set for
rows and columns must be equipped with an order.  If $d > 1$, we
will use the order of the set $\mathbb{N}_0^d$ along successive finite diagonals.  Our goal is to find an  algorithm for the entries in the infinite matrix we denote
$G^{-1}$. This does turn out to be possible for the infinite matrices
we will be using in our analysis of moments and of
transformations.
 \begin{definition}

          Let $E$ be an infinite matrix.  We say that $E$ is an \textit{idempotent} if the
matrix product $E^2$ is well defined and if  $E^2 = E$.
\end{definition}

\begin{definition}\label{def:inverse}
     Let $G, H, E_1$ and $E_2$  be infinite matrices.  Assume that the matrix products $GH$  and $HG$ are well
defined.  We say that $G$ is a \textit{left inverse} of $H$  if there is an idempotent matrix $E_1$ such that $GH = E_1$.  $G$ is called a \textit{right inverse} of $H$ if there is an idempotent  $E_2$ such
that $HG = E_2$.  If $G$ is both a left and right inverse of $H$, $G$ is called an \textit{inverse} of $H$.
\end{definition}

While this notion of inverses for infinite matrices is not
symmetric, the following lemma does justify the use of the term
``inverse''.

\begin{lemma}\label{lem:inverse}
Let $G$ and $H$ be infinite matrices such that both
matrix products $GH$ and $HG$ are well defined. Suppose there is
an idempotent $E_1$ such that $GH = E_1$ and $E_1G = GE_1 = G$.
Then the infinite matrix $HG$ is an idempotent, denoted $E_2$,
which satisfies $E_2H = HE_1$ and $E_2E_1 = E_2$.
\end{lemma}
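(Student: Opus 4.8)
The plan is to verify each of the four claimed identities directly from the hypotheses $GH = E_1$, $E_1 G = G E_1 = G$, and the idempotence $E_1^2 = E_1$, using associativity of infinite matrix products. The key technical point is that all the products appearing below are well defined (so that associativity applies); I would note at the outset that $GH$ and $HG$ are well defined by hypothesis, that $E_1 = GH$ and $E_2 := HG$ therefore inherit well-definedness, and that products like $HGH$, $GHG$, $HGHG$ collapse via the given relations to products already known to be well defined, so no convergence issue arises. One should be slightly careful that ``well defined'' here means entrywise-absolutely-convergent matrix products as in Lemma \ref{Lem:Product}'s setup, and that associativity of such triple products holds whenever the relevant double products are well defined; I would either invoke this as a standard fact about the matrix calculus set up in Section \ref{Sec:InfMatrices} or give a one-line Fubini/Tonelli justification.

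First I would show $E_2 = HG$ is idempotent: compute $E_2^2 = (HG)(HG) = H(GH)G = H E_1 G = H(E_1 G) = HG = E_2$, where I used associativity to regroup as $H(GH)G$, then $GH = E_1$, then the hypothesis $E_1 G = G$. Next, $E_2 H = (HG)H = H(GH) = H E_1$, again just by associativity and $GH = E_1$; this gives the claimed relation $E_2 H = H E_1$. Finally, $E_2 E_1 = (HG)E_1 = H(G E_1) = HG = E_2$, using associativity and the hypothesis $G E_1 = G$. That establishes all three assertions of the lemma (idempotence of $E_2$, $E_2 H = H E_1$, and $E_2 E_1 = E_2$).

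There is no real obstacle here beyond bookkeeping: the only thing to be vigilant about is that every regrouping step is licensed, i.e. that the triple products $H(GH)G$, $H(GH)$, and $H(G E_1)$ make sense entrywise and agree with the other bracketing. Since $GH$, $HG$, $E_1 G$, and $G E_1$ are all assumed (or immediately seen) to be well defined, and the matrix product in this paper is built from absolutely convergent sums, a standard interchange-of-summation argument gives associativity; I would cite the discussion around Equation (\ref{Eqn:MatrixProduct}) and Lemma \ref{Lem:Product} rather than rederive it. I expect this to be the ``hard part'' only in the sense that it is the sole place where one must be honest about convergence rather than manipulating symbols formally; the algebraic content is a three-line chain of substitutions.
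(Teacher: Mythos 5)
Your proposal is correct and follows essentially the same argument as the paper: the same three-line chain of substitutions $HGHG = HE_1G = HG$, $E_2H = HGH = HE_1$, and $E_2E_1 = HGE_1 = HG$. Your added care about associativity of the absolutely convergent products is a reasonable refinement, but the underlying proof is identical to the paper's.
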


\begin{proof}  First, we see that $HG$ is idempotent: $$HGHG =
H(E_1)G = HG.$$ The formulas are also readily verified: $$ E_2H =
HGH = HE_1
$$ and
$$ E_2E_1 = HGE_1 = HG = E_2.$$
\end{proof}

A result of Lemma \ref{lem:inverse} is that if we know the matrix
product $GH = E_1$ is an idempotent such that $E_1G = GE_1 = G$,
then by Definition \ref{def:inverse}, $G$ and $H$ are inverses.

\begin{example}\label{Ex:Inverses} The following matrices arise in Example \ref{Ex:Rank2} with respect to measures which are convex combinations of Dirac masses.  \end{example}  We are given the matrices 
\begin{equation*}
G = \begin{bmatrix}
1         & 0         & 0 & \cdots\\
-1        & 2         & 0 & \cdots \\
0         & 0         & 0 &\cdots\\
\vdots & \vdots &       &\ddots
\end{bmatrix}
\quad\text{and}\quad H_1 = \begin{bmatrix}
1 & 0 & 0  & \cdots\\
\frac{1}{2} & \frac{1}{2}  & 0 &\cdots \\
\frac{1}{2} & \frac{1}{2}  & 0 &\cdots \\
\vdots        & \vdots        &\vdots &\ddots\\
\end{bmatrix}.
\end{equation*}
We see that $GH_1 = E$ where $E$ is the idempotent (in fact, projection):
\begin{equation*}
E = GH_1 = \begin{bmatrix}
1 & 0 & 0 & \cdots\\
0 & 1 & 0  & \cdots\\
0 & 0 & 0  & \cdots\\
\vdots & \vdots &\vdots & \ddots
\end{bmatrix}
\end{equation*}
Therefore, $H_1$ is an inverse of $G$.  Note that this inverse is not unique.  The matrix 
\begin{equation*}H_2=  \begin{bmatrix}  
1 & 0 & 0  & \cdots\\
\frac{1}{2} & \frac{1}{2}  & 0 &\cdots \\
\frac{1}{2} & \frac{1}{2}  & \frac12 &\cdots \\
\frac12        & \frac12       &\frac12 &\ddots\\
\vdots & \vdots & \vdots & \vdots
\end{bmatrix}
\end{equation*}
also satisfies $GH_2=E$ hence is also an inverse of $G$.   \hfill $\Diamond$

\chapter{The moment problem}\label{Sec:MomentTheory}

In this chapter we introduce the \textit{moment
problem}, in which we describe the various positivity conditions
that an infinite matrix $M$ must satisfy in order to imply that
there is a Borel measure $\mu$  on
an ambient space $X$ such that $M$ is the
moment matrix of $\mu$, i.e. $M = M^{(\mu)}$.  These conditions will be different in the
cases where $X$ is a subset of $\mathbb{R}$, $\mathbb{C}$, or
$\mathbb{R}^d$ for $d>1$.  We also examine whether or not the measures found will be unique.   The existence of the measure $\mu$ is discussed in Section \ref{Subsec:exist}, and a procedure by Parthasarathy using a Kolmogorov construction is used in Section \ref{Subsec:Parth} to discuss a more concrete construction of $\mu$.  The connections of the moment problem to other areas of interest to mathematicians, physicists, and engineers are mentioned in Section \ref{Subsec:history}.

\section{The moment problem $M = M^{(\mu)}$}\label{Subsec:exist}

Given an infinite matrix $M$, the moment problem addresses whether
there exist measures $\mu$ such that $M = M^{(\mu)}$.  There are known
conditions on $M$ which ensure that such a measure $\mu$ exists.  Our
presentation will be brief and we will omit the proofs of the results
stated here, as they are available in the literature, albeit scattered
within a variety of journal articles. Fuglede's paper \cite{Fug83}
offers a very readable survey of the literature on moments in several
variables, up to 1983. The two papers \cite{BeDu06, BeDu07} emphasize
the complex case and include new results.

Our intention in this section is primarily to give the definitions of
the positive semidefinite properties on the infinite matrices in the
real and complex cases.  We will only treat the existence part of the moment
problem, but there are a variety of interesting uniqueness results in
the literature as well. Here we shall only need the simplest version
of uniqueness: these are the cases when the measures $\mu$ are known
\textit{a priori} to be compactly supported. In this case, uniqueness
follows as a result of an application of the Stone-Weierstrass
Theorem.  An example of nonuniqueness is seen in Section
\ref{Sec:Nonunique}.

We begin with the real case $\mathbb{R}^d$, for $d \geq 1$.  Let
$\mathcal{D}$ be the space of all infinite sequences $c$ which are
finite linear combinations of elements of the standard orthonormal
basis $\{e_{\alpha}\}_{\alpha \in \mathbb{N}_0^d}$.  Clearly
$\mathcal{D}$ is a dense subspace of the Hilbert space
$\ell^2(\mathbb{N}_0^d)$, and matrix-vector products are well defined
on vectors in $\mathcal{D}$.

One of the ways to test whether a given sequence, or a given infinite
Hankel matrix, is composed of the moments of a measure $\mu$ is to
test for a positive semidefinite condition.  While there are several
such conditions in the literature, we isolate condition (\ref{Eqn:PD})
as it summarizes a variety of features that will be essential for our
point of view.  Note that (\ref{Eqn:PD}) entails a separate
verification for every finite system of numbers, hence it amounts to
checking that all the finite truncated square matrices have positive
spectrum.  This in turn can be done by checking determinants of finite
submatrices.

\begin{definition}\label{Defn:PD}
A matrix $M$ with real entries indexed by $\mathbb{N}_0^d \times
\mathbb{N}_0^d$ is said to be \textit{positive semidefinite} if
 \begin{equation}\label{Eqn:PD} \langle c | Mc
\rangle_{\ell_2} = \sum_{\alpha \in \mathbb{N}_0^d} \sum_{\beta \in
\mathbb{N}_0^d} \overline{c}_{\alpha}
M_{\alpha,\beta}c_{\beta} \geq 0\end{equation}
for all $c\in\mathcal{D}$.
\end{definition}

We say that a given infinite matrix $M$ is a $(\mathbb{N}_0^d,
\mathbb{R}^d)$-moment matrix if there exists a positive Borel
measure $\mu$ on $\mathbb{R}^d$ such that
\begin{eqnarray} M_{\alpha,\beta} &=& M^{(\mu)}_{\alpha,\beta} =  \int_{\mathbb{R}^d} x^{\alpha + \beta} \mathrm{d}\,\mu(x) \label{Eqn:MMuAlphaBeta}
\end{eqnarray}
for all $\alpha, \beta \in \mathbb{N}_0^d$.  Note that, as we observed in Section \ref{Subsec:moments},  if $M$ is a  moment matrix, it must have the Hankel property.

The solution to the existence part of the moment problem in the
$\mathbb{R}^d$ case is given in a theorem due to M. Riesz
\cite{Rie23} for $d=1$ and Haviland \cite{Hav35, Hav36} for
general $d$.

\begin{theorem}[Riesz, Haviland]\label{thm:real}
For any positive semidefinite real Hankel matrix $M = (M_{\alpha,\beta})$
indexed by $\mathbb{N}_0^d \times \mathbb{N}_0^d$, there is a
positive Borel measure $\mu$ on $\mathbb{R}^d$ having moment
matrix $M$.
\end{theorem}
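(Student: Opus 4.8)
The plan is to realize $M$ as the Gram matrix of a family of vectors in a Hilbert space, to build commuting symmetric ``multiplication by $x_i$'' operators out of the Hankel structure, to pass to commuting self-adjoint extensions, and to read off $\mu$ from the resulting joint spectral resolution applied to the vector corresponding to the constant monomial $x^{0}\equiv 1$. Since $M$ is positive semidefinite in the sense of Definition~\ref{Defn:PD}, the sesquilinear form $\langle c,c'\rangle_{M}:=\langle c|Mc'\rangle_{\ell^{2}}$ is a (possibly degenerate) inner product on $\mathcal{D}$; quotienting by its null space $\mathcal{N}$ and completing produces a Hilbert space $\mathcal{H}_{M}$, and writing $v_{\alpha}$ for the image of $e_{\alpha}$ we obtain vectors with $\langle v_{\alpha}|v_{\beta}\rangle=M_{\alpha,\beta}$ whose linear span is dense. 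This is the Kolmogorov dilation of the positive definite kernel $(\alpha,\beta)\mapsto M_{\alpha,\beta}$.

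Next, for each $i\in\{1,\dots,d\}$ define $T_{i}$ on $\mathrm{span}\{v_{\alpha}\}$ by $T_{i}v_{\alpha}=v_{\alpha+\epsilon_{i}}$, where $\epsilon_{i}$ is the $i$-th standard multi-index. The single multivariate Hankel relation $M_{\alpha+\epsilon_{i},\beta}=M_{\alpha,\beta+\epsilon_{i}}$ shows at once that $\mathcal{N}$ is invariant under the shift $e_{\alpha}\mapsto e_{\alpha+\epsilon_{i}}$ (so $T_{i}$ is well defined on the quotient) and that $T_{i}$ is symmetric on its dense domain; moreover the $T_{i}$ commute there and $T_{1}^{\alpha_{1}}\cdots T_{d}^{\alpha_{d}}v_{0}=v_{\alpha}$. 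Because $M$ is real (and Hankel, hence symmetric), coordinatewise complex conjugation descends to a conjugation $J$ on $\mathcal{H}_{M}$ commuting with each $T_{i}$, so each $T_{i}$ has equal deficiency indices and therefore admits a self-adjoint extension. In the case $d=1$, fix a self-adjoint extension $A\supseteq T_{1}$ with spectral resolution $A=\int_{\mathbb{R}}\lambda\,dE(\lambda)$; since $A$ extends $T_{1}$ we have $v_{0}\in\mathrm{dom}(A^{n})$ and $A^{n}v_{0}=v_{n}$ for all $n$, so putting $\mu(\cdot):=\langle v_{0}|E(\cdot)v_{0}\rangle$ gives
\[
\int_{\mathbb{R}}x^{n}\,\mathrm{d}\mu(x)=\langle v_{0}|A^{n}v_{0}\rangle=\langle v_{0}|v_{n}\rangle=M_{0,n},
\]
and the Hankel identity $M_{i,j}=M_{0,i+j}$ then yields $M^{(\mu)}=M$ (while $\mu(\mathbb{R})=\|v_{0}\|^{2}=M_{0,0}$, so $\mu$ is a finite positive measure).

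The several-variable case requires the commuting symmetric operators $T_{1},\dots,T_{d}$ to admit \emph{mutually commuting} self-adjoint extensions $A_{1},\dots,A_{d}$; given these, their joint projection-valued measure $E$ on $\mathbb{R}^{d}$ together with $v_{0}$ produces $\mu(\cdot):=\langle v_{0}|E(\cdot)v_{0}\rangle$ with $\int_{\mathbb{R}^{d}}x^{\alpha}\,\mathrm{d}\mu(x)=\langle v_{0}|v_{\alpha}\rangle=M_{0,\alpha}$, after which the multi-index Hankel property again gives $M^{(\mu)}=M$. This step — the existence of commuting self-adjoint extensions — is where the genuine difficulty lies: commuting symmetric operators need not possess commuting self-adjoint extensions, and this is exactly the phenomenon separating mere positive semidefiniteness of $M$ from the stronger Haviland condition that the associated Riesz functional be nonnegative on every polynomial nonnegative on $\mathbb{R}^{d}$. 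I would address it by exhibiting a dense set of joint (quasi-)analytic vectors for the $T_{i}$ — for instance from a Carleman-type growth bound on the entries of $M$ — and invoking Nelson's commutative analytic vector theorem, so that each $T_{i}$ is essentially self-adjoint and the closures commute strongly; with the joint spectral measure in hand, recovering the moment identities is then routine bookkeeping with the Hankel relations.
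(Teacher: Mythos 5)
Your one--variable argument is correct and complete: the GNS/Kolmogorov dilation of the positive semidefinite kernel $(\alpha,\beta)\mapsto M_{\alpha,\beta}$, the symmetric shift (well defined on the quotient because the null space is shift-invariant, which follows from the Hankel identity together with Cauchy--Schwarz for the semi-inner product), the conjugation giving equal deficiency indices, and then $\mu(\cdot)=\langle v_0|E(\cdot)v_0\rangle$ for the spectral resolution $E$ of any self-adjoint extension. This is the classical Hamburger route via von Neumann extension theory, and it is essentially the machinery the Memoir itself develops later in Chapter \ref{Ch:Extensions} (the shift $S$ on $\mathcal{H}_Q$, Lemma \ref{Lem:indices}, Theorem \ref{Thm:EqualIndices}, and Proposition \ref{Prop:NonuniqueMeasure}). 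It is, however, a genuinely different route from the proof the paper gives for this theorem in Section \ref{Subsec:Parth}: there the measure is produced probabilistically, by applying the Parthasarathy--Schmidt/Kolmogorov construction (Theorem \ref{Thm:ParKol}) on the product space $\Omega$, restricting to the ``Hankel'' subset $\Omega_{Han}$, and pushing forward under the coordinate map $X(0)$ (Theorem \ref{Thm:OmegaHan}). Your route buys the connection to self-adjoint extensions, and hence directly sets up the uniqueness/non-uniqueness analysis of Chapter \ref{Ch:Extensions}; the paper's route buys an explicit product-space construction of $\mu$ that also motivates the IFS constructions used later.

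The genuine gap is the case $d\geq 2$, which the stated theorem covers. You correctly isolate the crux --- the commuting symmetric operators $T_1,\dots,T_d$ need not admit mutually commuting self-adjoint extensions --- but your proposed repair cannot be carried out under the stated hypotheses: a Carleman-type growth bound on the entries of $M$ is not available, since nothing beyond positive semidefiniteness and the Hankel property is assumed, so there is no way to produce the dense set of (quasi-)analytic vectors needed to invoke Nelson's theorem. Moreover, no argument along these lines can close the gap in this generality: for $d\geq 2$, positivity of the Hankel form (equivalently, nonnegativity of the Riesz functional on sums of squares) is strictly weaker than Haviland's hypothesis of nonnegativity on all polynomials that are nonnegative on $\mathbb{R}^d$, because of nonnegative polynomials that are not sums of squares, and the examples of Berg--Christensen--Jensen and of Schm\"udgen exhibit positive semidefinite multisequences with no representing measure. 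So, as written, your proposal proves the statement for $d=1$ only; note that the paper's own detailed construction in Section \ref{Subsec:Parth} is likewise carried out only for $d=1$, with the higher-dimensional case left to the reader and to the cited literature.
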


If $M$ is an infinite matrix with complex entries, we say $M$ is a complex moment matrix if there exists a positive Borel measure $\mu$ on $\mathbb{C}$ such that
\begin{eqnarray} M_{i,j} = M^{(\mu)}_{i,j}
= \int_{\mathbb{C}} \overline{z}^i z^j \mathrm{d}\,\mu(z)
\end{eqnarray}
for all $i,j \in \mathbb{N}_0$. We define a property on the complex matrix $M$ which is stronger than the positive semidefinite property.
\begin{definition}[\cite{BeDu06, BeDu07}] \label{Defn:PDC}
We say a complex infinite matrix $M$ indexed by $\mathbb{N}_0 \times \mathbb{N}_0$ has \textit{Property PD$\mathbb{C}$} if given any doubly-indexed sequence $c = \{c_{i,j}\}$ having only finitely many nonzero entries,   we have
\begin{equation}\label{Eqn:PDC}
\sum_{i,j,k,\ell \in \mathbb{N}_0}
\overline{c}_{i,j} M_{i+\ell, j+k} c_{k,\ell} \geq 0.
\end{equation}
\end{definition}
Note that if we consider the
special case where $j,\ell=0$ in Definition \ref{Defn:PDC}, we get
exactly the positive semidefinite condition on $M$ given in Definition \ref{Defn:PD}.

\begin{theorem}[\cite{Fug83, BeDu06, BeDu07}]
\label{thm:complex} If $M$ is an infinite complex matrix indexed by
$\mathbb{N}_0 \times \mathbb{N}_0$  which satisfies Property PD$\mathbb{C}$, then there exists a positive Borel measure $\mu$ on $\mathbb{C}$
such that $M = M^{(\mu)}$.
\end{theorem}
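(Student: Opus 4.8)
The plan is to translate Property PD$\mathbb{C}$ into the positivity of a linear functional on a $*$-algebra of polynomials, to build a Hilbert space by the Kolmogorov (GNS) construction, to extend the resulting ``multiplication by $z$'' operator to a genuine normal operator, and then to recover $\mu$ from its spectral resolution. First I would work in the commutative $*$-algebra $\mathcal{P}=\mathbb{C}[z,\overline{z}]$ with involution determined by $z^{*}=\overline{z}$, and define a linear functional $L$ on $\mathcal{P}$ by $L(z^{a}\overline{z}^{b}):=M_{b,a}$, extended by linearity. Writing $p=\sum_{i,j}c_{i,j}\,z^{i}\overline{z}^{j}$ and expanding $p^{*}p$ (using that $z$ and $\overline{z}$ commute) gives
\[
L(p^{*}p)=\sum_{i,j,k,\ell\in\mathbb{N}_{0}}\overline{c}_{i,j}\,M_{i+\ell,\,j+k}\,c_{k,\ell},
\]
so Property PD$\mathbb{C}$ says exactly that $L(p^{*}p)\ge 0$ for every $p\in\mathcal{P}$; equivalently the sesquilinear form $\langle p\,|\,q\rangle_{L}:=L(p^{*}q)$ on $\mathcal{P}$ is positive semidefinite and Hermitian (the latter forcing $\overline{M_{a,b}}=M_{b,a}$). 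This is precisely the positive-definite datum to which the Kolmogorov decomposition applies, and it is where the four free summation indices of (\ref{Eqn:PDC}) matter — much more than the bare positive-semidefinite condition of Definition \ref{Defn:PD}, which is the special case $j=\ell=0$.

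Next I would run the GNS/Kolmogorov construction. By the Cauchy--Schwarz inequality for the positive form $\langle\cdot|\cdot\rangle_{L}$, the set $N:=\{p\in\mathcal{P}:L(p^{*}p)=0\}$ is a linear subspace, indeed an ideal of $\mathcal{P}$: if $p\in N$ then $|\langle p\,|\,w\rangle_{L}|\le\|p\|_{L}\,\|w\|_{L}=0$ for all $w$, so $\langle p\,|\,\cdot\,\rangle_{L}\equiv 0$ and hence $qp\in N$ for every $q$. Thus multiplication by any element of $\mathcal{P}$ descends to $\mathcal{P}/N$, where $\langle\cdot|\cdot\rangle_{L}$ is now a genuine inner product; let $\mathcal{H}$ be the completion, $[p]$ the class of $p$, and $\xi:=[1]$. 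Then $Z[p]:=[zp]$ defines a densely defined operator on $\mathcal{H}$ with domain $\mathcal{D}_{0}=\mathcal{P}/N$, and $Z^{\sharp}[p]:=[\overline{z}p]$ is a formal adjoint: $\langle Z[p]\,|\,[q]\rangle=L(\overline{z}\,p^{*}q)=\langle[p]\,|\,Z^{\sharp}[q]\rangle$. Moreover $\|Z[p]\|^{2}=L(z\overline{z}\,p^{*}p)=\|Z^{\sharp}[p]\|^{2}$ and $ZZ^{\sharp}=Z^{\sharp}Z$ on $\mathcal{D}_{0}$ (both are multiplication by $z\overline{z}$), so $Z$ is \emph{formally normal}; and $\xi$ is cyclic for the algebra generated by $Z$ and $Z^{\sharp}$, with $\langle\xi\,|\,(Z^{\sharp})^{i}Z^{j}\xi\rangle=L(\overline{z}^{i}z^{j})=M_{i,j}$ for all $i,j$.

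The heart of the matter --- and the step I expect to be the main obstacle --- is to promote the formally normal operator $Z$ to an honest \emph{normal} operator $\widetilde{Z}$, acting on $\mathcal{H}$ or, if necessary, on a larger Hilbert space, in such a way that $\xi$ still satisfies $\langle\xi\,|\,(\widetilde{Z}^{*})^{i}\widetilde{Z}^{j}\xi\rangle=M_{i,j}$. This is genuinely harder than the one-variable real case of Theorem \ref{thm:real}: there the multiplication operator is symmetric and commutes with a conjugation, hence has equal deficiency indices and a self-adjoint extension, whereas a formally normal operator need not admit \emph{any} normal extension. The route I would take is to split $Z=A+iB$ with $A=\tfrac12(Z+Z^{\sharp})$ and $B=\tfrac{1}{2i}(Z-Z^{\sharp})$ formally symmetric and commuting on $\mathcal{D}_{0}$, and then to use (\ref{Eqn:PDC}) together with the self-adjoint-extension theory developed in Chapter \ref{Ch:Extensions} to produce \emph{commuting} self-adjoint extensions $\overline{A},\overline{B}$ and set $\widetilde{Z}=\overline{A}+i\overline{B}$ (one then checks $\widetilde{Z}^{*}=\overline{A}-i\overline{B}$ restricts to $Z^{\sharp}$ on $\mathcal{D}_{0}$, so the moment identities persist); an alternative is to take the Kolmogorov dilation of the full kernel $((i,j),(k,\ell))\mapsto M_{i+\ell,\,j+k}$ on $\mathbb{N}_{0}\times\mathbb{N}_{0}$ and to verify that its diagonal-shift structure forces the dilated operator to be normal. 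It is exactly at this step --- not in the formal bookkeeping --- that Property PD$\mathbb{C}$, rather than mere positive semidefiniteness, is indispensable, as in the analyses of Fuglede \cite{Fug83} and of Berg and Dur\'an \cite{BeDu06, BeDu07}.

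Granting such a normal $\widetilde{Z}$, the rest is routine. The spectral theorem for normal operators yields a projection-valued measure $E$ on $\mathbb{C}$ with $\widetilde{Z}=\int_{\mathbb{C}}\lambda\,dE(\lambda)$. Define $\mu(\Delta):=\langle\xi\,|\,E(\Delta)\xi\rangle$ for Borel $\Delta\subseteq\mathbb{C}$; then $\mu$ is a positive Borel measure with $\mu(\mathbb{C})=\|\xi\|^{2}=L(1)=M_{0,0}<\infty$. Since $\xi$ lies in the domain of $(\widetilde{Z}^{*})^{i}\widetilde{Z}^{j}$ for every $i,j$, the joint functional calculus gives $\int_{\mathbb{C}}|\overline{z}^{i}z^{j}|^{2}\,d\mu<\infty$ and $\int_{\mathbb{C}}\overline{z}^{i}z^{j}\,d\mu(z)=\langle\xi\,|\,(\widetilde{Z}^{*})^{i}\widetilde{Z}^{j}\xi\rangle=M_{i,j}$ for all $i,j\in\mathbb{N}_{0}$. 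Hence $M=M^{(\mu)}$, which is the assertion.
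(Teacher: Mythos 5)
Your opening and closing steps are fine, and they parallel what the paper actually does with this statement: your identification of Property PD$\mathbb{C}$ with positivity of the functional $L$ on hermitian squares of $\mathbb{C}[z,\overline z]$, and the GNS/quotient-completion construction of $(\mathcal{H},\xi,Z)$ with $\langle\xi\,|\,(Z^{\sharp})^{i}Z^{j}\xi\rangle=M_{i,j}$, is exactly the bookkeeping the paper performs in Section \ref{Subsec:Parth}, where Theorem \ref{Thm:ParKol} (in its second, quadratic-form proof) is applied to the induced kernel $\widehat{M}((i,j),(k,\ell))=M(i+\ell,j+k)$; and the endgame (defining $\mu(\cdot)=\langle\xi\,|\,E(\cdot)\xi\rangle$ from the spectral resolution of a normal extension) is indeed routine once such an extension exists. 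Note, however, that the paper does not itself prove Theorem \ref{thm:complex}: it quotes it from \cite{Fug83, BeDu06, BeDu07}, and Corollary \ref{Cor:Complex} \emph{presupposes} the measure when it realizes the dilation pair concretely inside $L^{2}(\mathbb{C},\mu)$.

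The genuine gap is the step you yourself flag as the obstacle, and the mechanisms you sketch for it do not close it. The self-adjoint-extension theory of Chapter \ref{Ch:Extensions} (deficiency subspaces, the conjugation trick of Theorem \ref{Thm:EqualIndices}, Lemma \ref{Lem:indices}) treats one symmetric operator at a time; it may give each of $A=\tfrac12(Z+Z^{\sharp})$ and $B=\tfrac1{2i}(Z-Z^{\sharp})$ self-adjoint extensions separately, but nothing in that theory makes any pair of chosen extensions \emph{strongly} commute, and commutation of $A,B$ on the dense invariant domain $\mathcal{D}_{0}$ is not enough (Nelson's example exhibits symmetric operators commuting on a common invariant core whose self-adjoint extensions do not commute spectrally). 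Worse, formally normal operators exist that admit no normal extension in \emph{any} larger Hilbert space (Coddington; Schm\"udgen), so neither the splitting $Z=A+iB$ nor the ``diagonal-shift structure'' of the dilated kernel can force normality by itself. In fact no soft argument can: positivity of $L$ on hermitian squares of $\mathbb{C}[z,\overline z]$ --- which is precisely what (\ref{Eqn:PDC}) encodes --- is known not to imply, in general, the existence of a representing measure on $\mathbb{C}\cong\mathbb{R}^{2}$ (Berg--Christensen--Jensen, Schm\"udgen), because nonnegative polynomials in two variables need not be sums of squares; this is exactly the contrast with the one-variable case of Theorem \ref{thm:real}. So the passage from PD$\mathbb{C}$ to a normal $\widetilde Z$ is not a step to be ``granted'': it is the entire analytic content, it requires input beyond the positivity you have used, and it is the part supplied (under the appropriate additional structure) by the references the theorem cites. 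As written, your proposal reproduces the formal dilation framework but leaves the theorem unproved at its crux.
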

 
\begin{remark}\label{Rem:RversusC}
  The condition in Theorem \ref{thm:real} that the matrix entries in $M$ are real numbers cannot be dropped.  To
see this, let $d=1$ and take $\xi$ be a fixed nonreal number.  Let
$M = ({\overline{\xi}}^j\xi^{k})_{j,k \in \mathbb{N}_0}$.  We see
that
\begin{eqnarray*} \langle c | Mc \rangle_{\ell^2}
&=& \sum_j \sum_k \overline{c}_j M_{j,k} c_k \\ &=& \left( \sum_j \overline{c}_j \overline{\xi}^j \right) \left(\sum_k c_k \xi^k \right) \\
&=& \left| \sum_j c_j \xi^j\right|^2 \\
&\geq & 0 \end{eqnarray*} for all $c  \in \mathcal{D}$. The
infinite matrix $M$ is therefore positive semidefinite.  Without
the condition that the components of $M$ be real, then, we would
conclude that there is a measure on $\mathbb{R}$ with $M$ as its
moment matrix.   We can also verify, however, that $M$ has
Property PD$\mathbb{C}$ from Theorem \ref{thm:complex}.   Let $c =
\{c_{i,j}\}$ be a doubly indexed sequence in $\mathcal{D}$ (i.e.
$c$ has only finitely many nonzero entries).  Then

\begin{eqnarray*} \sum_{i,j,k,\ell \in \mathbb{N}_0} \overline{c}_{i,j}M_{i+\ell,j+k} c_{k,\ell} &=&
\sum_{i,j,k,\ell \in \mathbb{N}_0}
\overline{c}_{i,j}\overline{\xi}^{i+\ell}\xi^{j+k} c_{k,\ell} \\
&=& \left(\overline{\sum_{i,j} c_{i,j}\overline{\xi}^j \xi^i
}\right) \left(\sum_{k,\ell} c_{k,\ell}\overline{\xi}^{\ell} \xi^k
\right) \\ &\geq& 0. \end{eqnarray*}

This leads to the conclusion that there is a measure on
$\mathbb{C}$ having moment matrix $M$.  In fact, given $\xi \in
\mathbb{C}$, we note that the Dirac measure $\delta_{\xi}$ at
$\xi$ has the matrix $M$ as its moment matrix, since the entries
of $M$ are the evaluation of the monomials $\overline{z}^jz^k$ at
$\xi$:
\begin{equation*} M_{j,k} = \int_{\mathbb{C}} \overline{z}^jz^k \mathrm{d}\delta_{\xi}(z) = \overline{\xi}^j
\xi^k.
\end{equation*}

It is shown in \cite{Fug83} that measures with compact support are
uniquely determined by their moment matrices.  The Dirac measure,
therefore, is the unique Borel measure on $\mathbb{C}$ having
moment matrix $M$.  If we have $\xi \in \mathbb{C}\setminus \mathbb(R)$, then even though $M$ satisfies the positive semidefinite condition from Definition \ref{Defn:PD}, there cannot
be a Borel measure $\mu$ on $\mathbb{R}$ such that $M =
M^{(\mu)}$.

In some sense, what we have shown is that there is a measure, but
its support is not restricted to $\mathbb{R}$.  One of the
classical moment questions is whether one can determine the
support of a measure $\mu$ from the geometric properties of its
moment matrix $M^{(\mu)}$. 
\end{remark}

  The simplest moment situation arises when the measure $\mu = \delta_{\xi}$ is a Dirac mass.  We will revisit this example frequently in the remainder of the Memoir.    Observe that the infinite moment matrix for $\delta_{\xi}$ has rank one.  We will later show that if $\mu$ is a finite convex combination of Dirac masses, then the associated moment matrix will be of finite rank.

\section{A Parthasarathy-Kolmogorov approach to the moment problem}\label{Subsec:Parth}

The traditional existence proofs of Theorems \ref{thm:real} and
\ref{thm:complex} are not constructive.  In this section, we carefully
explain how the Parthasarathy-Schmidt theorem, Theorem
\ref{Thm:ParKol} below, can be applied to the moment problems from
Theorems \ref{thm:real} and \ref{thm:complex}.  Theorem
\ref{Thm:OmegaHan} and Corollary \ref{Cor:Complex} are restatements of
the moment problem in the real and complex cases, respectively.  We
use Theorem \ref{Thm:ParKol} to produce the measures which appear in
Theorem \ref{Thm:OmegaHan} and Corollary \ref{Cor:Complex}.  In
Corollary \ref{Cor:Complex}, the condition PD$\mathbb{C}$ (Definition
\ref{Defn:PDC}) appears in a more natural fashion.  In addition, the
proof of Theorem \ref{Thm:OmegaHan} clearly shows why the Hankel
assumption for the infinite positive definite matrix $M$ is essential
in the real case.

The use of the Kolmogorov ideas is motivated by our applications
to iterated function systems (IFSs) which begin in Chapter
\ref{Sec:Exist}.  We will be interested in moments of IFS
measures.  A key tool in the analysis of these IFS measures will
be infinite product spaces, precisely such as those that arise in
the Parthasarathy-Schmidt construction.

To simplify the main idea, we carry out the details only in the
real case, and only for $d = 1$.  The reader will be able to
generalize to the remaining real cases in $\mathbb{R}^d$, $d > 1$.
We conclude with the application to the complex moment problem.

\begin{definition}
Let $S$ be a set, and let $M:S\times S \rightarrow \mathbb{C}$. We
say that $M$ is a \textit{positive semidefinite function} if
\begin{equation*}
\sum_{s \in S} \sum_{t \in S} \overline{c_s} M(s, t) c_t \geq 0
\end{equation*}
for all sequences $\{c_s\}_{s\in S}\in\mathcal{D}$.  (Recall this
means the sequences have only finitely many nonzero coordinates.)
In the real moment problem, the matrix $M$ is real, and we
restrict to sequences $\{c_s\}_{s\in S}$ with entries in
$\mathbb{R}$.
\end{definition}
\begin{theorem}\label{Thm:ParKol}\rm(\cite[Theorem 1.2]{PaSc72})
\it Suppose $S$ is a set, and suppose $M: S \times S \rightarrow
\mathbb{C}$ is a positive semidefinite function.  Then there is a
Hilbert space $\mathcal{H}$ and a function $X:S \rightarrow
\mathcal{H}$ such that $\mathcal{H} =
\overline{\mathrm{sp}}\{X(s)\,:\, s \in S\}$ and
\begin{equation}\label{Eqn:XInnerProd}
M(s,t) = \langle X(s) | X(t)\rangle_{\mathcal{H}}.
\end{equation}  Moreover, the
pair $(\mathcal{H},X)$ is unique up to unitary equivalence.
\end{theorem}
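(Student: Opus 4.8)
The plan is to carry out the Kolmogorov decomposition of the positive semidefinite kernel $M$ — essentially the GNS construction, adapted from states on a $C^{*}$-algebra to a positive semidefinite function on an index set. First I would record that the positivity hypothesis already forces $M$ to be Hermitian, $M(t,s) = \overline{M(s,t)}$: test the defining inequality on sequences $c \in \mathcal{D}$ supported on a two-point set $\{s,t\}$, with coefficient pairs $(1,1)$ and $(1,i)$, and use that the quadratic expression $\sum_{s,t}\overline{c_s}M(s,t)c_t$ is required to be real.

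Next I would build the Hilbert space. Let $V$ be the free vector space $\mathcal{D}$ of finitely supported functions $S \to \mathbb{C}$, with canonical spanning vectors $\{\varepsilon_s\}_{s \in S}$ (so $\varepsilon_s(t) = \delta_{s,t}$), and put on $V$ the sesquilinear form $B(b,c) = \sum_{s,t \in S} \overline{b_s}\,M(s,t)\,c_t$, conjugate-linear in the first slot to match the paper's convention. By the previous step $B$ is Hermitian, and the hypothesis says $B(c,c)\ge 0$ for all $c \in V$. Using the Cauchy–Schwarz inequality for positive semidefinite Hermitian forms (whose one-line proof I would include), the null set $N := \{c \in V : B(c,c) = 0\}$ coincides with $\{c : B(c,\cdot)\equiv 0\}$ and is therefore a linear subspace; hence $B$ descends to a genuine inner product on $V/N$. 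Let $\mathcal{H}$ be the completion of $(V/N,\langle\cdot|\cdot\rangle)$ and set $X(s) := [\varepsilon_s] \in V/N \subseteq \mathcal{H}$. Then $\langle X(s)|X(t)\rangle_{\mathcal{H}} = B(\varepsilon_s,\varepsilon_t) = M(s,t)$, which is (\ref{Eqn:XInnerProd}), and since $V/N = \mathrm{sp}\{[\varepsilon_s] : s \in S\}$ is dense in $\mathcal{H}$ by construction, $\mathcal{H} = \overline{\mathrm{sp}}\{X(s) : s \in S\}$.

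For uniqueness up to unitary equivalence, suppose $(\mathcal{H}',X')$ is another pair with the stated properties. I would define $U_0$ on $\mathrm{sp}\{X(s)\}$ by $U_0\bigl(\sum_s c_s X(s)\bigr) = \sum_s c_s X'(s)$. This is well defined and isometric because for every finitely supported $c$,
\[
\Bigl\| \sum_{s} c_s X(s) \Bigr\|_{\mathcal{H}}^{2} = \sum_{s,t \in S} \overline{c_s}\, c_t\, M(s,t) = \Bigl\| \sum_{s} c_s X'(s) \Bigr\|_{\mathcal{H}'}^{2},
\]
so in particular $\sum_s c_s X(s) = 0$ if and only if $\sum_s c_s X'(s) = 0$. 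Since $\mathrm{sp}\{X(s)\}$ and $\mathrm{sp}\{X'(s)\}$ are dense in $\mathcal{H}$ and $\mathcal{H}'$ respectively, $U_0$ extends uniquely to a unitary $U:\mathcal{H}\to\mathcal{H}'$ with $U X(s) = X'(s)$ for all $s$. The real case is identical, working with the free real vector space and real scalars throughout.

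I do not expect a genuine obstacle here: the theorem is a repackaging of the GNS/reproducing-kernel construction, and the only points that merit care are the reduction to a Hermitian kernel in the first step and the Cauchy–Schwarz argument that promotes $N$ from a set to a subspace; the quotient, the completion, and the verification of (\ref{Eqn:XInnerProd}) are then routine bookkeeping.
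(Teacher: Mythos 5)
Your construction coincides with the paper's second proof of Theorem \ref{Thm:ParKol}: the same sesquilinear form on the finitely supported sequences, quotient by the null space, completion to $\mathcal{H}_Q$, and $X(s)=[\delta_s]$, while your uniqueness argument is exactly the unitary $W(X_1(s))=X_2(s)$ extension given in the paper's first proof. The only difference is cosmetic --- you spell out the Hermitian symmetry of $M$ and the Cauchy--Schwarz step showing the null set is a subspace, which the paper asserts without detail --- so the proposal is correct and essentially the paper's own route (the paper's alternative first proof, via Gaussian measures and Kolmogorov consistency, is a genuinely different construction that you did not need).
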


\begin{remark}
We outline here two choices for the pair $(\mathcal{H},X)$ in the
special case where $S = \mathbb{N}_0$ and $M$ is given by an
infinite positive semidefinite matrix.  The first pair is the
the one constructed in \cite{PaSc72} using Kolmogorov's extension
principle on an infinite product space.  This particular choice
will allow us in Theorem \ref{Thm:OmegaHan} to express concretely
the measure satisfying the moment problem in the special case
where $M$ is Hankel.

The second pair is formed by constructing a Hilbert space from a
quadratic form using the matrix $M$.  We will be using similar
completions later in the paper to work with moment matrices, so this
is a natural approach.  Note that Theorem \ref{Thm:ParKol} tells us that our two
Hilbert spaces are isometrically isomorphic.
\end{remark}

\noindent \textit{Proof $1$.}\;  Let $S$ be the natural numbers
$\mathbb{N}_0$ and let $\Omega$ be the set of all functions from
$\mathbb{N}_0$ into the one-point compactification
$\overline{\mathbb{R}}$ of $\mathbb{R}$: 
\[\Omega = \prod_{\mathbb{N}_0}\overline{\mathbb{R}}=(\overline{\mathbb{R}})^{\mathbb{N}_0}.\]
($\Omega$ must be compact
in order to use a Stone-Weierstrass argument in the construction.)

We now describe the Gaussian construction and its Kolmogorov
consistency.   For now, assume that $M$ is positive definite in
the strict sense.   Let $J=\{i_1, \ldots, i_p\}$ be a finite
subset of $\mathbb{N}_0$.  The positive definite function $M$
gives rise to a (strictly) positive definite matrix $M_J$ which is
formed by choosing elements from rows and columns in $M$ indexed
by $J$.
 Let $P_J$ be the Gaussian measure on
 $\Omega_J:=\overline{\mathbb{R}}^J$ with zero mean and covariance
 matrix $M_J$, such that
 $P_J$ has Radon-Nikodym derivative $f_J$ with respect to Lebesgue measure,
where $f_J$ is given by
\begin{equation}\label{Eqn:Gauss}
f_J(\omega_J) = \frac{1}{(\sqrt{2\pi})^p\sqrt{\det
M_J}}\exp\Bigl(-\frac{1}{2}\omega_J^t M_J^{-1}\omega_J \Bigr),
\end{equation}
where we denote $\omega_J = (\omega_{i_1}, \ldots, \omega_{i_p})$.

Note that Equation (\ref{Eqn:Gauss}) uses the strict positive
definite condition because the inverse $M_J^{-1}$ is required.  The density
in Equation (\ref{Eqn:Gauss}) is the standard multivariate normal
density for random variables $X(i_1), \ldots, X(i_p)$ with mean
$0$.

By Kolmogorov's extension theorem \cite[``Fundamental Theorem,''
p. 29]{Kol50}, in order for the family of measures $\{P_J\,:\, J
\, \textrm{finite}\}$ to define a measure $P$ on all of $\Omega$,
the measures $P_J$ must be consistent.  Suppose $J$ and $K$ are
both finite subsets of $\mathbb{N}_0$, where $J\subset K$. In this
context, \textit{consistency} means that if $g$ is a function on
$\Omega_J$ which is extended to a function $G$ on $\Omega_K$
depending only on the variables in $J$, then
\[
\int_{\Omega_J} g f_J d\omega_J = \int_{\Omega_K}Gf_Kd\omega_K.
\]
If we consider this problem in the coordinates which diagonalize
the matrix $M_K$, we see that the variables in $K\backslash J$
will integrate to $1$, and the measures $P_J$ are indeed
consistent. Note that consistency in one set of coordinates does
not imply consistency in another set of coordinates.

If $M$ is not strictly positive definite, then for any finite set
$J \subset \mathbb{N}_0$, we can change coordinates to diagonalize
 $M_J$ and write $\Omega_J = \mathrm{ker}(M_J) \oplus
\mathrm{ker}(M_J)^{\perp}$.  We take the measure $P_J$ to be the
Gaussian given by $f_J$ on $\mathrm{ker}(M_J)^{\perp}$ and
$\delta_0$, the distribution having mean $0$ and variance $0$, on
$\mathrm{ker}(M_J)$.  Then the covariance matrix of $P_J$ to
is exactly the diagonalized $M_J$, and Kolmogorov
consistency is maintained.

By the consistency condition, the measures $P_J$ defined on finite
subspaces of $\Omega$ can be extended to a measure $P$ on $\Omega$.
We now take the Hilbert space in Theorem \ref{Thm:ParKol} to be
$\mathcal{H} = L^2(\Omega, P)$ and the map $X: \mathbb{N}_0
\rightarrow \mathcal{H}$ to map $n \in \mathbb{N}_0$ to the
projection map onto the $n^{\mathrm{th}}$ coordinate of the element
$\omega\in\Omega$:
\[X(n)(\omega) = \omega(n).
\] Then we see that $\mathcal{H}$ is the closed linear span of the maps
$\{X(n)\,:\, n \in \mathbb{N}_0\}$ and \[ \langle X(n) | X(m)
\rangle_{\mathcal{H}} = \int_{\Omega} X(n)(\omega) X(m)(\omega)
\,\mathrm{d}P(\omega) = M(m,n). \]
We can now call $M$ the covariance function.

To show the uniqueness statement, suppose Hilbert spaces
$\mathcal{H}_1$ and $\mathcal{H}_2$ with corresponding functions
$X_1$ and $X_2$ satisfy $\langle X_i(s) |
X_i(t)\rangle_{\mathcal{H}_i} = M(s,t)$ and $H_i =
\overline{\mathrm{sp}}\{X_i(s)\,:\,s \in S\}$ for $i=1,2$. For
each $s \in S$, let $W$ be the linear operator such that $W(X_1(s)) =
X_2(s)$. $W$ defined this way is an operator, since if $\sum_i c_i
X_1(s_i) = 0$ then \begin{eqnarray*}  \left\|W\Big(\sum_i c_i
X_1(s_i)\Big)\right\|^2_{\mathcal{H}_2} &=& \left\langle W\Big(\sum_i c_i X_1(s_i)\Big) \Bigr|
W\Big(\sum_i c_i X_1(s_i)\Big) \right\rangle_{\mathcal{H}_2}\\& = &\sum_{i,j}
\overline{c_i}c_j \langle X_2(s_i)|X_2(s_j) \rangle_{\mathcal{H}_2}
\\&=&\sum_{i,j} \overline{c_i}c_j M(i,j) \\&=& \sum_{i,j} \overline{c_i}c_j \langle
X_1(s_i)|X_1(s_j) \rangle_{\mathcal{H}_1} \\ &=& \left\| \sum_i c_i X_1(s_i) \right\|^2_{\mathcal{H}_1} = 0. \end{eqnarray*} By
linearity and density, $W$ extends to a map from $\mathcal{H}_1$
onto $\mathcal{H}_2$, and by the association of the inner products
with the matrix $M$, we see that $W$ is unitary. \hfill $\Box$

\medskip

\noindent \textit{Proof $2$.}\;  As in the first proof, let $S =
\mathbb{N}_0$.  Let $\mathcal{D}$ be the set of all maps from $S$
to $\mathbb{C}$ (i.e. complex sequences) such that only finitely
many coordinates are nonzero.  Given the positive semidefinite
function $M$, define the sesquilinear form
\[ S(v_1,v_2) = \sum_{s \in \mathbb{N}_0} \sum_{t \in \mathbb{N}_0}
\overline{v_1(s)} M(s,t) v_2(t). \] This is clearly a sesquilinear
form and therefore yields a quadratic form $Q$ which is a seminorm
\[ Q(v) = \|v\|_M^2 = S(v,v). \] The set $ \mathrm{Null}_M = \{v \in
\mathcal{D}\,:\, Q(v) = 0 \}$ is a subspace of $\mathcal{D}$, so the
quotient space $\mathcal{D}/\mathrm{Null}_M$ is an inner product
space.  We complete this space to form a Hilbert space which we denote
$\mathcal{H}_Q$ to emphasize the dependence on the quadratic form $Q$
arising from the matrix $M$.  We note here that this construction of a
Hilbert space from a given positive semidefinite function $M$ is
unique up to unitary equivalence.  We will make use of this Hilbert
space completion of the quadratic form $Q$ again in Chapters
\ref{Ch:Kato} and \ref{Ch:Extensions}.

We next define the map $X: \mathbb{N}_0 \rightarrow
\mathcal{H}_Q$ by \[ [X(s)](t) = \delta_s(t) =
\left\{\begin{matrix} 1 & s=t\\ 0 & x \neq t \end{matrix} \right.
.\]  Clearly $\delta_s \in \mathcal{D} \subseteq \mathcal{H}_Q$
for all $s \in \mathbb{N}_0$, and in fact, $\mathcal{D}$ is the
linear span of $\{\delta_s\,:\, s \in \mathbb{N}_0\}$.  Therefore,
$\mathcal{H}_Q = \overline{\mathrm{sp}}\{X(s)\,:\, s \in
\mathbb{N}_0\}$.  We also see that
\begin{eqnarray*} \langle X(s)|X(t) \rangle &=& \sum_{u,v \in
\mathbb{N}_0} \overline{\delta_s(u)}M(u,v)\delta_t(v)\\ &=&
M(s,t). 
\end{eqnarray*}  \hfill $\Box$


\begin{remark} There are subtleties involved in these Hilbert space
  completions.  In the pre-Hilbert space (before the completion), we
  will typically be working with a space $\mathcal{D}$ of all finitely
  supported functions on $S$.  In other words, $\mathcal{D}$ is the
  space of all finite linear combinations from the orthonormal basis
  $\{\delta_s \}_{s \in S}$ for $\ell^2(S)$.  When the completion to a
  Hilbert space is done, $\mathcal{D}$ will be a dense linear subspace
  in the completion.  Here, ``dense'' refers to the norm being used.
  In Chapter \ref{Ch:Kato}, for example, the norm is a weighted
  $\ell^2$ norm.

  Caution: In these alternative Hilbert space completions, say
  $\mathcal{H}_Q$ above, explicit representations of the vectors in
  $\mathcal{H}_Q$ are often not transparent.  The useful geometric
  representations of limits of concretely given functions may be
  subtle and difficult.  Notions of boundary constructions reside in
  these completions.  In fact, this subtlety is quite typical when
  Hilbert space completions are used in mathematical physics problems.
  Such examples are seen in \cite{JoOl00, Jor00}, where symmetries
  result in separate positive definite quadratic forms, and hence two
  very different Hilbert space completions. \end{remark}


We are now ready to state our existence result for the real moment
problem, using the language of Kolmogorov and
Parthasarathy-Schmidt (the first proof above). Given a matrix $M$,
our goal is to find a measure $\mu$ such that $M$ is the moment
matrix for $\mu$. The previous theorem gives us a Hilbert space
and a map $(\mathcal{H},X)$, unique up to unitary equivalence,
which we can use under the right condition ($M$ a Hankel matrix)
to determine a solution to the moment problem. This solution will
not, in general, be a unique solution.

\begin{theorem}\label{Thm:OmegaHan} Let $M$ be an infinite matrix
satisfying the positive semidefinite condition (\ref{Eqn:PD}), and
let $M(0,0) = 1$. Let $\Omega_{Han}$ be the measurable subset of
$\Omega$ given by
\begin{equation}\label{Eqn:OmegaHan}
\Omega_{Han} = \{ \omega \in \Omega \;:\; \omega(k) =
[\omega(0)]^k \, \textrm{ for all } k \in \mathbb{N}_0\}.
\end{equation}
Then Kolmogorov's extension construction yields a probability
measure $P_{Han}$ on $\Omega_{Han}$ with $M$ as its moment matrix
if and only if $M$ satisfies the Hankel property from Definition
\ref{Defn:HankelN0d}.
\end{theorem}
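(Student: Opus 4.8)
The plan is to prove the equivalence by separating the two implications; the forward implication is short and is exactly where the geometry of $\Omega_{Han}$ forces the Hankel condition, while the converse carries the real content.

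\textbf{(Hankel is necessary.)} Suppose the construction produces a probability measure $P_{Han}$ carried by $\Omega_{Han}$ whose moment matrix is $M$, i.e.\ $M(i,j)=\int_{\Omega}\omega(i)\,\omega(j)\,dP_{Han}(\omega)$. By the defining relation of $\Omega_{Han}$, for $P_{Han}$-a.e.\ $\omega$ every coordinate $\omega(k)$ is a fixed power of a single distinguished coordinate, so there is a measurable $t=t(\omega)$ with $\omega(k)=t^{k}$ for all $k$. Hence $M(i,j)=\int t(\omega)^{\,i+j}\,dP_{Han}(\omega)$ depends only on $i+j$; that is, $M$ has the Hankel property. (Finiteness of all these integrals also forces $P_{Han}$ to assign no mass to the point at infinity of $\overline{\mathbb{R}}$, so it is genuinely carried by $\mathbb{R}$, and pushing it forward along $\omega\mapsto t(\omega)$ yields a Borel probability measure $\mu$ on $\mathbb{R}$ with $M=M^{(\mu)}$.)

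\textbf{(Hankel is sufficient.)} Assume $M$ is positive semidefinite, Hankel, and $M(0,0)=1$, and write $m_{k}$ for the common value of the entries $M(i,j)$ with $i+j=k$. The first step is to produce a Borel probability measure $\mu$ on $\mathbb{R}$ with $\int_{\mathbb{R}}t^{k}\,d\mu(t)=m_{k}$ for all $k$. This follows at once from Theorem \ref{thm:real} applied to the real positive semidefinite Hankel matrix $M$ (with $\mu(\mathbb{R})=m_{0}=1$). It is worth recording the operator-theoretic route as well, since it exhibits the Hankel hypothesis in action and is what makes contact with Theorem \ref{Thm:ParKol}: in the quadratic-form model $\mathcal{H}_{Q}$ of Proof~2 of Theorem \ref{Thm:ParKol}, with cyclic vector $\delta_{0}$ and $\langle\delta_{m}|\delta_{n}\rangle=m_{m+n}$, the shift $T\delta_{n}=\delta_{n+1}$ satisfies $\langle T\delta_{m}|\delta_{n}\rangle=m_{m+n+1}=\langle\delta_{m}|T\delta_{n}\rangle$ \emph{precisely because} $M$ is Hankel, so $T$ is densely defined and symmetric; it commutes with the conjugation fixing each $\delta_{n}$, hence has equal deficiency indices and a self-adjoint extension $\widetilde{T}$ (cf.\ Chapter \ref{Ch:Extensions}); and with $E$ its projection-valued measure and $\mu=\langle\delta_{0}|E(\cdot)\delta_{0}\rangle$ one gets $\int t^{k}\,d\mu(t)=\langle\delta_{0}|\widetilde{T}^{k}\delta_{0}\rangle=\langle\delta_{0}|\delta_{k}\rangle=m_{k}$, using $T^{k}\delta_{0}=\delta_{k}$ so that $\delta_{0}$ lies in the domain of every power of $\widetilde{T}$.

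\textbf{(Assembling $P_{Han}$.)} Finally, transport $\mu$ onto $\Omega_{Han}$: let $\Phi:\overline{\mathbb{R}}\to\Omega$ be the moment embedding $\Phi(t)=(t^{k})_{k\in\mathbb{N}_{0}}$, which is continuous with image contained in $\Omega_{Han}$, and set $P_{Han}=\Phi_{*}\mu$. For each finite $J\subset\mathbb{N}_{0}$ the marginal of $P_{Han}$ on $\Omega_{J}$ is the pushforward of $\mu$ under $t\mapsto(t^{j})_{j\in J}$, and this family of marginals is visibly consistent, so $P_{Han}$ is exactly the measure delivered by Kolmogorov's extension theorem from that family; it is a probability measure carried by the measurable set $\Omega_{Han}$, and its moment matrix is $\int_{\Omega}\omega(i)\,\omega(j)\,dP_{Han}(\omega)=\int_{\mathbb{R}}t^{\,i+j}\,d\mu(t)=m_{i+j}=M(i,j)$, as required. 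I expect the genuine obstacle to be the construction of $\mu$ in the sufficiency direction if one insists on keeping everything inside the Parthasarathy--Schmidt framework rather than quoting Theorem \ref{thm:real}: one must justify the existence of a self-adjoint extension of the generally unbounded symmetric operator $T$ and track the domains of its powers carefully — this is the precise point at which the Hankel hypothesis is used. Granting Theorem \ref{thm:real}, the remaining steps (no atom at infinity, Kolmogorov-consistency of the moment-curve marginals, measurability of $\Phi$) are routine.
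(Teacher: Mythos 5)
Your necessity argument is the paper's own, essentially word for word: on $\Omega_{Han}$ every coordinate is a power of a single base coordinate, so $M(j,k)=\int X(j)X(k)\,\mathrm{d}P_{Han}$ collapses to $\int x^{j+k}\,\mathrm{d}\mu$ with $\mu$ the distribution of that base coordinate, which is the Hankel property. The sufficiency direction is where you genuinely diverge. The paper stays inside the Parthasarathy--Schmidt/Kolmogorov machinery of Theorem \ref{Thm:ParKol}: the Hankel hypothesis is what makes the finite-dimensional family with covariances $M_J$ consistent on the measurable set $\Omega_{Han}$, the extension principle then delivers $P_{Han}$, and the solution $\mu = P_{Han}\circ X(0)^{-1}$ of the moment problem is obtained as a byproduct. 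You run the logic in the opposite direction: you first produce $\mu$ --- either by quoting Riesz--Haviland (Theorem \ref{thm:real}) or by the shift-operator/self-adjoint-extension argument --- and then set $P_{Han}=\Phi_{*}\mu$ for the moment embedding $\Phi(t)=(t^k)_k$, observing that this is trivially the Kolmogorov extension of its own (visibly consistent) marginals. That does prove the stated equivalence, but the first route costs you the point of the section: the text before Theorem \ref{Thm:ParKol} announces that the Kolmogorov construction is being used to \emph{produce} the measure appearing in Theorem \ref{Thm:OmegaHan}, i.e.\ to give a new, construction-based proof of the existence half of the moment problem, so invoking Theorem \ref{thm:real} is circular in spirit (though not formally, since the paper quotes it from the literature) and reduces the Kolmogorov step to decoration --- once $\mu$ exists, a pushforward needs no extension theorem at all. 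Your operator-theoretic alternative does not suffer from this: it is an independent existence proof, it isolates exactly where Hankel enters (symmetry of the shift on $\mathcal{H}_Q$), and it anticipates the paper's own later treatment (Lemma \ref{Lem:indices} and Proposition \ref{Prop:NonuniqueMeasure} in Chapter \ref{Ch:Extensions}); it is, however, a different mechanism from the Gaussian-consistency argument the theorem statement refers to. One shared blemish: read literally, the $k=0$ case of (\ref{Eqn:OmegaHan}) forces $\omega(0)=1$ and hence $\omega\equiv 1$, so your curve $\Phi(t)$ lies in $\Omega_{Han}$ only under the intended reading in which $\omega(0)\equiv 1$ and the base coordinate is $\omega(1)$; the paper's proof uses the same tacit convention, so this is a defect of the statement rather than of your argument.
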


\begin{proof} ($\Rightarrow$)  Suppose a measure $P_{Han}$ exists as stated.  Theorem \ref{Thm:ParKol} defines the map $X$ such that
$X(k)(\omega) = \omega(k) = [\omega(0)]^k$ for $\omega \in
\Omega_{Han}$.   For the covariance function we have
\begin{equation}
M(j,k) = \int_{\Omega_{Han}} X(j) X(k) \mathrm{d}P_{Han}.
\end{equation}
Using Equation (\ref{Eqn:OmegaHan}), we then find
\begin{eqnarray*}
M(j,k)
&=& \int_{\Omega_{Han}} [X(0)(\omega)]^j [X(0)(\omega)]^k\mathrm{d}P_{Han}(\omega)\\
&=& \int_{\mathbb{R}} x^jx^k \mathrm{d}(P_{Han} \circ X(0)^{-1})(x) \\
&=& \int_{\mathbb{R}} x^{j+k} \mathrm{d}\mu(x)
\end{eqnarray*}
where we define the measure by
\begin{equation}
\mu = P_{Han} \circ X(0)^{-1}.
\end{equation}
Since this formula shows that $M$ is a Hankel matrix, we have
proved one implication.

($\Leftarrow$) Conversely, if $M$ is a Hankel matrix,
then Kolmogorov consistency holds on the subset $\Omega_{Han}
\subset \Omega$ because $\Omega_{Han}$ is measurable.  By the
extension principle, we get a measure space
$(\Omega_{Han},P_{Han})$ such that the measure $\mu = P_{Han}
\circ X(0)^{-1}$ on $\mathbb{R}$ is a solution to the moment
problem for $M = M^{(\mu)}$.
\end{proof}

 There is an analogous result for the complex case, which we
describe briefly.  Let $M:\mathbb{N}_0 \times \mathbb{N}_0
\rightarrow \mathbb{C}$ be a function satisfying Property
PD$\mathbb{C}$, i.e.,
\begin{equation}
\sum_{i,j,k,\ell \in \mathbb{N}_0} \overline{c}_{i,j} M_{i+\ell,
j+k} c_{k,\ell} \geq 0
\end{equation}
for all doubly-indexed sequences $c = \{c_{ij}\}\in\mathcal{D}$.
Define an induced function $\widehat{M}:\mathbb{N}_0^2 \times
\mathbb{N}_0^2 \rightarrow \mathbb{C}$ by
\begin{equation}
\widehat{M}((i,j),(k,l)) = M(i+l,j+k).
\end{equation}
One can readily verify that $\widehat{M}$ satisfies the positive
definite condition given in Theorem \ref{Thm:ParKol}.

\begin{corollary}\label{Cor:Complex}
When Theorem \ref{Thm:ParKol} is applied to the function
$\widehat{M}$ above, we get a pair $(X,\mathcal{H})$ where the
Hilbert space may be taken to be $\mathcal{H} =
L^2(\mathbb{C},\mu)$, and the function can be given by $X(i,j) =
z^i \overline{z}^j$ for all $(i,j) \in \mathbb{N}_0^2$.
\end{corollary}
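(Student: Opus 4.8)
The plan is to mimic the proof of Theorem~\ref{Thm:OmegaHan}, carrying the Kolmogorov construction over to the complex setting and restricting it to a ``complex moment curve'' inside the product space. Since it has already been verified that $\widehat{M}$ satisfies the positive semidefinite hypothesis of Theorem~\ref{Thm:ParKol}, that theorem applied to $S=\mathbb{N}_0^2$ and $\widehat{M}$ produces a pair $(\mathcal{H},X)$, unique up to unitary equivalence, with $\widehat{M}((i,j),(k,l))=\langle X(i,j)\,|\,X(k,l)\rangle_{\mathcal{H}}$ and $\mathcal{H}=\overline{\mathrm{sp}}\{X(i,j):(i,j)\in\mathbb{N}_0^2\}$. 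I would realize this pair through the concrete model in the first proof of Theorem~\ref{Thm:ParKol}: take $\Omega=\prod_{(i,j)\in\mathbb{N}_0^2}\overline{\mathbb{C}}$ (one-point compactification in each factor), let $P$ be the Kolmogorov measure built from the covariance function $\widehat{M}$, set $\mathcal{H}=L^2(\Omega,P)$, and let $X(i,j)$ be the coordinate projection $\omega\mapsto\omega(i,j)$.

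Next I would single out the subset
\[
\Omega_{\mathbb{C}}=\bigl\{\,\omega\in\Omega\;:\;\omega(i,j)=[\omega(1,0)]^{i}\,\overline{[\omega(1,0)]}^{\,j}\ \text{ for all }(i,j)\in\mathbb{N}_0^2\,\bigr\},
\]
the exact complex analogue of $\Omega_{Han}$ from Theorem~\ref{Thm:OmegaHan}: on $\Omega_{\mathbb{C}}$ every coordinate is determined by the single number $z:=\omega(1,0)$, and in particular $\omega(0,1)=\overline{z}$ and $\omega(0,0)=1$. This set is closed, hence measurable, being the intersection of the level sets of the continuous maps $\omega\mapsto\omega(i,j)-[\omega(1,0)]^{i}\,\overline{[\omega(1,0)]}^{\,j}$. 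Exactly as in the $(\Leftarrow)$ direction of Theorem~\ref{Thm:OmegaHan}, I would argue that the translation-invariance of $\widehat{M}$ --- the fact that $\widehat{M}((i,j),(k,l))$ depends on its arguments only through $i+l$ and $j+k$ --- is precisely what keeps the restricted family of Gaussian marginals Kolmogorov-consistent once we pass to $\Omega_{\mathbb{C}}$; measurability of $\Omega_{\mathbb{C}}$ then yields a probability measure $P_{\mathbb{C}}$ carried by $\Omega_{\mathbb{C}}$. I then push it forward: $\mu:=P_{\mathbb{C}}\circ(X(1,0))^{-1}$, a Borel probability measure on $\overline{\mathbb{C}}$, and note that, as in the real case, the Gaussian marginals give the point at infinity no mass, so $\mu$ is carried by $\mathbb{C}$.

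It remains to identify the output. For any $(i,j),(k,l)\in\mathbb{N}_0^2$ we have $X(i,j)(\omega)=z^{i}\overline{z}^{j}$ with $z=X(1,0)(\omega)$ on $\Omega_{\mathbb{C}}$, so pushing forward under $X(1,0)$ gives
\begin{align*}
M(i+l,j+k)&=\widehat{M}((i,j),(k,l))=\langle X(i,j)\,|\,X(k,l)\rangle_{\mathcal{H}}\\
&=\int_{\mathbb{C}}\overline{z^{i}\overline{z}^{j}}\,z^{k}\overline{z}^{l}\,d\mu(z)=\int_{\mathbb{C}}\overline{z}^{i+l}z^{j+k}\,d\mu(z).
\end{align*}
Specializing to $k=l=0$ shows $M(a,b)=\int_{\mathbb{C}}\overline{z}^{a}z^{b}\,d\mu(z)$ for every $(a,b)\in\mathbb{N}_0^2$, i.e.\ $M=M^{(\mu)}$; this is the point at which Property PD$\mathbb{C}$ reappears in its natural role, as exactly the positivity needed for such a $\mu$ to exist. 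Finally, by the uniqueness clause of Theorem~\ref{Thm:ParKol}, the abstract pair $(\mathcal{H},X)$ may be replaced by the concrete model in which the Hilbert space is the closed span of $\{z^{i}\overline{z}^{j}:(i,j)\in\mathbb{N}_0^2\}$ in $L^2(\mathbb{C},\mu)$ and $X(i,j)=z^{i}\overline{z}^{j}$ --- the assertion of the corollary; one may take the Hilbert space to be all of $L^2(\mathbb{C},\mu)$ whenever those monomials are dense there, e.g.\ when $\mu$ has compact support, by Stone-Weierstrass.

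The step I expect to be the real obstacle is the consistency-on-restriction: one must check that collapsing $\Omega$ to the degenerate set $\Omega_{\mathbb{C}}$ preserves Kolmogorov consistency, which, as in Theorem~\ref{Thm:OmegaHan}, rests on $\Omega_{\mathbb{C}}$ being measurable together with the structural identity $\widehat{M}((i,j),(k,l))=$ (a function of $i+l$ and $j+k$). Everything else --- the index bookkeeping behind $\widehat{M}((i,j),(k,l))=M(i+l,j+k)$, the conjugations in $L^2(\mu)$, and discarding the point at infinity --- should be routine once the framework of Theorem~\ref{Thm:OmegaHan} is in place.
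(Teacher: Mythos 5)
Your proposal is correct in substance, but it takes a longer, more constructive route than the paper does. The paper's entire proof of Corollary \ref{Cor:Complex} is essentially your final verification step: with $\mathcal{H}=L^2(\mathbb{C},\mu)$ and $X(i,j)=z^i\overline{z}^j$ one computes $\langle X(i,j)\,|\,X(k,l)\rangle_{L^2(\mu)}=\int_{\mathbb{C}}\overline{z}^{\,i+l}z^{\,j+k}\,\mathrm{d}\mu(z)=M(i+l,j+k)=\widehat{M}((i,j),(k,l))$, so the concrete pair satisfies Equation (\ref{Eqn:XInnerProd}), and the uniqueness clause of Theorem \ref{Thm:ParKol} does the rest; the measure $\mu$ itself is taken as given there (its existence is the content of Theorem \ref{thm:complex} and the surrounding construction). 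What you do differently is to manufacture $\mu$ from scratch by re-running the Kolmogorov construction over $S=\mathbb{N}_0^2$, restricting to your set $\Omega_{\mathbb{C}}$, and pushing forward under $X(1,0)$ --- in effect proving a complex analogue of Theorem \ref{Thm:OmegaHan} and deducing the corollary from it. That buys a self-contained account of where $\mu$ comes from, which the paper leaves implicit, and your closing caveat --- that Theorem \ref{Thm:ParKol} strictly yields the closed span of $\{z^i\overline{z}^j\}$, which equals $L^2(\mathbb{C},\mu)$ only when those monomials are dense (e.g.\ compact support) --- is a point the paper glosses over. The cost is that the two steps you yourself flag are only asserted: the Gaussian/Kolmogorov model in Proof 1 of Theorem \ref{Thm:ParKol} is carried out in the paper only for real-valued covariances, so a complex covariance needs either a genuinely complex Gaussian construction or a splitting into real and imaginary parts, and the consistency-on-restriction to $\Omega_{\mathbb{C}}$ is exactly as delicate as the corresponding implication in Theorem \ref{Thm:OmegaHan}, which the paper itself treats tersely. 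Since the paper's own proof of the corollary never needs either of these ingredients, your route is heavier but not wrong, and its concluding index computation coincides with the published argument.
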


\begin{proof}  Given these choices of $\mathcal{H}$ and $X$, we
see that
\begin{equation*}
\langle X(i,j) | X(k,l) \rangle_{\mathcal{H}} = \int_{\mathbb{C}}
\overline{z^i \overline{z}^j} z^k \overline{z}^l\mathrm{d}\mu(z)
\end{equation*} by Theorem \ref{Thm:ParKol}.
Then,
\begin{equation*}
\begin{split}
& \int_{\mathbb{C}} \overline{z^{i+l}}z^{j+k} \mathrm{d}\mu(z)
=\langle z^{i+l} | z^{j+k} \rangle_{L^2(\mu)}\\
&= M(i+l,j+k) = \widehat{M}((i,j),(k,l)) .
\end{split}
\end{equation*}
This verifies our desired result and also shows that $\mu$ is a
solution to the complex moment problem for the matrix $M_{ij} =
M(i,j)$.
\end{proof}

\section{Examples}

\begin{example}\label{Ex:Laguerre-2} The measure $\mu = e^{-x}dx$.\end{example}
We showed in Example \ref{Ex:Laguerre} that the measure $e^{-x}dx$ on
the positive reals has moment matrix \[M^{(\mu)}_{i,j} = (i+j)!\] for
all $i,j \in \mathbb{N}_0$.  This measure does not have compact
support, but it is an example of a case where the solution to the
moment problem $M=M^{(\mu)}$ is unique.  We know this because the
Laguerre system of orthogonal polynomials is dense in $L^2(\mu,
\mathbb{R}^+)$.

Recall our earlier observation that this is also an example which
illustrates that infinite Hankel matrices cannot always be realized
directly by operators on $\ell^2$-sequence spaces.  However we will
show that an operator representation may be found after a certain
renormalization is introduced. This will be an example of a general
operator theoretic framework to be introduced in Chapter
\ref{Ch:Kato}.  \hfill $\Diamond$

We next describe a series of examples of measures having moments
involving the Catalan numbers.  We will revisit these examples in
Chapter \ref{Sec:Spectrum}, where we will be able to say more about
the operator properties of the moment matrices.

The $k^{\textrm{th}}$ Catalan number $C_k$ is defined 
\[C_k
:=\frac{1}{k+1}\binom{2k}{k} 
= \frac{(2k)!}{k!(k+1)!},
\]
where the first Catalan number, $C_0$, is $1$.  
We define $B_k$ to be
\[
B_k:=\binom{2k}{k}.
\]

The Catalan numbers satisfy the following relation:
\[C_{k+1} = \sum_{n=0}^k C_n C_{k-n}.\] There is an explicit formula
for the generating function associated with the Catalan numbers:
\begin{equation}\label{Eqn:CatGenFn}
G_{\textrm{Cat}}(x) 
= \sum_{k=0}^{\infty} C_k x^k
=\frac{1-\sqrt{1-4x}}{2x}.
\end{equation}
The radius of convergence for $G_{\textrm{Cat}}(x)$ is $\frac{1}{4}$.

The generating function $G_{\textrm{Bin}}(x)$ associated with the $B_k$'s has the same radius of convergence; in fact,
\begin{equation}
G_{\textrm{Cat}}(x) = \frac{1}{x}\int_0^x G_{\textrm{Bin}}(y)\mathrm{d}y,
\end{equation}
and
\begin{equation}
G_{\textrm{Bin}}(x) = \frac{2}{\sqrt{1-4x}}.
\end{equation}
The Hankel matrix $M^{\textrm{Cat}} = (C_{j+k})$ is positive definite because every principal submatrix has determinant $1$. 

\begin{example}\label{Ex:Wigner}Wigner's semicircle measure.\end{example}
The measure $\mathrm{d}\mu$ is given on $(-2, 2)$ by
\[ \mathrm{d}\mu(x) = \frac{\sqrt{4-x^2}}{2\pi}\mathrm{d}x\]
and is $0$ otherwise.
A simple calculation shows that
\[ m_{2k} = \int_{-2}^2 x^{2k}\mathrm{d}\mu(x) = C_k\]
and 
\[ m_{2k+1} = \int_{-2}^2 x^{2k+1}\mathrm{d}\mu(x) = 0.\]
\hfill$\Diamond$

\begin{example}\label{Ex:Secant}The secant measure.\end{example}
The measure $\mathrm{d}\mu$ is given on $(-2, 2)$ by
\[ \mathrm{d}\mu(x) = \frac{1}{\pi\sqrt{4-x^2}}\mathrm{d}x\]
and is $0$ otherwise.  In this case,
\[ m_{2k} = \int_{-2}^2 x^{2k}\mathrm{d}\mu(x) = B_k\]
and 
\[ m_{2k+1} = \int_{-2}^2 x^{2k+1}\mathrm{d}\mu(x) = 0.\]
\hfill$\Diamond$

\begin{example}\label{Ex:HalfSecant}The half-secant measure.\end{example}
The measure $\mathrm{d}\mu$ is given on $(0, 2)$ by
\[ \mathrm{d}\mu(x) = \frac{2}{\pi\sqrt{4-x^2}}\mathrm{d}x\]
and is $0$ otherwise.  In this example, both the even and odd moments are nonzero:
\[m_{2k} =\int_0^2 x^{2k} \mathrm{d}\mu(x) = B_k,\]
and
\[m_{2k+1} = \int_0^2 x^{2k+1} \mathrm{d}\mu(x) = \frac{4^{k+2}}{\pi(k+1)} \binom{2k+1}{k}^{-1} .\]
\hfill$\Diamond$

\section{Historical notes}\label{Subsec:history}

Our positive semidefinite functions $M$ in the form (\ref{Eqn:PD})
have a history in a variety of guises under the names \textit{positive
  semidefinite kernels}, \textit{positive hermitian matrices},
\textit{reproducing kernels}, or \textit{positive definite functions},
among others. In these settings, a positive semidefinite function is
just a function in two variables, or, equivalently, a function $M$
defined on $S \times S$ where $S$ is a set.

There is a broad literature covering many aspects of positive
semidefinite kernels, especially in the case when $S$ is a domain in a
continuous manifold; $S$ may even be a function space.  Function
spaces lead to the theory of reproducing kernels, which are also
called Bergman-Shiffer-Aronszajn kernels. See
\cite{Aro50},\cite{BeSc52} for a classical exposition and
\cite{BeRe84}, \cite{Dut04}, \cite{DuJo06} for a modern view.

If $S$ is a group or a semigroup, and if $M$ is a positive
semidefinite kernel, we will adopt additive notation ``$+$'' for the
operation in $S$ and restrict attention to the abelian case.  Of
special interest are the cases when the function $M$ has one of two
forms:
\begin{enumerate}[(i)]
\item $M(s,t) = F_1(s - t)$ for some function $F_1$ on $S$
\item $M(s,t) = F_2(s + t)$, $F_2$ a function on $S$.
\end{enumerate}
In the first case, we say that the function $F_1$ is \textit{positive
  semidefinite} on $S$. Equivalently, we say that $M$ is a
\textit{Toeplitz matrix}. There is a substantial theory of positive
semidefinite functions on groups (and semigroups), see e.g.,
\cite{HeRo79}, \cite{BeRe84}; positive semidefinite functions play a
central role in harmonic analysis.  

In the second case (ii), $M$ is often called a \textit{Hankel matrix}.  In the following chapters, we develop operator theoretic duality  techniques to study positive semidefinite kernels in the form of infinite Hankel matrices.
\begin{enumerate}
\item We define a (generaly unbounded) operator $F$ which maps
  sequence spaces $\ell^2$ or their weighted variants $\ell^2(w)$ into
  function spaces $L^2(\mu)$.  This crucial operator $F : \ell^2(w)
  \rightarrow L^2(\mu)$ is defined on the dense subspace $\mathcal{D}$
  of finite sequences in $\ell^2$ and sends a sequence into the
  generating function (or polynomial).
\item Given the Hilbert spaces $\ell^2(w)$ and $L^2(\mu)$, we
  introduce a duality with the use of adjoint operators.  If $F$ is a
  given operator from $\ell^2(w)$ to $L^2(\mu)$, its adjoint operator
  $F^*_w$ maps from $L^2(\mu)$ to $\ell^2(w)$. 
\end{enumerate}
As a byproduct of our operator analysis for the spaces $\ell^2(w)$ and
$L^2(\mu)$, we get operations on the two sides which are unitarily
equivalent.  The unitary equivalence is critical:  it allows us to
derive spectral properties of measures $\mu$ from matrix operations
with infinite matrices, and vice versa.

As a final historical note, we mention that because the operator $F$
associates elements of sequence spaces with generating functions (or
polynomials), we initially obtained some of our results with formal
power series in the spirit of Rota's umbral calculus \cite{RoRo78},
\cite{Rom84}.  We then found conditions (such as renormalizing
sequence spaces) which guaranteed not just formal convergence but
actual convergence.  In Rota's
umbral calculus, the ``umbra'' is a space of formal power series.  In
this Memoir, the ``umbra'' consists of the Hilbert spaces $\ell^2(w)$ and
$L^2(\mu)$ and the closed linear operators $F$ and $F^*_w$ between
them.  For more about the book \cite{Rom84} and the umbral calculus's
relation to other parts of mathematics, see the 2007 review by
Jorgensen \cite{JorAmazon}.

\chapter{A transformation of moment matrices: the affine case}\label{Sec:Exist}

In this chapter we use the maps from an iterated function system to describe a corresponding transformation on moment matrices.   In the case of an affine IFS, we show that the matrix transformation is a sum of triple products  of
infinite matrices.

Given a measurable endomorphism $\tau$ mapping a measure space
$(X,\mu)$ to itself, we make use of the measure transformation
\begin{equation}\label{Eqn:ExistMeasTrans}
\mu\mapsto\mu\circ\tau^{-1}.
\end{equation}
If
$(X,\mu)$ and $(X,\mu \circ \tau^{-1})$ have finite moments of all
orders, we can also study the transformation of moment matrices
 \begin{equation}\label{Eqn:ExistMxTrans}
M^{(\mu)}\mapsto M^{(\mu\circ\tau^{-1})}.
\end{equation}

We are interested in the circumstances under which this
transformation of moment matrices  (\ref{Eqn:ExistMxTrans}) can be
expressed explicitly as a well-defined intertwining of the form
\begin{equation}\label{Eqn:CovarianceFirst}
 M^{(\mu\circ\tau^{-1})} = A^*M^{(\mu)}A
\end{equation}
for some suitable infinite matrix $A$.  (We are using the notation
$A^*$ here to represent the conjugate transpose of the infinite
matrix $A$.)  We then ask whether there are conditions under which
$A$ might be an operator on the Hilbert space $\ell^2$.

We begin by stating the result from \cite{EST06} that the
matrix $A$ can be described explicitly in the case where $\tau$ is
an affine map of a single variable.  We give a proof of this
result in Section \ref{Sec:SingleVar} and then examine in
Section \ref{Subsec:FixedPointsHut} the connections of this
result to affine iterated function systems (IFSs) in
one dimension.  We also prove a
stronger uniqueness result for the fixed point of iterations of
the moment matrix transformation arising out of a Bernoulli IFS.
We examine in Section \ref{Subsec:Hankel} the types of
transformations that preserve infinite Hankel matrices, as a
precursor to Chapter \ref{Sec:ComputeA} in which we explore the
existence of matrices $A$ for more general measurable maps $\tau$.

\section{Affine maps}\label{Sec:SingleVar}

We consider $\mu$ a probability measure on $\mathbb{R}$ or
$\mathbb{C}$.  We will consider the case where $\tau$ is an affine
map, noting that a finite set $\{\tau_b\}$ of affine maps on
$\mathbb{R}$ or $\mathbb{C}$ can comprise an affine iterated
function system (IFS). Such IFSs are used in the study of infinite
convolution problems; see e.g., \cite{Erd39,JKS07a,JKS07b}. We
begin with some definitions.

Recall that the moment matrix $M^{(\mu)}$  of $\mu$ is the
infinite matrix defined by  $M^{(\mu)}_{i,j} = \int_{\mathbb{R}}
x^{i+j} d\mu(x)$ when $x$ is a real variable, and by
$M^{(\mu)}_{i,j} = \int_{\mathbb{C}}  \overline{z^i}z^j d\mu(z)$
when $z$ is a complex variable.  We naturally must assume that
moments of all orders exist. Certainly moments of all orders exist when $\mu$ has
compact support, but we will not always be restricted to compactly
supported measures.  Recall that the row and column
indexing of $M^{(\mu)}$ start at row $0$ and column $0$.

In the real case, we recall that every moment matrix $M^{(\mu)}$
is a Hankel matrix.   Our moment matrix transformation
corresponding to $\tau$ in this real setting must preserve the
Hankel property, then, since $M^{(\mu \circ \tau^{-1})}$ is also a
moment matrix.

The following result is stated in \cite{EST06}. Because one of our goals is to generalize the
lemma, we include a proof here for completeness.  This result
gives a matrix $A$ corresponding to an affine map $\tau$ on
$\mathbb{C}$, so that the moment transformation is an intertwining
by matrix multiplication. Note that the same matrix $A$ works in
the real case if we take $\tau$ to be an affine map of a single
real variable.

\begin{lemma}\label{Lemma:Amatrix}
\rm\cite[Proposition 1.1, p. 80]{EST06}  \it Suppose
$\tau:\mathbb{C}\rightarrow\mathbb{C}$ is an affine map (not
necessarily contractive) given by
\begin{equation*}
\tau(z) = cz + b,
\end{equation*}
where $c, b\in\mathbb{C}$.  Let 
$M^{(\mu\circ\tau^{-1})}$  be the moment matrix associated with the measure $\mu\circ\tau^{-1}$.  We have 
\begin{equation}
M^{(\mu\circ\tau^{-1})} = A^* M^{(\mu)} A,
\end{equation}
where $A = (a_{i,j})$ is the upper triangular matrix with
\begin{equation}\label{Eqn:UpTriA}
a_{i,j} =
\begin{cases}
\binom{j}{i}c^i b^{j-i} &  i \leq j\\
0 & \text{otherwise}
\end{cases}.
\end{equation}
\end{lemma}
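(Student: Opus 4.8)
The plan is to verify the identity $M^{(\mu\circ\tau^{-1})} = A^*M^{(\mu)}A$ by computing the $(i,j)$ entry of the right-hand side directly and matching it to the defining integral of the left-hand side. First I would note that the matrix $A$ is defined so that the linear map it represents sends the monomial $z^j$ to $\tau(z)^j = (cz+b)^j$: indeed, by the binomial theorem $(cz+b)^j = \sum_{i=0}^j \binom{j}{i}(cz)^i b^{j-i} = \sum_{i \le j} a_{i,j} z^i$, so the columns of $A$ record the coefficients of $\tau(z)^j$ in the monomial basis. This is the conceptual heart of the statement, and I would isolate it as the key observation before doing any entrywise bookkeeping.

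Next I would compute $(A^*M^{(\mu)}A)_{i,j}$ using the matrix product rules from Section \ref{Sec:InfMatrices}. Since $A$ is upper triangular, all the relevant sums are finite, so convergence is not an issue and Lemma \ref{Lem:Product} (or just the finite-range remark following it) applies. Writing out
\[
(A^*M^{(\mu)}A)_{i,j} = \sum_{k,\ell} \overline{a_{k,i}}\, M^{(\mu)}_{k,\ell}\, a_{\ell,j} = \sum_{k,\ell} \overline{a_{k,i}}\Big(\int \overline{z}^k z^\ell \,\mathrm{d}\mu(z)\Big) a_{\ell,j},
\]
I would pull the (finite) sums inside the integral to get $\int \big(\sum_k \overline{a_{k,i}}\,\overline{z}^k\big)\big(\sum_\ell a_{\ell,j} z^\ell\big)\,\mathrm{d}\mu(z)$. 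By the key observation, $\sum_\ell a_{\ell,j} z^\ell = \tau(z)^j$ and $\sum_k \overline{a_{k,i}}\,\overline{z}^k = \overline{\tau(z)^i} = \overline{\tau(z)}^i$, so the expression collapses to $\int \overline{\tau(z)}^i\, \tau(z)^j\, \mathrm{d}\mu(z)$. A change of variables $w = \tau(z)$ in the pushforward measure then gives $\int \overline{w}^i w^j \,\mathrm{d}(\mu\circ\tau^{-1})(w) = M^{(\mu\circ\tau^{-1})}_{i,j}$, which is exactly what we want. The real case is identical with $\overline{z}$ replaced by $z$ and $\tau$ an affine map of a real variable.

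I expect the main obstacle to be essentially bookkeeping rather than anything deep: one must be careful with the conjugate-linear-in-the-first-slot convention, so that $A^*$ has entries $\overline{a_{k,i}}$ in the $(i,k)$ position, and one must confirm that $a_{i,j}$ as defined really produces $(cz+b)^j$ and not, say, $(b z + c)^j$ or a transpose of what is claimed — i.e. checking that the index $i$ (the output/row index of $A$) is the one that ranges over the exponents of $z$ in $\tau(z)^j$. A secondary point worth a sentence is justifying the interchange of the finite sums with the integral: since $A$ is upper triangular, each sum has only finitely many terms, so linearity of the integral suffices and no dominated-convergence argument is needed. I would also remark that the hypothesis that $\mu$ has finite moments of all orders guarantees all the integrals appearing are finite, so every step is legitimate; no contractivity of $\tau$ is used anywhere.
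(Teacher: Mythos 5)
Your proposal is correct and follows essentially the same route as the paper's proof: both arguments reduce $(A^*M^{(\mu)}A)_{i,j}$ to a finite double sum (using the upper triangularity of $A$), interchange the sum with the integral by linearity, and recombine via the binomial theorem to obtain $\int \overline{(cz+b)^i}(cz+b)^j\,\mathrm{d}\mu(z) = M^{(\mu\circ\tau^{-1})}_{i,j}$. Your isolation of the observation that the columns of $A$ are the monomial coefficients of $\tau(z)^j$ is just a conceptual repackaging of the same binomial expansion the paper uses, not a different method.
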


\begin{proof} The
$(i,j)^{\mathrm{th}}$ entry of the moment matrix $M^{(\mu\circ\tau^{-1})}$
is given by
\begin{equation}
\begin{split}
M^{(\mu\circ\tau^{-1})}_{i,j}
& = \int_{\mathbb{C}} \overline{z^i} z^j \:d(\mu\circ\tau^{-1})(z)\\
& = \int_{\mathbb{C}} \overline{(c z +b)^i}(c z + b)^j \:d\mu(z).
\end{split}
\end{equation}
Even though $A$ and $M^{(\mu)}$ are infinite matrices, the sum
defining the $(i,j)^{\mathrm{th}}$ entry of the triple product $A^*
M^{(\mu)} A$ is finite because $A$ is upper triangular.  With this
observation, the matrix product is well defined and we can compute
the $(i,j)^{\mathrm{th}}$ entry of the product $A^*M^{(\mu)}A$ by
\begin{equation*}
\begin{split}
(A^* M^{(\mu)} A)_{i,j}
& = \sum_{k = 0}^i \sum_{\ell = 0}^j A^*_{i,k} M^{(\mu)}_{k,\ell} A_{\ell,j}\\
& = \sum_{k = 0}^i \sum_{\ell = 0}^j \binom{i}{k}\overline{c^kb^{i-k}} \Biggl(\int_{\mathbb{C}} \overline{z^k} z^{\ell}\:d\mu(z)\Biggr)
\binom{j}{\ell} c^{\ell}b^{j-\ell}\\
& = \sum_{k=0}^i \binom{i}{k}
\overline{c^k}\overline{b^{i-k}}\sum_{\ell = 0}^j
\binom{j}{\ell}c^{\ell} b^{j-\ell} \Biggl(\int_{\mathbb{C}} \overline{z^k}
z^{\ell}\:d\mu(z)\Biggr).
\end{split}
\end{equation*}
Taking advantage of the linearity of the integral, we compute the
sums in $k$ and $\ell$ to obtain
\begin{equation*}
\begin{split} (A^* M^{(\mu)} A)_{i,j} & = \int_{\mathbb{C}} \sum_{k=0}^i \binom{i}{k}
\overline{c^k}\overline{z^k}\overline{b^{i-k}} \sum_{\ell = 0}^j
\binom{j}{\ell}c^{\ell}z^{\ell} b^{j-\ell} \: d\mu(z) \\ & = \int_{\mathbb{C}}
\overline{(cz+b)^i} (cz+b)^j \:d\mu(z).
\end{split}
\end{equation*}
This gives us our desired result.\end{proof}

\section{IFSs and fixed points of the Hutchinson operator}\label{Subsec:FixedPointsHut}
An iterated function system has an associated compact set $X$ and a measure $\mu$ supported on $X$, where both $X$ and $\mu$ arise as unique solution to a fixed-point problem described in the following paragraph.   A central theme in this Memoir is that every IFS in the classical sense corresponds to a non-abelian system of operators and a version of Equations (\ref{Eqn:SetInvariance}) and (\ref{Eqn:TransformedMu}) in which the moment matrix $M = M^{(\mu)}$ will be a solution to an associated fixed-point problem for moment matrices.  (See Proposition \ref{Prop:MatrixFixedPt} and Sections \ref{Sec:GeneralA} and \ref{Subsec:KatoA}.)  One of the consequences is that we will be able to use the formula for $M$ in a recursive computation of the moments, which is not easy to do directly for even the simplest Cantor measures.

Let $I$ be a finite index set.  An \textit{iterated function
system} (IFS) is a finite collection $\{\tau_i\}_{i \in I}$ of
contractive transformations on $\mathbb{R^d}$ and a set of
probabilities $\{p_i\}_{i \in I}$. Using Banach's fixed point
theorem and the Hausdorff metric topology, it is proved in Hutchinson's paper
\cite{Hut81} that there is a unique compact subset $X$ of
$\mathbb{R}^d$, called the \textit{attractor} of the IFS, which
satisfies the equation
\begin{equation}\label{Eqn:SetInvariance} X = \bigcup_{i \in I}\tau_i(X).
\end{equation}
There is also a unique measure $\mu$ supported on $X$ which arises from
Banach's theorem.

\begin{theorem}[Hutchinson, \cite{Hut81}]
Given a contractive IFS $\{\tau_i\}_{i \in I}$ in $\mathbb{R}^d$
and probabilities $\{p_i\}$,  there is a unique Borel probability
measure $\mu$ on $\mathbb{R}^d$ satisfying
\begin{equation}\label{Eqn:TransformedMu} \mu = \sum_{i \in I} p_i
(\mu \circ\tau_i^{-1}). \end{equation} The measure $\mu$ is called
an \textit{equilibrium measure}. Moreover if $p_i
> 0$ for all $i \in I$, the support of $\mu$ is the unique compact
attractor $X$ for $\{\tau_i\}_{i \in I}$.
\end{theorem}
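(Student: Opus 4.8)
The plan is to realize $\mu$ as the fixed point of a contraction on a complete metric space of probability measures, mirroring the construction of the attractor $X$. First I would fix the compact attractor $X$ from \eqref{Eqn:SetInvariance} (already established in the statement via Banach's theorem applied to the Hutchinson set map on the space of compact subsets of $\mathbb{R}^d$ under the Hausdorff metric), and restrict attention to $\mathcal{P}(X)$, the space of Borel probability measures supported on $X$. On $\mathcal{P}(X)$ I would put the Monge--Kantorovich (Wasserstein, or equivalently Hutchinson) metric
\[
d_H(\nu_1,\nu_2) = \sup\Bigl\{ \Bigl| \int f\,\mathrm{d}\nu_1 - \int f\,\mathrm{d}\nu_2 \Bigr| \;:\; \mathrm{Lip}(f)\le 1 \Bigr\}.
\]
Since $X$ is compact, $(\mathcal{P}(X), d_H)$ is a complete metric space (the topology it induces is the weak-$*$ topology, and weak-$*$ compactness of $\mathcal{P}(X)$ gives completeness).

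Next I would define the Hutchinson operator on measures,
\[
T(\nu) = \sum_{i\in I} p_i\,(\nu\circ\tau_i^{-1}),
\]
and check that $T$ maps $\mathcal{P}(X)$ into itself: it is manifestly a positive measure of total mass $\sum_i p_i = 1$, and its support lies in $\bigcup_i \tau_i(X) = X$ by \eqref{Eqn:SetInvariance}. The key estimate is that $T$ is a contraction: letting $c = \max_{i} \mathrm{Lip}(\tau_i) < 1$ (each $\tau_i$ is contractive), a change of variables gives $\int f\,\mathrm{d}(\nu\circ\tau_i^{-1}) = \int (f\circ\tau_i)\,\mathrm{d}\nu$, and since $\mathrm{Lip}(f\circ\tau_i)\le c\,\mathrm{Lip}(f)$, one obtains
\[
d_H\bigl(T(\nu_1),T(\nu_2)\bigr) \le \sum_{i\in I} p_i \sup_{\mathrm{Lip}(f)\le 1}\Bigl|\int (f\circ\tau_i)\,\mathrm{d}\nu_1 - \int (f\circ\tau_i)\,\mathrm{d}\nu_2\Bigr| \le c\, d_H(\nu_1,\nu_2).
\]
Banach's fixed point theorem then yields a unique $\mu\in\mathcal{P}(X)$ with $T(\mu)=\mu$, which is exactly \eqref{Eqn:TransformedMu}.

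Finally I would upgrade uniqueness from $\mathcal{P}(X)$ to all Borel probability measures on $\mathbb{R}^d$ and pin down the support. For uniqueness: if $\nu$ is any Borel probability measure on $\mathbb{R}^d$ satisfying $\nu = T(\nu)$, iterating gives $\nu = T^n(\nu)$, and since the $\tau_i$ are contractions, the supports $\tau_{i_1}\circ\cdots\circ\tau_{i_n}(\mathrm{supp}\,\nu)$ shrink and accumulate on $X$, forcing $\mathrm{supp}\,\nu\subseteq X$ and hence $\nu\in\mathcal{P}(X)$, where uniqueness already holds. For the support statement when all $p_i>0$: each $\tau_i(\mathrm{supp}\,\mu)\subseteq \mathrm{supp}\,\mu$ since $p_i(\mu\circ\tau_i^{-1})\le\mu$, so $\bigcup_i\tau_i(\mathrm{supp}\,\mu)\subseteq\mathrm{supp}\,\mu$; by minimality of $X$ among nonempty compact sets satisfying \eqref{Eqn:SetInvariance} (or by the contraction argument again, noting $\mathrm{supp}\,\mu$ is compact and forward-invariant under the set map), $X\subseteq\mathrm{supp}\,\mu$, while $\mathrm{supp}\,\mu\subseteq X$ from the previous step, giving equality.

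The main obstacle I anticipate is the verification that $(\mathcal{P}(X), d_H)$ is complete and that $d_H$ metrizes weak-$*$ convergence on $\mathcal{P}(X)$ — this is the one genuinely infinite-dimensional input, and it relies essentially on compactness of $X$ (via Prokhorov/Banach--Alaoglu). Everything else is a routine contraction estimate plus bookkeeping about supports. Since this is Hutchinson's classical theorem and the paper cites \cite{Hut81}, I expect the paper's own proof to simply invoke it, but the sketch above is how I would fill in the details.
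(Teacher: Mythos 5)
Since this theorem is quoted from Hutchinson \cite{Hut81}, the paper gives no proof of its own, so there is nothing internal to compare against; your sketch is essentially Hutchinson's original argument, and the metric $d_H$ you use is exactly the $L$-metric $\rho$ that the paper recalls later in the proof of Proposition \ref{Prop:MatrixFixedPt}. The contraction estimate on $(\mathcal{P}(X),d_H)$, the fixed-point step, and the identification of $\mathrm{supp}\,\mu$ with $X$ when all $p_i>0$ are correct as written; completeness of $(\mathcal{P}(X),d_H)$, which you flag as the main obstacle, is indeed standard on compact $X$ (the metric metrizes the weak-$*$ topology, and $\mathcal{P}(X)$ is weak-$*$ compact).

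The one step that needs repair is the reduction of uniqueness from $\mathcal{P}(X)$ to \emph{all} Borel probability measures on $\mathbb{R}^d$, which is what the statement asserts. You argue that the sets $\tau_{i_1}\circ\cdots\circ\tau_{i_n}(\mathrm{supp}\,\nu)$ ``shrink and accumulate on $X$,'' but if $\mathrm{supp}\,\nu$ is unbounded these images have infinite diameter and do not shrink: the pointwise bound $\mathrm{dist}(\tau_w(z),X)\le c^{n}\,\mathrm{dist}(z,X)$ for a word $w$ of length $n$ is not uniform over $z\in\mathrm{supp}\,\nu$, so this argument literally fails there. The fix is to push the measure, not the support: from $\nu=T^n(\nu)$ and the inclusion $\tau_w^{-1}\bigl(\{\mathrm{dist}(\cdot,X)>t\}\bigr)\subseteq\{\mathrm{dist}(\cdot,X)>t/c^{n}\}$ one gets $\nu\bigl(\{\mathrm{dist}(\cdot,X)>t\}\bigr)\le\nu\bigl(\{\mathrm{dist}(\cdot,X)>t c^{-n}\}\bigr)$, and the right-hand side tends to $0$ as $n\to\infty$ because $\nu$ is a finite measure and the sets $\{\mathrm{dist}(\cdot,X)>s\}$ decrease to $\emptyset$ as $s\to\infty$. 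Hence $\nu(\{\mathrm{dist}(\cdot,X)>t\})=0$ for every $t>0$, so $\mathrm{supp}\,\nu\subseteq X$ and your $\mathcal{P}(X)$-uniqueness applies. (Alternatively, the backward random iteration $\tau_{I_1}\circ\cdots\circ\tau_{I_n}(Z)$ with $Z\sim\nu$ converges almost surely to a limit whose law does not depend on $\nu$, which yields uniqueness among all Borel probability measures directly.) With that adjustment your proof is complete and is the classical one.
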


First, let us examine an IFS on the real line $\mathbb{R}$
consisting of contractive maps.  We will find that the moment
matrix for the equilibrium measure of the IFS also satisfies an
invariance property corresponding to Equations
(\ref{Eqn:SetInvariance}) and (\ref{Eqn:TransformedMu}).

 Let $I$ be a finite index set.  Let a contractive IFS
on $\mathbb{R}$ be given by the maps $\{\tau_i\}_{i \in I}$ and the
nonzero probability weights $\{p_i \}_{i \in I}$.  Suppose $\mu$ is
the invariant Hutchinson measure  associated with this IFS.   Then $\mu$ is a probability measure satisfying  (\ref{Eqn:TransformedMu}) which is supported on the attractor set $X$. 

We now observe, as also noted in \cite{EST06}, that the moment matrix for the equilibrium measure
$\mu$ of an IFS on the real line also satisfies an invariance
property $\mathcal{R}(M^{(\mu)}) = M^{(\mu)}$ under the
transformation
\begin{equation} \mathcal{R}: M^{(\nu)} \mapsto M^{(\sum_{i \in I} p_i (\nu \circ \tau_i^{-1}))}. \end{equation}
In other words, $M^{(\mu)}$ is a fixed point of the transformation
$\mathcal{R}$.   We now state a uniqueness result.

\begin{proposition}\label{Prop:MatrixFixedPt}  Given a
 contractive IFS $\{\tau_i\}_{i \in I}$ on
$\mathbb{R}$ and probability weights $\{p_i\}_{i \in I}$ with $p_i
\in (0,1)$ and $\sum_{i \in I}p_i = 1$, there exists a unique
moment matrix $M^{(\mu)}$ for a probability measure $\mu$ which
satisfies
\begin{equation}
\mathcal{R}(M^{(\mu)}) = M^{(\mu)}.
\end{equation}
  This unique solution  is exactly
the moment matrix for the Hutchinson equilibrium measure for the
IFS.  Furthermore, for any Borel regular probability measure $\nu$
supported on the attractor set  $X$ of the IFS,
$\mathcal{R}^n(M^{(\nu)})$ converges componentwise to $M^{(\mu)}$.
\end{proposition}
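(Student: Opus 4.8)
The plan is to reduce the statement to the classical Hutchinson picture at the level of measures. Write $W$ for the Hutchinson operator $W\nu := \sum_{i\in I} p_i(\nu\circ\tau_i^{-1})$, so that by construction $\mathcal{R}(M^{(\nu)}) = M^{(W\nu)}$. First I would record the structural facts: since each $\tau_i$ is a contraction, the attractor $X$ is compact and $\tau_i(X)\subseteq X$, so $W$ maps $\mathcal{P}(X)$ (the Borel probability measures supported on $X$) into itself; every $\nu\in\mathcal{P}(X)$ has finite moments of all orders because monomials are bounded on the compact set $X$, and --- by the Stone--Weierstrass argument already recalled at the start of Chapter \ref{Sec:MomentTheory} --- the assignment $\nu\mapsto M^{(\nu)}$ is injective on $\mathcal{P}(X)$. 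Hence $\mathcal{R}$ is a genuine self-map of $\{M^{(\nu)} : \nu\in\mathcal{P}(X)\}$, and a one-line induction gives $\mathcal{R}^{\,n}(M^{(\nu)}) = M^{(W^{n}\nu)}$ for all $n\ge 0$.

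Next I would dispatch existence and uniqueness of the fixed point. Existence is immediate: $W\mu = \mu$ by Equation (\ref{Eqn:TransformedMu}), so $\mathcal{R}(M^{(\mu)}) = M^{(W\mu)} = M^{(\mu)}$. For uniqueness, suppose $M^{(\sigma)}$ with $\sigma\in\mathcal{P}(X)$ is also a fixed point; then $M^{(W\sigma)} = M^{(\sigma)}$, and injectivity of $\nu\mapsto M^{(\nu)}$ on $\mathcal{P}(X)$ forces $W\sigma = \sigma$, whence $\sigma = \mu$ by the uniqueness half of Hutchinson's theorem. I would also add the remark that in the affine case $\tau_i(x) = c_i x + b_i$ one can bypass uniqueness of the measure entirely and argue at the level of moments: expanding $M^{(W\sigma)} = M^{(\sigma)}$ gives, for each $k$, $\bigl(1 - \sum_{i} p_i c_i^{\,k}\bigr) m_k = (\text{an explicit linear combination of } m_0,\dots,m_{k-1})$, and since $\bigl|\sum_i p_i c_i^{\,k}\bigr|\le \sum_i p_i|c_i|^k < 1$ the leading coefficient never vanishes, so $m_0 = 1$ determines every $m_k$ recursively; this version gives uniqueness even among probability measures not supported on $X$ (as long as they have moments of all orders).

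For the convergence assertion I would invoke Banach's fixed point theorem exactly as Hutchinson does in \cite{Hut81}: $W$ is a strict contraction of $\mathcal{P}(X)$ in the Hutchinson (Monge--Kantorovich) metric, so $W^{n}\nu \to \mu$ in that metric for every $\nu\in\mathcal{P}(X)$; because $X$ is compact, this metric induces the weak-$*$ topology, so $\int f\,dW^{n}\nu \to \int f\,d\mu$ for every $f\in C(X)$. Taking $f(x) = x^{i+j}$ then yields
\[
\mathcal{R}^{\,n}(M^{(\nu)})_{i,j} = \int_X x^{i+j}\,d(W^{n}\nu)(x)\ \longrightarrow\ \int_X x^{i+j}\,d\mu(x) = M^{(\mu)}_{i,j}
\]
for all $i,j\in\mathbb{N}_0$, which is precisely the claimed componentwise convergence.

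The step I expect to need the most care is this last passage from convergence of the measures to componentwise convergence of their moment matrices. Weak-$*$ convergence only controls integrals of \emph{bounded} continuous functions, whereas the monomials $x^{i+j}$ are unbounded on $\mathbb{R}$; the argument goes through only because the iterates $W^{n}\nu$ all lie in the \emph{single} compact set $X$, so along the sequence the monomials are uniformly bounded. This is exactly why the hypothesis that $\nu$ be supported on $X$ cannot be dropped and why it is cleanest to phrase the whole proposition in terms of $\mathcal{P}(X)$; I would make this point explicitly in the write-up rather than leave it implicit.
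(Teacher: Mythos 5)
Your proof is correct and follows essentially the same route as the paper's: existence via $W\mu=\mu$, uniqueness via the Stone--Weierstrass fact that compactly supported measures are determined by their moments combined with Hutchinson's uniqueness, and componentwise convergence by applying the weak convergence $W^{n}\nu\to\mu$ in the Hutchinson metric to the monomials $x^{i+j}$, which are bounded on the compact attractor. Your explicit care about why weak-$*$ convergence suffices for the (globally unbounded) monomials, and the affine-case recursion remark, are refinements the paper leaves implicit but do not change the argument.
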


\begin{proof}
We begin by proving the convergence of the moments.  Let $S$ be
the transformation of measures supported on $X$ given by
\begin{equation} S(\nu) = \sum_{i \in I} p_i (\nu \circ \tau_i^{-1}).
\end{equation}
From \cite{Hut81}, we know that $S(\nu)$ maps probability measures
to probability measures, and that for any Borel regular
probability measure $\nu$ supported on $X$, $S^n\nu \rightarrow
\mu$ as $n \rightarrow \infty$, where convergence is in the metric
$\rho$ which Hutchinson calls the $L$-metric on the space of
measures \cite{Hut81}.

To define the $L$-metric, first recall the space of Lipschitz functions $\textrm{Lip}(X,\mathbb{R})$ on a metric space $X$ with metric $d$.  $\textrm{Lip}(X,\mathbb{R})$ consists of all functions $\phi:X \rightarrow \mathbb{R}$ such that there exists a constant $C=C_{\phi}< \infty$  with 
\begin{equation}\label{Eqn:Lip} |\phi(x)-\phi(y)| \leq C_{\phi} d(x,y) \; \textrm{ for all }\; x,y \in X.\end{equation}  We then define the  constant  \begin{equation}
L(\phi) = \inf \{C_{\phi} \,|\,  C_{\phi} \textrm{ satisfies (\ref{Eqn:Lip})} \},  \end{equation} and hence the space of functions \[ \mathrm{Lip}_1 = \{ \phi \in \mathrm{Lip}(X,\mathbb{R})\,|\, L(\phi) \leq 1 \}. \]  We can now define the $L$-metric:
\begin{equation}\label{def:Lmetric} \rho(\mu,\nu) = \sup \left\{\int_X
\phi \, \mathrm{d}\mu - \int_X \phi \, \mathrm{d}\nu \,:\, \phi
\in  \mathrm{Lip}_1(X,\mathbb{R}) \right\}.
\end{equation}
  When the
measures are defined on compact spaces, the $L$-metric topology is
equivalent to a weak topology, and therefore we have $$\int_X f
\mathrm{d}(S^n\nu) \rightarrow \int_X f \mathrm{d}\mu $$ for all
functions $f:X \rightarrow \mathbb{R}$ which are bounded on
bounded sets  \cite{Hut81}.

Let $f(x) = x^{i+j}$, which is bounded on bounded sets for all
choices of $i,j \in \mathbb{N}_0$.  Inserting $f$ into the
convergence above yields the componentwise convergence of the
moment matrix for $S^n\nu$ to the moment matrix for $\mu$.

Given that $\mu$ satisfies the invariance property $$ S(\mu) =
\mu,$$ we next wish to conclude that $\mathcal{R}(M^{(\mu)}) =
M^{(\mu)}$.    We defined the transformation $\mathcal{R}$ by
\[\mathcal{R}(M^{(\nu)}) = M^{(\sum_i p_i \nu \circ \tau_i^{-1})} =
M^{S\nu}.\] Since $S \mu = \mu$, it is clear that $S\mu$ and $\mu$ have the same
moment matrices. Therefore $\mathcal{R}(M^{(\mu)}) = M^{(\mu)}$.

It remains to be shown that the matrix $M^{(\mu)}$ is the unique
solution among moment matrices for probability measures on $X$ to
the equation $\mathcal{R}(M) = M$. We have from \cite{Hut81} that
the measure $\mu$ is the unique probability measure satisfying
$S\mu = \mu$. Suppose there exists another matrix $M^{(\nu)}$
which is a moment matrix for a probability measure $\nu$ and which
is a fixed point for $\mathcal{R}$.  Since
$\mathcal{R}(M^{(\nu)}) = M^{(S\nu)}$, the measures
$\nu$ and $S\nu$ have the same moments of all orders.  By
Stone-Weierstrass, since the measures have compact support and
agree on the moments (hence the polynomials), the
measures must be the same.  Since $S\nu = \nu$, we must have $\nu =
\mu$.
\end{proof}

\begin{corollary}\label{Cor:InfUnique}  Let $\{\tau_i\}_{i \in I}$ be an affine
contractive IFS on $\mathbb{R}$, such that $\tau_i(x) = c_ix+b_i$.
Let $\{p_i\}_{i \in I}$ be probability weights and let $A_i$ be
the matrix given in Lemma \ref{Lemma:Amatrix} which encodes
$\tau_i$ for each $i \in I$. The moment matrix transformation
\begin{equation}\label{Eqn:AffineTrans} M^{(\nu)} \mapsto \sum_{i \in I} p_i A_i^*M^{(\nu)}A_i \end{equation} has a
unique fixed point among moment matrices. Moreover, the fixed
point is the moment matrix $M^{(\mu)}$ for the Hutchinson
equilibrium measure of the IFS.
\end{corollary}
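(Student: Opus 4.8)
The plan is to observe that the transformation (\ref{Eqn:AffineTrans}) is exactly the map $\mathcal{R}$ of Proposition \ref{Prop:MatrixFixedPt}, merely rewritten via Lemma \ref{Lemma:Amatrix}; once this identification is in hand, the corollary is immediate.

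First I would fix a Borel probability measure $\nu$ with finite moments of all orders --- in particular any $\nu$ supported on the compact attractor $X$ --- and note that each triple product $A_i^* M^{(\nu)} A_i$ is well defined: since each $A_i$ from Lemma \ref{Lemma:Amatrix} is upper triangular, the sum defining each entry of $A_i^* M^{(\nu)} A_i$ is finite, exactly as in the proof of Lemma \ref{Lemma:Amatrix}.

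Next, I would apply Lemma \ref{Lemma:Amatrix} to each affine map $\tau_i(x) = c_i x + b_i$ in turn, which gives $A_i^* M^{(\nu)} A_i = M^{(\nu\circ\tau_i^{-1})}$. Because $I$ is finite, linearity of the integral then yields
\[
\sum_{i \in I} p_i\, A_i^* M^{(\nu)} A_i = \sum_{i \in I} p_i\, M^{(\nu\circ\tau_i^{-1})} = M^{\left(\sum_{i \in I} p_i\, \nu\circ\tau_i^{-1}\right)} = \mathcal{R}(M^{(\nu)}),
\]
so the transformation (\ref{Eqn:AffineTrans}) coincides with $\mathcal{R}$ on moment matrices of probability measures. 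Since (\ref{Eqn:SetInvariance}) forces $\tau_i(X) \subseteq X$, if $\nu$ is supported on $X$ then so is each $\nu\circ\tau_i^{-1}$, and hence $\mathcal{R}(M^{(\nu)})$ is again the moment matrix of a probability measure supported on $X$; thus the class in which we seek a fixed point is preserved by the map.

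Finally, I would invoke Proposition \ref{Prop:MatrixFixedPt}: $\mathcal{R}$ has a unique fixed point among moment matrices of probability measures, and that fixed point is the moment matrix $M^{(\mu)}$ of the Hutchinson equilibrium measure of $\{\tau_i, p_i\}$. Since (\ref{Eqn:AffineTrans}) is the same map as $\mathcal{R}$, the corollary follows verbatim. I do not expect a genuine obstacle here: the only step requiring any care is the identification of (\ref{Eqn:AffineTrans}) with $\mathcal{R}$, and this reduces to the bookkeeping in Lemma \ref{Lemma:Amatrix} together with the (finite) linearity step, both of which are routine given that $I$ is finite and the $A_i$ are upper triangular.
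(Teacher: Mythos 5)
Your proposal is correct and matches the paper's (implicit) argument: the corollary is stated there without a separate proof precisely because, as you note, Lemma \ref{Lemma:Amatrix} identifies each $A_i^*M^{(\nu)}A_i$ with $M^{(\nu\circ\tau_i^{-1})}$, so the transformation (\ref{Eqn:AffineTrans}) is the map $\mathcal{R}$ and the conclusion follows from Proposition \ref{Prop:MatrixFixedPt}. Your added remarks on well-definedness of the triple products and preservation of the class of measures supported on $X$ are consistent with how the paper uses these facts.
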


Given an affine IFS  and its associated moment matrix
transformation, we can state a uniqueness result for finite
matrices which corresponds to and extends Corollary
\ref{Cor:InfUnique}.  Given any infinite matrix $M$, denote by
$M_n$ the $(n+1) \times (n+1)$ matrix which is the upper left
truncation $[M_{ij}]_{i,j = 0, 1, \ldots, n}$ of $M$.  Let
$\{\tau_i\}_{i=0}^k$ be a contractive affine IFS on $\mathbb{R}$,
and let $\mu$ be the unique Hutchinson measure corresponding to
the IFS and probability weights $\{p_i\}_{i=0}^k$. Note that we
have $\tau_i(x) = c_ix+b_i$, where $c_i<1$ for each $i=0,1,\ldots,
k$ in the most general affine IFS.

We know from Corollary \ref{Cor:InfUnique} that  $M^{(\mu)}$ is
the unique infinite matrix which is fixed under the transformation
in Equation (\ref{Eqn:AffineTrans}).  We now give the result  that
the truncated form of the transformation in Equation
(\ref{Eqn:AffineTrans}) has the truncated moment matrix as a fixed
point, and moreover, that it is a unique fixed point among Hankel
matrices.  We remark here that some of the computations required for this
proof are done in \cite{EST06}, but they don't use them to state
this uniqueness result.

We begin by observing that, because the matrices $A_i$ which
encode affine maps are triangular (recall Lemma
\ref{Lemma:Amatrix}), a truncation of the matrix product $ A_i^*MA_i$ is exactly equal to the product of the truncated matrices  $(A_i)_n^*M_n(A_i)_n$.

\begin{lemma}\label{Lem:Truncation}  Given infinite matrices $A,M$ such that $A$ is upper triangular,
\begin{equation} (A^*MA)_n = A^*_nM_nA_n. \end{equation}
\end{lemma}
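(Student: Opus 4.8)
The plan is to compute both sides entrywise and observe that the upper-triangularity of $A$ forces all the relevant summation indices to stay within the range $\{0,1,\dots,n\}$, so that no information is lost by truncation. First I would fix indices $i,j \in \{0,1,\dots,n\}$ and write out the $(i,j)$ entry of the left-hand side. Since $(A^*MA)_n$ is by definition just the $(i,j)$ entry of $A^*MA$ for $i,j \le n$, and since $A^* = (A^*_{k,\ell}) = (\overline{A_{\ell,k}})$ is lower triangular (because $A$ is upper triangular), we have
\begin{equation*}
(A^*MA)_{i,j} = \sum_{k}\sum_{\ell} A^*_{i,k}\,M_{k,\ell}\,A_{\ell,j},
\end{equation*}
where $A^*_{i,k} = 0$ unless $k \le i$, and $A_{\ell,j} = 0$ unless $\ell \le j$. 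Hence the only nonzero terms have $k \le i \le n$ and $\ell \le j \le n$, so the double sum is finite and in fact runs over $k \in \{0,\dots,i\}$ and $\ell \in \{0,\dots,j\}$; in particular the product is well defined in the sense of Section \ref{Sec:InfMatrices} without any convergence hypothesis.

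Next I would expand the right-hand side. By definition $A_n$ is the $(n+1)\times(n+1)$ truncation of $A$, so $(A_n)_{\ell,j} = A_{\ell,j}$ for $\ell,j \le n$, and similarly $(A^*_n)_{i,k} = A^*_{i,k} = \overline{A_{k,i}}$ for $i,k \le n$; one should note that $A^*_n = (A_n)^*$ because conjugate-transposing commutes with taking a symmetric $(n+1)\times(n+1)$ truncation. Likewise $(M_n)_{k,\ell} = M_{k,\ell}$ for $k,\ell \le n$. Therefore
\begin{equation*}
(A^*_n M_n A_n)_{i,j} = \sum_{k=0}^{n}\sum_{\ell=0}^{n} A^*_{i,k}\,M_{k,\ell}\,A_{\ell,j}.
\end{equation*}
Comparing the two expressions, the right-hand sum ranges over $k,\ell \in \{0,\dots,n\}$ while the left-hand sum (after discarding the zero terms) ranges over $k \le i$ and $\ell \le j$; but every term with $k > i$ or $\ell > j$ vanishes in the right-hand sum as well, precisely because $A^*_{i,k} = \overline{A_{k,i}} = 0$ when $k > i$ and $A_{\ell,j} = 0$ when $\ell > j$. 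So the two finite sums have exactly the same nonzero terms, and they agree.

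There is essentially no obstacle here — the lemma is a bookkeeping statement — but the one point that deserves a sentence of care is the claim that $(A_n)^* = (A^*)_n$, i.e.\ that truncation to the $(n+1)\times(n+1)$ upper-left block commutes with conjugate transposition; this is true because the truncation index set $\{0,\dots,n\}^2$ is symmetric under swapping the two coordinates. A second point worth flagging explicitly is that, although $M$ is an infinite matrix, no convergence hypothesis on $M$ or on the products $A^*M$, $MA$ is needed: the upper-triangularity of $A$ (equivalently the lower-triangularity of $A^*$) makes every sum that appears a finite sum, so all the matrix products in sight are automatically well defined in the sense of the definition in Section \ref{Sec:InfMatrices}. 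With these remarks the equality $(A^*MA)_n = A^*_n M_n A_n$ follows.
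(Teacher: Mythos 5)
Your proof is correct and is essentially the paper's own argument written out entrywise: the paper packages the same fact as a $2\times 2$ block-matrix identity, where the zero lower-left block of $A$ (its upper triangularity) forces the upper-left block of $A^*MA$ to equal $A_n^*M_nA_n$, which is exactly your observation that the surviving summation indices satisfy $k\le i\le n$ and $\ell\le j\le n$. The only difference worth noting is in your favor: the paper adds the caveat ``assuming that the associated block products are well defined,'' whereas you correctly point out that the triangularity makes every sum finite, so the products are automatically well defined and no convergence hypothesis is needed.
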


\begin{proof}  This follows immediately by the properties of block
matrices assuming that the associated block products are well defined.  In the matrices below, $B$ and $D$ are upper triangular:   $$ \left[ \begin{matrix} B^* & 0\\C^* & D^*
\end{matrix} \right] \left[ \begin{matrix} E&F\\G&H \end{matrix}
\right]\left[
\begin{matrix} B&C\\0&D \end{matrix} \right] = \left[
\begin{matrix}  B^*EB & * \\ * & * \end{matrix} \right]. $$

\end{proof}

Given the affine IFS $\{\tau_i\}_{i=0}^k, \{p_i\}_{i=0}^k$ mentioned above, let $A_i$ be the matrix given  by Lemma \ref{Lemma:Amatrix} which encodes each affine map $\tau_i(x) = c_ix+b_i$.   Let $\mu$ be the unique Hutchinson equilibrium measure for this IFS.  Using our truncation notation, define the matrix transformation $\mathcal{R}_n$ on $(n+1) \times (n+1)$ matrices $M$ by
\begin{equation}\label{Eq:finitetransform} \mathcal{R}_n(M) = \sum_{i=0}^k p_i (A_i^*)_n M (A_i)_n . \end{equation}   We then have the following result.

\begin{proposition}\label{Prop:finite}  For each $n \in \mathbb{N}$, $M = M^{(\mu)}_n$ is the unique Hankel matrix having $M_{0,0}=1$ which is a fixed point  for $\mathcal{R}_n$.
\end{proposition}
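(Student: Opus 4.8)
The plan is to identify an $(n+1)\times(n+1)$ real Hankel matrix with its parameter vector, writing $M_{i,j}=s_{i+j}$ with $(s_0,\dots,s_{2n})\in\mathbb{R}^{2n+1}$, so that the normalization $M_{0,0}=1$ is just the condition $s_0=1$. For existence, Corollary \ref{Cor:InfUnique} gives $M^{(\mu)}=\sum_{i=0}^k p_i\,A_i^*M^{(\mu)}A_i$; truncating this identity to size $n+1$ and applying Lemma \ref{Lem:Truncation} to each summand (each $A_i$ is upper triangular, so the relevant sums are finite) yields $M^{(\mu)}_n=\sum_{i=0}^k p_i\,(A_i^*)_n M^{(\mu)}_n (A_i)_n=\mathcal{R}_n\big(M^{(\mu)}_n\big)$. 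Since a real moment matrix is Hankel, so is $M^{(\mu)}_n$, and $M^{(\mu)}_{0,0}=\int_{\mathbb{R}}1\,d\mu=1$ because $\mu$ is a probability measure. Thus $M^{(\mu)}_n$ is a Hankel fixed point of $\mathcal{R}_n$ with $(0,0)$-entry $1$, and the remaining work is to show it is the only one.

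The key step is to compute the action of $\mathcal{R}_n$ on Hankel parameters. Fix a single map $\tau(x)=cx+b$ with $|c|<1$, let $A=(a_{i,j})$ be the upper triangular matrix of Lemma \ref{Lemma:Amatrix}, and let $M$ be Hankel as above. Then for $i,j\le n$,
\[
\big((A^*)_n M (A)_n\big)_{i,j}=\sum_{k=0}^{i}\sum_{\ell=0}^{j}\binom{i}{k}\binom{j}{\ell}c^{k+\ell}b^{i+j-k-\ell}s_{k+\ell}.
\]
Grouping the terms with $k+\ell=m$ and applying the Vandermonde identity $\sum_{k}\binom{i}{k}\binom{j}{m-k}=\binom{i+j}{m}$ collapses this to $\sum_{m=0}^{i+j}\binom{i+j}{m}c^{m}b^{i+j-m}s_m$, which depends on $i,j$ only through $i+j$. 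Summing over the maps $\tau_i(x)=c_ix+b_i$ with weights $p_i$ then shows that $\mathcal{R}_n$ carries Hankel matrices to Hankel matrices, and that on parameters it acts by the linear map $s\mapsto\tilde s$ with
\[
\tilde s_\ell=\sum_{i=0}^k p_i\sum_{m=0}^{\ell}\binom{\ell}{m}c_i^{m}b_i^{\ell-m}s_m .
\]

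Uniqueness is then immediate from the triangular shape of this map. Since $\tilde s_\ell$ depends only on $s_0,\dots,s_\ell$, the map is lower triangular in the ordered basis $(s_0,\dots,s_{2n})$, with diagonal entries $d_\ell:=\sum_{i=0}^k p_i c_i^{\ell}$; here $d_0=\sum_i p_i=1$, while for $\ell\ge1$ contractivity gives $|d_\ell|\le\sum_i p_i|c_i|^{\ell}\le\sum_i p_i|c_i|<1$, so $d_\ell-1\ne0$. Hence the equation $\tilde s=s$ reads $\tilde s_0=s_0$ (no constraint) in row $0$, and
\[
(d_\ell-1)s_\ell=-\sum_{m=0}^{\ell-1}\Big(\sum_{i=0}^k p_i\binom{\ell}{m}c_i^{m}b_i^{\ell-m}\Big)s_m
\]
in row $\ell\ge1$, so $s_1,\dots,s_{2n}$ are determined one at a time by forward substitution once $s_0$ is fixed. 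Therefore there is exactly one Hankel fixed point of $\mathcal{R}_n$ with $s_0=1$, and by the existence step it must be $M^{(\mu)}_n$. The one genuinely computational ingredient---and the main obstacle---is the Vandermonde collapse in the middle paragraph, which simultaneously establishes that $\mathcal{R}_n$ preserves the Hankel property and exhibits the triangular structure with diagonal $d_\ell$; once that is in hand, the rest is bookkeeping.
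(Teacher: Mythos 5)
Your proof is correct, but it takes a genuinely different route from the paper's. The paper argues by induction on $n$: the base case $n=1$ is a direct $2\times 2$ computation, and in the inductive step Lemma \ref{Lem:Truncation} forces the upper-left $n\times n$ block of any Hankel fixed point to be $M^{(\mu)}_{n-1}$, the Hankel structure then pins down all but two entries ($x$ in the last column and the corner $y$), and the fixed-point equation is observed to give two linear equations with unique solutions (the nondegeneracy of those equations is left implicit, as ``straightforward to verify''). You instead parametrize an $(n+1)\times(n+1)$ Hankel matrix by its cross-diagonal values $s_0,\dots,s_{2n}$ and compute, via the Vandermonde identity, that $\mathcal{R}_n$ acts on these parameters by the lower triangular map $\tilde s_\ell=\sum_i p_i\sum_{m\le\ell}\binom{\ell}{m}c_i^m b_i^{\ell-m}s_m$ with diagonal $d_\ell=\sum_i p_i c_i^\ell$; contractivity ($|c_i|<1$, $p_i>0$, $\sum p_i=1$) gives $d_\ell\ne 1$ for $\ell\ge 1$, so once $s_0=1$ is imposed the fixed-point equation determines $s_1,\dots,s_{2n}$ by forward substitution. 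Your approach buys several things the paper's does not: it avoids induction entirely, it shows explicitly that $\mathcal{R}_n$ maps the Hankel class to itself, it makes explicit the nondegeneracy $\sum_i p_i c_i^\ell\ne 1$ that the paper's ``unique solution for $x$ and $y$'' tacitly relies on, and your row-$\ell$ identity is essentially the recursive moment formula stated in the corollary immediately following the proposition. What the paper's argument buys in exchange is brevity: given Lemma \ref{Lem:Truncation}, the induction reduces everything to two scalar equations per step without any binomial bookkeeping.
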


\begin{proof}
 We know from Corollary \ref{Cor:InfUnique} and Lemma \ref{Lem:Truncation} that for each $n$, $M^{(\mu)}_n$ is a fixed point of $\mathcal{R}_n$.  We need only to prove the uniqueness.

 For $n=1$, assume $M$ is a $2 \times 2$ Hankel matrix with $M_{0,0}=1$ such that $\mathcal{R}_1(M) = M$.  $M$ is of the form $$M = \left[ \begin{matrix} 1&x\\x&y \end{matrix} \right].$$  Then,
 \begin{eqnarray*} \mathcal{R}_1(M) &=& \sum_{i=0}^k p_i \left[ \begin{matrix} 1&0\\b_i&c_i \end{matrix} \right] \left[ \begin{matrix} 1&x\\x&y \end{matrix} \right] \left[ \begin{matrix} 1&b_i\\0&c_i \end{matrix} \right] \\ &=& \sum_{i=0}^k p_i \left[ \begin{matrix} 1&b_i + c_ix\\b_i +c_ix& b_i^2 + 2b_ic_ix+c_i^2 \end{matrix} \right] \\ &=& \left[ \begin{matrix} 1&x\sum p_ic_i + \sum p_ib_i \\x\sum p_ic_i + \sum p_ib_i& y\sum p_ic_i^2 + 2x\sum p_ib_ic_i + \sum p_ib_i^2 \end{matrix} \right] \\ &=& \left[ \begin{matrix} 1&x\\x&y \end{matrix} \right] = M.
 \end{eqnarray*}

 There is a unique solution for $x$ and $y$ in the equations
 $x = x\sum p_ic_i + \sum p_ib_i$ and $y= y\sum p_ic_i^2 + 2x\sum p_ib_ic_i + \sum p_ib_i^2$, thus $M = M^{(\mu)}_1$.

 Assume next that $M^{(\mu)}_{n-1}$ is the unique $n \times n$ Hankel matrix which is a fixed point of $\mathcal{R}_{n-1}$.  Take $M$ to be an $(n+1) \times (n+1)$ Hankel matrix with $M_{0,0}=1$ such that $\mathcal{R}_n(M) = M$.  By Lemma \ref{Lem:Truncation} and our hypothesis, $M$ is of the form
 $$ M = \left[ \begin{matrix} M^{(\mu)}_{n-1} &  B\\ B^{tr} & y \end{matrix} \right], $$  where $B$
  is  $n \times 1$.  Due to the Hankel structure of $M$, all but one of the entries in $B$ are known from $M^{(\mu)}_n$:
 $$  B = \left[ \begin{matrix} m_n \\ \vdots \\m_{2n-2} \\ x \end{matrix} \right].$$

 It is straightforward to verify that, as in the $n=1$ case, the terms in the matrix equation $S(M)=M$ yield two
 linear equations in $x$ and $y$ which have unique solutions.  Therefore $M = M^{(\mu)}_n$.
\end{proof}

The computations in Proposition \ref{Prop:finite} to solve for $x$
and $y$ give a recursive equation to compute the moments of an
equilibrium measure in terms of the previous moments.

\begin{corollary}[\cite{EST06}] Given the IFS as above with equilibrium measure $\mu$,  $$M^{(\mu)}_{m,n} = \frac{1}{1-\sum_{i=0}^k p_ic_i^2} \sum_{i=0}^k p_i
\sum_{j=0,\ell=0, (j,\ell) \neq (m,n)}^{m,n}
\binom{m}{j}\binom{n}{\ell} b_i^{m+n-j-\ell}c_i^{j+\ell}
M^{(\mu)}_{j,\ell} $$
\end{corollary}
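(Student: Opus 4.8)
The plan is to read the recursion directly off the infinite fixed-point identity
\[
M^{(\mu)} \;=\; \sum_{i=0}^{k} p_i\, A_i^{*} M^{(\mu)} A_i
\]
from Corollary \ref{Cor:InfUnique}, rather than going through the finite truncations of Proposition \ref{Prop:finite}; here $A_i$ is the upper triangular matrix of Lemma \ref{Lemma:Amatrix} attached to $\tau_i(x) = c_i x + b_i$. I would fix $m,n \in \mathbb{N}_0$ and expand the $(m,n)$ entry of the right-hand side. Because each $A_i$ is upper triangular, so $A_i^{*}$ is lower triangular, the sum defining the $(m,n)$ entry of each triple product $A_i^{*} M^{(\mu)} A_i$ has finite range and all the matrix operations are well defined --- exactly the situation already exploited in the proof of Lemma \ref{Lemma:Amatrix}. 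Substituting $(A_i)_{\ell,n} = \binom{n}{\ell} c_i^{\ell} b_i^{n-\ell}$ for $\ell \le n$ and $(A_i^{*})_{m,j} = \binom{m}{j}\, \overline{c_i^{\,j} b_i^{\,m-j}}$ for $j \le m$ (the conjugation bars being vacuous in the real-variable case to which the corollary applies), I would obtain
\[
\bigl(A_i^{*} M^{(\mu)} A_i\bigr)_{m,n} \;=\; \sum_{j=0}^{m}\sum_{\ell=0}^{n} \binom{m}{j}\binom{n}{\ell}\, c_i^{\,j+\ell}\, b_i^{\,m+n-j-\ell}\, M^{(\mu)}_{j,\ell},
\]
which is the same binomial bookkeeping carried out inside the proof of Lemma \ref{Lemma:Amatrix}.

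The key step is then to isolate, for each $i$, the single term with $(j,\ell) = (m,n)$: it equals $c_i^{\,m+n} M^{(\mu)}_{m,n}$, and it is the \emph{only} term in which $M^{(\mu)}_{m,n}$ appears, since every other admissible pair has $j \le m$, $\ell \le n$, $(j,\ell) \ne (m,n)$ and hence $j+\ell < m+n$ strictly. Summing the displayed identity against the weights $p_i$ and setting the total equal to $M^{(\mu)}_{m,n}$, I would move the self-referential terms to the left, factor out $M^{(\mu)}_{m,n}$, and divide by the resulting scalar --- the reciprocal shown in the statement, namely $1-\sum_{i=0}^{k}p_ic_i^{\,m+n}$, which for $m=n=1$ is the factor $1-\sum_i p_i c_i^{2}$ computed explicitly in the proof of Proposition \ref{Prop:finite}. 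This scalar is nonzero because $p_i \in (0,1)$, $\sum_i p_i = 1$ and $0 < c_i < 1$ force $0 < \sum_i p_i c_i^{\,m+n} < 1$. What remains on the right is precisely the double sum over $(j,\ell) \ne (m,n)$ in the claimed formula.

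Two bookkeeping remarks belong in the write-up. First, evaluating the infinite matrix identity of Corollary \ref{Cor:InfUnique} entrywise is legitimate precisely because the $A_i$ are triangular, so each entry of each triple product is a finite sum (the block-matrix observation behind Lemma \ref{Lem:Truncation}); no convergence has to be checked. Second, the recursion is well founded: its right-hand side involves only moments $M^{(\mu)}_{j,\ell}$ with $j+\ell < m+n$, and $M^{(\mu)}_{0,0}=\int 1\,d\mu = 1$ serves as the base case, so the formula does determine every moment of $\mu$. I do not anticipate a genuine obstacle here --- the only conceptual point is recognizing that the top-degree term $(m,n)$ is the unique self-referential term in the expanded fixed-point equation; everything else is the binomial computation already done for Lemma \ref{Lemma:Amatrix}, with the only mild care being to keep the conjugations straight if one also wants the analogous complex statement.
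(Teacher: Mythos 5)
Your argument is correct and is essentially the paper's own route: the corollary is read off the fixed-point identity of Corollary \ref{Cor:InfUnique} via the same binomial expansion of $(A_i^*M^{(\mu)}A_i)_{m,n}$ that underlies Lemma \ref{Lemma:Amatrix}, and although the paper phrases this through the truncated computations of Proposition \ref{Prop:finite}, the triangularity observation of Lemma \ref{Lem:Truncation} makes that entrywise calculation identical to the one you carry out on the infinite identity, so nothing substantively different is happening. One caution: your derivation yields the denominator $1-\sum_{i=0}^{k}p_i c_i^{\,m+n}$, which is the correct general factor but is \emph{not} what the printed statement shows --- the statement has $1-\sum_i p_i c_i^{2}$, which matches your factor only when $m+n=2$ (its source being the $n=1$ computation in Proposition \ref{Prop:finite}); this is evidently a typo in the corollary as stated, so your proof establishes the corrected formula, but you should not describe your factor as ``the reciprocal shown in the statement.'' Also, since a contractive affine IFS only requires $|c_i|<1$ (the $c_i$ may be negative), the nonvanishing of the denominator should be argued from $\bigl|\sum_i p_i c_i^{\,m+n}\bigr|<1$ rather than from $0<c_i<1$.
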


\section{Preserving Hankel matrix structure}\label{Subsec:Hankel}
When we work with affine IFSs for a real variable $x$, the moment
matrix of a probability measure is a Hankel matrix whose
$(0,0)^{\mathrm{th}}$ entry is $1$.  We will examine here the sorts of
matrix transformations of the form $$M \mapsto A^*MA$$ which
preserve Hankel structure, so that in the next section we can look
for moment matrix transformations corresponding to more general
maps $\tau$. Let $\mathcal{H}^{(1)}$ denote the set of positive
definite Hankel matrices whose $(0,0)^{\mathrm{th}}$ entry is $1$.

 We have already shown that the
transformation $\mathcal{R}$ corresponding to an affine IFS given
by
\begin{equation}
\mathcal{R}(M) = \sum_{i\in I} p_i (A_i^{*} M A_i) = \sum_{b\in
B}p_b M^{(\mu\circ\tau_b^{-1})}
\end{equation}
maps a moment matrix $M^{(\mu)}$ to another moment matrix
$M^{(\sum_{i \in I}p_i \mu \circ \tau_i^{-1})}$, and therefore
maps $\mathcal{H}^{(1)}$ into itself.

\begin{definition}
We say that the matrix $A$ \textit{preserves $\mathcal{H}^{(1)}$} if
$A^{*}MA \in \mathcal{H}^{(1)}$ for all matrices
$M\in\mathcal{H}^{(1)}$.
\end{definition}

In the following lemma, we refer to inverses of infinite matrices
and to invertible matrices.  A careful definition of these
concepts is given in Definition \ref{def:inverse}.

\begin{lemma}\label{Lemma:DGPreserveH1}
The following matrices preserve $\mathcal{H}^{(1)}$:
\begin{equation}
D(\delta)=
\begin{bmatrix}
1 & 0        & 0            & 0             & \cdots \\
0 & \delta & 0            & 0             & \cdots \\
0 & 0        & \delta^2 & 0             & \cdots \\
0 & 0        & 0             & \delta^3 &  \\
\vdots & \vdots & \vdots      &                & \ddots \\
\end{bmatrix}
\quad
\text{ and }
\quad
G(\gamma) =
\begin{bmatrix}
1 & \gamma & \gamma^2  & \gamma^3    & \cdots\\
0 & 1             & 2\gamma    & 3\gamma^2 & \cdots\\
0 & 0             & 1                  & 3\gamma     & \cdots \\
0 & 0             & 0                  & 1                   &  \\
\vdots & \vdots & \vdots      &                & \ddots \\
\end{bmatrix}.
\end{equation}
The inverse of $D(\delta)$ is $D(\delta^{-1})$ ($\delta\neq 0$) and the inverse of $G(\gamma)$ is $G(-\gamma)$.
\end{lemma}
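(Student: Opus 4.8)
The plan is to recognize both $D(\delta)$ and $G(\gamma)$ as the $A$-matrices of Lemma \ref{Lemma:Amatrix} for suitable affine maps $\tau$, and then to invoke that lemma together with the fact already established for the affine transformation $\mathcal{R}$. Concretely, in Lemma \ref{Lemma:Amatrix} the matrix $A = (a_{i,j})$ attached to $\tau(x) = cx + b$ has entries $a_{i,j} = \binom{j}{i} c^i b^{j-i}$ for $i \le j$. Taking $c = \delta$ and $b = 0$ gives $a_{i,j} = \binom{j}{i}\delta^i 0^{j-i}$, which is $\delta^i$ when $i = j$ and $0$ otherwise; this is exactly $D(\delta)$, corresponding to the scaling map $\tau(x) = \delta x$. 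Taking $c = 1$ and $b = \gamma$ gives $a_{i,j} = \binom{j}{i}\gamma^{j-i}$ for $i \le j$, which is precisely $G(\gamma)$, corresponding to the translation $\tau(x) = x + \gamma$. One should verify these two identifications against the displayed $4\times 4$ corners to make sure the binomial bookkeeping is correct (e.g.\ the $(1,3)$ entry of $G(\gamma)$ is $\binom{3}{1}\gamma^2 = 3\gamma^2$, matching the display).

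Next I would argue that each such $A$ preserves $\mathcal{H}^{(1)}$. If $M \in \mathcal{H}^{(1)}$, then by the Riesz--Haviland theorem (Theorem \ref{thm:real}) there is a Borel probability measure $\mu$ on $\mathbb{R}$ with $M = M^{(\mu)}$; here $M(0,0) = 1$ guarantees $\mu$ is a probability measure. By Lemma \ref{Lemma:Amatrix}, $A^* M A = A^* M^{(\mu)} A = M^{(\mu \circ \tau^{-1})}$, which is again a moment matrix of a Borel probability measure (the pushforward of a probability measure is a probability measure), hence is positive semidefinite, Hankel, and has $(0,0)$-entry $\int 1 \, d(\mu\circ\tau^{-1}) = 1$. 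Thus $A^* M A \in \mathcal{H}^{(1)}$. One subtlety: $\mathcal{H}^{(1)}$ was defined using \emph{positive definite} Hankel matrices, so strictly one should note that positive semidefiniteness suffices for the application or else restrict to the positive definite case and check the transformation preserves nondegeneracy; since $A = D(\delta)$ (for $\delta \ne 0$) and $A = G(\gamma)$ are invertible in the sense of Definition \ref{def:inverse}, the quadratic form $c \mapsto \langle c | A^* M A c\rangle = \langle Ac | M (Ac)\rangle$ is strictly positive whenever $M$ is, so positive definiteness is indeed preserved.

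Finally, the inverse statements follow from the functoriality of the correspondence $\tau \mapsto A$ under composition, combined with Lemma \ref{lem:inverse}. Since $D(\delta)$ encodes $x \mapsto \delta x$ and $D(\delta^{-1})$ encodes $x \mapsto \delta^{-1} x$, and these two affine maps are mutual inverses, the product $D(\delta) D(\delta^{-1})$ should equal the matrix encoding the identity map $x \mapsto x$, namely the identity matrix (by Lemma \ref{Lemma:Amatrix} with $c = 1$, $b = 0$). Likewise $G(\gamma) G(-\gamma)$ encodes $x \mapsto x$, i.e.\ is the identity. I would either quote a composition lemma if one appears later, or verify directly by the matrix multiplication $\bigl(D(\delta)D(\delta^{-1})\bigr)_{i,j} = \sum_k \delta^i \delta_{i,k}\,\delta^{-k}\delta_{k,j} = \delta_{i,j}$ and $\bigl(G(\gamma)G(-\gamma)\bigr)_{i,j} = \sum_{i \le k \le j} \binom{k}{i}\gamma^{k-i}\binom{j}{k}(-\gamma)^{j-k}$, the latter collapsing to $\delta_{i,j}$ by the standard alternating-binomial identity $\sum_{k}(-1)^{j-k}\binom{k}{i}\binom{j}{k} = \delta_{i,j}$. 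The main obstacle I anticipate is nothing deep: it is just the careful verification that the binomial-coefficient arithmetic in these two concrete matrices matches the formula \eqref{Eqn:UpTriA} exactly (including the corner-entry consistency check) and handling the positive-definite-versus-semidefinite wrinkle in the definition of $\mathcal{H}^{(1)}$.
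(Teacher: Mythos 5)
Your proposal is correct, and it follows essentially the route the paper itself relies on: the paper states this lemma without a separate proof, since $D(\delta)$ and $G(\gamma)$ are exactly the matrices of Lemma \ref{Lemma:Amatrix} for the affine maps $x\mapsto\delta x$ and $x\mapsto x+\gamma$, and the preceding discussion already notes that $M^{(\mu)}\mapsto A^{*}M^{(\mu)}A=M^{(\mu\circ\tau^{-1})}$ sends moment matrices of probability measures to moment matrices of probability measures, hence preserves $\mathcal{H}^{(1)}$. Your extra care with the positive-definite (versus semidefinite) point and the direct binomial verification of $G(\gamma)G(-\gamma)=I$ are sound and only make explicit what the paper leaves implicit.
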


We are pleased to thank Christopher French for the proof of the
following proposition.  See also \cite{Fre07}, which implicity
uses Proposition \ref{Prop:French} throughout, and the papers
\cite{SpSt06} and \cite{Lay01}.

\begin{proposition}\label{Prop:French}
Suppose $A=(a_{i,j})$ is an infinite upper triangular, invertible
matrix which preserves $\mathcal{H}^{(1)}$.  Then either $A$ or
$-A$ is the product of matrices of the form $D(\delta)$ and
$G(\gamma)$, where $D$ and $G$ are defined in Lemma
\ref{Lemma:DGPreserveH1}.
\end{proposition}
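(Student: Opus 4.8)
\textit{Proof proposal.} The plan is to show that the hypotheses force the $n$-th column of $A$ to be the coefficient vector of $(cx+b)^{n}$ for a single affine map $\tau(x)=cx+b$, so that $A$ coincides with the matrix attached to $\tau$ in Lemma~\ref{Lemma:Amatrix} and then factors as $G(b/c)D(c)$. First I would normalize the sign: since $A$ is upper triangular, $a_{i,0}=0$ for $i>0$, so for any $M\in\mathcal{H}^{(1)}$ one has $(A^{*}MA)_{0,0}=a_{0,0}^{2}M_{0,0}=a_{0,0}^{2}$, and since $A^{*}MA\in\mathcal{H}^{(1)}$ has $(0,0)$ entry $1$ this forces $a_{0,0}=\pm1$; replacing $A$ by $-A$ (which again preserves $\mathcal{H}^{(1)}$, as $(-A)^{*}M(-A)=A^{*}MA$) I may assume $a_{0,0}=1$. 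Then I pass to the polynomial picture: set $P_{j}(x)=\sum_{i}a_{i,j}x^{i}$, so that the $j$-th column of $A$ is the coefficient vector of $P_{j}$ and $P_{0}=1$. For any probability measure $\mu$ with finite moments of all orders, a finite-sum computation from $M^{(\mu)}_{i,j}=\int x^{i+j}\,d\mu$ gives
\[
(A^{*}M^{(\mu)}A)_{k,\ell}=\int P_{k}(x)\,P_{\ell}(x)\,d\mu(x),
\]
so $A^{*}M^{(\mu)}A$ is the Gram matrix of the system $\{P_{j}\}$ in $L^{2}(\mu)$.

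Next I would exploit the Hankel constraint. Whenever $M^{(\mu)}\in\mathcal{H}^{(1)}$ the matrix $A^{*}M^{(\mu)}A$ is again Hankel, so its $(k,\ell)$ entry depends only on $k+\ell$; hence
\[
\int\bigl(P_{j}P_{k}-P_{j'}P_{k'}\bigr)\,d\mu=0\qquad\text{whenever }j+k=j'+k'.
\]
The main obstacle is to upgrade this to the polynomial identity $P_{j}P_{k}=P_{j'}P_{k'}$: one cannot simply test against Dirac masses, since those are only positive semidefinite and so lie outside $\mathcal{H}^{(1)}$. Instead, for each $t\in\mathbb{R}$ I would use the measures $\mu_{\varepsilon}=(1-\varepsilon)\delta_{t}+\varepsilon\rho$ with $\varepsilon\in(0,1)$, where $\rho$ is a fixed compactly supported positive definite probability measure, e.g.\ normalized Lebesgue measure on $[0,1]$ (with moment matrix the Hilbert matrix of Example~\ref{Ex:Lebesgue}). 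Each $\mu_{\varepsilon}$ has all moments finite; $M^{(\mu_{\varepsilon})}=(1-\varepsilon)M^{(\delta_{t})}+\varepsilon M^{(\rho)}$ is Hankel with $(0,0)$ entry $1$, and is positive definite because $\langle c\,|\,M^{(\mu_{\varepsilon})}c\rangle\geq\varepsilon\langle c\,|\,M^{(\rho)}c\rangle>0$ for every nonzero finitely supported $c$; thus $M^{(\mu_{\varepsilon})}\in\mathcal{H}^{(1)}$. Feeding $\mu=\mu_{\varepsilon}$ into the vanishing integral and letting $\varepsilon\to0^{+}$ yields $P_{j}(t)P_{k}(t)=P_{j'}(t)P_{k'}(t)$ for all real $t$, hence equality of polynomials.

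Finally I would solve these identities. Taking $(j,k,j',k')=(0,n,1,n-1)$ and using $P_{0}=1$ gives $P_{n}=P_{1}P_{n-1}$, so by induction $P_{n}=P_{1}^{\,n}$ for all $n$. Writing $P_{1}(x)=cx+b$, invertibility of the upper triangular matrix $A$ forces $c=a_{1,1}\neq0$ (otherwise columns $0$ and $1$ of $A$ are both multiples of $e_{0}$, so $A$ has nontrivial kernel). Consequently the $n$-th column of $A$ is the coefficient vector of $(cx+b)^{n}$, i.e.\ $a_{i,n}=\binom{n}{i}c^{i}b^{n-i}$, which is exactly the matrix of Lemma~\ref{Lemma:Amatrix} for $\tau(x)=cx+b$. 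The identity $(G(\gamma)D(c))_{i,j}=\binom{j}{i}\gamma^{j-i}c^{j}$, evaluated at $\gamma=b/c$, shows that this matrix equals $G(b/c)D(c)$. Hence $A=G(b/c)D(c)$, and undoing the sign normalization we conclude that $A$ or $-A$ is a product of matrices of the form $D(\delta)$ and $G(\gamma)$. The only non-routine point is the passage from vanishing integrals to polynomial identities, which is precisely why a genuinely positive definite perturbation $\rho$ is used (and why one checks that all the measures involved have moments of every order).
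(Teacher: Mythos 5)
Your proof is correct, and it takes a genuinely different route from the paper's. The paper argues entirely at the matrix level: after the same sign normalization it right-multiplies $A$ by $D(1/a_{1,1})$ and $G(-a_{0,1}/a_{1,1})$ to make the upper $2\times 2$ block the identity, and then shows by induction that the normalized matrix $\tilde{A}$ is the identity, by comparing two expressions for the $(k,0)$ entry of $\tilde{A}^{*}M\tilde{A}$ and ``treating $m_1,\dots,m_k$ as independent variables.'' You instead read $A^{*}M^{(\mu)}A$ as the Gram matrix of the column polynomials $P_j$ in $L^2(\mu)$, convert the Hankel constraint into the polynomial identities $P_jP_k=P_{j'}P_{k'}$ for $j+k=j'+k'$, deduce $P_n=P_1^{\,n}$, and recognize $A$ as the matrix of Lemma \ref{Lemma:Amatrix} for the affine map $P_1(x)=a_{1,1}x+a_{0,1}$, which factors as $G(a_{0,1}/a_{1,1})D(a_{1,1})$ by the entrywise identity you check. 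What your route buys: it explains structurally why only affine maps can occur (every column must be a power of one degree-one polynomial), and your perturbation family $\mu_\varepsilon=(1-\varepsilon)\delta_t+\varepsilon\rho$ supplies, rigorously, the ``rich enough'' supply of test matrices inside $\mathcal{H}^{(1)}$ that the paper's ``independent variables'' step quietly assumes — you are right that Dirac masses alone are inadmissible, and your check that $M^{(\mu_\varepsilon)}$ is strictly positive definite, Hankel, normalized, with all moments finite closes that gap cleanly. What the paper's route buys: it is shorter, purely algebraic, and never needs to realize elements of $\mathcal{H}^{(1)}$ as moment matrices of measures. One small caveat common to both arguments: the step $a_{1,1}\neq 0$ rests on the invertibility hypothesis, which the paper only defines in the weak idempotent sense of Definition \ref{def:inverse}; your kernel argument implicitly uses the stronger, standard notion, but the paper's own proof simply asserts $a_{1,1}\neq 0$ with no justification at all, so you are no worse off (and under your Gram-matrix picture one can even note that $a_{1,1}=0$ would make $A^{*}MA$ a rank-one matrix, contradicting positive definiteness, without invoking invertibility).
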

\begin{proof} If $A^{*}MA \in\mathcal{H}^{(1)}$ for all $M\in
\mathcal{H}^{(1)}$, then $a_{0,0}^2 = 1$, so $a_{0,0} = \pm 1$. If
$a_{0,0} = -1$, replace $A$ with $-A$.

Denote $A$ by
\begin{equation*}
A = \begin{bmatrix}
  1        & a_{0,1} & a_{0,2} & \cdots \\
  0        & a_{1,1} & a_{1,2} & \cdots \\
  0        &      0     & a_{2,2} & \cdots \\
\vdots &  \vdots &              & \ddots\\
\end{bmatrix}.
\end{equation*}

Since $a_{1,1}\neq 0$, multiply $A$ on the right by
$D(1/a_{1,1})$. The upper $2\times 2$ principal submatrix of
$AD(\delta)$ is
\begin{equation*}
\begin{bmatrix}
1 & a_{0,1}/a_{1,1}\\
0 & 1\\
\end{bmatrix}.
\end{equation*}
Now set $\gamma = -a_{0,1}/a_{1,1}$; the matrix
$AD(\delta)G(\gamma)$ now has the $2\times 2$ identity matrix as
its upper left principal submatrix.

Now set $\tilde{A} = AD(1/a_{1,1})G(-a_{0,1}/a_{1,1})$; we know
that $\tilde{A}$ preserves $\mathcal{H}^{(1)}$ by Lemma
\ref{Lemma:DGPreserveH1}. We will show that $\tilde{A}$ is the
infinite identity matrix.   We start with the $k=2$ case (instead
of $k=1$) to give more intuition.

Let $\tilde{A}$ be denoted
\begin{equation}
\tilde{A}=
\begin{bmatrix}
1 & 0 & a_0& \cdots\\
0 & 1  & a_1& \cdots\\
0 & 0 & a_2 & \cdots \\
\vdots & \vdots & &\ddots\\
\end{bmatrix}
\end{equation}
Consider the upper left $3\times 3$ principal submatrix of the
matrix $\tilde{A}^{*}M\tilde{A}$:
\begin{equation}
\begin{bmatrix}
1 & m_1 & a_0+a_1m_1 +a_2m_2\\
m_1 & m_2 &              *\\
a_0+a_1m_1 +a_2m_2 & * & * \\
\end{bmatrix}.
\end{equation}
Since $\tilde{A}^{*}M\tilde{A}$ belongs to $\mathcal{H}^{(1)}$ for
all $M\in\mathcal{H}^{(1)}$, we know that the above matrix is
Hankel. We can treat $m_1$ and $m_2$ as independent variables, so
$a_0 = a_1 = 0$ and $a_2 = 1$. Therefore $\tilde{A}$ actually has
an upper $3\times 3$ principal submatrix which is the identity.

Continuing inductively, suppose the upper $k\times k$ principal
submatrix of $\tilde{A}$ is the $k\times k$ identity matrix and
$\tilde{A}$ has the form
\begin{equation}
\tilde{A} =
\begin{bmatrix}
1 & 0 & 0 & \cdots & 0 & a_0& * &\cdots\\
0 & 1 & 0 & \cdots & 0& a_1& *& \cdots \\
\vdots     & & & & &   \vdots &  & \\
0 & 0 & 0 & \cdots & 1& a_{k-1} & *& \cdots\\
0 & 0 & 0  & \cdots & 0 & a_{k}& *&\cdots\\
0 & 0 & 0 & \cdots & 0 & 0 & *  &\cdots \\
\vdots & \vdots & \vdots & & \vdots &\vdots & &\ddots
\end{bmatrix}.
\end{equation}  Multiplying $\tilde{A}^{*}M\tilde{A}$, we find two expressions for the $(k,0)^{\mathrm{th}}$ entry:
\begin{equation}
m_{k} = \sum_{i=0}^{k} a_i m_i.
\end{equation}
Since this equation holds for all choices of $m_1, \ldots m_k$, we must have that $a_0 = \cdots = a_{k-1} = 0$ and $a_k = 1$.

Therefore, $\tilde{A} = AD(1/a_{1,1})G(-a_{0,1}/a_{1,1}) = I$, so
we can write
\begin{equation}
A = G(a_{0,1}/a_{1,1})D(a_{1,1}).
\end{equation}
\end{proof}

\chapter{Moment matrix transformation:  measurable
maps}\label{Sec:ComputeA}   In the previous chapter, we focused on the case of affine transformations comprising IFSs, but there is a great deal of interest in concrete applications to nonaffine examples; e.g. real and complex Julia sets.  We turn our attention to nonaffine measurable maps in this chapter.   We begin with an
arbitrary measurable map $\tau$ on a measure space $(X,\mu)$, where $X$ is a
subset of $\mathbb{R}^d$ for $d \geq 1$ or $\mathbb{C}$, and the moments of all orders with respect to $\mu$ and $\mu \circ \tau^{-1}$ are finite.  We ask
whether the transformation of moment matrices $M^{(\mu)} \mapsto
M^{(\mu \circ \tau^{-1})}$ can be expressed as a matrix triple
product $M^{(\mu \circ \tau^{-1})} = A^*M^{(\mu)}A$, for $A$ an
infinite matrix. In the previous section, we stated the
appropriate $A$ when $\tau$ is an affine map.  We now seek to find
an intertwining matrix $A$ for more general maps $\tau$.  

We find that $A$, when it can be
written down, need not be a triangular matrix.  We therefore also
will need to examine the hypotheses under which the formal matrix
products we write down are actually well defined.  


\section{Encoding matrix $A$ for $\tau$}\label{Sec:GeneralA}
The following analysis will be restricted to the real one-dimensional case.
We let $X \subset \mathbb{R}$ be a Borel subset and let $\mu$ be a Borel measure with moments of all orders on $X$.  Let $\tau$ be a measurable map on $X$, so that $\mu \circ \tau^{-1}$ also is a measure on $X$ with moments of all orders.     We seek to find an infinite matrix $A$ which enacts the moment matrix transformation from $M^{(\mu)}$ to $M^{(\mu \circ \tau^{-1})}$; more precisely, such that
\begin{equation}\label{eqn:Atau} M^{(\mu \circ \tau^{-1})} = A^* M^{(\mu)} A.
\end{equation}

In the space $L^2(\mu)$, let $\mathcal{P}$ be the closed linear
span of the monomials.  (The monomials are $L^2$ functions since
all moments are finite for $\mu$.)   The following results hinge
on a careful description of the Gram-Schmidt process on the
monomials, since the monomials could possibly have linear
dependence relations among them in $L^2(\mu)$ (See Example \ref{Ex:dependent} below).  Let $\{v_j\}_{j
\in \mathbb{N}_0}$ be the set of monomial functions, but if $x^j$
is dependent on $1,x,x^2, \ldots, x^{j-1}$, we remove $x^j$ from
the collection.  In other words, $v_j$ might not be $x^j$, but it
is a monomial function $x^k$ for some $k \geq j$, and there are no
finite dependence relations among the set
$\{v_j\}_{j\in \mathbb{N}_0}$.    

\begin{example}\label{Ex:dependent} Bernoulli IFSs which yield measures with linearly dependent monomials. \end{example}
Consider the affine IFS on $\mathbb{R}$ of  the form \[ \tau_0(x) = \lambda x,  \quad
\tau_1(x) = \lambda(x + 2).\]  The properties of the Hutchinson equilibrium measure depend on the choice of the parameter $\lambda$.  If $\lambda < \frac12$, the measure is supported on a generalized Cantor set, while when $\lambda \geq \frac12$ the measure is supported on an interval in the real line.

We can transform these into measures supported on subsets of the unit circle $\mathbb{T} \subset \mathbb{C}$ via the map $x \mapsto e^{2 \pi i x}$.  By \cite{JoPe98}, when $\lambda = \frac14$, there is an orthonormal basis of monomials \[ \{z^k\,:\, k= 0,1, 4, 5, 16, 17, 20, 21, \ldots\} \] for the corresponding measure contained in $ \mathbb{T}$.   The collection of all the monomials, however, does not have finite linear dependence relations, as described in Remark \ref{Rem:Cantor}.  On the other hand, if $\lambda = \frac13$, the corresponding circle measure does have finite linear dependence relations among the monomials.  These cases also arise when $\lambda > \frac12$ \cite{JKS07b}.
\hfill $\Diamond$
 
 \begin{remark}\label{Rem:Cantor}
A central theme in our study of moments in Chapters \ref{Sec:Exist} and \ref{Sec:ComputeA} is how the
study of moments relates to the self-similarity which characterizes
equilibrium measures for IFSs.  Because we use operator-theoretic methods
to study moment matrices, we encounter spectral information along the
way.  Historically (see \cite{Akh65}) the approach to spectral theory of
moments in $\mathbb{R}$ went as follows:
\begin{enumerate}
\item Start with the monomials $\{x^k\}_{k \in \mathbb{N}_0}$ viewed as a dense subset of (a subspace of)
$L^2(\mu)$.
\item Apply Gram-Schmidt to obtain the associated orthonormal polynomial
basis $\{p_k\}_{k \in \mathbb{N}_0}$.
\item The tri-diagonal matrix $J$ representing multiplication by $x$ in the ONB
$\{p_k\}_{k \in \mathbb{N}_0}$ (see Chapter \ref{Ch:Extensions}) gives rise to spectral information.
\end{enumerate}
However, the classical approach does not take into account the
self-similar properties that $\mu$ may have.  Following \cite{JoPe98} and
\cite{Jor06}, we can seek instead to encode the IFS structure directly into
the analysis of moments.  A case in point is the Cantor system mentioned above
with scaling by $\frac{1}{4}$ and two subdivisions (a measure $\mu$ with scaling
  dimension $\frac12$).  It was shown in \cite{JoPe98} that it is better to realize
$\mu$ as a complex measure which is supported on the circle in the
complex plane.  The monomials we are then led to study are the complex
monomials $\{z^k \}_{k \in \mathbb{N}_0}$ in $L^2(\mu)$.  

This set of all the monomials has no finite linear dependence relations.  To see this, we assume a polynomial $p(z) = \sum_{j=0}^n a_jz^j$ supported on the circle is zero in $L^2(\mu)$.  Then for almost every point $z$ in the support of $\mu$, the polynomial is zero.  Since this measure has no atoms (see \cite{DuJo06d},\cite{JKS07c}), the complement of a set of measure zero must be infinite.  We conclude that since $p$ has infinitely many zeros, it is the identically zero polynomial and hence $a_0=a_1=\cdots=a_n=0$.    

The standard application of Gram-Schmidt on the monomials would produce an ONB of polynomials, but it would be different from an ONB of monomials such as $\{z^k | k =0, 1, 4, 5, 16, 17, 20, 21, ...\}$.  Moreover, this approach misses the IFS scaling property of $\mu$.    \end{remark}

Returning to our quest to encode the map $\tau$ via a matrix $A$, we perform the Gram-Schmidt process on
the finitely linearly independent monomials $\{v_j\}_{j\in\mathbb{N}_0}$  to  construct an orthonormal
basis  $\{p_k\}_{k \in \mathbb{N}_0}$ of polynomials for
$\mathcal{P}$.   Part of the definition of the Gram-Schmidt
process gives that each polynomial $p_k$ is in the span of
$\{v_0,v_1, \ldots v_k\}$, and hence is orthogonal to $sp\{v_0,
\ldots, v_{k-1}\} = sp\{p_0, \ldots p_{k-1}\}$.  We can write the lower triangular matrix $G$ which enacts Gram-Schmidt as follows:
\begin{equation}\label{Eqn:GramMatrix} \sum_{i=0}^k G_{k,i}v_i = p_k.\end{equation} Also, assume that $\mu$ is a
probability measure, so $p_0$ is the constant function $1$, i.e.
$p_0(x)\equiv 1$.

In the cases where some of the monomials have been left out of the
sequence $\{v_j\}$, we define a moment matrix $N^{(\mu)}$ for
$\mu$ to be \begin{equation} N^{(\mu)}_{j,k} = \langle v_j|v_k
\rangle_{L^2(\mu)}. \end{equation} This adjusted moment matrix $N^{(\mu)}$
will be symmetric in the real cases but will not have the Hankel
property.   The moments contained in $N^{(\mu)}$ are total in
$\mathcal{P}$.

We place the condition on the map $\tau$  that the powers
$\tau^j$ are in  the space $\mathcal{P}$ for all $j \in
\mathbb{N}_0$.  From here, we define the following transformations
on $\mathcal{P}$:

\begin{eqnarray}  Rp_k  &=& v_k \label{Eqn:R} \\ Tp_k &=& v_k \circ \tau. \label{Eqn:T} \end{eqnarray}

$R$ and $T$ are well-defined operators on $\mathcal{P}$ since they are defined on an orthonormal basis.  They might be unbounded, but their domains do contain the ONB elements $\{p_k\}_{k \in \mathbb{N}_0}$ by definition.  Therefore, we can express $R$ and $T$ in matrix form with respect to $\{p_k\}_{k \in \mathbb{N}_0}$.  With a slight abuse of notation, we will also refer to these matrices as $R$ and $T$ respectively:

\begin{eqnarray}  R_{j,k} &=&  \langle p_j | Rp_k \rangle_{L^2(\mu)} = \langle p_j  | v_k \rangle_{L^2(\mu)} \label{eqn:grammatrix}\\ T_{j,k} &=& \langle p_j|Tp_k \rangle_{L^2(\mu)} =  \langle p_j|v_k \circ \tau \rangle_{L^2(\mu)}. \label{eqn:T}\end{eqnarray}
Observe that  the matrix for $R$ is upper triangular, since each $p_j$ is orthogonal to $sp\{v_0, \ldots v_{j-1}\} = sp\{p_0, \ldots p_{j-1}\}$.

\begin{lemma}\label{Lem:GramInverse} The matrix $R$ is invertible in the
sense of Definition \ref{def:inverse}, and the matrix of $R^{-1}$
is exactly the transpose of the matrix which enacts the
Gram-Schmidt process on the monomials, i.e.  \[ \sum_{i=0}^k
R^{-1}_{i,k}v_i = p_k. \]
\end{lemma}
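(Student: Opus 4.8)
The plan is to unwind the definitions of the two matrices involved and recognize that they are genuinely inverse to each other in the weak sense of Definition \ref{def:inverse}. Recall that the Gram--Schmidt matrix $G$ of Equation (\ref{Eqn:GramMatrix}) satisfies $\sum_{i=0}^{k} G_{k,i} v_i = p_k$, so $G$ is lower triangular with $G_{k,k} \neq 0$ for every $k$ (the diagonal entries are the reciprocals of the Gram--Schmidt normalization constants). The matrix $R$ from Equation (\ref{eqn:grammatrix}) has entries $R_{j,k} = \langle p_j | v_k \rangle_{L^2(\mu)}$, and is upper triangular as observed just after that equation. The first step is to write $v_k$ back in terms of the orthonormal basis: since $\{p_j\}$ is an ONB for $\mathcal{P}$ and $v_k \in \mathrm{sp}\{p_0,\dots,p_k\}$, we have the finite expansion $v_k = \sum_{j=0}^{k} \langle p_j | v_k \rangle\, p_j = \sum_{j=0}^{k} R_{j,k}\, p_j$. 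In other words, $R$ is exactly the (upper triangular) change-of-basis matrix expressing the $v_k$ in the ONB $\{p_j\}$, while $G$ expresses the $p_k$ in the $\{v_i\}$. These two expansions are mutually inverse as finite-dimensional changes of basis on each truncation, so I would argue that $R$ and $G^{\mathrm{tr}}$ (equivalently, $G$ and $R^{\mathrm{tr}}$) are inverse in the sense required.

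Concretely, the second step is to substitute one expansion into the other. Plugging $p_k = \sum_{i=0}^{k} G_{k,i} v_i$ into $v_k = \sum_{j=0}^{k} R_{j,k} p_j$ and then $v_i = \sum_{\ell} R_{\ell, i} p_\ell$, or more directly composing $v_k = \sum_{j=0}^k R_{j,k} \sum_{i=0}^j G_{j,i} v_i = \sum_{i} \bigl(\sum_{j} R_{j,k} G_{j,i}\bigr) v_i$; since the $\{v_i\}$ are finitely linearly independent in $L^2(\mu)$ (this is the whole point of having pruned the dependent monomials), we conclude $\sum_{j} G_{j,i} R_{j,k} = \delta_{i,k}$, i.e. $(G^{\mathrm{tr}} R)_{i,k} = \delta_{i,k}$, so $G^{\mathrm{tr}} R = I$. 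All sums here are finite because both $G$ and $R$ are triangular, so there is no convergence issue and the products are automatically well defined. Symmetrically, substituting the other way gives $R G^{\mathrm{tr}} = I$, using that $\{p_k\}$ is a linearly independent (indeed orthonormal) set. Hence with $E_1 = E_2 = I$ (the identity is trivially idempotent) Definition \ref{def:inverse} is satisfied, and $G^{\mathrm{tr}}$ is an inverse of $R$. Since the inverse produced this way is triangular with nonzero diagonal, one can also note it is genuinely unique here, though the lemma only claims existence. Writing out $\sum_{i=0}^{k} R^{-1}_{i,k} v_i$ with $R^{-1} = G^{\mathrm{tr}}$, i.e. $R^{-1}_{i,k} = G_{k,i}$, recovers exactly $\sum_{i=0}^k G_{k,i} v_i = p_k$, which is the displayed formula in the statement.

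The only genuinely delicate point is the linear independence input: the identity $G^{\mathrm{tr}} R = I$ is deduced from an equality of the form $\sum_i c_i v_i = 0$ in $L^2(\mu)$ forcing all $c_i = 0$, and this is precisely the property that the pruned sequence $\{v_j\}_{j\in\mathbb{N}_0}$ was constructed to have (with the relevant examples, such as the Cantor/Bernoulli cases, discussed in Example \ref{Ex:dependent} and Remark \ref{Rem:Cantor}). I would state this dependence explicitly. A secondary, purely bookkeeping point is to confirm that the matrix product $R\,G^{\mathrm{tr}}$ and $G^{\mathrm{tr}} R$ are ``well defined'' in the sense of Section \ref{Sec:InfMatrices}; but since $R$ is upper triangular and $G^{\mathrm{tr}}$ is upper triangular, each entry of the product is a finite sum, so this is immediate and needs only a one-line remark rather than an appeal to Lemma \ref{Lem:Product}. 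Everything else is routine finite linear algebra performed on the nested truncations and then read off at the level of matrix entries.
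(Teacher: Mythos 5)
Your argument is correct, and it reaches the same identification $R^{-1}=G^{\mathrm{tr}}$ as the paper, but by a slightly different mechanism. The paper proves only the one-sided identity $(RG^{\mathrm{tr}})_{i,j}=\delta_{i,j}$, and it does so by an inner-product computation: $\sum_k R_{i,k}G_{j,k}=\langle p_i\,|\,\sum_k G_{j,k}v_k\rangle=\langle p_i|p_j\rangle$, i.e.\ it uses only the orthonormality of the $p_j$'s; it then upgrades this to ``inverse'' status by invoking Lemma \ref{lem:inverse}, together with a short operator-level argument that the composition $RG^{\mathrm{tr}}$ is densely defined (the monomials lie in its domain). You instead stay entirely at the level of infinite-matrix entries and verify \emph{both} products directly: $RG^{\mathrm{tr}}=I$ from the orthonormality (equivalently, linear independence) of $\{p_k\}$, and $G^{\mathrm{tr}}R=I$ from the finite linear independence of the pruned monomials $\{v_i\}$ --- an input the paper never needs to invoke explicitly, but which is indeed guaranteed by the construction of the sequence $\{v_j\}$. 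What your route buys is the elimination of Lemma \ref{lem:inverse} and of any domain/density considerations, since triangularity makes every entry a finite sum; what the paper's route buys is that only one structural fact (orthonormality of the $p_j$'s) is needed, with the second identity coming for free from the abstract lemma. One small caveat: your parenthetical claim that the inverse is ``genuinely unique here'' is not supported by the paper's weak Definition \ref{def:inverse}, under which products need only equal idempotents rather than the identity (compare Example \ref{Ex:Inverses}, where an inverse fails to be unique); since the lemma claims only existence, you should drop or qualify that remark.
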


\begin{proof}
We compute the matrix product $RG^{tr}$.  Note that the triangular structure of $G^{tr}$ gives a finite sum, so the product is well-defined:

\begin{eqnarray*}  (RG^{tr})_{i,j} &=& \sum_{k=0}^{\infty} R_{i,k}G_{j,k}\\
&=& \sum_{k=0}^{\infty} \langle p_j|v_k \rangle_{L^2(\mu)}G_{j,k} \\
&=& \Bigr\langle p_j|\sum_{k=0}^j G_{j,k}v_k\Bigr\rangle_{L^2(\mu)}\\&=& \langle p_i|p_j \rangle\\ &=& \delta_{i,j}.  \end{eqnarray*}
We need to show that this composition has a dense domain.  One can readily verify that when a monomial $v_j$ is expressed as a column vector with respect to $\{p_k\}_{k \in \mathbb{N}_0}$, that column vector only has nonzero entries $\langle p_k|v_j \rangle$ for $k \leq j$.  Moreover, by the upper triangular structure of $G^{tr}$, the matrix-vector product $G^{tr}v_j$ must be in $\mathcal{D}$.    These are in the domain of the matrix for $R$, so we can now conclude that the monomials
$v_k$ are all in the domain of the operator $RG^{tr}$.  Therefore, $RG^{tr}$ has dense domain and
on that domain, $RG^{tr}$ is the identity.  By Lemma \ref{lem:inverse},
$G^{tr}$ and $R$ are  inverses of each other, and we write $G^{tr} =
R^{-1}$.
\end{proof}

Next, we will discuss the adjoints $R^*$ and $T^*$.  These can be
written down as matrices, but it is not always true that their
domains are dense in $\mathcal{P}$.  We will show that there
exists a \textit{renormalization} of $L^2(\mu)$ such that both
$R^*$ and $T^*$ have dense domains in $\mathcal{P}$.  This is
equivalent (see \cite[Proposition 1.6, Chapter 10]{Con90}) to
saying $R$ and $T$ are closable operators.

Given a set of nonzero weights $w=\{w_i\}_{i \in \mathbb{N}_0}$, define the space $\mathcal{P}_w$ to be the set of all measurable functions which are in the span of the monomials with respect to the weighted norm \[
\|f\|_{\mathcal{P}_w}^2 = \sum_{i=0}^{\infty} w_i |\langle f|p_i \rangle_{L^2(\mu)}|^2.\]
To be precise, we see that $\int f p_i \mathrm{d}\mu$ must be finite for all $i \in \mathbb{N}_0$, but depending on the weights, observe that $\mathcal{P}_w$ could include functions which are not in $L^2(\mu)$.
The inner product, then, on $\mathcal{P}_w$ is given by \[ \langle f|g \rangle_{\mathcal{P}_w} = \sum_k w_k \langle f|p_k\rangle_{L^2(\mu)} \langle p_k|g\rangle_{L^2(\mu)}.\]

If we consider the operator $R$ as defined above, but as a map from $\mathcal{P}_w$ to $\mathcal{P}$, then the adjoint $R_w^*: \mathcal{P} \rightarrow \mathcal{P}_w$ is given by 
\begin{equation}\label{Eqn:DefnRStar}
R_w^*g = \sum_k \frac{1}{w_k} \langle v_k| g \rangle_{L^2(\mu)} p_k
\end{equation}
on every $g$ in the domain of $R^*_w$ (i.e. every $g$ such that $R_w^*g \in \mathcal{P}_w$.)  Observe that the weights change the adjoint, so we denote it by $R^*_w$.  To verify Equation (\ref{Eqn:DefnRStar}), we compute for $f \in \mathcal{P}_w \cap\, \mathrm{Dom}(R)$ and $g \in \mathcal{P}$; 
 \[Rf = \sum_{k=0}^{\infty} \langle p_k|f \rangle_{L^2(\mu)} v_k,\]
which gives

 \[ \langle g|Rf \rangle_{L^2(\mu)} = \Bigr\langle g|\sum_{k=0}^{\infty} \langle p_k|f \rangle_{L^2(\mu)}  v_k\Bigr\rangle_{L^2(\mu)} = \sum_{k=0}^{\infty} \langle p_k|f \rangle_{L^2(\mu)}  \langle g|v_k \rangle_{L^2(\mu)} .\] 
 
 We then compute 

\begin{eqnarray*}   \Bigr\langle \sum_k \frac{1}{w_k} \langle v_k|g \rangle_{L^2(\mu)} p_k \Bigr| f \Bigr\rangle_{\mathcal{P}_w} &=&
\sum_j w_j \Bigr\langle  \sum_k \frac{1}{w_k} \langle v_k|g \rangle_{L^2(\mu)} p_k \Bigr| p_j \Bigr \rangle_{L^2(\mu)} \langle p_j|f \rangle_{L^2(\mu)} \\ &=& \sum_k \langle g|v_k \rangle_{L^2(\mu)} \langle p_k|f\rangle_{L^2(\mu)}.
\end{eqnarray*}
The equality of these two expressions verifies our definition of $R^*_w$,  at least for the dense set of finite linear combinations of the orthogonal polynomials, hence holds for all $f$ in the domain of $R$.  


In the next lemma, we produce a weighted norm on the space
$\mathcal{P}$ using weights $\{w_k\}$ which guarantee that $R$
will be closable in the weighted space $\mathcal{P}_w$.

\begin{lemma}\label{Lemma:RClosable}
Given the measure space $(X, \mu)$ as above having finite moments of all orders and given the operator $R$ defined in Equation (\ref{Eqn:R}), there exist weights $\{w_i\}_{i \in \mathbb{N}_0}$ such that $R: \mathcal{P}_w \rightarrow
\mathcal{P}$  is closable.
\end{lemma}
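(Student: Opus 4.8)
The plan is to choose weights $w=\{w_i\}_{i\in\mathbb{N}_0}$ that force the adjoint operator $R^*_w:\mathcal{P}\to\mathcal{P}_w$ to have dense domain; by the criterion recalled earlier (\cite{ReSi80}; see also \cite[Proposition 1.6, Chapter 10]{Con90}), $R:\mathcal{P}_w\to\mathcal{P}$ is closable if and only if $R^*_w$ is densely defined in $\mathcal{P}$. Since $\{p_\ell\}_{\ell\in\mathbb{N}_0}$ is an orthonormal basis of $\mathcal{P}$ and $\mathrm{Dom}(R^*_w)$ is a linear subspace, it suffices to arrange that $p_\ell\in\mathrm{Dom}(R^*_w)$ for every $\ell$; then $\mathrm{Dom}(R^*_w)$ contains the dense subspace $\mathrm{sp}\{p_\ell:\ell\in\mathbb{N}_0\}$ of $\mathcal{P}$.

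To test membership of $p_\ell$ in $\mathrm{Dom}(R^*_w)$, I would use the formula (\ref{Eqn:DefnRStar}) for $R^*_w$. Writing $v_k=\sum_{j=0}^k R_{j,k}p_j$ in the orthonormal basis, with $R_{j,k}=\langle p_j|v_k\rangle_{L^2(\mu)}$ upper triangular (as observed just after (\ref{eqn:T})), we get $\langle v_k|p_\ell\rangle_{L^2(\mu)}=R_{\ell,k}$ in the real case, which vanishes unless $k\geq\ell$. Hence (\ref{Eqn:DefnRStar}) gives $R^*_w p_\ell=\sum_{k\geq\ell}w_k^{-1}R_{\ell,k}\,p_k$, and a direct computation with the definition of $\|\cdot\|_{\mathcal{P}_w}$ shows
\[
\|R^*_w p_\ell\|_{\mathcal{P}_w}^2=\sum_{i\geq\ell}w_i\,\Bigl|\frac{R_{\ell,i}}{w_i}\Bigr|^2=\sum_{i\geq\ell}\frac{|R_{\ell,i}|^2}{w_i}.
\]
Thus $p_\ell\in\mathrm{Dom}(R^*_w)$ exactly when this series converges, and I only need a single weight sequence making all of them converge at once.

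The key estimate is uniform in $\ell$. By Cauchy--Schwarz in $L^2(\mu)$ and $\|p_\ell\|_{L^2(\mu)}=1$,
\[
|R_{\ell,i}|=\bigl|\langle p_\ell|v_i\rangle_{L^2(\mu)}\bigr|\leq\|v_i\|_{L^2(\mu)},
\]
and since $v_i$ is the monomial $x^{n_i}$ for some $n_i\geq i$, the standing hypothesis that $\mu$ has finite moments of all orders gives $\|v_i\|_{L^2(\mu)}^2=\int_X x^{2n_i}\,d\mu(x)<\infty$. Choosing, for instance, $w_i:=2^{i}\bigl(1+\|v_i\|_{L^2(\mu)}^2\bigr)>0$, we obtain for every $\ell$
\[
\|R^*_w p_\ell\|_{\mathcal{P}_w}^2\leq\sum_{i\geq\ell}\frac{\|v_i\|_{L^2(\mu)}^2}{w_i}\leq\sum_{i=0}^{\infty}\frac{\|v_i\|_{L^2(\mu)}^2}{2^{i}\bigl(1+\|v_i\|_{L^2(\mu)}^2\bigr)}\leq\sum_{i=0}^{\infty}2^{-i}=2<\infty,
\]
so every $p_\ell$ lies in $\mathrm{Dom}(R^*_w)$. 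Hence $R^*_w$ is densely defined and $R:\mathcal{P}_w\to\mathcal{P}$ is closable with this choice of $w$.

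I do not expect a serious obstacle: the lemma is soft, and the only points demanding care are (i) using the equivalence ``closable $\iff$ adjoint densely defined'' with $R^*_w$ taken relative to the \emph{weighted} inner product of $\mathcal{P}_w$ (which is precisely why the weights enter (\ref{Eqn:DefnRStar})), and (ii) making the bound on $|R_{\ell,i}|$ independent of $\ell$, so that a single weight sequence serves all basis vectors $p_\ell$ simultaneously. It is also worth noting that any pointwise-larger weight sequence works equally well, which will be convenient when the analogous renormalization is needed later for the operator $T$ of (\ref{Eqn:T}).
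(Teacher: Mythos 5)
Your proof is correct, but it reaches the dense domain of $R^*_w$ by a different route than the paper. The paper's own proof fixes the weights $w_k=(1+k^2)\mathcal{M}_k^2$ built from the $L^1$-moment data $\mathcal{M}_k=\int_X|v_k|\,\mathrm{d}\mu$ (the same recipe reused later for $T$ in Lemma \ref{Lem:TClosable} and for $F$ in Lemma \ref{Lemma:FClosable}, cf.\ Equation (\ref{Eqn:RWeights})), and shows that every $f\in L^{\infty}(\mu)$ satisfies the membership criterion via $|\langle v_k|f\rangle_{L^2(\mu)}|\leq\|f\|_{\infty}\mathcal{M}_k$ and $\{\mathcal{M}_k^2/w_k\}\in\ell^1$; density of $L^{\infty}(\mu)$ then finishes the argument. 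You instead target only the orthonormal polynomials $p_\ell$, use Cauchy--Schwarz to get the $\ell$-uniform bound $|R_{\ell,i}|\leq\|v_i\|_{L^2(\mu)}$, and take geometric weights $w_i=2^i(1+\|v_i\|_{L^2(\mu)}^2)$, so that a single summable majorant handles all $\ell$ at once. What each buys: the paper's choice exhibits a larger explicit subset of $\mathrm{dom}(R^*_w)$ (all essentially bounded functions) and a weight formula that is quoted verbatim in later lemmas, whereas yours is more minimal and self-contained --- it needs only the finiteness of the even moments $\int v_i^2\,\mathrm{d}\mu$, does not invoke the inclusion $L^{\infty}(\mu)\subseteq L^2(\mu)$, and, since the $p_\ell$ already lie in $\mathcal{P}$, it sidesteps the small point (implicit in the paper) that density of $L^{\infty}(\mu)$ in $L^2(\mu)$ must be transferred to density inside the subspace $\mathcal{P}$. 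Your closing observations are also sound: the criterion is stable under pointwise enlargement of the weights, which is exactly what is needed when a common weight sequence must serve both $R$ and $T$ later on; and the upper triangularity of $R$ is a convenience rather than a necessity in your estimate, since the bound $\sum_i|R_{\ell,i}|^2/w_i<\infty$ would hold over all $i$ anyway.
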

\begin{proof} 
Observe that  $R_w^*g\in \mathcal{P}_w$ precisely when
\begin{equation}\label{Eqn:RStar2}
\sum_k w_k | \langle p_k| R_w^*g \rangle_{L^2(\mu)}|^2 < \infty.
\end{equation}
Substituting in the definition of $R^*_w$ shows that Equation (\ref{Eqn:RStar2}) is true if and only if
\begin{equation}
\sum_k \frac{1}{w_k} \Big|\langle v_k|g\rangle_{L^2(\mu)}\Big|^2  < \infty.
\end{equation}
Since $\mu$ has finite $0^{\mathrm{th}}$ moment, i.e. is a finite measure, we have $L^{\infty}(\mu) \subseteq L^2(\mu)$.  Set $\mathcal{M}_k:=\int |v_k| d\mu(x)$, and suppose $f\in
L^{\infty}(\mu)$.  Thenf
\begin{equation}
\Big| \langle v_k|f \rangle_{L^2(\mu)}\Big| \leq \int_{\mathbb{R}} |v_k f |\mathrm{d}\mu \leq
\|f\|_{L^{\infty}(\mu)} \mathcal{M}_k.
\end{equation}
Therefore, if $\{\mathcal{M}_k^2/w_k\}\in \ell^1$, then
\begin{equation}
L^{\infty}(\mu)\subset \textrm{dom}(R^*).
\end{equation}
We thus explicitly define a choice of weights $w = \{w_k\}_{k \in \mathbb{N}_0}$ by
\begin{equation}\label{Eqn:RWeights}
w_k:=(1 + k^2)\mathcal{M}_k^2.
\end{equation}
Since $L^{\infty}(\mu)$ is dense in $L^2(\mu)$, we now know that
$\textrm{dom}(R_w^*)$ is dense in $\mathcal{P}$, where $w = \{w_k\}_{k \in \mathbb{N}_0}$ is
defined in (\ref{Eqn:RWeights}).  Therefore $R:\mathcal{P}_w
\rightarrow \mathcal{P}$ is closable. 
\end{proof}

The space $\mathcal{P}_w$ has the weighted orthogonal polynomials $\left\{\frac{p_k}{\sqrt{w_k}}\right\}_{k \in \mathbb{N}_0}$ as an orthonormal basis.  We will denote these vectors $\{p_k^w\}_{k \in \mathbb{N}_0}$.  We next observe the following property of the matrix $R_w^*R$ written in terms of the orthonormal basis $\{p^w_k\}_{k \in \mathbb{N}_0}$:

\begin{eqnarray}\label{Eqn:RStarR}  R_w^*R_{i,j}&=& \langle p_i^w | R^*_wRp_j^w \rangle_{\mathcal{P}_w} \\ \nonumber&=& \langle Rp_i^w|Rp_j^w \rangle_{L^2(\mu)} \\ \nonumber&=& \frac{\langle v_i|v_j \rangle_{L^2(\mu)}}{\sqrt{w_iw_j}} \\\nonumber &=& \frac{1}{\sqrt{w_iw_j}}N^{(\mu)}_{i,j}\end{eqnarray}.

So $R^*_wR$ is a self-adjoint operator and is exactly a weighted version of the moment matrix.

Next, we recall our definition (Equation (\ref{Eqn:T})) for the
operator $T$ which maps $p_i$ to $v_i \circ \tau$ for each $i \in
\mathbb{N}_0$.   By our hypothesis on $\tau$, we know $T$ is an
operator on $\mathcal{P}$.  We wish $T$ to also be closable, so
that $T^*$ is densely defined.  As we discovered for $R$, this may
require weights.  Since the adjoint depends on the weights, we will denote it $T^*_w$.

A function $f$ is in the domain of $T_w^*$ with respect to weights
$w = \{w_k\}_{k \in \mathbb{N}_0}$ if $T_w^*f$ is in  $\mathcal{P}_w$.  The same
computations used for $R^*_w$, replacing each $v_k$ with $v_k
\circ \tau$, show
\begin{equation}\label{Eqn:DefTStar} T_w^*g = \sum_{k=0}^{\infty} \frac{1}{w_k} \langle v_k \circ \tau|g \rangle_{L^2(\mu)}. \end{equation}   We also find that $g \in \mathrm{dom}(T_w^*)$ if and only if \begin{equation}\label{Eqn:DomTStar} \sum_{k=0}^{\infty} \frac{1}{w_k} \Big|\langle v_k \circ \tau|g \rangle_{L^2(\mu)}\Big|^2 < \infty. \end{equation}

The condition on $\tau$ under which there are weights such that $T^*_w$ is densely defined on $\mathcal{P}_w$ is given below.

\begin{lemma}\label{Lem:TClosable} Let $\mathcal{M}'_k  = \int_X |v_k \circ \tau |d\mu = \int |v_k| d(\mu \circ \tau^{-1})$.
If $\mathcal{M}'_k < \infty$ for all $k \in \mathbb{N}_0$,  we
define the weights $w=\{w_k\}_{k \in \mathbb{N}_0}$ by \[ w_k = (\mathcal{M}'_k)^2
(1+k^2),
\] for which $T^*_w$ is densely defined from $\mathcal{P}$ to
$\mathcal{P}_w$.
\end{lemma}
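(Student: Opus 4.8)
The plan is to mirror the proof of Lemma \ref{Lemma:RClosable} almost verbatim, replacing each monomial $v_k$ by its composition $v_k\circ\tau$ and $\mathcal{M}_k$ by $\mathcal{M}'_k$. First I would invoke the characterization already recorded in Equation (\ref{Eqn:DomTStar}): a function $g$ lies in $\mathrm{dom}(T_w^*)$ precisely when $\sum_{k}\frac{1}{w_k}\bigl|\langle v_k\circ\tau\,|\,g\rangle_{L^2(\mu)}\bigr|^2<\infty$. Since by hypothesis $\mu$ has finite zeroth moment (it is a finite measure), we have the inclusion $L^\infty(\mu)\subseteq L^2(\mu)$, and for $f\in L^\infty(\mu)$ the estimate $\bigl|\langle v_k\circ\tau\,|\,f\rangle_{L^2(\mu)}\bigr|\le\int_X|v_k\circ\tau|\,|f|\,\mathrm{d}\mu\le\|f\|_{L^\infty(\mu)}\,\mathcal{M}'_k$ holds, using exactly that $\mathcal{M}'_k=\int_X|v_k\circ\tau|\,\mathrm{d}\mu$ is finite by assumption.

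Feeding this bound into the series, for $f\in L^\infty(\mu)$ we obtain $\sum_k\frac{1}{w_k}\bigl|\langle v_k\circ\tau\,|\,f\rangle_{L^2(\mu)}\bigr|^2\le\|f\|_{L^\infty(\mu)}^2\sum_k\frac{(\mathcal{M}'_k)^2}{w_k}$, and the choice $w_k=(\mathcal{M}'_k)^2(1+k^2)$ makes $\sum_k\frac{(\mathcal{M}'_k)^2}{w_k}=\sum_k\frac{1}{1+k^2}<\infty$. Hence the orthogonal projection onto $\mathcal{P}$ of every element of $L^\infty(\mu)$ lies in $\mathrm{dom}(T_w^*)$; note that $v_k\circ\tau\in\mathcal{P}$ by the standing hypothesis on $\tau$, so the pairing $\langle v_k\circ\tau\,|\,g\rangle$ depends only on the $\mathcal{P}$-component of $g$, and the defining series is insensitive to whatever is orthogonal to $\mathcal{P}$. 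Since $L^\infty(\mu)$ is dense in $L^2(\mu)$ and the projection onto $\mathcal{P}$ is continuous, the image of $L^\infty(\mu)$ under that projection is dense in $\mathcal{P}$; therefore $\mathrm{dom}(T_w^*)$ is dense in $\mathcal{P}$, which is the assertion. (By \cite[Proposition 1.6, Chapter 10]{Con90} this is also equivalent to $T:\mathcal{P}_w\to\mathcal{P}$ being closable.)

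The only point requiring a little care — and the closest thing here to an obstacle — is the bookkeeping between $\mathcal{P}$ and the ambient space $L^2(\mu)$: $T_w^*$ is an operator on $\mathcal{P}$, not on $L^2(\mu)$, so one must observe both that projecting $L^\infty(\mu)$ into $\mathcal{P}$ still yields a dense set and that membership in $\mathrm{dom}(T_w^*)$ is unaffected by the component orthogonal to $\mathcal{P}$. Everything else is the same finiteness-of-weighted-$\ell^2$ computation as in Lemma \ref{Lemma:RClosable}; in particular, the hypothesis $\mathcal{M}'_k<\infty$ for all $k$ is exactly what makes each term of the series well defined before the weights $w_k$ tame the sum.
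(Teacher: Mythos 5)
Your proposal is correct and follows essentially the same route as the paper, which simply repeats the computation of Lemma \ref{Lemma:RClosable} with $v_k\circ\tau$ in place of $v_k$ and $\mathcal{M}'_k$ in place of $\mathcal{M}_k$, concluding that $L^\infty(\mu)$ lies in $\mathrm{dom}(T_w^*)$ because $\sum_k(\mathcal{M}'_k)^2/w_k=\sum_k 1/(1+k^2)<\infty$, and then invoking density of $L^\infty(\mu)$. Your extra remark about projecting onto $\mathcal{P}$ is a harmless (and slightly more careful) elaboration of the same density step, not a different argument.
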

\begin{proof}
Repeating the computation in Lemma \ref{Lemma:RClosable}, we find
that $f \in L^{\infty}(\mu)$ is in the domain of $T_w^*$ if and only
if
\[ \sum_k \frac{1}{w_k} (\mathcal{M}')^2 \|f\|_{\infty} < \infty, \] which holds for the given weights.
\end{proof}

This then gives us an expression for the matrix of the self-adjoint operator $T_w^*T$ with respect to our orthonormal basis of polynomials:

\begin{eqnarray}\label{Eqn:TStarT} (T_w^*T)_{i,j} &=& \langle p^w_i|T^*_wT p^w_j \rangle_{\mathcal{P}_w} \nonumber\\ &=& \langle Tp^w_i | Tp^w_j \rangle_{L^2(\mu)} \nonumber\\&=& \frac{\langle v_i \circ \tau|v_j \circ \tau \rangle_{L^2(\mu)}}{\sqrt{w_iw_j}} \nonumber \\ &=& \frac{ 1}{\sqrt{w_iw_j}} N^{\mu \circ \tau^{-1}}_{i,j}. \end{eqnarray}

We next define a matrix
$A$ that will give coefficients of the functions $v_k \circ \tau$
expanded in terms of the monomials $\{v_j\}_{j \in \mathbb{N}_0}$:

 \begin{equation}\label{Eqn:DefA}
v_k \circ \tau = \sum_{j=0}^{\infty} A_{jk}v_j.
\end{equation}
The entries of $A$ exist since we assumed that each $v_k \circ
\tau$ is an element of  $\mathcal{P}$ and therefore has an $L^2$-convergent
expansion in the monomials.  We may or may not, however, be able to compute
entries of the matrix $A$ directly from this definition.  

\begin{example}\label{Ex:Polynomial} A nonaffine map:  $\tau(x) = x^2+b$ \end{example}   This is perhaps the simplest example on a nonaffine transformation.  The matrix $A$ which encodes $\tau(x)=x^2+b$ can be computed from the powers of $\tau$ to satisfy Equation (\ref{Eqn:DefA}): 

\[A = \left[ \begin{matrix} 1&b&b^2 &b^3& \cdots\\ 0&0& 0 &0& \cdots\\
0&1&2b&3b^2 &\cdots\\ 0&0&0 &0& \cdots \\  0&0&1&3b&\cdots \\ \vdots & \vdots & \vdots  & \vdots & \ddots  \end{matrix} \right].  \]
\hfill $\Diamond$

We wish to think of $A$ as the matrix representation for an operator on the weighted space $\mathcal{P}_w$.  With an abuse of notation, we will also refer to this operator as $A$.  In general, it is not even certain that the operator $A$ has dense domain.  Throughout the remainder of this section, however, we will restrict our attention to the cases in which the domain of $A$ contains the monomials, and hence the orthogonal polynomials.  

\begin{proposition}\label{Prop:TRA}  Given $A$ as in Equation (\ref{Eqn:DefA}) and such that $A$ is the matrix representation of an operator (also denoted $A$) on $L^2(\mu)$ with respect to the ONB $\{p_k\}_{k\in \mathbb{N}_0}$, if the operator composition $RA$ is densely defined, then \[RA=T\] on the domain of $RA$, i.e. $RA$ is a restriction of $T$.
\end{proposition}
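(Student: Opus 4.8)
The plan is to prove this operator identity by first establishing $RA=T$ at the level of matrices with respect to the orthonormal basis $\{p_k\}_{k\in\mathbb{N}_0}$, and then upgrading from matrices to operators using the standing assumption that $\mathrm{dom}(A)$ contains the orthogonal polynomials (hence the monomials $v_k$). The only analytic input needed is the $L^2(\mu)$-convergence of the expansions $v_k\circ\tau=\sum_j A_{j,k}v_j$, which is available because each $v_k\circ\tau$ lies in $\mathcal{P}$.

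First I would verify the entrywise identity $(RA)_{i,k}=T_{i,k}$ for all $i,k\in\mathbb{N}_0$. Starting from $T_{i,k}=\langle p_i\,|\,v_k\circ\tau\rangle_{L^2(\mu)}$ (Equation (\ref{eqn:T})), substituting the defining expansion (\ref{Eqn:DefA}), and using that $\langle p_i\,|\,\cdot\,\rangle_{L^2(\mu)}$ is continuous on $L^2(\mu)$, one obtains $T_{i,k}=\sum_j A_{j,k}\langle p_i\,|\,v_j\rangle_{L^2(\mu)}=\sum_j R_{i,j}A_{j,k}$, which is exactly the $(i,k)$ entry of the matrix product $RA$. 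In particular this shows that the product sums converge, so the matrix product $RA$ is well defined in the sense of Section \ref{Sec:InfMatrices} and has the same matrix as $T$.

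Next I would pass to the operators. Since $p_k\in\mathrm{dom}(A)$ by hypothesis, $Ap_k$ is the $k$-th column $\sum_j A_{j,k}p_j$ of $A$; because $\sum_j A_{j,k}v_j$ converges in $L^2(\mu)$ to $v_k\circ\tau$, the vector $Ap_k$ lies in $\mathrm{dom}(R)$ with $R(Ap_k)=v_k\circ\tau$. Hence $p_k\in\mathrm{dom}(RA)$ and $RA\,p_k=v_k\circ\tau=Tp_k$, so $RA$ and $T$ agree on the span $\mathcal{D}$ of the $p_k$'s. For an arbitrary $x=\sum_k x_kp_k\in\mathrm{dom}(RA)$, the hypothesis $Ax\in\mathrm{dom}(R)$ gives $RAx=R(Ax)=\sum_j(Ax)_j v_j$ convergent in $L^2(\mu)$, and rearranging $\sum_j\bigl(\sum_k A_{j,k}x_k\bigr)v_j=\sum_k x_k\bigl(\sum_j A_{j,k}v_j\bigr)=\sum_k x_k(v_k\circ\tau)$ then places $x$ in $\mathrm{dom}(T)$ with $Tx=RAx$; this is the desired inclusion $RA\subseteq T$.

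The main obstacle is justifying that last rearrangement of a double series inside an $L^2(\mu)$-convergent expansion, since no absolute-convergence majorant is given a priori. I would handle it by invoking that $R$ is closable (Lemma \ref{Lemma:RClosable}): the partial sums $\sum_{j\le N}(Ax)_jp_j$ converge to $Ax$ in $\mathcal{P}_w$ while their $R$-images $\sum_{j\le N}(Ax)_jv_j$ converge to $RAx$ in $\mathcal{P}$, and comparing these with the partial sums $\sum_{k\le K}x_k(v_k\circ\tau)$ --- each the $R$-image of $\sum_{k\le K}x_k(Ap_k)$ --- together with the $L^2$-convergence of every individual expansion $v_k\circ\tau=\sum_j A_{j,k}v_j$ forces the two limits to agree. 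Alternatively, if one is content to read ``restriction'' as agreement of the two operators on the core $\mathcal{D}$ together with the coincidence of matrix representations established in the second paragraph, then the first two steps already yield the statement and this last step may be omitted.
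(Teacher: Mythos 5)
Your first two paragraphs are, in substance, the paper's own proof: the paper also establishes the identity exactly on the basis vectors, writing $Tp_k=v_k\circ\tau=\sum_j A_{j,k}v_j=\sum_j\langle p_j|Ap_k\rangle_{L^2(\mu)}v_j=RAp_k$, where the last equality is the Parseval-type formula $Rf=\sum_j\langle p_j|f\rangle_{L^2(\mu)}v_j$ for $f\in\mathrm{dom}(R)$ applied to $f=Ap_k$. Your preliminary entrywise computation $(RA)_{i,k}=T_{i,k}$ is a harmless (and correct) addition. One small but real difference: you infer that $Ap_k\in\mathrm{dom}(R)$ \emph{from} the $L^2(\mu)$-convergence of $\sum_j A_{j,k}v_j$. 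That step silently takes $R$ to be the maximal operator defined by the formula $f\mapsto\sum_j\langle p_j|f\rangle v_j$ on all $f$ for which the series converges; if instead $R$ is the closure of its restriction to finite spans (the reading suggested by Lemma \ref{Lemma:RClosable}), convergence of the image series alone does not place $Ap_k$ in the domain --- one would also need the partial sums $\sum_{j\le N}A_{j,k}p_j$ to converge to $Ap_k$ in the domain-side ($\mathcal{P}_w$) norm. The paper sidesteps this by simply adding the proviso that the computation holds for every $k$ with $Ap_k\in\mathrm{dom}(R)$, which is in effect part of the hypothesis that the composition $RA$ is defined there.

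The genuine gap is in your third paragraph, and you half-acknowledge it yourself. For general $x\in\mathrm{dom}(RA)$ the rearrangement $\sum_j\bigl(\sum_k A_{j,k}x_k\bigr)v_j=\sum_k x_k\bigl(\sum_j A_{j,k}v_j\bigr)$ is an interchange of two limits inside $L^2(\mu)$, and ``comparing the partial sums'' does not force the limits to agree: the closability of $R$ controls the $j$-limit of $R$-images of vectors converging in $\mathcal{P}_w$, but it gives no control linking the truncations in $k$ to the truncations in $j$; one would need closedness or continuity of the map $c\mapsto\sum_k c_k(v_k\circ\tau)$ (i.e., of $T$), or an absolute-convergence majorant, and neither is available from the hypotheses. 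So the last step as written does not go through. However, the paper's proof establishes only the agreement $RAp_k=Tp_k$ on the orthogonal polynomials (the statement's ``$RA$ is a restriction of $T$'' is meant at that level of precision, since $T$ itself is only pinned down on the ONB), so your stated fallback --- agreement on the span of the $p_k$'s together with the matrix identity --- is exactly what the paper proves, and with that reading your argument is complete.
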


\begin{proof} Suppose $f \in \mathrm{dom}(R)$.  Then by Parseval we have
\begin{eqnarray*}  Rf &=& \sum_{j=0}^{\infty} \langle p_j|f
\rangle_{L^2(\mu)} Rp_j\\ &=& \sum_{j=0}^{\infty} \langle p_j|f \rangle_{L^2(\mu)}
v_j, \end{eqnarray*} with convergence in the $L^2$ sense.

We have assumed that, as an operator, $A$ has the monomial functions $\{v_k\}_{k \in \mathbb{N}_0}$ and the
 polynomials $\{p_k\}_{k \in \mathbb{N}_0}$ in its domain.  We have also assumed that the product $RA$ is well-defined on a dense subset of $L^2(\mu)$.  We can then compute 
\begin{eqnarray*} Tp_k &=& v_k \circ \tau \\ &=&  \sum_{j=0}^{\infty} A_{j,k}v_j \\&=&
\sum_{j=0}^{\infty} \langle p_j|Ap_k \rangle_{L^2(\mu)} v_j \\&=&
RAp_k.
  \end{eqnarray*}
The last line above holds for every $Ap_k$ that is in the domain
of $R$.
\end{proof}

\begin{theorem}\label{Thm:AMatrix} Given $(X,\mu)$ a Borel measure space with
$X \subset \mathbb{R}$ and given $\tau$ a measurable map from $X$
to itself, let $N^{(\mu)}$ be the adjusted moment matrix for $\mu$
and $N^{(\mu \circ \tau^{-1})}$ the corresponding moment matrix
for $\mu \circ \tau^{-1}$.  Then, if the matrix $A$ from Equation (\ref{Eqn:DefA}) satisfies the hypotheses in Proposition \ref{Prop:TRA}, \begin{equation}\label{Eqn:Nmu} A^* N^{(\mu)}A =
N^{(\mu \circ \tau^{-1})} .\end{equation}
\end{theorem}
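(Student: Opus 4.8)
The plan is to prove the identity (\ref{Eqn:Nmu}) entrywise: fix $i,j \in \mathbb{N}_0$, rewrite the $(i,j)$ entry of the right-hand side as an inner product in $L^2(\mu)$, and then unfold the matrix triple product on the left by substituting the defining expansion of $A$ and using continuity of the inner product. First I would record the change-of-variables identity for the pushforward measure $\mu\circ\tau^{-1}$: for every Borel function $g$ with $g\circ\tau \in L^1(\mu)$ one has $\int_X g\,\mathrm{d}(\mu\circ\tau^{-1}) = \int_X (g\circ\tau)\,\mathrm{d}\mu$. Applying this to $g = v_i v_j$ (a monomial, and $g\circ\tau$ is $\mu$-integrable since $\mu\circ\tau^{-1}$ has finite moments of all orders by the standing hypothesis) gives
\[
N^{(\mu\circ\tau^{-1})}_{i,j} = \int_X v_i v_j \,\mathrm{d}(\mu\circ\tau^{-1}) = \int_X (v_i\circ\tau)(v_j\circ\tau)\,\mathrm{d}\mu = \langle v_i\circ\tau \,|\, v_j\circ\tau\rangle_{L^2(\mu)},
\]
there being no conjugations to track in the real one-dimensional setting of the theorem, and $v_i\circ\tau, v_j\circ\tau \in L^2(\mu)$.

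Next I would substitute the defining relation (\ref{Eqn:DefA}), $v_k\circ\tau = \sum_{j} A_{j,k} v_j$, which under the hypotheses imported from Proposition \ref{Prop:TRA} (so that $T = RA$ and $A$ genuinely represents an $L^2(\mu)$-operator carrying $p_k$ to $v_k\circ\tau$) converges in the $L^2(\mu)$ norm. Because the inner product on $L^2(\mu)$ is continuous in each slot with the other fixed, I would expand the left slot first and then, for each fixed $k$, the right slot:
\begin{align*}
\langle v_i\circ\tau \,|\, v_j\circ\tau\rangle_{L^2(\mu)} &= \sum_{k} A_{k,i}\,\langle v_k \,|\, v_j\circ\tau\rangle_{L^2(\mu)} = \sum_{k} A_{k,i}\sum_{\ell} A_{\ell,j}\,\langle v_k \,|\, v_\ell\rangle_{L^2(\mu)}\\
&= \sum_{k} A_{k,i}\sum_{\ell} N^{(\mu)}_{k,\ell} A_{\ell,j} = \sum_{k} (A^*)_{i,k}\,(N^{(\mu)}A)_{k,j}.
\end{align*}
Here the inner sum $\sum_{\ell} N^{(\mu)}_{k,\ell} A_{\ell,j} = (N^{(\mu)}A)_{k,j}$ converges, being nothing but the single inner product $\langle v_k \,|\, v_j\circ\tau\rangle_{L^2(\mu)}$, so the product $N^{(\mu)}A$ is well defined in the sense of Section \ref{Sec:InfMatrices}; and the outer sum converges since its value $N^{(\mu\circ\tau^{-1})}_{i,j}$ is finite. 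Hence $A^*(N^{(\mu)}A)$ is well defined and equals $N^{(\mu\circ\tau^{-1})}$, which is exactly (\ref{Eqn:Nmu}).

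The change-of-variables step and the algebra of pulling the two sums through the inner product are routine; the main obstacle is the point that forces us to invoke the hypotheses of Proposition \ref{Prop:TRA} rather than to argue purely formally as in the finite, triangular computation of Lemma \ref{Lemma:Amatrix}: one must know that the expansion $v_k\circ\tau = \sum_j A_{j,k} v_j$ is a genuine $L^2(\mu)$-norm limit, not merely a formal or coefficientwise equality, since only then is the inner product continuous enough for the term-by-term manipulation to be legitimate. A secondary bookkeeping point, worth stating explicitly because $A$ need not be triangular and the sums are genuinely infinite, is to verify that the iterated sum obtained is indeed the triple product $A^* N^{(\mu)} A$ in the well-defined sense of Section \ref{Sec:InfMatrices}; the computation above supplies this, by exhibiting the convergence of the nested sums in the required order.
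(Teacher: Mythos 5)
Your proof is correct, but it takes a genuinely different route from the paper's. The paper argues operator-theoretically: it invokes Proposition \ref{Prop:TRA} to get $T=RA$, picks weights $w$ so that $R^*_w$ and $T^*_w$ are densely defined, and then uses the identities (\ref{Eqn:RStarR}) and (\ref{Eqn:TStarT}) --- namely that $R^*_wR$ and $T^*_wT$ are the weighted versions of $N^{(\mu)}$ and $N^{(\mu\circ\tau^{-1})}$ --- to conclude $N_w^{(\mu\circ\tau^{-1})}=T_w^*T=(RA)^*(RA)=A^*R_w^*RA=A^*N_w^{(\mu)}A$. You instead prove (\ref{Eqn:Nmu}) entrywise: change of variables gives $N^{(\mu\circ\tau^{-1})}_{i,j}=\langle v_i\circ\tau\,|\,v_j\circ\tau\rangle_{L^2(\mu)}$, and the $L^2(\mu)$-convergent expansion (\ref{Eqn:DefA}) together with continuity of the inner product in each slot unfolds this into the iterated sums defining $A^*N^{(\mu)}A$ in the sense of Section \ref{Sec:InfMatrices}. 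Your route avoids weights altogether, sidesteps the adjoint manipulations for unbounded operators that the paper's chain $(RA)^*=A^*R^*_w$ quietly relies on, yields the unweighted identity exactly as stated, and makes the convergence of the nested matrix sums explicit; what it does not provide is the operator factorization $R^*_wR$, $T^*_wT$ of the (weighted) moment matrices, which the paper reuses in the Kato--Friedrichs framework of Chapter \ref{Ch:Kato} and Section \ref{Subsec:KatoA}. One small repair to your write-up: the convergence of the outer sum is justified by continuity of the inner product applied to the $L^2$-convergent partial sums of the first-slot expansion (which you invoke one sentence earlier), not by the finiteness of the limiting value, so the phrase ``converges since its value is finite'' should be replaced by that continuity argument.
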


\begin{proof}  We have $RA =T$.  Find weights $w$ such that both $T^*_w$ and $R^*_w$ are densely defined on $L^2(\mu)$.  Let $N^{(\mu)}_w$ be the weighted moment matrix with entries $\frac{1}{\sqrt{w_iw_j}}\langle v_i|v_j \rangle_{L^2(\mu)} $  Using Equations (\ref{Eqn:RStarR}) and (\ref{Eqn:TStarT}), we find that
\begin{eqnarray*}  N_w^{(\mu \circ \tau^{-1})} &=& T^*_wT \\&=& (RA)^*(RA) \\ &=& A^*R^*_wRA \\ &=& A^*N_w^{(\mu)}A.
\end{eqnarray*}
\end{proof}


\section{Approximation of $A$ with finite matrices}\label{Subsec:Finite}
 In this section, we explore whether we can perform computations with
finite matrices which yield a finite approximation to the infinite
matrix $A$ defined in Equation (\ref{Eqn:DefA}), and thereby achieve an
approximation of the moments for $\mu \circ \tau^{-1}$ from the
moments for $\mu$.

Let $\mu$ be a Borel measure on a set $X \subset \mathbb{R}$ such
that the moments $M^{(\mu)}_{i,j} = \int_X x^{i+j} d\mu(x)$ are
finite for all orders. Let $\tau:X \rightarrow X$ be a measurable
endomorphism such that the moments with respect to $\mu \circ
\tau^{-1}$ are also finite for all orders, and the powers of
$\tau$ are in the closed span $\mathcal{P}$ of the monomials in
$L^2(\mu)$. Let $T$ be the infinite matrix introduced in
Section \ref{Sec:GeneralA} with entries $T_{ij} = \langle p_i|v_j
\circ \tau \rangle_{L^2(\mu)}$, where $\{p_i\}_{i\in \mathbb{N}_0}$ are the
orthonormal polynomials in $L^2(\mu)$ given by performing the
Gram-Schmidt method on the monomials $\{v_j\}_{j \in \mathbb{N}_0}$.  Let $R$ be the
transformation taking $p_i$ to $v_i$, so the matrix entries are
$R_{i,j} = \langle p_i|v_j \rangle_{L^2(\mu)}$.

 Fix $n$. Let $R_n$ and $T_n$
be (as in Section \ref{Subsec:FixedPointsHut}) the $(n+1) \times
(n+1)$ truncations of the $R$ and $T$ matrices, respectively.  Let
$\mathcal{H}_n$ be the closed linear span of the monomials
$\{v_0,v_1, \ldots, v_n\}$ (which is also the closed linear span
of $\{p_i\}_{i=0}^n$) and let $P_n$ be the orthogonal projection
onto $\mathcal{H}_n$ from $\mathcal{P}$. Note that in the Dirac
notation mentioned in Chapter \ref{Sec:Notation},
\begin{equation}\label{Eqn:Pn} P_n = \sum_{i=0}^n |p_i\rangle
\langle p_i | .\end{equation}

We have proved in Proposition \ref{Prop:TRA}  that the matrix $A =
R^{-1}T$, in the cases where this
product of infinite matrices is well defined, gives the coefficients of $v_j \circ \tau$ in terms of the monomials $\{v_i\}_{i \in \mathbb{N}_0}$. We now wish to show
that the finite matrix product $R^{-1}_nT_n$ provides an
approximation of $A$, in the sense that it yields coefficients for
the projection of the powers of $\tau$ onto $\mathcal{H}_n$,
expanded in terms of the monomials.

\begin{lemma}\label{Lemma:finite} For fixed $n$, \begin{equation} \sum_{j=0}^n
(R^{-1}_nT_n)_{j,k}v_j = P_n(v_k \circ \tau). \end{equation}
Consequently,
\begin{equation} \lim_{n \rightarrow \infty} \sum_{j=0}^n
(R_n^{-1}T_n)_{j,k}v_j = v_k \circ \tau, \end{equation} where
convergence is in $L^2(\mu)$.
\end{lemma}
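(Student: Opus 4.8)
The plan is to evaluate the finite matrix product $R_n^{-1}T_n$ in closed form by exploiting the upper-triangular structure of $R$, and then to recognize the resulting expansion as the orthogonal projection $P_n(v_k\circ\tau)$ (here we read the statement for $0\le k\le n$, which is forced by the size of $T_n$).

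The preliminary observations I would record are: (i) since $p_i$ is orthogonal to $\mathrm{sp}\{v_0,\dots,v_{i-1}\}=\mathrm{sp}\{p_0,\dots,p_{i-1}\}$, the matrix $R$ with $R_{i,j}=\langle p_i|v_j\rangle_{L^2(\mu)}$ is upper triangular, with nonzero diagonal entries $R_{i,i}=\langle p_i|v_i\rangle_{L^2(\mu)}$; (ii) by Lemma \ref{Lem:GramInverse}, $R^{-1}$ is also upper triangular and $\sum_{j=0}^k R^{-1}_{j,k}v_j=p_k$ for every $k$; and (iii) because $R$ and $R^{-1}$ are triangular, the block-matrix computation of Lemma \ref{Lem:Truncation} (only finitely many terms enter each product entry) gives $R_n(R^{-1})_n=(RR^{-1})_n=I_n$, so $R_n$ is invertible as a finite matrix and $(R_n)^{-1}=(R^{-1})_n$.

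With these in hand the computation is short: writing $(R_n^{-1}T_n)_{j,k}=\sum_{i=0}^n R^{-1}_{j,i}T_{i,k}$ and interchanging the two finite sums,
\[
\sum_{j=0}^n (R_n^{-1}T_n)_{j,k}\,v_j=\sum_{i=0}^n T_{i,k}\Big(\sum_{j=0}^n R^{-1}_{j,i}v_j\Big)=\sum_{i=0}^n T_{i,k}\,p_i,
\]
where the inner sum collapses to $p_i$ by (ii) (the terms with $j>i$ vanish by triangularity of $R^{-1}$, and $i\le n$). Since $T_{i,k}=\langle p_i|v_k\circ\tau\rangle_{L^2(\mu)}$ and $P_n=\sum_{i=0}^n|p_i\rangle\langle p_i|$ by Equation (\ref{Eqn:Pn}), the right-hand side is precisely $P_n(v_k\circ\tau)$, which is the first assertion. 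For the consequence, $v_k$ is a monomial $x^m$, so $v_k\circ\tau$ is a power of $\tau$ and hence lies in $\mathcal{P}$ by the standing hypothesis on $\tau$; since $\{p_i\}_{i\in\mathbb{N}_0}$ is an orthonormal basis for $\mathcal{P}$, we get $P_n(v_k\circ\tau)\to v_k\circ\tau$ in $L^2(\mu)$.

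The only point I expect to require genuine care is item (iii): truncation commutes with inversion here only because $R$ is upper triangular---for a general invertible infinite matrix $(R_n)^{-1}$ and $(R^{-1})_n$ need not agree---so the triangular structure of the Gram--Schmidt matrix is doing essential work, exactly as in Lemma \ref{Lem:Truncation}. Everything else is a bookkeeping exchange of finite sums together with strong convergence $P_n\to I$ on $\mathcal{P}$.
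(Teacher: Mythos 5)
Your proposal is correct and follows essentially the same route as the paper's proof: expand the finite product $R_n^{-1}T_n$, interchange the two finite sums, collapse $\sum_{j}R^{-1}_{j,\ell}v_j$ to $p_\ell$ via Lemma \ref{Lem:GramInverse}, recognize $T_{\ell,k}=\langle p_\ell|v_k\circ\tau\rangle_{L^2(\mu)}$, and invoke Equation (\ref{Eqn:Pn}) together with $v_k\circ\tau\in\mathcal{P}$ for the $L^2(\mu)$ limit. Your item (iii), identifying $(R_n)^{-1}$ with the truncation $(R^{-1})_n$ via the triangular structure, is exactly the point the paper treats more informally, so it is a welcome but not divergent clarification.
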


\begin{proof} Given our fixed $n$, let $k \leq n$.  In the
computation below, recall that for $j,k \leq n$, we have
$(R^{-1}_n)_{j,k} = R^{-1}_{j,k}$ and $(T_n)_{j,k} = T_{j,k}$.
Also, since the orthogonal polynomials $p_0, \ldots, p_j$ are in
the span of the monomials $v_0, \ldots v_j$, we know that the
truncated matrix $R^{-1}_n$ maps $v_j$ to $p_j$ for each $j \leq
n$.

 \begin{eqnarray*} \sum_{j=0}^n
(R_n^{-1}T_n)_{j,k}v_j &=& \sum_{j=0}^n \sum_{\ell=0}^n
(R_n)^{-1}_{j,\ell}(T_n)_{\ell,k}v_j \\ &=& \sum_{\ell=0}^n
\langle p_{\ell}|v_k \circ \tau \rangle_{L^2(\mu)} \sum_{j=0}^n
(R^{-1}_n)_{j,\ell} v_j  \quad \text{where both \: }\ell,j \leq n \\
&=& \sum_{\ell=0}^n \langle p_{\ell}|v_k \circ \tau
\rangle_{L^2(\mu)}p_{\ell}
\\ &=& P_n
v_k \circ \tau \quad \text{by Equation (\ref{Eqn:Pn})}.
\end{eqnarray*}

\end{proof}

\begin{remark} It is important to realize here that even for $j,k \leq
n$,
\begin{equation} (R^{-1}_nT_n)_{j,k} \neq (R^{-1}T)_{j,k}.
\end{equation}
We do, however, know that because the product $R^{-1}T$ is well
defined, if we fix $j,k$ and let $n \rightarrow \infty$, we do
have $(R^{-1}_nT_n)_{j,k} \rightarrow (R^{-1}T)_{j,k}$.
\end{remark}

If we were able to conclude from the truncation result in Lemma
\ref{Lemma:finite} that  $ \lim_{n \rightarrow \infty}
\sum_{j=0}^n (R^{-1}T)_{j,k}v_j = v_k \circ \tau$, where we take
the limit in $L^2(\mu)$, we would have an alternate proof of
Proposition \ref{Prop:TRA}. It other words, we would be able to
write
\begin{equation}  \sum_{j=0}^{\infty} (R^{-1}T)_{j,k}v_j =
v_k \circ \tau. \end{equation}   This convergence, however, would
require hypotheses about how the sequences $(R^{-1}_nT_n)_{j,k}$
converge to $(R^{-1}T)_{j,k}$ as $n \rightarrow \infty$.  To
illustrate this point, let us fix $k$ and let  $a_j^{(n)} =
(R^{-1}_n T_n)_{j,k}$ and $a_j = (R^{-1}T)_{j,k}$, so for each $j$,  $a_j^{(n)} \rightarrow a_j$ as $n \rightarrow
\infty$.
\begin{eqnarray*}  \Bigr\|\sum_{j=0}^n a_jv_j - v_k \circ \tau\Bigr\|_{L^2(\mu)} &=& \Bigr\| \sum_{j=0}^n a_jv_j - \sum_{j=0}^n a^{(n)}_jv_j
+ \sum_{j=0}^n a^{(n)}_jv_j  - v_k \circ \tau\Bigr\|_{L^2(\mu)} \nonumber \\
&\leq& \Bigr\|\sum_{j=0}^n (a_j-a_j^{(n)})v_j\Bigr\|_{L^2(\mu)} +
\Bigr\|\sum_{j=0}^n a^{(n)}_jv_j  - v_k \circ \tau\Bigr\|_{L^2(\mu)}
\end{eqnarray*}

Using Lemma \ref{Lemma:finite}, the second term of the last line
above can certainly be made arbitrarily small for large enough
$n$. The first term, however, may not have that property.

For each $k \in \mathbb{N}_0$, the truncated matrix products
$R^{-1}_nT_n$ produce asymptotic expansions for $v_k \circ \tau$
by giving expansions of $P_n(v_k \circ \tau)$ in terms of monomials
$v_j$, even if there is no \textit{a priori} known expansion of
$v_k \circ \tau$ in the monomials.  We assume that such an
expansion exists, that is,
$$v_k \circ \tau = \sum_{j=0}^{\infty} a_j v_j. $$ We may, however, have no
way of computing the actual coefficients.  Lemma
\ref{Lemma:finite} gives approximations to these coefficients
which get better as $n$ increases.
The next question is how these finite approximations to the matrix
$A$ interact with the moment matrix transformation given in Equation (\ref{Eqn:Nmu}) from
Theorem \ref{Thm:AMatrix}.  In the case described in Subsection
\ref{Sec:SingleVar}, where $\tau$ is an affine map on $\mathbb{R}$
or $\mathbb{C}$, the matrix $A$ is upper triangular. With this
added structure, it is readily demonstrated that the truncated
matrix product yields exactly the truncation of the infinite
matrix product, i.e.
$$A^*_nM^{(\mu)}_nA_n = [A^*M^{(\mu)}A]_n = M^{(\mu \circ \tau^{-1})}.$$  We also note that in this special
case, we have Equation (\ref{Eqn:UpTriA})  from \cite{EST06} giving
a concrete expression of the entries of $A$. We then can compute
the exact moments with respect to $\mu \circ \tau^{-1}$ using
finite matrix computations.

In the more general case, however, we may not have a construction
giving us the entries in $A$, so we may need to use the finite
matrix product $\widetilde{A}_n = R_n^{-1}T_n$.  In this case, we
find the triple product gives entries which are the inner products
of projections of powers of the measurable map $\tau$.

Given the map $\tau$, the entries in the adjusted moment matrix for $\mu
\circ \tau^{-1}$ are given by the inner product  $$ (N^{(\mu \circ
\tau^{-1})})_{i,j} = \langle v_i \circ \tau | v_j \circ \tau
\rangle_{L^2(\mu)}. $$

\begin{corollary} Let $\widetilde{A}_n = R_n^{-1}T_n$.  Then
\begin{equation}  (\widetilde{A}_n^*N^{(\mu)}\widetilde{A}_n)_{i,j} =
\langle P_n(v_i \circ \tau) | P_n (v_j \circ \tau) \rangle_{L^2(\mu)} = \langle
v_i \circ \tau | P_n (v_j \circ \tau) \rangle_{L^2(\mu)}.
\end{equation}
\end{corollary}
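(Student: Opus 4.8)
The plan is to reduce the statement entirely to the truncation identity already proved in Lemma~\ref{Lemma:finite}, namely $\sum_{j=0}^n (R_n^{-1}T_n)_{j,k}v_j = P_n(v_k\circ\tau)$. Since $\widetilde{A}_n = R_n^{-1}T_n$ is an $(n+1)\times(n+1)$ matrix and the factor $N^{(\mu)}$ in the triple product may be silently replaced by its $(n+1)\times(n+1)$ truncation $N^{(\mu)}_n$, the matrix $\widetilde{A}_n^* N^{(\mu)} \widetilde{A}_n$ is itself a genuine finite matrix; hence, unlike the infinite-matrix statements of Proposition~\ref{Prop:TRA} and Theorem~\ref{Thm:AMatrix}, no weights, closability hypotheses, or convergence arguments are needed here, and the entry in question may be expanded directly as an ordinary finite double sum.

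First I would write, using the paper's conventions for the adjoint (so $(\widetilde{A}_n^*)_{i,k} = \overline{(\widetilde{A}_n)_{k,i}}$) and for the inner product (conjugate-linear in the first slot),
\[
(\widetilde{A}_n^* N^{(\mu)} \widetilde{A}_n)_{i,j}
= \sum_{k=0}^n \sum_{\ell=0}^n \overline{(\widetilde{A}_n)_{k,i}}\, N^{(\mu)}_{k,\ell}\, (\widetilde{A}_n)_{\ell,j},
\]
and then substitute $N^{(\mu)}_{k,\ell} = \langle v_k | v_\ell\rangle_{L^2(\mu)}$. Pulling the two finite sums through the sesquilinear form gives
\[
(\widetilde{A}_n^* N^{(\mu)} \widetilde{A}_n)_{i,j}
= \Bigl\langle \sum_{k=0}^n (\widetilde{A}_n)_{k,i} v_k \;\Big|\; \sum_{\ell=0}^n (\widetilde{A}_n)_{\ell,j} v_\ell \Bigr\rangle_{L^2(\mu)}.
\]
Next I would apply Lemma~\ref{Lemma:finite} to each of the two vectors, recognizing them as $P_n(v_i\circ\tau)$ and $P_n(v_j\circ\tau)$ respectively; this produces the middle expression $\langle P_n(v_i\circ\tau) | P_n(v_j\circ\tau)\rangle_{L^2(\mu)}$. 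Finally, since $P_n = \sum_{m=0}^n |p_m\rangle\langle p_m|$ is an orthogonal projection by Equation~(\ref{Eqn:Pn}), it is self-adjoint and idempotent, so $\langle P_n f | P_n g\rangle_{L^2(\mu)} = \langle f | P_n^2 g\rangle_{L^2(\mu)} = \langle f | P_n g\rangle_{L^2(\mu)}$ with $f = v_i\circ\tau$ and $g = v_j\circ\tau$, which yields the last expression $\langle v_i\circ\tau | P_n(v_j\circ\tau)\rangle_{L^2(\mu)}$.

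The only place where care is really required — and the step I would watch most closely — is the index bookkeeping: making sure the conjugation under $\widetilde{A}_n^*$ lands on the correct factor, and confirming that the truncation level $n$ is used consistently everywhere (so that $N^{(\mu)}$ legitimately stands in for $N^{(\mu)}_n$ and the summations line up with the range $0,\dots,n$ in Lemma~\ref{Lemma:finite}, which is exactly where the hypothesis $k\le n$ of that lemma is used). Everything past that is a formal manipulation of finite sesquilinear sums, so I expect no substantive obstacle.
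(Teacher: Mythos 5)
Your argument is correct and matches the paper's proof essentially step for step: expand the finite double sum, substitute $N^{(\mu)}_{k,\ell} = \langle v_k | v_\ell \rangle_{L^2(\mu)}$, pull the sums through the sesquilinear form, invoke Lemma \ref{Lemma:finite} to identify the two vectors as $P_n(v_i\circ\tau)$ and $P_n(v_j\circ\tau)$, and finish with $P_n = P_n^* = P_n^2$. Your explicit remark that only the $(n+1)\times(n+1)$ block of $N^{(\mu)}$ enters is implicit in the paper's summation ranges, so there is no substantive difference.
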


\begin{proof} This is a consequence of Lemma \ref{Lemma:finite}.

\begin{eqnarray*} (\widetilde{A}_n^*N^{(\mu)}\widetilde{A}_n)_{i,j}
&=& \sum_{k=0}^n \sum_{l=0}^n
(\widetilde{A}_n^*)_{i,k}(N^{(\mu)})_{k,l}(\widetilde{A}_n)_{l,j} \\
&=& \sum_{k=0}^n
\sum_{l=0}^n(\widetilde{\overline{A}}_n)_{k,i} \langle v_k|v_l
\rangle_{L^2(\mu)} (\widetilde{A}_n)_{l,j}\\ &=& \left\langle
\sum_{k=0}^n (\widetilde{A}_n)_{k,i}v_k \Bigr| \sum_{l=0}^n
(\widetilde{A}_n)_{l,j}v_l \right\rangle_{L^2(\mu)} \\&=& \langle P_n(v_i
\circ \tau) | P_n(v_j \circ \tau) \rangle_{L^2(\mu)}\\ &=& \langle v_i \circ
\tau | P_n (v_j \circ \tau) \rangle_{L^2(\mu)}.
\end{eqnarray*}
In the last line, we use the property of projections that $P_n =
P_n^* = P_n^2$.
\end{proof}

\chapter{The Kato-Friedrichs operator}\label{Ch:Kato}

The moment matrix $M^{(\mu)}$ is an infinite matrix, and while $M^{(\mu)}$ may not be a well defined operator on $\ell^2$, we may be able to view $M^{(\mu)}$ as an operator in some other sequence space.  We use the techniques of Kato and Friedrichs to turn $M^{(\mu)}$ into a self-adjoint densely defined operator on a weighted $\ell^2$ space.  Our main tool will be a quadratic form $Q_M$ which is defined from the moments of $\mu$.   We obtain the weighted $\ell^2$ space by finding a space in which the quadratic form $Q_M$ is closable.  In the course of showing that $Q_M$ is closable, we introduce two key operators:  $F$ and its adjoint $F^*$, which we will continue to study in Chapters \ref{Sec:IntOperators} and \ref{Sec:Spectrum}.  

\section{The quadratic form $Q_M$}\label{Subsec:QClosable}
Let $\mathcal{D}$ be the set of all finitely supported sequences indexed by $\mathbb{N}_0$.  
The familiar expression in Equation (\ref{Eqn:MatrixMult}) for the infinite matrix-vector product  $Mc$  makes
sense for every $c \in \mathcal{D}$, but $Mc$ may not be well defined for
every sequence $c$.  Even when it is well defined, $Mc$ may not
be in the same Hilbert space as $c$.   We will address this technicality by changing the domain of $M$.

Let $M$ be the moment matrix of a positive Borel measure with finite moments, where $M$ satisfies the
positive semidefinite condition in Definition \ref{Defn:PD} (real
case) or the PD$\mathbb{C}$ condition in Definition \ref{Defn:PDC}
(complex case).   The quadratic form $Q_M$ is defined on infinite sequences with only finitely many nonzero
components.  Specifically, given $c\in\mathcal{D}$ we have
\begin{equation}\label{Def:QM}
Q_M(c) = \sum_i \sum_j \overline{c_i} M_{i,j} c_j.
\end{equation}

Even though every operator on a Hilbert space determines a quadratic form, the converse is not necessarily true.  Here, we look for conditions on the quadratic form $Q_M$ which guarantee that the quadratic form gives rise to an operator on a Hilbert space $\mathcal{H}$.  The work of Friedrichs and Kato provides the correct conditions.  Friedrichs (see \cite[Section 6.2.3] {Kat80}) proved that semibounded symmetric operators in a Hilbert space
$\mathcal{H}$ have self-adjoint extensions in $\mathcal{H}$.  Later,
Kato \cite[Section 6.2.1-Theorem 2.1; Section 6.2.6-Theorem 2.23]{Kat80} extended
Friedrichs's theorem to quadratic forms.

\begin{theorem}[Kato]\label{Thm:Kato} Let $Q$ be a densely defined, closed, positive quadratic form.  Then there exists a unique  self-adjoint operator $H$ on a Hilbert space $\mathcal{H}$ such that the Hilbert space completion $\mathcal{H}_Q$ of $Q$ is equal to the completion of $H$, and for all $c$ in the domain of $Q$,  \[ Q(c) = \|H^{1/2}c\|^2_{\mathcal{H}} = \|c\|^2_{\mathcal{H}_Q}. \]  In particular, the domain of $Q$ is equal to the domain of $H^{1/2}$.  
\end{theorem}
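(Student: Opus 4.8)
The plan is to run the classical Friedrichs--Kato construction: manufacture a Hilbert space out of $Q$, represent the ambient inner product on it by a bounded operator, and let $H$ be a shift of that operator's inverse. First I would polarize $Q$ into a sesquilinear form $Q(\cdot,\cdot)$ and put on $\mathrm{dom}(Q)$ the inner product $\langle u | v\rangle_{Q}:=Q(u,v)+\langle u|v\rangle_{\mathcal H}$. Since $Q\ge 0$, $\|\cdot\|_{Q}\ge\|\cdot\|_{\mathcal H}$, so the inclusion $(\mathrm{dom}(Q),\|\cdot\|_{Q})\hookrightarrow\mathcal H$ is a contraction with dense range; the hypothesis that $Q$ is closed says exactly that $(\mathrm{dom}(Q),\langle\cdot|\cdot\rangle_{Q})$ is complete, and this is the Hilbert space $\mathcal H_Q$ (equivalently, $\mathcal H_Q$ is the completion of $\mathrm{dom}(Q)$ for $\sqrt{Q}$, on which the inclusion into $\mathcal H$ is injective; the ``$+1$'' norm is used only to invoke completeness, and under the nondegeneracy that holds for the moment forms $Q_M$ the displayed identity is read with $\|c\|_{\mathcal H_Q}^2=Q(c)$). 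For each $f\in\mathcal H$ the functional $v\mapsto\langle f|v\rangle_{\mathcal H}$ is bounded on $\mathcal H_Q$, so the Riesz lemma yields a unique $Bf\in\mathcal H_Q$ with
\[
\langle Bf\,|\,v\rangle_{Q}=\langle f\,|\,v\rangle_{\mathcal H}\qquad\text{for all }v\in\mathcal H_Q .
\]
A short computation shows $B:\mathcal H\to\mathcal H$ is bounded, self-adjoint, $0\le B\le I$, with $\ker B=\{0\}$ (if $Bf=0$ then $\langle f|v\rangle_{\mathcal H}=0$ on the dense set $\mathcal H_Q$) and dense range.

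Next I would set $H:=B^{-1}-I$ with $\mathrm{dom}(H):=\mathrm{ran}(B)$. Since $B$ is a bounded, injective, positive self-adjoint operator with dense range, $B^{-1}$ is self-adjoint, hence so is $H$; and $H\ge 0$ because $\sigma(B)\subseteq(0,1]$. Note $(I+H)^{-1}=B$, so $\mathrm{dom}(H)=\mathrm{ran}(B)$ is dense in $\mathcal H$. For $u=Bf\in\mathrm{dom}(H)$ and $v\in\mathrm{dom}(Q)$ the Riesz identity rearranges to $Q(u,v)=\langle(B^{-1}-I)u\,|\,v\rangle_{\mathcal H}=\langle Hu\,|\,v\rangle_{\mathcal H}$; in particular $\mathrm{dom}(H)\subseteq\mathrm{dom}(Q)$ and $Q(u)=\langle Hu|u\rangle_{\mathcal H}$. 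Forming $H^{1/2}$ by the spectral calculus for the non-negative self-adjoint $H$ then gives $\|H^{1/2}u\|_{\mathcal H}^2=\langle Hu|u\rangle_{\mathcal H}=Q(u)$ for all $u\in\mathrm{dom}(H)$.

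To pass from $\mathrm{dom}(H)$ to all of $\mathrm{dom}(Q)$ I would invoke two density facts: (i) $\mathrm{dom}(H)$ is a core for $H^{1/2}$ (standard, via the spectral theorem, since $\mathrm{dom}(H)=\mathrm{ran}((I+H)^{-1})$); and (ii) $\mathrm{dom}(H)$ is $\|\cdot\|_{Q}$-dense in $\mathcal H_Q$ (if $v\in\mathcal H_Q$ is $\langle\cdot|\cdot\rangle_{Q}$-orthogonal to $\mathrm{ran}(B)$ then $\langle f|v\rangle_{\mathcal H}=\langle Bf|v\rangle_{Q}=0$ for every $f$, so $v=0$). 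Consequently the identity map on $\mathrm{dom}(H)$ is isometric from the $\|\cdot\|_{Q}$-norm to the graph norm of $H^{1/2}$ and has dense range in both completions, so it extends to a unitary $\mathcal H_Q\to\mathrm{dom}(H^{1/2})$. This is precisely the claim that $\mathcal H_Q$ coincides with the completion of $H$, that $\mathrm{dom}(Q)=\mathrm{dom}(H^{1/2})$, and that $Q(c)=\|H^{1/2}c\|_{\mathcal H}^2=\|c\|_{\mathcal H_Q}^2$ on this common domain.

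For uniqueness, suppose $H'$ is self-adjoint with $\mathrm{dom}({H'}^{1/2})=\mathrm{dom}(Q)$ and $\|{H'}^{1/2}c\|_{\mathcal H}^2=Q(c)$ there. Polarizing, $\langle {H'}^{1/2}u\,|\,{H'}^{1/2}v\rangle_{\mathcal H}=Q(u,v)=\langle Hu\,|\,v\rangle_{\mathcal H}$ for $u\in\mathrm{dom}(H)$ and $v\in\mathrm{dom}(Q)$; since $\mathrm{dom}(Q)$ is a core for ${H'}^{1/2}$, this forces ${H'}^{1/2}u\in\mathrm{dom}({H'}^{1/2})$ with $H'u=Hu$, so $H\subseteq H'$, and self-adjointness of both makes the inclusion an equality. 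I expect the genuine work to be step three --- matching the form norm with the graph norm of $H^{1/2}$ --- which rests on the square-root lemma and on $\mathrm{dom}(H)$ being simultaneously a core for $H^{1/2}$ and dense in $\mathcal H_Q$; the rest is bookkeeping with the Riesz representation.
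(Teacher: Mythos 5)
Your argument is correct, but note that the paper never proves Theorem \ref{Thm:Kato} at all: it quotes the result from Kato's monograph \cite{Kat80} and then applies it as a black box to the closures of the moment forms $Q_M$ on weighted $\ell^2$ spaces. What you have written is precisely the classical representation-theorem proof that the citation points to: polarize $Q$, equip $\mathrm{dom}(Q)$ with the inner product $Q(u,v)+\langle u|v\rangle_{\mathcal H}$ (complete exactly because $Q$ is closed), represent $\langle f|\cdot\rangle_{\mathcal H}$ on that space by a bounded, injective, self-adjoint $B$ with $0\le B\le I$ and dense range, set $H=B^{-1}-I$, and then identify $\mathrm{dom}(Q)$ with $\mathrm{dom}(H^{1/2})$ via the facts that $\mathrm{dom}(H)$ is a core for $H^{1/2}$ and is form-dense in $\mathrm{dom}(Q)$; uniqueness by polarization and the core property is also as in the standard treatment. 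So there is nothing to reconcile with the paper, and your proof would serve as a self-contained substitute for the citation.

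Two small points deserve explicit sentences in a polished version. First, in the paper's usage $\mathcal{H}_Q$ is the completion with respect to $Q$ itself, which is only a seminorm when $Q$ has null vectors (as it does for the finite-rank moment matrices of Examples \ref{Ex:Rank1} and \ref{Ex:Rank2}), so the identity $Q(c)=\|c\|^2_{\mathcal{H}_Q}$ is to be read after passing to the quotient by $\{Q=0\}$; you flag this, but you should keep your auxiliary ``$+1$''-norm space notationally distinct from $\mathcal{H}_Q$, since in your Riesz step the symbol silently switches meaning. Second, in the step where you extend $Q(u)=\|H^{1/2}u\|^2_{\mathcal H}$ from $\mathrm{dom}(H)$ to $\mathrm{dom}(Q)$, the abstract unitary between the two completions must be seen to be implemented by the identity inside $\mathcal{H}$: if $u_n\in\mathrm{dom}(H)$ is Cauchy in the common norm, it converges in $\mathcal{H}$ to some $u$; closedness of $Q$ gives $u\in\mathrm{dom}(Q)$ with $Q(u_n-u)\to 0$, and closedness of $H^{1/2}$ gives $u\in\mathrm{dom}(H^{1/2})$ with $H^{1/2}u_n\to H^{1/2}u$, so the two completions coincide pointwise and the norms pass to the limit. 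That one-line justification is what your ``extends to a unitary'' phrase is standing in for; with it, the proof is complete.
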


Our quadratic form $Q_M$ arising from a moment matrix may not be closed, but we can show that $Q_M$ is
\textit{closable} and then apply Kato's theorem to the closure.  We change our notation briefly here in order to work with sequences of elements in $\mathcal{D}$.  A sequence in $\mathcal{D}$ will be denoted
$\{c_n\}_{n\in\mathbb{N}_0}$, and the $i^{\textrm{th}}$ component of $\{c_n\}$ is $c_n(i)$.  (We retain the
notation $c = \{c(i)\}_{i\in\mathbb{N}_0}$ for elements of
$\mathcal{D}$ and its subsequent completion throughout the next two
sections.) 

\begin{definition}[\cite{Kat80}]\label{Defn:QClosable}
Let $\mathcal{H}$ be a Hilbert space in which  $\mathcal{D}$ is dense. A quadratic form $Q$ defined on $\mathcal{D}$ is \textit{closable} if and only if
\[ c_n \rightarrow 0 \text{ in } \mathcal{H}\]
and
\[ Q(c_n - c_m) \rightarrow 0 \text{ as } m,n\rightarrow \infty\]
imply
\[ Q(c_n)\rightarrow 0 \text{ as }n\rightarrow \infty.\]
\end{definition}

We next define the operator $F$ to
map finite sequences $c\in \mathcal{D}$ to  polynomials (a.k.a generating
functions) in $L^2(\mu)$.  Given $c\in\mathcal{D}$, we define
\begin{equation}\label{Eqn:DefnF}
Fc(x) = f_c(x)  = \sum_{i\in \mathbb{N}_0} c_i x^i .
\end{equation}

\begin{example}A quadratic form which is not closable in $\ell^2$.\end{example}
In order to construct a quadratic form which is not closable in $\ell^2$, we need a sequence $\{c_n\}\subset\ell^2$ and a measure $\mu$ such that
\begin{enumerate}[(1)]
\item $\sum_{i\in\mathbb{N}_0} |c_n(i)|^2 \rightarrow 0$ as $n\rightarrow\infty$
\item $\int_{\mathbb{R}}|f_{c_m}(x) - f_{c_n}(x)|^2 \,\mathrm{d}\mu(x) \rightarrow 0$ as $m,n\rightarrow \infty$
\item $\int_{\mathbb{R}}|f_{c_n}(x)|^2 \,\mathrm{d}\mu(x) \not\rightarrow 0$ as $n\rightarrow\infty.$
\end{enumerate}
Let $\mu = \delta_b$ for $b > 1$, and let $c_n(i) =
\delta(n,i)b^{-i}$ (the Kronecker delta).  Since $b > 1$,
$\|c_n\|^2_{\ell^2} = b^{-2n} \rightarrow 0$ as
$n\rightarrow\infty$, so (1) is satisfied.  Since
\[
f_{c_n}(x) = b^{-n} x^n \textrm{ for each }n,
\]
and the integral in (2) simply evaluates $f_{c_n}$ and $f_{c_m}$ at $x = b$, we have
\[
\int_{\mathbb{R}}|f_{c_m}(x) - f_{c_n}(x)|^2 \,\mathrm{d}\mu(x) = |1 - 1|^2 = 0 \textrm{ for all }m, n.
\]
However, the integral in (3) is $1$ for all $n$.  Therefore the quadratic form associated with $\delta_b$, $b > 1$, is not closable in $\ell^2$.
\hfill$\Diamond$

\section{The closability of $Q_M$}

In order to show that the quadratic form $Q_M$ is closable for a particular infinite Hankel matrix $M$,
we show that the operator $F$ defined in Equation (\ref{Eqn:DefnF}) is closable with respect
to a weighted space  $\ell^2(w)$.  Recall
Definition \ref{Def:FClosed} regarding closed and closable
operators.  The  \textit{weighted} $\ell^2$ spaces are
defined by a sequence of weights $w= \{w_i\}_{i\in\mathbb{N}_0}$:
\[
\ell^2(w):=\Bigl\{ \{c(i)\} \Big| \sum_{i}w_i |c(i)|^2 <
\infty\Bigr\}.
\]
The norm on $\ell^2(w)$ is
\[
\|c\|^2_{\ell^2(w)} = \sum_{i\in\mathbb{N}_0} w_i |c(i)|^2,
\]
and the inner product is given by 
\[ \langle c|d \rangle_{\ell^2(w)} = \sum_{i \in \mathbb{N}_0} w_i \overline{c(i)}d(i). \]
We will generally take the weights to all be strictly greater than zero.

In the next lemma, we use an argument mirroring the proof of Lemma \ref{Lemma:RClosable} to produce a set
of weights $w$ which guarantee that $F$ will be closable in the weighted space $\ell^2(w)$, and then
in Theorem \ref{Thm:QClosable} we show that the quadratic form
$Q_M$ is closable in $\ell^2(w)$.   

 Consider $F$ as a map with domain in $\ell^2(w)$ for some weights $w=\{w_i\}_{i \in \mathbb{N}_0}$. We then see that the adjoint $F^*$ of $F$  is given by 
\begin{equation}\label{Eqn:DefnFStar}
(F_w^*f)_k = \frac{1}{w_k}\int_{\mathbb{R}} x^k f(x) \,\mathrm{d}\mu(x).
\end{equation}

To verify this, we must show that $F_w^*$ satisfies
\begin{equation}
\langle Fc|f\rangle_{L^2(\mu)} = \langle
c|F_w^*f\rangle_{\ell^2(w)}
\end{equation} for every $f$ such that $F_w^*f \in \ell^2(w)$.  

For each $c\in\mathcal{D}$, we have
\begin{equation}
\begin{split}
\langle Fc|f\rangle_{L^2(\mu)}
& = \int_{\mathbb{R}} \overline{\sum_{k \in \mathbb{N}_0} c(k)x^k} f(x) \,\mathrm{d}\mu(x)\\
& = \sum_{k \in \mathbb{N}_0} \overline{c(k)} \int_{\mathbb{R}} x^k f(x) \,\mathrm{d}\mu(x).\\
\end{split}
\end{equation}
We now multiply and divide by $w_k$:
\begin{equation}
\begin{split}
\langle Fc|f\rangle_{L^2(\mu)}
& = \sum_{k\in \mathbb{N}_0} w_k \overline{c(k)} \underbrace{\frac{1}{w_k} \int_{\mathbb{R}} x^k f(x) \,\mathrm{d}\mu(x)}_{(\ref{Eqn:DefnFStar})}\\
& = \langle c | F_w^*f\rangle_{\ell^2(w)},
\end{split}
\end{equation}
where we have shown the right had side of Equation (\ref{Eqn:DefnFStar}) satisfies the definition of the adjoint $F^*_w$.

\begin{lemma}\label{Lemma:FClosable}
Suppose $\int_{\mathbb{R}} |x|^k \,\mathrm{d}\mu(x) < \infty$ for each $k\in\mathbb{N}_0$.
There exist weights $\{w_i\}_{i \in \mathbb{N}_0}$ such that $F:\ell^2(w)\rightarrow
L^2(\mu)$ (\ref{Eqn:DefnF}) is closable.
\end{lemma}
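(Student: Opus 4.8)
The plan is to follow the proof of Lemma \ref{Lemma:RClosable} essentially line for line, replacing the operator $R$ by $F$, the monomials $v_k$ by the functions $x^k$, and the weighted polynomial space $\mathcal{P}_w$ by $\ell^2(w)$. The guiding principle is the characterization recalled in Section \ref{subsec:unboundedop}: $F$ is closable if and only if its adjoint $F^*_w$ is densely defined. So the whole task is to choose a sequence of strictly positive weights $w = \{w_k\}_{k\in\mathbb{N}_0}$ for which $\mathrm{dom}(F^*_w)$ is dense in $L^2(\mu)$.

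First I would use the explicit formula (\ref{Eqn:DefnFStar}) for $F^*_w$ to observe that $g\in\mathrm{dom}(F^*_w)$ exactly when $F^*_w g\in\ell^2(w)$, i.e.
$$\sum_{k\in\mathbb{N}_0} \frac{1}{w_k}\Bigl|\int_{\mathbb{R}} x^k g(x)\,\mathrm{d}\mu(x)\Bigr|^2 < \infty,$$
the cancellation $w_k/w_k^2 = 1/w_k$ coming from the definition of the $\ell^2(w)$-norm. Next, set $\mathcal{M}_k := \int_{\mathbb{R}} |x|^k\,\mathrm{d}\mu(x)$, which is finite for every $k$ by hypothesis. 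Since the case $k=0$ of the hypothesis says $\mu(\mathbb{R})<\infty$, the measure $\mu$ is finite and $L^\infty(\mu)\subseteq L^2(\mu)$; for $f\in L^\infty(\mu)$ the crude bound $\bigl|\int_{\mathbb{R}} x^k f\,\mathrm{d}\mu\bigr| \leq \|f\|_{L^\infty(\mu)}\mathcal{M}_k$ yields
$$\sum_{k\in\mathbb{N}_0}\frac{1}{w_k}\Bigl|\int_{\mathbb{R}} x^k f\,\mathrm{d}\mu\Bigr|^2 \leq \|f\|_{L^\infty(\mu)}^2\sum_{k\in\mathbb{N}_0}\frac{\mathcal{M}_k^2}{w_k}.$$
I would then make the explicit choice $w_k := (1+k^2)\mathcal{M}_k^2$ whenever $\mathcal{M}_k\neq 0$, and $w_k := 1$ in the degenerate case $\mathcal{M}_k = 0$, so that $\sum_k \mathcal{M}_k^2/w_k \leq \sum_k (1+k^2)^{-1} < \infty$. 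This shows $L^\infty(\mu)\subseteq\mathrm{dom}(F^*_w)$, and since $L^\infty(\mu)$ is dense in $L^2(\mu)$ it follows that $F^*_w$ is densely defined, hence $F:\ell^2(w)\to L^2(\mu)$ is closable.

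I do not expect a genuine obstacle here: the argument is a direct transcription of Lemma \ref{Lemma:RClosable}. The only points that need a word of care are that the chosen weights must be positive — automatic from the formula once $\mathcal{M}_k>0$, and handled by the constant $1$ otherwise — and the degenerate situation in which $\mathcal{M}_k = 0$ for some $k\geq 1$, which forces $\mu$ to be concentrated at $0$ and makes the corresponding coordinate of $F^*_w g$ vanish identically, so no positivity issue actually arises there either.
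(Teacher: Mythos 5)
Your proof is correct and follows essentially the same route as the paper: reduce closability to dense definition of $F^*_w$, bound $\bigl|\int x^k f\,\mathrm{d}\mu\bigr|$ by $\|f\|_{L^\infty(\mu)}\mathcal{M}_k$ for bounded $f$, and take $w_k=(1+k^2)\mathcal{M}_k^2$ so that $L^\infty(\mu)\subseteq\mathrm{dom}(F^*_w)$ is dense in $L^2(\mu)$. Your extra care in setting $w_k=1$ when $\mathcal{M}_k=0$ is a sensible refinement (the paper's formula would otherwise give a zero weight), but it does not change the argument.
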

\begin{proof}  The operator $F$ is closable if and only if
$\textrm{dom}(F^*)$ is dense in $L^2(\mu)$.  (See, for example, \cite[Proposition 1.6, Chapter 10]{Con90}.)

Now, $F_w^*f\in \ell^2(w)$ precisely when
\begin{equation}\label{Eqn:FStar2}
\sum_{k\in \mathbb{N}_0} w_k | (F_w^*f)_k|^2 < \infty,
\end{equation}
and (\ref{Eqn:FStar2}) is true if and only if
\begin{equation}\label{Eqn:Mkcriterion}
\sum_k \frac{1}{w_k} \Big| \int_{\mathbb{R}} x^k f(x) \,\mathrm{d}\mu(x)
\Big|^2 < \infty.
\end{equation}
This gives us one sufficient criterion for $F$ to be closable.  We see from Equation (\ref{Eqn:Mkcriterion}) that the monomial functions $\{v_j\}_{j \in \mathbb{N}_0}$ are in the domain of $F^*_w$ if and only if \begin{equation}\label{Eqn:FCriterion}
\sum_k \frac{1}{w_k} \Big| \int_{\mathbb{R}} x^{j+k} \,\mathrm{d}\mu(x)
\Big|^2 = \sum_k \frac{1}{w_k} |M_{j,k}|^2< \infty \end{equation} for all $j \in \mathbb{N}_0$.  

Rather than looking at this criterion (\ref{Eqn:FCriterion}), however, we will find weights that put the essentially bounded functions in the domain of $F^*_w$.  By hypothesis, $\mu$ is a finite measure so $L^{\infty}(\mu) \subseteq L^2(\mu)$.  Set $\mathcal{M}_k:=\int_{\mathbb{R}}|x|^k\,\mathrm{d}\mu(x)$, and suppose $f\in
L^{\infty}(\mu)$.  Then
\begin{equation}
\Big| \int_{\mathbb{R}} x^k f(x) \,\mathrm{d}\mu(x)\Big| \leq
\|f\|_{L^{\infty}(\mu)} \mathcal{M}_k.
\end{equation}

Therefore, if $\{\mathcal{M}_k^2/w_k\}\in \ell^1$, then
\begin{equation}
L^{\infty}(\mu)\subset \textrm{dom}(F_w^*),
\end{equation}
and we explicitly define a choice of weights $w_k$:
\begin{equation}\label{Eqn:FWeights}
w_k:=(1 + k^2)\mathcal{M}^2_k.
\end{equation}
Since $L^{\infty}(\mu)$ is dense in $L^2(\mu)$, we now know that
$\textrm{dom}(F_w^*)$ is dense in $\ell^2(w)$ where $w = \{w_k\}_{k \in \mathbb{N}_0}$ is
defined in (\ref{Eqn:FWeights}).  Therefore $F:\ell^2(w)
\rightarrow L^2(\mu)$ is closable.\end{proof}

\begin{theorem}\label{Thm:QClosable}
Given $\mu$ a Borel measure on $\mathbb{R}$ with finite moments of all orders and  $M = M^{(\mu)}$, there are weights
$\{w(i)\}_{i\in\mathbb{N}_0}$ such that $(Q_M, \mathcal{D},
\ell^2(w))$ is closable.
\end{theorem}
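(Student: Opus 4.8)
The plan is to reduce the closability of the quadratic form $Q_M$ to the closability of the operator $F$ already obtained in Lemma \ref{Lemma:FClosable}. The key observation is that for every finite sequence $c \in \mathcal{D}$,
\[
Q_M(c) = \sum_i \sum_j \overline{c_i} M_{i,j} c_j = \sum_i \sum_j \overline{c_i}\,\langle x^i | x^j \rangle_{L^2(\mu)}\, c_j = \Bigl\| \sum_i c_i x^i \Bigr\|_{L^2(\mu)}^2 = \| Fc \|_{L^2(\mu)}^2,
\]
all the sums being finite so that no convergence question arises. In other words, $Q_M$ is exactly the quadratic form attached to $F$, and the task is to control limits of $\|Fc_n\|_{L^2(\mu)}$ as $c_n \to 0$.

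First I would invoke Lemma \ref{Lemma:FClosable}: since $\mu$ is a finite measure with finite moments of all orders, the weights $w = \{w_k\}$ given by $w_k = (1+k^2)\mathcal{M}_k^2$, with $\mathcal{M}_k = \int_{\mathbb{R}} |x|^k \, \mathrm{d}\mu$, make $F : \ell^2(w) \to L^2(\mu)$ closable; that is, the graph closure of $F$ is again the graph of a closed operator $\overline{F}$. These are the weights I will use for $Q_M$. Note also that $\mathcal{D}$ is dense in $\ell^2(w)$ because the $w_k$ are strictly positive, so Definition \ref{Defn:QClosable} applies in $\mathcal{H} = \ell^2(w)$.

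Next I would verify the closability criterion directly. Suppose $c_n \in \mathcal{D}$ with $c_n \to 0$ in $\ell^2(w)$ and $Q_M(c_n - c_m) \to 0$ as $m, n \to \infty$. By the identity above, $\|Fc_n - Fc_m\|_{L^2(\mu)}^2 = Q_M(c_n - c_m) \to 0$, so $\{Fc_n\}$ is Cauchy in $L^2(\mu)$ and converges to some $g \in L^2(\mu)$. Then $(c_n, Fc_n) \to (0, g)$ in $\ell^2(w) \times L^2(\mu)$, so $(0, g)$ lies in the closure of the graph of $F$; since $F$ is closable, this closure is the graph of an operator, which forces $g = 0$. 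Hence $Q_M(c_n) = \|Fc_n\|_{L^2(\mu)}^2 \to \|g\|_{L^2(\mu)}^2 = 0$, which is exactly the condition in Definition \ref{Defn:QClosable}. Since $Q_M$ is manifestly positive and densely defined on $\mathcal{D}$, its closure then falls under Theorem \ref{Thm:Kato}.

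I do not expect a genuine obstacle here: the whole content is the factorization $Q_M(\cdot) = \|F\cdot\|_{L^2(\mu)}^2$ together with the standard equivalence between closability of a positive form and closability of an operator realizing it, and the required weights are supplied verbatim by Lemma \ref{Lemma:FClosable}. The only point deserving care is the bookkeeping with the two Hilbert spaces $\ell^2(w)$ and $L^2(\mu)$ and keeping track of which norm each limit is taken in.
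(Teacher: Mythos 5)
Your proposal is correct and follows essentially the same route as the paper: both use the identity $Q_M(c)=\|Fc\|_{L^2(\mu)}^2$ on $\mathcal{D}$, take the weights from Lemma \ref{Lemma:FClosable} making $F:\ell^2(w)\to L^2(\mu)$ closable, and conclude via the closed graph of $\overline{F}$ that the $L^2(\mu)$-limit $g$ of $Fc_n$ must be $0$, so $Q_M(c_n)\to 0$. Your version merely spells out the graph-closure step slightly more explicitly than the paper does.
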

\begin{proof}Choose weights $w = \{w_i\}_{i\in\mathbb{N}_0}$ as
in Equation (\ref{Eqn:FWeights}) so that $F$ is closable.  Suppose
$c_n\underset{\ell^2(w)}{\longrightarrow} 0$ and $Q_M(c_n -
c_m)\rightarrow 0$ as $m,n\rightarrow \infty$ as in Definition \ref{Defn:QClosable}.  Our goal is to show that $Q_M(c_n)\rightarrow 0$.

Denote the polynomial $Fc_n$ by $f_n$ for each $n \in \mathbb{N}_0$.  Because $Q_M$ on $\mathcal{D}$ and $\widetilde{Q}_M$ defined on the
polynomials satisfy
\[\widetilde{Q}_M(F(c)) = Q_M(c),\]
the condition $Q_M(c_n - c_m)\rightarrow 0$ implies that $\{f_n\}_{n \in\mathbb{N}_0}$
is a Cauchy sequence in $L^2(\mu)$. Therefore, there exists $g \in
L^2(\mu)$ such that $f_{n}\underset{L^2(\mu)}{\longrightarrow} g$.

We will now use the condition that $c_n \rightarrow 0$ in
$\ell^2(w)$. By Lemma \ref{Lemma:FClosable},
$F:\ell^2(w)\rightarrow L^2(\mu)$ is closable, so its closure
$\overline{F}$ has a closed graph.  Since $c_n\rightarrow 0$,
$F(c_n) =\overline{F}(c_n)\rightarrow g$, and the graph of
$\overline{F}$ is closed, $g$ must be  $0\in L^2(\mu)$.
\end{proof}

As a result of Theorem \ref{Thm:QClosable}, we can apply Theorem
\ref{Thm:Kato} (Kato's Theorem) to the closure of the quadratic form $Q_M$.  We denote by $\mathcal{H}_Q$ the Hilbert space completion with respect to the quadratic form $Q_M$.  By Kato's theorem, there exists a self-adjoint operator $H$ on $\ell^2(w)$ such that the Hilbert space completion is also $\mathcal{H}_Q$.  This gives \begin{equation}\label{Eqn:KatoProperties} \|c\|^2_{\mathcal{H}_Q} = Q_M(c) = \|H^{1/2}c\|^2_{\ell^2(w)} \end{equation} for all $c \in \mathcal{H}_Q$.  Moreover, the domain of $H^{1/2}$ is equal to the domain of $Q_M$.  It should be noted that in general, the domain of $H$ is a subset of the domain of $H^{1/2}$ and may or may not contain $\mathcal{D}$.  

\section{A factorization of the Kato-Friedrichs operator}

Given $F$ from $\ell^2(w)$ to
$L^2(\mu)$, suppose the weights $w$ have been selected such that
$F$ is closable, i.e. $F^*_w$ is densely defined in $L^2(\mu)$.

Let $\widetilde{Q}_M$ be the quadratic form given by the $L^2(\mu)$-norm:  
\begin{equation}
\widetilde{Q}_M(f)=\int_{\mathbb{R}}|f|^2 \,\mathrm{d}\mu.
\end{equation}
Then for every $c \in \mathcal{D}$, we have \begin{eqnarray}
\widetilde{Q}_M(Fc) &=& \int_{\mathbb{R}}|Fc|^2\,\mathrm{d}\mu \nonumber\\ &=& \int_{\mathbb{R}}\sum_i \sum_j \overline{c}_ic_j x^{i+j} \,\mathrm{d}\mu \nonumber\\ &=&
\sum_i \sum_j \overline{c_i} M_{i,j}c_j = Q_M(c). \end{eqnarray}

\begin{proposition}\label{Lem:FStarFisH} $F^*_wF$ is a self-adjoint densely defined operator on $\ell^2(w)$, and $F^*_wF$ is the Kato operator $H$.
\end{proposition}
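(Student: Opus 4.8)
The plan is to realize $F_w^*F$ as a self-adjoint operator by means of the classical von Neumann theorem (that $T^*T$ is self-adjoint for every closed, densely defined $T$), and then to identify it with the Kato operator $H$ of Theorem \ref{Thm:Kato} using the uniqueness clause of that theorem together with the relation $\widetilde{Q}_M(Fc)=Q_M(c)$.

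First I would invoke closability. By the choice of weights $w=\{w_k\}$ in Lemma \ref{Lemma:FClosable}, the operator $F:\ell^2(w)\to L^2(\mu)$ is closable; let $\overline{F}$ be its closure, a closed and densely defined operator with $(\overline{F})^*=F_w^*$ (an operator and its closure share the same adjoint). The von Neumann theorem (see, e.g., \cite{ReSi80, Con90}) then gives that $(\overline{F})^*\overline{F}=F_w^*\overline{F}$, with domain $\{c\in\mathrm{dom}(\overline{F}):\overline{F}c\in\mathrm{dom}(F_w^*)\}$, is self-adjoint, non-negative, and densely defined on $\ell^2(w)$. This self-adjoint operator is what we mean by $F_w^*F$; its domain is contained in $\mathrm{dom}(\overline{F})$ and, consistently with the remark after Theorem \ref{Thm:QClosable}, need not contain $\mathcal{D}$ when $\mu$ has unbounded support. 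This settles the first half of the statement.

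To identify $F_w^*\overline{F}$ with $H$, I would use that the form norm attached to $Q_M$ on $\mathcal{D}$, namely $c\mapsto Q_M(c)+\|c\|^2_{\ell^2(w)}=\|Fc\|^2_{L^2(\mu)}+\|c\|^2_{\ell^2(w)}$ (here $\widetilde{Q}_M(Fc)=Q_M(c)$ is used), is exactly the squared graph norm of $F$. Therefore the two hypotheses in Definition \ref{Defn:QClosable} for a sequence $c_n\in\mathcal{D}$ express precisely that $\{(c_n,Fc_n)\}$ is Cauchy in the graph of $F$ while $c_n\to 0$, so the closure $\overline{Q_M}$ of $Q_M$ has domain $\mathrm{dom}(\overline{F})$ and is given there by $\overline{Q_M}(c)=\|\overline{F}c\|^2_{L^2(\mu)}$. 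By Theorem \ref{Thm:Kato}, $H$ is then the unique self-adjoint operator with $\mathrm{dom}(H^{1/2})=\mathrm{dom}(\overline{F})$ and $\|H^{1/2}c\|^2_{\ell^2(w)}=\|\overline{F}c\|^2_{L^2(\mu)}$ for all such $c$. But the self-adjoint operator $F_w^*\overline{F}$ also meets this description: by the standard identity $(\overline{F})^*\overline{F}=|\overline{F}|^2$ from the polar decomposition, $\mathrm{dom}\bigl((F_w^*\overline{F})^{1/2}\bigr)=\mathrm{dom}(\overline{F})$ and $\|(F_w^*\overline{F})^{1/2}c\|^2_{\ell^2(w)}=\langle c\,|\,F_w^*\overline{F}c\rangle_{\ell^2(w)}=\|\overline{F}c\|^2_{L^2(\mu)}$. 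Uniqueness in Theorem \ref{Thm:Kato} now forces $F_w^*\overline{F}=H$.

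The step I expect to require the most care is the identification of the abstract form closure $\overline{Q_M}$ (defined only through the limiting conditions of Definition \ref{Defn:QClosable}) with the concrete form $c\mapsto\|\overline{F}c\|^2$ on $\mathrm{dom}(\overline{F})$, and the bookkeeping that the natural operator domain of $F_w^*F$ can be a proper subspace of $\mathcal{D}$. If one wishes to avoid the square-root and polar-decomposition facts, the same conclusion follows by a direct computation: for $c\in\mathrm{dom}(F_w^*\overline{F})$ and every $d\in\mathrm{dom}(\overline{F})=\mathrm{dom}(H^{1/2})$ one has $\langle H^{1/2}d\,|\,H^{1/2}c\rangle_{\ell^2(w)}=\overline{Q_M}(d,c)=\langle\overline{F}d\,|\,\overline{F}c\rangle_{L^2(\mu)}=\langle d\,|\,F_w^*\overline{F}c\rangle_{\ell^2(w)}$, and since $H^{1/2}$ is self-adjoint this forces $H^{1/2}c\in\mathrm{dom}(H^{1/2})$ with $H^{1/2}(H^{1/2}c)=F_w^*\overline{F}c$, i.e. $c\in\mathrm{dom}(H)$ and $Hc=F_w^*\overline{F}c$; as both operators are self-adjoint, they coincide.
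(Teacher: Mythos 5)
Your proposal is correct, and at its core it takes the same route as the paper: both arguments rest on the identity $Q_M(c)=\widetilde{Q}_M(Fc)=\|Fc\|^2_{L^2(\mu)}=\langle c\,|\,F_w^*Fc\rangle_{\ell^2(w)}$ together with the uniqueness clause of Theorem \ref{Thm:Kato}. The difference is one of rigor rather than strategy. The paper's proof only verifies the identity on the form core $\mathcal{D}$ and then invokes ``uniqueness of the Kato operator,'' taking the self-adjointness of $F_w^*F$ for granted (the von Neumann $T^*T$ fact is cited elsewhere in the paper but not in this proof); you supply that step explicitly via $\overline{F}$, and you also carry out the bookkeeping the paper leaves implicit, namely that the closure of $Q_M$ is $c\mapsto\|\overline{F}c\|^2$ on $\mathrm{dom}(\overline{F})$ and that $\mathrm{dom}\bigl((F_w^*\overline{F})^{1/2}\bigr)=\mathrm{dom}(\overline{F})=\mathrm{dom}(H^{1/2})$, which is exactly what legitimizes the appeal to uniqueness; your closing direct computation is a nice self-contained substitute for the polar-decomposition facts. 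One point where you part company with the paper: its proof opens by asserting that the polynomials lie in $\mathrm{dom}(F_w^*)$, hence $\mathcal{D}\subset\mathrm{dom}(F_w^*F)$, and the remark following the proposition relies on this. That assertion is automatic when $\mu$ has compact support (or when the weights satisfy criterion (\ref{Eqn:FCriterion}), as in Example \ref{Ex:ExpDecay}), but it does not follow from the weights of Lemma \ref{Lemma:FClosable} alone, so your caution that $\mathrm{dom}(F_w^*F)$ need not contain $\mathcal{D}$ in general is warranted; since the statement of the proposition itself makes no claim about $\mathcal{D}$, this does not affect the validity of your proof, and working with $\overline{F}$ sidesteps the issue entirely.
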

\begin{proof}
The domain of $F$ contains $\mathcal{D}$, and the domain of $F^*_w$
contains the polynomials.  Since $F$ maps finite sequences $c \in
\mathcal{D}$ to polynomials $Fc$, we see that $\mathcal{D}$ is
contained in the domain of $F^*_wF$.  In order to prove that
$F^*_wF$ is the Kato operator corresponding to the quadratic form
$Q_M$, we must show that it satisfies \[ Q_M(c) =
\|(F^*_wF)^{1/2}c\|_{\ell^2(w)} \] for all $c \in \mathcal{D}$.

 First, note that for any $c \in \mathcal{D}$,
\begin{eqnarray*}  \langle c |F_w^*Fc \rangle_{\ell^2(w)} &=& \big\langle (F^*_wF)^{1/2}c \big|(F^*_wF)^{1/2}c \big\rangle_{\ell^2(w)} \\
&=& \|(F_w^*F)^{1/2} c\|_{\ell^2(w)}^2. \end{eqnarray*} We also note that
\begin{eqnarray*} \langle c|F_w^*Fc \rangle_{\ell^2(w)} &=& \langle Fc | Fc \rangle_{\ell^2(w)}\\ &=&
\|Fc\|^2_2\\&=& \widetilde{Q}_M(Fc)= Q_M(c). \end{eqnarray*}

Therefore, for $c \in \mathcal{D}$,
\[ Q_M(c) = \langle c|F^*_wFc \rangle_{\ell^2(w)} = \|(F^*_wF)^{1/2}c\|^2_{\ell^2(w)}.\]
By uniqueness of the Kato operator for the closable quadratic form
$Q_M$, we have $F^*_wF = H$.
\end{proof}
This proposition makes the usually mysterious Kato operator much more concrete.  In particular, as a result of this proposition, the domain of $H$ includes $\mathcal{D}$.  
  
\section{Kato connection to $A$ matrix}\label{Subsec:KatoA}

Given a moment matrix $M^{(\mu)}$ and a measurable map $\tau$, we can use Kato theory as another way to describe a matrix $A$ such that, under appropriate convergence criteria, \[M^{(\mu \circ \tau)} = A^*M^{(\mu)}A.\]
This is a different approach to that used in Section \ref{Sec:GeneralA}.  We prove that under certain hypotheses on the measure $\mu$  and the map $\tau$, we can find an operator $\widetilde{A}$ which is an isometry between the Kato completion Hilbert spaces for the quadratic forms $Q_M$ and $Q_M'$, where $M = M^{(\mu)}$ and $M' = M^{(\mu \circ \tau^{-1})}$.  We demonstrate that this isometry $\widetilde{A}$ will intertwine the Kato operators for $Q_M$ and $Q_{M'}$, and therefore the corresponding matrix representation $A$ will intertwine the moment matrices $M$ and $M'$ as required.

Let $(X,\mu)$ be a measure space with $X \subseteq \mathbb{R}$.  For each $k \in \mathbb{N}_0$, let $v_k$ be the monomial function on $X$: $v_k(x) = x^k$. The polynomials may not be dense in $L^2(\mu)$, so denote by $\mathcal{P}$ the closed span of the polynomials in $L^2(\mu)$.  We take $\mu$ to be a measure such that $\{v_k\}_{k\in \mathbb{N}_0} \subseteq L^1(\mu) \cap \mathcal{P}$. Let $M = M^{(\mu)}$ be the moment matrix for $\mu$.  By Section \ref{Subsec:QClosable}, there exist weights $w=\{w_i\}_{i\in\mathbb{N}_0}$ such that the map $F$ is closable on the weighted space $\ell^2(w)$.  Thus the quadratic form $Q_M$ is closable, and we can define the Kato operator $H$ on $\ell^2(w)$.   Note that the domains of $Q_M, F$, and $F_w^*$ all contain the set of finite sequences $\mathcal{D}$.

Let $\tau$ be a measurable endomorphism on $X$ such that the functions $v_k \circ \tau = \tau^k$ are all in $L^1(\mu) \cap \mathcal{P}$.  Let $M' = M^{(\mu \circ \tau^{-1})}$ be the moment matrix for the measure $\mu \circ \tau^{-1}$.   Define the map $F':\mathcal{D} \rightarrow L^2(\mu \circ \tau^{-1})$ to be the analog of $F$, i.e. the map which takes $c \in \mathcal{D}$ to the polynomial  $f_c = \sum_i c_iv_i \in L^2(\mu \circ \tau^{-1})$.   Using the same reasoning as that used in Section \ref{Subsec:QClosable}, there exist weights such that $F'$ is closable, hence the quadratic form $Q_{M'}$ is closable, and we have $H'$, the Kato operator on the corresponding weighted $\ell^2$ space.

Let $w=\{w_k\}_{k \in \mathbb{N}_0}$ be weights that allow for both $F$ and $F'$ to be closable on $\ell^2(w)$, hence the Kato operators $H$ and $H'$ are both densely defined self-adjoint operators on $\ell^2(w)$. Denote the Kato completion Hilbert spaces for the quadratic forms $Q_M$ and $Q_{M'}$ by $\mathcal{H}_Q$ and $\mathcal{H}_{Q'}$, respectively.

Notice that $\phi \in L^2(\mu \circ \tau^{-1})$ if and only if $\int_{\mathbb{R}}|\phi \circ \tau|^2 \mathrm{d}\mu < \infty$, which holds if and only if $\phi \circ \tau \in L^2(\mu)$.  In fact, there is a natural isometry $\alpha$ between $L^2(\mu \circ \tau^{-1})$ and $L^2(\mu)$ given by \[ \alpha(\phi) = \phi \circ \tau.\]

\begin{remark}
The map $F$ which we have defined above actually takes on several realizations in this section.  Simply put, it is the map that takes finite sequences to the polynomial with these coefficients.  But we can think of $F$ as operating on the Kato spaces $\mathcal{H}_Q$, $\mathcal{H}_{Q'}$ or on the weighted space $\ell^2(w)$.  We can also take the codomain of $F$ to be $L^2(\mu)$ or $L^2(\mu \circ \tau^{-1})$.  In order to clarify these different realizations, we will denote the operator $F$ as follows:

\begin{eqnarray*}  F_Q&:& \mathcal{H}_Q \rightarrow L^2(\mu) \\ F_{Q'}&:& \mathcal{H}_{Q'} \rightarrow L^2(\mu \circ \tau^{-1}) \\ F &:& \ell^2(w) \rightarrow L^2(\mu) \\ F' &:& \ell^2(w) \rightarrow L^2(\mu \circ \tau^{-1}). \end{eqnarray*} Moreover, we also observe that the operator $T$ (defined below) which maps standard basis elements $e_i$ to $ \tau^i$ can be expressed in terms of the $F'$ and the isometry $\alpha$.  Given $c \in \mathcal{D}$,
\begin{eqnarray*}  Tc &=& \Bigr(\sum c_i \tau^i \Bigr) \in L^2(\mu)\\ &=& \sum c_i \alpha(v_i ) \in L^2(\mu)\\ &=& \alpha F'c.
\end{eqnarray*}
We will show below that the operators $F_Q$ and $F_{Q'}$ are isometries, and we have already proved that for appropriately chosen weights, the operators $F$ and $F'$ are closable on $\ell^2(w)$.
\end{remark}

\begin{lemma}\label{Lem:KatoIsometry}
 The Kato space $\mathcal{H}_Q$ is isometric to $\mathcal{P} \subseteq L^2(\mu)$, and $\mathcal{H}_{Q'}$ is isometric to a subspace of $L^2(\mu \circ \tau^{-1})$.
\end{lemma}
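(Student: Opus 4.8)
The plan is to realize the Kato completion concretely through the operator $F$ used in the factorization of the Kato–Friedrichs operator. First I would recall that for every $c \in \mathcal{D}$,
\[
\|c\|^2_{\mathcal{H}_Q} \;=\; Q_M(c) \;=\; \widetilde{Q}_M(Fc) \;=\; \int_{\mathbb{R}} |Fc|^2 \,\mathrm{d}\mu \;=\; \|Fc\|^2_{L^2(\mu)},
\]
which is exactly the identity established just before Proposition \ref{Lem:FStarFisH}, combined with Equation (\ref{Eqn:KatoProperties}). Thus, regarded as a map from $(\mathcal{D}, \sqrt{Q_M})$ into $L^2(\mu)$, $F$ is norm-preserving; in particular its kernel on $\mathcal{D}$ is precisely $\mathrm{Null}_M$, so it descends to a well-defined linear isometry on the inner product space $\mathcal{D}/\mathrm{Null}_M$.

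Next I would pass to the completion. Since $\mathcal{D}/\mathrm{Null}_M$ is dense in $\mathcal{H}_Q$ by construction and $L^2(\mu)$ is complete, this isometry extends uniquely to an isometry $F_Q \colon \mathcal{H}_Q \to L^2(\mu)$, namely the realization of $F$ on $\mathcal{H}_Q$ singled out in the Remark preceding this lemma. Being the isometric image of a complete space, $F_Q(\mathcal{H}_Q)$ is a closed subspace of $L^2(\mu)$. On one hand it contains $F(\mathcal{D})$, which is the set of all polynomials, hence it contains $\overline{\mathrm{sp}}\{v_k : k \in \mathbb{N}_0\} = \mathcal{P}$. On the other hand, $F_Q$ is continuous and $\mathcal{D}$ is dense in $\mathcal{H}_Q$, so $F_Q(\mathcal{H}_Q) \subseteq \overline{F(\mathcal{D})} = \mathcal{P}$. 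Therefore $F_Q$ is an isometry of $\mathcal{H}_Q$ onto $\mathcal{P}$, which is the first assertion.

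For the second assertion I would run the identical argument with $\mu$ replaced by $\mu \circ \tau^{-1}$, the form $Q_M$ by $Q_{M'}$, and the operator $F$ by $F'$: the identity $\|c\|^2_{\mathcal{H}_{Q'}} = Q_{M'}(c) = \|F'c\|^2_{L^2(\mu \circ \tau^{-1})}$ on $\mathcal{D}$ produces an isometry $F_{Q'} \colon \mathcal{H}_{Q'} \to L^2(\mu \circ \tau^{-1})$ whose range is the closed linear span of the polynomials in $L^2(\mu \circ \tau^{-1})$; since this span need not be all of $L^2(\mu \circ \tau^{-1})$, one obtains only ``a subspace'' in this case. The step I expect to require the most care is not an estimate but the bookkeeping of the Kato completion: one must check that the seminorm-null quotient used to build $\mathcal{H}_Q$ coincides with the kernel of $F$ inside $L^2(\mu)$ (immediate from the displayed isometry identity) and that continuity of $F_Q$ prevents a limit vector of $\mathcal{H}_Q$ from leaving $\mathcal{P}$. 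Everything else is a routine application of the ``isometric extension to the completion'' principle.
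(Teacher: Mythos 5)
Your proof is correct and takes essentially the same route as the paper's: both rest on the single identity $\|F_Qc\|^2_{L^2(\mu)} = \int |f_c|^2\,\mathrm{d}\mu = \sum_i\sum_j \overline{c_i}c_j M^{(\mu)}_{i,j} = Q_M(c) = \|c\|^2_{\mathcal{H}_Q}$ for $c \in \mathcal{D}$, with the identical argument repeated for $Q_{M'}$ and $L^2(\mu\circ\tau^{-1})$. The additional bookkeeping you carry out (quotient by the null space, isometric extension to the completion, and identification of the closed range with $\mathcal{P}$) is precisely what the paper leaves implicit, so your write-up is a more detailed rendering of the same argument rather than a different one.
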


\begin{proof}  The norm on the Hilbert space $\mathcal{H}_Q$ is given by \[ \|c\|^2_{\mathcal{H}_Q} = Q_M(c) \] when $c \in \mathcal{D}$,  and by the definition of the Kato operator, we also have \[\|c\|_{\mathcal{H}_Q} = \|H^{1/2} c \|_{\ell^2(w)}.\]  Let $F_Q$  be the map taking $c \in \mathcal{D}$ to $f_c = \sum_i c_iv_i$ in $\mathcal{P} \subseteq L^2(\mu)$.  Then \begin{eqnarray*}  \|F_Qc\|_2^2 &=& \|f_c\|^2_2\\ &=& \int_{\mathbb{R}}\Bigr|\sum_ic_iv_i \Bigr|^2 \mathrm{d}\mu\\ &=& \sum_i \sum_j \overline{c_i}c_j \int_{\mathbb{R}}v_iv_k \mathrm{d}\mu\\& = &\sum_i\sum_j \overline{c_i}c_j M^{(\mu)}_{i,j} \\& =& Q_M(c) = \|c\|^2_{\mathcal{H}_Q}.  \end{eqnarray*}

Thus $F_Q$ is an isometry from $\mathcal{H}_Q$ to $\mathcal{P}$.  An identical argument proves that the corresponding $F_{Q'}$ map from $\mathcal{H}_{Q'}$ to $L^2(\mu \circ \tau^{-1})$ is also an isometry onto the subspace $\mathcal{P}'$ of $L^2(\mu \circ \tau^{-1})$ spanned by the polynomials.
\end{proof}

\begin{proposition}\label{Prop:DefA} The Kato Hilbert spaces $\mathcal{H}_Q$ and $\mathcal{H}_{Q'}$ are isometric.
\end{proposition}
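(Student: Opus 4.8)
The plan is to realize the asserted isometry $\widetilde{A}\colon\mathcal{H}_{Q'}\to\mathcal{H}_Q$ as a composition of three isometries already in hand. First, Lemma~\ref{Lem:KatoIsometry} gives that $F_Q$ is an isometry of $\mathcal{H}_Q$ \emph{onto} the closed polynomial subspace $\mathcal{P}\subseteq L^2(\mu)$ and that $F_{Q'}$ is an isometry of $\mathcal{H}_{Q'}$ onto the closed polynomial subspace $\mathcal{P}'\subseteq L^2(\mu\circ\tau^{-1})$; since isometries are injective, $F_Q^{-1}\colon\mathcal{P}\to\mathcal{H}_Q$ is a well-defined isometry. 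Second, the map $\alpha(\phi)=\phi\circ\tau$ is an isometry of $L^2(\mu\circ\tau^{-1})$ into $L^2(\mu)$, because $\int_X|\phi\circ\tau|^2\,\mathrm{d}\mu=\int_X|\phi|^2\,\mathrm{d}(\mu\circ\tau^{-1})$. I would then simply set
\[
\widetilde{A}\ :=\ F_Q^{-1}\circ\alpha\circ F_{Q'}\colon\quad \mathcal{H}_{Q'}\ \longrightarrow\ \mathcal{H}_Q .
\]

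For this composition to make sense the only thing to verify is that $\alpha$ carries $\mathcal{P}'$ into $\mathcal{P}$, and this is precisely where the standing hypothesis $\{v_k\circ\tau\}_{k\in\mathbb{N}_0}=\{\tau^k\}_{k\in\mathbb{N}_0}\subseteq\mathcal{P}$ enters: since $\alpha(v_k)=v_k\circ\tau=\tau^k\in\mathcal{P}$ and $\alpha$ is continuous, $\alpha$ maps $\mathcal{P}'=\overline{\mathrm{sp}}\{v_k\}$ into $\overline{\mathrm{sp}}\{\tau^k\}\subseteq\mathcal{P}$. Hence the three arrows compose, and $\widetilde{A}$ is an isometry as a composition of isometries; this is the content of the proposition. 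It is worth recording --- since the next results will need it --- that on the dense domain $\mathcal{D}$ the operator $\widetilde{A}$ is just the matrix $A$ of Equation~(\ref{Eqn:DefA}): for $c\in\mathcal{D}$ one has $\alpha(F_{Q'}c)=\sum_i c_i\tau^i=\sum_j\bigl(\sum_i A_{j,i}c_i\bigr)v_j$, so $\widetilde{A}c=F_Q^{-1}\bigl(\sum_j(Ac)_j v_j\bigr)=Ac$.

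I do not expect any serious obstacle here; the proof is bookkeeping with already-established isometries. The one point that deserves a word of caution is interpretive: the composition produces an isometric \emph{embedding} of $\mathcal{H}_{Q'}$ into $\mathcal{H}_Q$, but $\widetilde{A}$ need not be surjective, since $\alpha(\mathcal{P}')$ can be a proper subspace of $\mathcal{P}$ (for instance, with $\mu$ Lebesgue measure on $[-1,1]$ and $\tau(x)=x^2$, the image $\alpha(\mathcal{P}')$ is the subspace of even functions). Thus ``isometric'' should be read as the existence of the isometry $\widetilde{A}$, in the sense of the remark preceding the proposition; this is exactly the property $\widetilde{A}^{*}\widetilde{A}=I$ that will later let us deduce the intertwining $A^{*}M^{(\mu)}A=M^{(\mu\circ\tau^{-1})}$ at the level of the Kato operators.
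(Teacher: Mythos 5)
Your proof is correct and is essentially the paper's own argument: the paper establishes the proposition via the chain of isometries $\mathcal{H}_{Q'}\cong\mathcal{P}'\cong\mathcal{P}\cong\mathcal{H}_Q$ (Lemma \ref{Lem:KatoIsometry} together with the map $\alpha$) and then records $\widetilde{A}=F_Q^{-1}\alpha F_{Q'}$, which is exactly your composition. Your closing caveat is apt---since $\alpha(\mathcal{P}')$ may be a proper subspace of $\mathcal{P}$, the conclusion is really an isometric embedding rather than a surjective isometry, a point the paper's one-line proof (whose $\cong$ nominally means an onto isometry) glosses over, though only the embedding property $Q_M(\widetilde{A}c)=Q_{M'}(c)$ is used in what follows.
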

\begin{proof} We use the notation $\mathcal{H} \cong \mathcal{G}$ if there exists an isometry from the Hilbert space $\mathcal{H}$ onto the Hilbert space $\mathcal{G}$.  Using Lemma \ref{Lem:KatoIsometry} and the map $\alpha$, we have the following isometries:
\[ \mathcal{H}_{Q'} \cong \mathcal{P}' \cong \mathcal{P} \cong \mathcal{H}_{Q}. \]
\end{proof}

If we denote the isometry in Proposition \ref{Prop:DefA} by $\widetilde{A}$, then
\[ \widetilde{A} = F_Q^{-1}\alpha F_{Q'}.\] The isometry property $\|c\|_{Q'}
= \|\widetilde{A}c\|_{Q}$ implies the quadratic forms satisfy $Q_M(\widetilde{A}c)  =
Q_{M'}(c)$ for all $c \in \mathcal{D}$.

Given $\{e_i\}_{i \in\mathbb{N}_0}$ the standard orthonormal basis for
$\ell^2$, we define the standard orthonormal basis
$\{e_i^w\}_{i\in\mathbb{N}_0} = \{ \frac{e_i}{\sqrt{w_i}}
\}_{i\in\mathbb{N}_0}$ for $\ell^2(w)$.  Given $F$ from $\ell^2(w)$ to
$L^2(\mu)$, suppose the weights $w$ have been selected such that
$F$ is closable, i.e. $F^*_w$ is densely defined in $L^2(\mu)$.  Recall from Lemma \ref{Lem:FStarFisH} that $F^*_wF$ is equal to the Kato operator $H$ on $\ell^2(w)$.

The operator $\widetilde{A}$ has domain containing $\mathcal{D} \subseteq
\mathcal{H}_Q$, but we will need to consider the analog of $\widetilde{A}$ as
an operator on $\ell^2(w)$.  In order to do this, we define the
following maps:
\begin{eqnarray*}  j_w &:& \ell^2(w) \rightarrow \mathcal{H}_Q\\
j_w'&:& \ell^2(w) \rightarrow \mathcal{H}_{Q'}.\end{eqnarray*}
Each of $j_w,j_w'$ acts like the identity map on $\mathcal{D}$,
mapping $e_i^w$ in $\ell^2(w)$ to $e_i^w$ in the Kato spaces
$\mathcal{H}_Q$, $\mathcal{H}_{Q'}$, respectively.   Then, we see \[ F =
F_Qj_w \qquad \txt{and} \qquad F' = F_{Q'}j_w'.\]

We then wish to define the operator on $\ell^2(w)$:  \[ A
= j_w'\widetilde{A}j_w^{-1}.\]

\begin{remark} Defining $A$ in this fashion requires the composition of operators to be defined on $\mathcal{D}$ and also requires $j_w$ be invertible.  We will assume these properties through the remainder of this section, but we note that they may not hold in general.  Thus, this approach requires convergence hypotheses to define $A$, just as were needed in Proposition \ref{Prop:TRA}.   \end{remark}


Given the maps $\alpha$ and $F'$ defined above, we define $T$ to
be the composition \[ T = \alpha F': \ell^2(w) \rightarrow
L^2(\mu),\]  where $T$ maps basis element $e_i^w$ to the function
$\frac{1}{\sqrt{w_i}}\tau^i =\frac{1}{\sqrt{w_i}} v_i \circ \tau$.

We now wish to write $T$ with respect to the isometry $\widetilde{A}$ and the
map $F=F_Qj_w$.  We will need to make use of the operator
$A$, as shown in the following diagram.
\[
\begin{CD}
L^2(\mu \circ \tau^{-1}) @>\alpha>>  L^2(\mu)\\
@AF_{Q'}AA    @AF_QAA\\
\mathcal{H}_{Q'} @>\widetilde{A}>>  \mathcal{H}_Q\\ 
@Aj_w'AA   @Aj_wAA\\
\ell^2(w) @>A>> \ell^2(w)
\end{CD}
\]

\begin{lemma}  Given the operator $T$ defined above, \[ T = Fj_w^{-1}\widetilde{A}j_w' = FA.\]    \end{lemma}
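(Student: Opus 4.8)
The plan is to verify the identity $T = Fj_w^{-1}\widetilde A j_w' = FA$ by chasing the commuting diagram, reducing everything to the dense set $\mathcal D$ of finite sequences on which all the maps in play are honestly defined. First I would recall the definitions assembled just above: $\widetilde A = F_Q^{-1}\alpha F_{Q'}$ as an isometry $\mathcal H_{Q'}\to\mathcal H_Q$, the identity-like inclusions $j_w:\ell^2(w)\to\mathcal H_Q$ and $j_w':\ell^2(w)\to\mathcal H_{Q'}$ (each sending $e_i^w$ to $e_i^w$ in the respective Kato space), the factorizations $F = F_Q j_w$ and $F' = F_{Q'} j_w'$ from Lemma \ref{Lem:KatoIsometry} and its surrounding discussion, the definition $A = j_w'\widetilde A j_w^{-1}$, and the definition $T = \alpha F'$ with $Tc = \alpha F'c = \sum_i c_i (v_i\circ\tau)$ for $c\in\mathcal D$.

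The second equality $Fj_w^{-1}\widetilde A j_w' = FA$ is immediate: since $A = j_w'\widetilde A j_w^{-1}$, composing on the left with $F j_w^{-1} \cdot$ is just a matter of associativity once we note $F j_w^{-1}\widetilde A j_w' = F(j_w^{-1}\widetilde A j_w') $ — wait, this needs the correct bracketing. Actually $FA = F j_w' \widetilde A j_w^{-1}$ by substitution, which is \emph{not} literally $F j_w^{-1}\widetilde A j_w'$; so I would instead prove both $T = Fj_w^{-1}\widetilde A j_w'$ and $T = FA$ directly from the diagram rather than trying to pass between them formally. For $T = FA$: starting from $A = j_w'\widetilde A j_w^{-1}$ and $F = F_Q j_w$, I compute on $c\in\mathcal D$,
\[
FAc = F_Q j_w (j_w'\widetilde A j_w^{-1})c.
\]
Here I want the right commuting square of the diagram: $j_w \circ (\text{something}) $ should collapse because $j_w$ and $j_w'$ act as the identity on $\mathcal D$. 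More precisely, on $\mathcal D$ we have $j_w^{-1}c = c$ (viewing $c$ as a finite sequence), $\widetilde A c = F_Q^{-1}\alpha F_{Q'}c \in \mathcal H_Q$, and then $j_w^{-1}$ of \emph{that} — no: the correct reading is $Ac = j_w'\widetilde A j_w^{-1}c$, and since $\widetilde A$ maps into $\mathcal H_Q$ but we then apply $j_w'$ which maps \emph{into} $\mathcal H_{Q'}$...

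\textbf{The main obstacle} I foresee is precisely disentangling which composite actually type-checks: the diagram as drawn has $\widetilde A:\mathcal H_{Q'}\to\mathcal H_Q$, so $j_w'\widetilde A$ does not compose, and the intended definition must be $A = j_w^{-1}\widetilde A j_w'$ (reading the diagram bottom-to-top-left then across then down, as the arrows force), i.e.\ there is a typo in the displayed ``$A = j_w'\widetilde A j_w^{-1}$''. I would therefore adopt the typechecking-consistent reading $A = j_w^{-1}\widetilde A j_w' : \ell^2(w)\to\ell^2(w)$, and then the identity to prove becomes transparent: on $c\in\mathcal D$,
\[
FAc = F_Q j_w\,(j_w^{-1}\widetilde A j_w')c = F_Q \widetilde A j_w' c = F_Q (F_Q^{-1}\alpha F_{Q'}) j_w' c = \alpha F_{Q'} j_w' c = \alpha F' c = Tc,
\]
using $F = F_Q j_w$, $j_w j_w^{-1} = \mathrm{id}$ on $\mathcal D$, the definition of $\widetilde A$, $F_Q F_Q^{-1} = \mathrm{id}$ on $\mathcal P$, and $F' = F_{Q'}j_w'$. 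The same chain, reading $Fj_w^{-1}\widetilde A j_w' c = F_Q j_w j_w^{-1}\widetilde A j_w' c = F_Q \widetilde A j_w' c$, gives $T = Fj_w^{-1}\widetilde A j_w'$ as well. I would close by remarking that every manipulation is legitimate because all composites were assumed (in the Remark preceding the lemma) to be defined on $\mathcal D$ and $j_w$ to be invertible there, so the equalities of operators hold on the dense domain $\mathcal D$, which is all that is claimed. The bulk of the writeup is thus bookkeeping; the one genuine subtlety worth flagging to the reader is the orientation of $\widetilde A$ versus the placement of $j_w,j_w'$, which I would state explicitly so the diagram and the formula agree.
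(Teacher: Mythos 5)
Your proposal is correct and takes essentially the same route as the paper's own proof: both reduce to the chain $T = \alpha F' = \alpha F_{Q'}j_w' = F_Q\widetilde{A}j_w' = Fj_w^{-1}\widetilde{A}j_w' = FA$ on the dense domain $\mathcal{D}$, using the factorizations $F = F_Qj_w$, $F' = F_{Q'}j_w'$ and the relation $F_Q\widetilde{A} = \alpha F_{Q'}$ coming from $\widetilde{A} = F_Q^{-1}\alpha F_{Q'}$. Your observation that the displayed definition $A = j_w'\widetilde{A}j_w^{-1}$ only type-checks when read as $A = j_w^{-1}\widetilde{A}j_w'$ is also right; the paper itself adopts that corrected reading immediately before the lemma, so your argument and the paper's coincide.
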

\begin{proof} Recall from the definition of $\widetilde{A}$ that $\alpha F_Q' = F_Q\widetilde{A}$.  We expand $T$ using the diagram above.
\begin{eqnarray*} T &=& \alpha F_w'\\ &=& \alpha F_Q'j_w'\\&=& F_Q\widetilde{A}J_w'\\&=& Fj_w^{-1}\widetilde{A}j_w' = FA. \end{eqnarray*}
\end{proof}

\begin{lemma} There exist weights such that $T$ is a closable operator on the weighted space $\ell^2(w)$.
\end{lemma}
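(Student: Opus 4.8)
The plan is to argue exactly as in the proofs of Lemma~\ref{Lemma:FClosable} and Lemma~\ref{Lem:TClosable}, using the standard criterion (\cite[Proposition~1.6, Chapter~10]{Con90}) that a densely defined operator is closable precisely when its adjoint is densely defined. First I would compute the adjoint $T^*_w$ explicitly. For a finite sequence $c \in \mathcal{D}$, written as a coefficient sequence, we have $Tc = \alpha F' c = \sum_k c(k)\,(v_k \circ \tau) \in L^2(\mu)$, since the functions $\tau^k = v_k \circ \tau$ lie in $\mathcal{P}\subseteq L^2(\mu)$ by hypothesis. The same multiply-and-divide-by-$w_k$ computation used for $F^*_w$ then yields
\[ (T^*_w f)(k) = \frac{1}{w_k} \int_X (v_k \circ \tau)\, f \,\mathrm{d}\mu, \]
so that $f$ belongs to $\mathrm{dom}(T^*_w)$ if and only if $\sum_k \frac{1}{w_k}\bigl|\int_X (v_k \circ \tau)\,f \,\mathrm{d}\mu\bigr|^2 < \infty$.

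Next I would choose the weights so that every essentially bounded function lies in $\mathrm{dom}(T^*_w)$. Since each $\tau^k = v_k \circ \tau$ is in $L^1(\mu)$ by hypothesis, the constants $\mathcal{M}'_k := \int_X |v_k \circ \tau|\,\mathrm{d}\mu = \int_X |v_k|\,\mathrm{d}(\mu \circ \tau^{-1})$ of Lemma~\ref{Lem:TClosable} are finite. As $\mu$ is a finite measure, $L^\infty(\mu)\subseteq L^2(\mu)$, and for $f \in L^\infty(\mu)$ we have $\bigl|\int_X (v_k \circ \tau)\, f\,\mathrm{d}\mu\bigr| \le \|f\|_{L^\infty(\mu)}\,\mathcal{M}'_k$; hence $\{(\mathcal{M}'_k)^2/w_k\}\in\ell^1$ forces $L^\infty(\mu)\subseteq \mathrm{dom}(T^*_w)$. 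Taking $w_k := (1+k^2)\bigl(1 + (\mathcal{M}'_k)^2\bigr)$ makes the relevant sum bounded by $\sum_k (1+k^2)^{-1} < \infty$, and density of $L^\infty(\mu)$ in $L^2(\mu)$ then shows $\mathrm{dom}(T^*_w)$ is dense, so $T$ is closable on $\ell^2(w)$.

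I would also mention a softer alternative: because $\alpha$ is an isometry of $L^2(\mu\circ\tau^{-1})$ into $L^2(\mu)$, its adjoint $\alpha^*$ is a surjective partial isometry with $\ker(\alpha^*) = \mathrm{range}(\alpha)^\perp$, and $T^*_w = (F')^*_w \alpha^*$; once $F'$ is closable (the analogue of Lemma~\ref{Lemma:FClosable} for the measure $\mu\circ\tau^{-1}$), the preimage under the coisometry $\alpha^*$ of the dense set $\mathrm{dom}((F')^*_w)$ equals $\alpha\bigl(\mathrm{dom}((F')^*_w)\bigr) \oplus \mathrm{range}(\alpha)^\perp$, which is dense, so $\mathrm{dom}(T^*_w)$ is dense. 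The only genuine bookkeeping point — not really an obstacle — is that this section wants one weight sequence making $F$, $F'$, and $T$ simultaneously closable (so that the identity $T = FA$ of the previous lemma is meaningful); this is arranged by taking $w_k := (1+k^2)\bigl(1 + \mathcal{M}_k^2 + (\mathcal{M}'_k)^2\bigr)$ and observing that enlarging the weights only enlarges the domains of the adjoints $F^*_w$, $(F')^*_w$, and $T^*_w$, so closability of each is preserved.
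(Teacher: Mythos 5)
Your proposal is correct, and it actually contains two valid arguments. Your primary argument (compute $(T^*_wf)_k = \tfrac{1}{w_k}\int_X (v_k\circ\tau)\,f\,\mathrm{d}\mu$, bound it by $\|f\|_{L^\infty(\mu)}\mathcal{M}'_k$, and choose $w_k$ so that $\{(\mathcal{M}'_k)^2/w_k\}\in\ell^1$, forcing $L^\infty(\mu)\subseteq\mathrm{dom}(T^*_w)$) is a direct, explicit route in the style of Lemmas \ref{Lemma:RClosable}, \ref{Lem:TClosable}, and \ref{Lemma:FClosable}; the paper instead disposes of the lemma in one line by the route you call the ``softer alternative'': the weights already chosen make $F$ and $F'$ closable with $L^\infty$ in the domains of $F^*_w$ and $(F')^*_w$, and since $\alpha$ is an isometry, $T^*_w=(F')^*_w\alpha^*$ is densely defined. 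Your write-up of that alternative is in fact more careful than the paper's, since you verify the step the paper leaves implicit: $(\alpha^*)^{-1}\bigl(\mathrm{dom}((F')^*_w)\bigr)=\alpha\bigl(\mathrm{dom}((F')^*_w)\bigr)\oplus\mathrm{range}(\alpha)^{\perp}$ is dense because the range of an isometry is closed and $\alpha$ maps a dense set onto a dense subset of it. Your closing bookkeeping point --- that one should take a single weight sequence such as $w_k=(1+k^2)\bigl(1+\mathcal{M}_k^2+(\mathcal{M}'_k)^2\bigr)$, noting that enlarging weights only enlarges the adjoint domains, so $F$, $F'$, and $T$ are simultaneously closable --- is also worth recording, since the paper simply posits such common weights without comment. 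What the direct computation buys you is independence from the identity $T=\alpha F'$ and an explicit admissible weight; what the paper's (and your alternative) argument buys is brevity, reusing the closability of $F'$ already established.
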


\begin{proof} We have proven that there exist weights $w= \{w_i\}_{i \in \mathbb{N}_0}$ already that make the operators $F$ and $F'$ closable, and in particular, these weights ensure that the bounded functions are in the domains of both $F_w^*$ and $(F'_w)^*$.    Since  $\alpha$ is an isometry, these weights also ensure that $T^*_w$ has dense domain.
\end{proof}

\begin{lemma}\label{Lem:TStarT} Let $w$ be weights such that $F$
and $T$ are both closable operators on $\ell^2(w)$.  If the operator
$T^*_wT$ is self-adjoint and densely defined on $\ell^2(w)$, then
$T_w^*T = H'$. 
\end{lemma}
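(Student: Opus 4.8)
The plan is to transcribe the proof of Proposition \ref{Lem:FStarFisH} — which identified $F^*_wF$ with the Kato operator $H$ of $Q_M$ — replacing $F$ by $T$, $\mu$ by $\mu\circ\tau^{-1}$ (accessed through the isometry $\alpha$), $Q_M$ by $Q_{M'}$, and $H$ by $H'$. Three ingredients must be assembled: that $T$ carries $Q_{M'}$ up to $L^2(\mu)$, that $T^*_wT$ reproduces $Q_{M'}$ on the core $\mathcal{D}$, and that the Kato operator is unique.

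First I would record the form identity on $\mathcal{D}$. For $c\in\mathcal{D}$ one has $F'c=\sum_i c(i)v_i\in L^2(\mu\circ\tau^{-1})$ and, exactly as in Section \ref{Subsec:QClosable},
\[ \|F'c\|^2_{L^2(\mu\circ\tau^{-1})}=\sum_{i,j}\overline{c(i)}\,c(j)\int x^{i+j}\,\mathrm{d}(\mu\circ\tau^{-1})(x)=\sum_{i,j}\overline{c(i)}M'_{i,j}c(j)=Q_{M'}(c). \]
Since $\alpha(\phi)=\phi\circ\tau$ preserves norms from $L^2(\mu\circ\tau^{-1})$ into $L^2(\mu)$ and $Tc=\alpha F'c=\sum_i c(i)\tau^i$, this yields $Q_{M'}(c)=\|Tc\|^2_{L^2(\mu)}$ for every $c\in\mathcal{D}$.

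Next I would pass through the adjoint. As in Proposition \ref{Lem:FStarFisH}, $\mathcal{D}\subset\mathrm{dom}(T)$ because the images $Tc$ lie in $\mathcal{P}\subset L^2(\mu)$ by the hypothesis on $\tau$, and the weights of Lemma \ref{Lem:TClosable} place the relevant functions in $\mathrm{dom}(T^*_w)$, so $\mathcal{D}\subset\mathrm{dom}(T^*_wT)$. Hence for $c\in\mathcal{D}$,
\[ \|Tc\|^2_{L^2(\mu)}=\langle Tc\,|\,Tc\rangle_{L^2(\mu)}=\langle c\,|\,T^*_wT\,c\rangle_{\ell^2(w)}=\bigl\|(T^*_wT)^{1/2}c\bigr\|^2_{\ell^2(w)}, \]
the last equality because $T^*_wT$ is self-adjoint by hypothesis and automatically positive, so its positive self-adjoint square root exists. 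Combining with the previous step, $Q_{M'}(c)=\|(T^*_wT)^{1/2}c\|^2_{\ell^2(w)}$ for all $c\in\mathcal{D}$.

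Finally I would invoke uniqueness. Because $F'$ is closable on $\ell^2(w)$, the form $Q_{M'}$ is closable, so Theorem \ref{Thm:Kato} applies to its closure and produces the \emph{unique} self-adjoint operator $H'$ on $\ell^2(w)$ characterized by $Q_{M'}(c)=\|(H')^{1/2}c\|^2_{\ell^2(w)}$ on the (closed) form domain, of which $\mathcal{D}$ is a core. Since the self-adjoint operator $T^*_wT$ satisfies the same identity on $\mathcal{D}$, the uniqueness clause of Theorem \ref{Thm:Kato} forces $T^*_wT=H'$. The only delicate point — and the main obstacle — is the bookkeeping that $\mathcal{D}$ lies in $\mathrm{dom}(T^*_wT)$ and is a form core, which is handled precisely as the corresponding step in the proof of Proposition \ref{Lem:FStarFisH}, using the explicit weights of Lemma \ref{Lem:TClosable}; everything else is a verbatim translation of that argument.
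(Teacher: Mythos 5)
Your proposal is correct and follows essentially the same route as the paper: identify $Q_{M'}(c)$ with $\|Tc\|^2_{L^2(\mu)}=\|(T^*_wT)^{1/2}c\|^2_{\ell^2(w)}$ on $\mathcal{D}$ and then invoke the uniqueness clause of Kato's theorem, exactly as in the proof of Proposition \ref{Lem:FStarFisH}. The paper phrases the first step as a computation of the matrix entries $\langle e_i^w\,|\,T^*_wTe_j^w\rangle_{\ell^2(w)}=\frac{1}{\sqrt{w_iw_j}}M^{(\mu\circ\tau^{-1})}_{i,j}$, which is the same change-of-variables identity your isometry $\alpha$ encodes.
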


\begin{proof}
This proof repeats the argument from Lemma \ref{Lem:FStarFisH}:

\begin{eqnarray*}  \langle e_i^w | T^*_wTe_j^w \rangle_{\ell^2(w)} &=& \langle Te_i^w|Te_j^w \rangle_{L^2(\mu)}\\ &=&
\frac{1}{\sqrt{w_iw_j}}\langle \tau^i|\tau^j \rangle_{L^2(\mu)}
\\&=& \frac{1}{\sqrt{w_iw_j}}\langle v_i|v_j \rangle_{L^2(\mu \circ \tau^{-1})}
\\&=& M'_{i,j} = \frac{1}{\sqrt{w_iw_j}} M^{(\mu \circ \tau^{-1})}_{i,j}. \end{eqnarray*}

Therefore, we have \[Q_{M'}(c) = \langle c|M'c\rangle_{\ell^2(w)} =
\|(T^*T)^{1/2}c\|_{\ell^2(w)}^2\] for all $c \in \mathcal{D}$, which proves
$T^*_wT$ is exactly the Kato operator $H'$ for $Q_{M'}$.  This
proves $T^*_wT$ is self-adjoint and densely defined on
$\ell^2(w)$. \end{proof}

Let $A$ be the operator $j_w^{-1}\widetilde{A}j_w'$ on $\ell^2(w)$, so $T = F A$.

\begin{theorem}  The operator $A$ intertwines the
Kato operators $H$ and $H'$, and thus in matrix form intertwines
the moment matrices $M^{(\mu)}$ and $M^{(\mu \circ \tau^{-1})}$.
\end{theorem}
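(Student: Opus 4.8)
The plan is to reduce the statement to the single operator identity $H'=A^{*}HA$ on $\ell^{2}(w)$ and then read it off in the weighted orthonormal basis. The ingredients are already in place: the lemma above gives the factorization $T=FA$ (as operators on $\ell^{2}(w)$ with common dense domain $\mathcal{D}$), Proposition~\ref{Lem:FStarFisH} identifies $F^{*}_{w}F=H$, and Lemma~\ref{Lem:TStarT} identifies $T^{*}_{w}T=H'$. Formally, then, $H'=T^{*}_{w}T=(FA)^{*}_{w}(FA)=A^{*}F^{*}_{w}FA=A^{*}HA$, which is the asserted intertwining of the Kato--Friedrichs operators.

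To make the middle step rigorous without juggling adjoints of products of unbounded operators, I would argue at the level of quadratic forms on the core $\mathcal{D}$. For $c\in\mathcal{D}$, using $T=FA$ together with $\|Fc\|^{2}_{L^{2}(\mu)}=Q_{M}(c)$ and $\|Tc\|^{2}_{L^{2}(\mu)}=Q_{M'}(c)$,
\[
\langle c\,|\,A^{*}HAc\rangle_{\ell^{2}(w)}=\langle Ac\,|\,F^{*}_{w}FAc\rangle_{\ell^{2}(w)}=\|F(Ac)\|^{2}_{L^{2}(\mu)}=\|Tc\|^{2}_{L^{2}(\mu)}=Q_{M'}(c)=\langle c\,|\,H'c\rangle_{\ell^{2}(w)}.
\]
By polarization the forms induced by $A^{*}HA$ and by $H'$ agree on $\mathcal{D}$; since (under the standing hypotheses before the theorem, and the self-adjointness hypothesis in Lemma~\ref{Lem:TStarT}) both $H'$ and $A^{*}HA=T^{*}_{w}T$ are self-adjoint with $\mathcal{D}$ a core, the uniqueness clause of Kato's theorem (Theorem~\ref{Thm:Kato}) promotes this to the operator equality $H'=A^{*}HA$. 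Equivalently, this is exactly the statement that the unitary $\widetilde{A}$ of Proposition~\ref{Prop:DefA}, transported through $j_{w},j_{w}'$, intertwines the two Kato operators, which is visible from the defining isometry relation $Q_{M}(\widetilde{A}c)=Q_{M'}(c)$.

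Finally I would pass to matrices. In the orthonormal basis $\{e^{w}_{i}\}$ of $\ell^{2}(w)$, Proposition~\ref{Lem:FStarFisH} gives
\[
\langle e^{w}_{i}\,|\,He^{w}_{j}\rangle_{\ell^{2}(w)}=\langle Fe^{w}_{i}\,|\,Fe^{w}_{j}\rangle_{L^{2}(\mu)}=\frac{1}{\sqrt{w_{i}w_{j}}}\langle v_{i}|v_{j}\rangle_{L^{2}(\mu)}=\frac{1}{\sqrt{w_{i}w_{j}}}M^{(\mu)}_{i,j},
\]
so $H$ is the weighted moment matrix, and the same computation with $T$ in place of $F$ shows $H'$ is the weighted matrix of $\mu\circ\tau^{-1}$. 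Writing $H'=A^{*}HA$ in these bases and clearing the weight factors $1/\sqrt{w_{i}w_{j}}$ exactly as in Theorem~\ref{Thm:AMatrix} yields $M^{(\mu\circ\tau^{-1})}=A^{*}M^{(\mu)}A$.

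The main obstacle is the domain/adjoint bookkeeping hidden in ``$(FA)^{*}_{w}(FA)=A^{*}F^{*}_{w}FA$'': for unbounded $F$ and $A$ one has only the inclusion $A^{*}F^{*}_{w}\subseteq(FA)^{*}_{w}$ in general, so the identity $T^{*}_{w}T=A^{*}HA$ is not automatic — it is precisely what the convergence hypotheses flagged just before the theorem (together with the self-adjointness hypothesis of Lemma~\ref{Lem:TStarT} and the invertibility of $j_{w}$) are there to supply. I would sidestep this by never forming those adjoints directly, working only with the closed positive form $Q_{M'}$ on the core $\mathcal{D}$ as above and appealing to the uniqueness in Kato's theorem, just as the analogous subtlety was handled in Proposition~\ref{Prop:TRA}.
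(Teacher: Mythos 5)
Your proposal follows essentially the same route as the paper's proof: factor $T=FA$ on $\mathcal{D}$, invoke Proposition \ref{Lem:FStarFisH} and Lemma \ref{Lem:TStarT} to obtain $H'=T^*_wT=A^*(F^*_wF)A=A^*HA$, and then read this identity off in the orthonormal basis $\{e_i^w\}$ and clear the weights to get $A^*M^{(\mu)}A=M^{(\mu\circ\tau^{-1})}$. Your added quadratic-form computation on the core $\mathcal{D}$ together with the uniqueness clause of Kato's theorem is a sound, slightly more careful justification of the adjoint identity $(FA)^*_w(FA)=A^*F^*_wFA$, which the paper simply asserts under the same standing domain hypotheses you correctly flag.
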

\begin{proof}
If the product $T=FA$ is defined on $\mathcal{D}$, we
have \[ T^*_wT = (FA)^*FA =
A^*(F^*_wF)A,\] which by Lemmas
\ref{Lem:FStarFisH} and \ref{Lem:TStarT} implies that \[H' =
A^*HA.\]

If we form the matrices for $A, H, $ and $H'$ with
respect to the orthonormal basis $\{e_i^w\}_{i \in \mathbb{N}_0}$, we have \[
A^* M^{(\mu)}A = M^{(\mu \circ
\tau^{-1})}.\] 
\end{proof}

Using the Kato approach we can therefore show that, under the appropriate conditions on 
$\tau$ and the measures $\mu$ and $\mu \circ \tau$, we can find a matrix $A$ such that \[
A^*M^{(\mu)}A = M^{(\mu \circ
\tau^{-1})}.\]

\section{Examples}
The following examples illustrate the situations in which infinite matrices do not have direct realizations as $\ell^2$ operators, but can be realized as operators on a certain weighted Hilbert space. The simplest such case arises when the measure $\mu$ is a Dirac mass at the point $b= 1$, which we observed earlier in Example \ref{Ex:PointMass}.  
\begin{example}\label{Ex:FStarDelta1}The Kato operator for $\mu = \delta_1$. \end{example}
Let $\mu = \delta_1$ be the Dirac measure on $\mathbb{R}$ with point
mass at $1$.  As we stated in Example \ref{Ex:PointMass}, the moment matrix $M=M^{(\mu)}$ will be $M_{i,j} = 1$ for all
$i,j\in\mathbb{N}_0$.  The associated quadratic form $Q_M$ is given on $\mathcal{D}$ by
\[ Q_M(c) = \Big|\sum_{k\in \mathbb{N}_0} c_k\Big|^2.\]
If we attempt to define the operator $F^*$ on the constant function $f(x)$ without weights, we have for each $i$, 
\[ (F^*f)_i = \int_{\mathbb{R}}f(x) x^i \,\mathrm{d}\delta_1(x) = f(1).\]
The weights are necessary in order to make any function with $f(1) \neq 1$ be in the domain of $F^*$.

Select  weights $w =\{w_i\}\subset \mathbb{R}^+$ such that
\[ \sum_{i\in\mathbb{N}_0} \frac{1}{w_i} < \infty.\]
Then $F^*_w$ and  $Q_M$ are closable, and the resulting Kato-Friedrichs operator $H_w$ then satisfies  Equation (\ref{Eqn:KatoProperties}).  Given $c$ is in the domain of $H_w$, we have
\[ Q_M(c) = \sum_j\sum_k \overline{c_j}c_k = \langle c|H_wc \rangle_{\ell^2(w)} = \sum_j w_j\overline{c_j}(H_wc)_j\] for all $j \in \mathbb{N}_0$. Thus, $H_w$  
is  defined by
\[ (H_wc)_j = \frac{1}{w_j} \sum_k c_k.\]
In other words, $H_w$ is a rank-one operator with range equal to the span of the vector \[
\xi = \left[\begin{matrix} \frac{1}{w_0} & \frac{1}{w_1} & \frac{1}{w_2} & \cdots \end{matrix}\right]. \] \hfill $\Diamond$

\begin{example}\label{Ex:ExpDecay} The Laplace transform and the operator $F^*$ associated with the measure $e^{-x}\,\mathrm{d}x$ on $\mathbb{R}^+$\end{example}

We first observed in Examples \ref{Ex:Laguerre} and \ref{Ex:Laguerre-2}  that the moment matrix entries 
\[M_{i,j} = (i+j)!\] grow quickly as $i$ and $j$ increase.  Therefore, we again need weights to make $F$ a closable operator.   

Let $w=\{w_i\}_{i \in \mathbb{N}_0}$ be weights given by 
\[ w_i = [(2i)!]^2.\]  We claim that with these weights, $F$ is closable.  
\begin{equation*} \sum_{i=0}^{\infty} \frac{1}{w_i}|M_{i+j}|^2 = \sum_{i=0}^{\infty} \frac{(i+j)!}{[(2i)!]^2}
\end{equation*} for each fixed $j \in \mathbb{N}_0$.  Using the ratio test, we see that the series converges for each $j$ so by Equation (\ref{Eqn:FCriterion}) we have the monomials in the domain of $F^*_w$, which makes $F$ closable on $\ell^2(w)$.

As a side note, we observe that $F_w^*$ has a connection, under appropriate conditions on
the function $f$, to the Laplace transform $\mathcal{L}$:
\[(F_w^*f)_i = \frac{1}{w_i} \int_0^{\infty} f(x) x^i e^{-x}\,\mathrm{d}x = \frac{(-1)^i}{w_i} \Bigl(\frac{\mathrm{d}}{\mathrm{d}s}\Bigr)^i \mathcal{L}_{f}(1),\]
where
\[\mathcal{L}_{f}(s):=\int_0^{\infty} e^{-sx}\xi(x)\,\mathrm{d}x\] for $s \in
\mathbb{R}^+$. 
\hfill$\Diamond$

\chapter{The integral operator of a moment matrix}\label{Sec:IntOperators}

In this chapter, we will first show how  the Hilbert matrix $M$, which is the moment matrix for Lebesgue measure on $[0,1]$, can be associated to an integral operator.  It turns out that both the operator associated to $M$ and the integral operator are bounded.   In fact, we use properties of Hardy space functions and the polar decomposition to show $M$ and its integral operator are unitarily equivalent.  The bounded operator properties of the Hilbert matrix are well-known.  See \cite{Hal67,  Wid66} for more details.  

   We will generalize Widom's technique, using a weighted Hilbert space when necessary, to find an integral operator on $L^2(\mu)$ associated with the moment matrix $M^{(\mu)}$, where $\mu$ is a positive Borel measure supported on $[-1,1]$.  In the more general setting, the integral operator may not be bounded.   In Chapter \ref{Sec:Spectrum} we will examine conditions under which the integral operator is bounded.   

 When we refer to the Hilbert matrix, where
$\mu$ is Lebesgue measure restricted to $[0,1)$, we will use the
symbol $M$; for any other measure $\mu$ supported in $[-1, 1]$,
the moment matrix will be denoted $M^{(\mu)}$.   
\section{The Hilbert matrix}\label{Sec:Hilbert}
The Hilbert matrix $M$ is the moment matrix for Lebesgue measure
restricted to $[0, 1)$.  $M$ has $(i,j)^{\textrm{th}}$ entry given
by
\begin{equation}
M_{i, j} :=\frac{1}{1 + i + j} = \int_0^1 x^{i+j} \,\mathrm{d}x.
\end{equation}
It is well-known \cite{Hal67, Wid66} that the Hilbert matrix defines a bounded operator on $\ell^2$, and the
operator norm of $M$ as an operator from $\ell^2$ to $\ell^2$ is
$\pi$. 

Using our definition of $\mathcal{D}$ to be the set of
sequences with only finitely many nonzero coordinates, recall the operator
$F:\mathcal{D}\rightarrow\mathcal{P}[0,1]$ given by
\[
Fc(x) = \sum_{n \in \mathbb{N}_0} c_n x^n.
\]
The operator $F$ defines a correspondence between the finite
sequences $\mathcal{D}$ and the polynomials on  $[0,1]$,
and this correspondence extends by Stone-Weierstrass and the Riesz-Fisher theorem to an
operator from $\ell^2$ to $L^2[0,1]$.  We will often call the
image of a sequence $c$ a \textit{generating function} $Fc = f_c$.

Suppose $f\in L^2(0,1)$ and $c\in \ell^2$.  By a Fubini argument and the Cauchy-Schwarz inequality, we can switch the sum and the integral in $\langle f
| Fc \rangle_{L^2(0,1)}$ to define the adjoint operator $F^*$:
\begin{eqnarray*}
\langle f|Fc \rangle_{L^2} &=& \int_0^1 \overline{f(x)}\sum_{n=0}^{\infty} c_nx^n \,\mathrm{d}x \\ &=& \sum_{n=0}^{\infty} c_n \int_0^1 \overline{f(x)}x^n \,\mathrm{d}x \\&=& \langle F^*f | c \rangle_{\ell^2}, \end{eqnarray*} where we have the adjoint now defined by
\[ (F^*f)_n = \int_0^1 f(x) x^n \,\mathrm{d}x.\]
Furthermore, if $f\in L^2(0,1)$, then $\{ (F^*f)_n\}_{n\in\mathbb{N}_0} \in \ell^2$.

\begin{lemma}\label{Lem:FFM} Let $M$ be the operator for the Hilbert matrix, and let $F$ and $F^*$ be the associated operators defined previously.  Then $F^*F = M$.
\end{lemma}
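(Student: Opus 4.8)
The statement $F^*F = M$ is a special case of the factorization $F_w^*F = H$ established in Proposition \ref{Lem:FStarFisH}, applied with $\mu$ equal to Lebesgue measure on $[0,1)$ and with trivial weights $w_i \equiv 1$, since in this case the Hilbert matrix already defines a bounded operator on $\ell^2$ and no renormalization is needed. But rather than invoke that machinery, I would give the direct one-line computation, because it is transparent and self-contained. The plan is to compute the $(i,j)^{\mathrm{th}}$ matrix entry of $F^*F$ with respect to the standard orthonormal basis $\{e_n\}$ and check it agrees with $M_{i,j} = \frac{1}{1+i+j}$.

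First I would recall that $Fe_j$ is the monomial function $x \mapsto x^j$ in $L^2(0,1)$, directly from the definition $Fc(x) = \sum_n c_n x^n$. Then, using the formula for the adjoint derived just above the lemma, $(F^*f)_n = \int_0^1 f(x)x^n\,\mathrm{d}x$, I would write
\[
(F^*F)_{i,j} = \langle e_i \mid F^*F e_j \rangle_{\ell^2} = \langle F e_i \mid F e_j \rangle_{L^2(0,1)} = \int_0^1 x^i x^j \,\mathrm{d}x = \frac{1}{i+j+1} = M_{i,j}.
\]
The middle equality is just the defining property of the adjoint, valid because $Fe_i, Fe_j \in L^2(0,1)$ and hence the pairing is finite; I would note that the interchange-of-sum-and-integral issue in the general adjoint formula is moot here since $e_i$ is a finite sequence.

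Since two bounded operators on $\ell^2$ with the same matrix entries in an orthonormal basis are equal, this establishes $F^*F = M$ as operators. The only point requiring a word of care is that $F$ and $F^*$, a priori defined on $\mathcal{D}$ (resp.\ on $L^2(0,1)$), genuinely extend to bounded operators on all of $\ell^2$ so that the operator identity (and not merely the entrywise identity on $\mathcal{D}$) makes sense; but this was already noted in the discussion preceding the lemma — $F$ extends to a bounded map $\ell^2 \to L^2[0,1]$ by Stone–Weierstrass and Riesz–Fisher, and $\{(F^*f)_n\} \in \ell^2$ for $f \in L^2(0,1)$ — so there is no real obstacle. In short, there is essentially no hard step: the content is the elementary integral $\int_0^1 x^{i+j}\,\mathrm{d}x = \frac{1}{i+j+1}$, and the lemma is really just the observation that the Hilbert matrix is the Gram matrix of the monomials in $L^2(0,1)$.
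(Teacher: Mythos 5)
Your proposal is correct and takes essentially the same route as the paper: a direct computation showing that $F^*F$ has entries $\int_0^1 x^{i+j}\,\mathrm{d}x = \frac{1}{i+j+1}$, i.e.\ that the Hilbert matrix is the Gram matrix of the monomials. The only cosmetic difference is that the paper evaluates $(F^*Fc)_i$ for a general $c\in\ell^2$ and justifies the sum--integral exchange by Fubini, whereas you evaluate on the basis vectors $e_j$ and then pass to the operator identity via boundedness; both rest on the same facts recorded before the lemma.
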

\begin{proof}
Given $c \in \ell^2$, the matrix product $Mc$ is well defined
since $M$ is a bounded operator on $\ell^2$.  We can therefore
write $(Mc)_i = \sum_{j=0}^{\infty} M(i,j)c_j =
\sum_{j=0}^{\infty} \frac{c_j}{i+j+1}$, where convergence is in
the $\ell^2$ norm.  Then

\begin{eqnarray*} (F^*Fc)_i &=& \int_0^1 x^i (Fc)(x) \,\mathrm{d}x\\
&=& \int_0^1x^i\sum_{j=0}^{\infty} c_jx^j \,\mathrm{d}x \\ &=&
\sum_{j=0}^{\infty} c_j \int_0^1 x^{i+j}\,\mathrm{d}x\\&=&
\sum_{j=0}^{\infty} \frac{c_j}{i+j+1}. \end{eqnarray*} The exchange
of the sum and integral above follows from Fubini since $c \in
\ell^2$ and $\left(\frac{1}{i+j+1}\right) \in \ell^2$ (as a sequence in $j$).
\end{proof}

We now turn to a relation between the operator $M$ and an integral
operator $K$.  This relationship is shown in a more general
setting in \cite{Wid66}, and we will also discuss a generalized version in Section \ref{Subsec:Integral}.

\begin{theorem}[\cite{Wid66}, Lemma 3.1]\label{Thm:HilbertKFFM}
Suppose $M = F^*F$ is the operator representing the Hilbert matrix.  The self-adjoint operator $K=FF^*$ on $L^2[0,1]$ is an integral operator with kernel
\[
k(x,y) = \frac{1}{1- xy}.
\]
Moreover, $K$ and $M$ satisfy the relation on
$\ell^2$:
\begin{equation}\label{Eqn:HilbertKFFM}
K F = F M.
\end{equation}

\end{theorem}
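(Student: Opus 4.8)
The plan is to split the statement into the identification of $K=FF^{*}$ as an integral operator, which carries all the content, and the intertwining relation, which then follows in one line. Since the Hilbert matrix is bounded on $\ell^{2}$ with $\|M\|=\pi$ and $M=F^{*}F$ by Lemma \ref{Lem:FFM}, we have $\|F\|^{2}=\|F^{*}F\|=\pi$, so $F\colon\ell^{2}\to L^{2}[0,1]$ and $F^{*}\colon L^{2}[0,1]\to\ell^{2}$ are bounded. Hence $K=FF^{*}$ is a bounded operator on $L^{2}[0,1]$ and $K^{*}=(FF^{*})^{*}=FF^{*}=K$, so $K$ is self-adjoint. The intertwining relation is then immediate from associativity of composition of bounded operators: using $M=F^{*}F$,
\[
KF=(FF^{*})F=F(F^{*}F)=FM ,
\]
which is the whole proof of Equation (\ref{Eqn:HilbertKFFM}); I would record this at the very end, once $K$ has been identified.

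The substantive step is to show $K$ is the integral operator with kernel $k(x,y)=(1-xy)^{-1}$. Fix $f\in L^{2}[0,1]$ and set $c=F^{*}f\in\ell^{2}$, so that $c_{n}=\int_{0}^{1}f(y)y^{n}\,\mathrm{d}y$ and $Kf=Fc$. First I would pin down the pointwise power-series form of $Fc$: for each $x\in[0,1)$, Cauchy--Schwarz gives $\sum_{n}|c_{n}|x^{n}\le\|c\|_{\ell^{2}}\bigl(\sum_{n}x^{2n}\bigr)^{1/2}=\|c\|_{\ell^{2}}/\sqrt{1-x^{2}}<\infty$, so $g(x):=\sum_{n}c_{n}x^{n}$ is defined pointwise on $[0,1)$; moreover the truncated polynomials $\sum_{n\le N}c_{n}x^{n}$ converge to $g$ pointwise on $[0,1)$ and to $Fc$ in $L^{2}[0,1]$, whence $Fc=g$ a.e.

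Next I would carry out the interchange of summation and integration. For each fixed $x\in[0,1)$ the partial sums satisfy $\bigl|f(y)\sum_{n\le N}(xy)^{n}\bigr|\le|f(y)|/(1-x)\in L^{1}[0,1]$, so dominated convergence yields
\[
\sum_{n\ge 0}c_{n}x^{n}=\lim_{N}\int_{0}^{1}f(y)\sum_{n\le N}(xy)^{n}\,\mathrm{d}y=\int_{0}^{1}f(y)\sum_{n\ge 0}(xy)^{n}\,\mathrm{d}y=\int_{0}^{1}\frac{f(y)}{1-xy}\,\mathrm{d}y,
\]
the last equality because $|xy|<1$. Combined with $Fc=g$ a.e., this shows $(Kf)(x)=\int_{0}^{1}k(x,y)f(y)\,\mathrm{d}y$ for a.e.\ $x$, i.e.\ $K$ is the integral operator with kernel $k(x,y)=(1-xy)^{-1}$ (real and symmetric, consistent with $K=K^{*}$); then the display $KF=FM$ above finishes the proof.

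The one delicate point in the whole argument is that $k$ is not square-integrable on $[0,1]^{2}$ — it blows up along the corner $(1,1)$ — so $K$ is not Hilbert--Schmidt, and ``integral operator'' must be read as the a.e.-pointwise formula above rather than as an $L^{2}([0,1]^{2})$ kernel operator. The saving grace is that for each fixed $x<1$ the slice $y\mapsto k(x,y)$ is bounded on $[0,1]$, which is precisely what makes the dominated-convergence interchange legitimate; if one prefers, the same finiteness can be obtained globally from Tonelli, since $\int_{0}^{1}\!\int_{0}^{1}|f(y)|(1-xy)^{-1}\,\mathrm{d}y\,\mathrm{d}x=\int_{0}^{1}|f(y)|\bigl(-\ln(1-y)\bigr)/y\,\mathrm{d}y\le\|f\|_{L^{2}}\,\bigl\|\!-\!\ln(1-\cdot)/\cdot\bigr\|_{L^{2}}<\infty$, the last factor being finite because $y\mapsto -\ln(1-y)/y$ has only a logarithmic singularity at $y=1$.
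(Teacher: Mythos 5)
Your proposal is correct and follows essentially the same route as the paper: compute $(FF^{*}f)(x)=\sum_{j}(F^{*}f)_{j}x^{j}$, exchange the sum and the integral, sum the geometric series to obtain the kernel $(1-xy)^{-1}$, and then get $KF=FF^{*}F=FM$ in one line from Lemma \ref{Lem:FFM}. The only difference is that you spell out the interchange (dominated convergence with the bound $|f(y)|/(1-x)$, or the Tonelli estimate) and the a.e.\ identification of $Fc$ with its pointwise power series, where the paper simply invokes Fubini.
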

\begin{proof}Let $f \in L^2[0,1]$ and fix $x \in (0,1)$.  Then
\begin{eqnarray*} (FF^*f)(x) &=& \sum_{j=0}^{\infty}(F^*
f)_jx^j\\ &=& \sum_{j=0}^{\infty} \int_0^1 f(y) y^jx^j
\,\mathrm{d}y \\ &=& \int_0^1 \sum_{j=0}^{\infty} f(y) (xy)^j
\,\mathrm{d}y \\ &=& \int_0^1 \frac{f(y)\,\mathrm{d}y}{1-xy}.
\end{eqnarray*}
We again use Fubini to exchange the sum and integral above, so $FF^*$ is the desired integral operator.
Combining this equation with Lemma \ref{Lem:FFM}, we find
\[KF = FF^*F = FM.\]   \end{proof}


We next develop tools which we will use in the next section to study the spectrum of the integral operator $K$.
We will demonstrate the relationship between the
integral transform  and the Hardy
space $\mathbb{H}_2$ on the open unit disc $\mathbb{D}$ in $\mathbb{C}$.

The Hardy space $\mathbb{H}_2$ consists of functions which are analytic on $\mathbb{D}$.  The Hardy space norm of a function $F(z)$ with expansion $F(z) = \sum_{n=0}^{\infty} c_nz^n$ is given by \[ \|F\|^2_{\mathbb{H}_2} = \sum_{n=0}^{\infty}|c_n|^2.\]  The inner product on $\mathbb{H}_2$ is exactly the $\ell^2$ inner product of the expansion coefficients of two Hardy functions.

Given $z \in \mathbb{D}$, we let $k_z(x) = (1-zx)^{-1}$.   Then $\widetilde{K}$ denotes the integral operator with kernel $k_z$, which is the extension of $K$ defined in Theorem \ref{Thm:HilbertKFFM} to the complex unit disc $\mathbb{D}$.  Given $g \in L^2[0,1]$, we define
\[ (\widetilde{K}g)(z) = \int_0^1 \frac{g(x)}{1 - \overline{z}x} \,\mathrm{d}x = \langle k_z | g \rangle_{L^2[0,1]}.\]

Note that as a function of $z\in\mathbb{D}$, $k_z$ is analytic for $x\in [-1, 1]$:
\[
\frac{1}{1-zx} = \sum_{n \in \mathbb{N}_0} (zx)^n.
\]

Next, we prove that the range of $\widetilde{K}$ is contained in $\mathbb{H}_2$.  In particular, we show that if $f = \widetilde{K} g$, with $g\in L^2[0,1]$, then $f$ can be extended to an analytic function $F(z)$ on the open unit disc $\mathbb{D}$.

\begin{lemma}Suppose $f\in L^2[0,1]$.  Then
\[F(z) =  \int_0^1 \frac{f(x)}{1 - zx} \,\mathrm{d}x \in \mathbb{H}_2.\]
\end{lemma}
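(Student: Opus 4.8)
The plan is to expand the kernel $(1-zx)^{-1}$ as a power series in $z$ with coefficients that are functions of $x$, integrate term-by-term against $f$, and identify the resulting coefficient sequence as the moments $(F^*f)_n$, which we already know lies in $\ell^2$ by the discussion preceding Lemma \ref{Lem:FFM}. Concretely, for $|z| < 1$ and $x \in [0,1]$ we have the geometric expansion $\frac{1}{1-zx} = \sum_{n \in \mathbb{N}_0} z^n x^n$, and this converges uniformly in $x$ on $[0,1]$ once $|z| \leq r < 1$. Hence, for such $z$,
\begin{equation*}
F(z) = \int_0^1 \frac{f(x)}{1-zx}\,\mathrm{d}x = \sum_{n \in \mathbb{N}_0} z^n \int_0^1 f(x) x^n \,\mathrm{d}x = \sum_{n \in \mathbb{N}_0} (F^*f)_n\, z^n,
\end{equation*}
where the interchange of sum and integral is justified by uniform convergence (or Fubini, since $\sum_n \int_0^1 |f(x)|\,|z|^n x^n\,\mathrm{d}x \leq \|f\|_{L^1[0,1]} \sum_n |z|^n < \infty$ on $|z| \le r$).

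Next I would invoke the fact, noted just before Lemma \ref{Lem:FFM}, that if $f \in L^2[0,1]$ then $\{(F^*f)_n\}_{n \in \mathbb{N}_0} \in \ell^2$. By the definition of the Hardy space norm given in the excerpt, $\|F\|_{\mathbb{H}_2}^2 = \sum_{n \in \mathbb{N}_0} |(F^*f)_n|^2 = \|F^*f\|_{\ell^2}^2 < \infty$, so the power series $\sum_n (F^*f)_n z^n$ has radius of convergence at least $1$ and defines an element of $\mathbb{H}_2$. Since this power series agrees with the integral $\int_0^1 \frac{f(x)}{1-zx}\,\mathrm{d}x$ for all $z$ in the disc $|z| < 1$ (the computation above was valid for every $r < 1$), we conclude $F(z) \in \mathbb{H}_2$, which is the claim.

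The only mild subtlety — and the step I would be most careful about — is the justification of the termwise integration: one must make sure the convergence is genuinely dominated rather than merely pointwise. This is handled cleanly by noting that $f \in L^2[0,1] \subseteq L^1[0,1]$ (since $[0,1]$ has finite measure), so that for fixed $z$ with $|z| \le r < 1$, the partial sums $\sum_{n=0}^{N} f(x) (zx)^n$ are dominated by the integrable function $|f(x)| \cdot (1-r)^{-1}$, and dominated convergence applies. Everything else is bookkeeping: recognizing the coefficients as $(F^*f)_n$ and reading off the Hardy norm. I would also remark that $F(z)$ is visibly the analytic extension of $(\widetilde{K}f)(\overline{z})$ to the disc, tying this lemma to the operator $\widetilde{K}$ introduced above, though that observation is not needed for the proof itself.
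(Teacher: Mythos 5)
Your proposal is correct and follows essentially the same route as the paper: expand $(1-zx)^{-1}$ geometrically, interchange sum and integral, and recognize the coefficients $\int_0^1 f(x)x^n\,\mathrm{d}x$ as a square-summable sequence giving finite $\mathbb{H}_2$ norm. The only cosmetic difference is that the paper justifies the $\ell^2$ membership in-line via Cauchy--Schwarz, $\bigl|\int_0^1 f(x)x^n\,\mathrm{d}x\bigr| \leq \|f\|_{L^2[0,1]}/\sqrt{2n+1}$, whereas you cite the remark preceding Lemma \ref{Lem:FFM}; both are the same estimate.
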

\begin{proof}
Given $x \in [0,1]$ and $|z|<1$, we have \[F(z) = \int_0^1 f(x) \sum_{n=0}^{\infty}x^nz^n \,\mathrm{d}x.\]   Next, observe that $(\int_0^1|f(x)|x^n \,\mathrm{d}x)$ is in $\ell^2$ as a sequence in $n$, since by Cauchy-Schwarz \[\int_0^1|f(x)|x^n \,\mathrm{d}x = \langle|f| | x^n \rangle_{L^2[0,1]} \leq \|f\|_{L^2[0,1]} \|x^n\|_{L^2[0,1]} = \frac{\|f\|_{L^2[0,1]}}{2n+1}.\]  This allows us to use Fubini's theorem to write \[F(z) = \sum_{n=0}^{\infty} \left(\int_0^1f(x) x^n \,\mathrm{d}x \right)z^n\] and also shows that $F \in \mathbb{H}_2$.  \end{proof}

The adjoint operator $\widetilde{K}^*$ to our extended integral operator $\widetilde{K}$ is a map from $\mathbb{H}_2$ to $L^2[0,1]$.   Let $\phi \in \mathbb{H}_2$ have expansion $\phi(z) = \sum_{n=0}^{\infty} c_nz^n$.  Then,

\begin{eqnarray*} \langle \widetilde{K}f|\phi \rangle_{\mathbb{H}_2} &=& \sum_{n=0}^{\infty} \left(\overline{\int_0^1 f(x)x^n\,\mathrm{d}x} \right) c_n \\&=& \int_0^1\overline{f(x)} \sum_{n=0}^{\infty} c_nx^n \,\mathrm{d}x \\ &=& \langle f | \phi \rangle_{L^2[0,1]}. \end{eqnarray*}  We use Fubini's Theorem above to switch the sum and the integral, and note that in the last line, $\phi$ is restricted to the real interval $[0,1]$.  Thus, we find that $\widetilde{K}^*\phi$ is the restriction of $\phi$ to $[0,1]$.
 We now provide an argument using the polar decomposition and Hardy
spaces to show that the operator $F$ is in fact a unitary
operator, so the operators $M$ and $K$ have the same spectrum.
Using the definitions of $F$ and $F^*$, we note that the polynomials are in the domain of $F^*$.  This proves that $F^*$ has a dense domain, and thus $F$ is closable.  We therefore have by a result of von Neumann that both $F^*F$ and $FF^*$ are self-adjoint positive operators with dense domains.

By polar decomposition, given the closable operator $F:\ell^2 \rightarrow L^2[0,1]$, there is a partial isometry $U:\ell^2 \rightarrow L^2[0,1]$ such that
\[F = U (F^*F)^{1/2} = (FF^*)^{1/2}U.\]

Because $U$ is a partial isometry, it is an isometry on the orthogonal complement of the kernel of $F$.  Our goal now is to show that  $U$ is, in fact, a unitary operator, i.e. the kernels of $F$ and $F^*$ are trivial.  This will give us a unitary equivalence between the operators $M$ and $K$.

Given $c \in \ell^2$, let $Fc=0$, that is, $\sum_{n=0}^{\infty} c_nx^n = 0$ for a.e. $x$ in $[0,1]$.  But, by the definition of Hardy functions, $\sum_{n=0}^{\infty} c_nx^n$ is the restriction to $[0,1]$ of an $\mathbb{H}_2$ function \[F(z) = \sum_{n=0}^{\infty}c_nz^n.\]  Since $F$ is analytic and is zero on a set with accumulation points, we have $F(z) = 0$ on $\mathbb{D}$.  Therefore $c_n = 0$ for all $n$, which gives $\mathrm{ker}(F) = \{0\}$.

For a function $f \in L^2[0,1]$, assume that $F^*f = 0$.  Thus, $\int_0^1 f(x) x^n \,\mathrm{d}x = 0$ for all $n \in \mathbb{N}_0$.  This means that $f$ is orthogonal to the monomials, and hence to the polynomials.  Since the polynomials are dense in $L^2[0,1]$ by Stone-Weierstrass, we have that $f(x)=0$ a.e.$x$.  Therefore, $\ker(F) = \{0\}$, and we can now conclude that the operator $U$ is a unitary operator from $\ell^2$ to $L^2[0,1]$.

Given our definition of $U$ from the polar decomposition, we have $U(F^*F)^{1/2} = (FF^*)^{1/2}U$.  This gives \[U(F^*F)^{1/2}U^* = (FF^*)^{1/2}, \] and squaring both sides gives \[ U(F^*F)U^* = UMU^* = FF^* = K.\]


\section{Integral operator for a measure supported on $[-1,1]$}\label{Subsec:Integral}

In this section we study measures $\mu$ with finite moments of all orders on $\mathbb{R}$. We generalize the results found in Section \ref{Sec:Hilbert} for the Hilbert matrix.  Given a measure $\mu$, we see that the moment matrix $M^{(\mu)}$ may be realized in two ways:

\begin{enumerate}[(1)]
\item as an operator $H$ having dense domain in $\ell^2$ or a weighted space $\ell^2(w)$ and acting via formal matrix multiplication by $M^{(\mu)}$; and
\item as an integral operator $K$ in the $L^2(\mu)$-space of all square integrable functions.
\end{enumerate}
While the resulting duality is also true in $\mathbb{R}^d$, for any value of $d$, for clarity we will present the details here just for $d = 1$.  The reader will be able to generalize to $d > 1$.  We will also assume our measures to be supported in $[-1,1]$.

Given the moment matrix $M^{(\mu)}$, define a quadratic form $Q_M$ as in Definition \ref{Def:QM} on the dense space $\mathcal{D}$ consisting of finite sequences.  Recall from Section \ref{Subsec:QClosable} that if the monomial functions $\{v_k\}_{k \in \mathbb{N}_0}$ are in $L^1(\mu) \cap L^2(\mu)$, there exist weights $w=\{w_i\}_{i \in\mathbb{N}_0}$ such that $Q_M$ is a closable quadratic form.  (Note: in some cases such as the Hilbert matrix, the weights are not required.) Moreover, the operator $F: \ell^2(w) \rightarrow L^2(\mu)$ which takes $c \in \mathcal{D}$ to the generating function (a polynomial) $f_c(x) = \sum_j c_jx^j$ is also made closable by the same weights. Then, as we showed previously, 
\[
Q_M(c) = \int |f_c(x)|^2 \, \mathrm{d}\mu(x)
\]
for all $c\in\mathcal{D}$.  By Kato's theory, $Q_M$ has a corresponding closable linear operator $H$ on $\ell^2(w)$ with dense domain satisfying $Q_M(c)= \|H^{1/2}c\|_{\ell^2(w)}^2$.  In particular, the Hilbert space completion $\mathcal{H}_Q$ with respect to the quadratic form $Q_M$ is the same as the completion with respect to the operator $H$. (See Remark \ref{Rem:Distributions} for another viewpoint on this space $\mathcal{H}_Q$.)  We also showed in Lemma \ref{Lem:FStarFisH} that the Kato operator $H$ is given by $F_w^*F$, which shows that the domain of $H$ contains $\mathcal{D}$.  Thus, for $c \in \mathcal{D}$, we have $Q_M(c) = \langle c|Hc \rangle_{\ell^2(w)}$.  

We will show below that the operator $FF^*_w$ on $L^2(\mu)$ is equal on its domain to the integral operator with kernel
\[
k(x,y) = \sum_k \frac{(xy)^k}{w_k}.
\]

Note that if weights are not required, the kernel above reduces to the same kernel which represented the Hilbert matrix in the previous section.  The integral operators corresponding to moment matrices $M^{(\mu)}$ for different measures $\mu$ might have the same kernel, but they will act on different Hilbert spaces $L^2(\mu)$. 

We now can state the association between the Kato operator $F^*_wF$ for a moment matrix $M^{(\mu)}$, the operator $K=F^*_w$, and an integral operator on $L^2(\mu)$.  As usual, we use the notation $v_j(x) = x^j$ for the monomials in $L^2(\mu)$.

\begin{proposition}\label{Lemma:FStar4}
Let $\mu$ be a Borel measure on $\mathbb{R}$ with support contained in $[-1,1]$ and $v_j \in L^1(\mu)$ for all $j \in \mathbb{N}_0$.  Let $F$ be the closure of the operator in (\ref{Eqn:DefnF}) which sends a sequence $c\in\mathcal{D}$ to the polynomial $f_c\in L^2(\mu)$.  Let $w=\{w_i\}_{i \in\mathbb{N}_0}$ be weights such that $F_w^*$ has dense domain as an operator into $\ell^2(w)$,  i.e. via Equation (\ref{Eqn:FCriterion}) we assume that 
\begin{equation}\label{Eqn:SumM}
\sum_{j\in\mathbb{N}_0} \frac{1}{w_j}|M_{j+k}|^2 < \infty \textrm{ for all } k\in\mathbb{N}_0.
\end{equation}
Then the operator $K=\widetilde{F}F^*_w$ is a self-adjoint operator with dense domain.  On its domain, $K$ is an integral operator with kernel 
\[ k(x,y) = \sum_j \frac{(xy)^j}{w_j}.\]
\end{proposition}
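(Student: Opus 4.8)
The plan is to verify directly that the operator $K = \widetilde{F}F^*_w$ acts as the claimed integral operator on a dense domain, following the template already established for the Hilbert matrix in Theorem \ref{Thm:HilbertKFFM}, and to deduce self-adjointness and density from von Neumann's theorem together with Lemma \ref{Lemma:FClosable} (or its proof). First I would observe that, by Lemma \ref{Lemma:FClosable}, the hypothesis (\ref{Eqn:SumM}) guarantees that $F_w^*$ has dense domain and $F$ is closable; replacing $F$ by its closure $\widetilde{F}$, the operator $F_w^*\widetilde{F}$ is the Kato operator $H$ (Proposition \ref{Lem:FStarFisH}) and $K = \widetilde{F}F_w^*$ is the ``dual'' composition on $L^2(\mu)$. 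The general fact that $\widetilde{F}F_w^*$ is self-adjoint with dense domain then follows from von Neumann's theorem applied to the closed, densely defined operator $F_w^*: L^2(\mu) \to \ell^2(w)$, whose adjoint is $\widetilde{F}$; this is exactly the argument sketched in the Hilbert-matrix section preceding Theorem \ref{Thm:HilbertKFFM}.

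The heart of the proof is the identification of $K$ with the integral operator. Fix $f$ in the domain of $K$, so in particular $f$ lies in the domain of $F_w^*$, meaning $F_w^*f \in \ell^2(w)$. By Equation (\ref{Eqn:DefnFStar}), $(F_w^*f)_k = \frac{1}{w_k}\int_{\mathbb{R}} x^k f(x)\,\mathrm{d}\mu(x)$. Applying $\widetilde{F}$ and using that $\widetilde{F}$ sends a sequence to its generating function, I would write, for $x \in [-1,1]$,
\begin{equation*}
(Kf)(x) = (\widetilde{F}F_w^*f)(x) = \sum_{k} (F_w^*f)_k\, x^k = \sum_{k} \frac{x^k}{w_k}\int_{\mathbb{R}} y^k f(y)\,\mathrm{d}\mu(y).
\end{equation*}
The remaining task is to justify interchanging the sum over $k$ with the integral over $y$, so that
\begin{equation*}
(Kf)(x) = \int_{\mathbb{R}} \Bigl(\sum_{k} \frac{(xy)^k}{w_k}\Bigr) f(y)\,\mathrm{d}\mu(y) = \int_{\mathbb{R}} k(x,y) f(y)\,\mathrm{d}\mu(y),
\end{equation*}
which is the asserted kernel. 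For the interchange I would invoke Fubini/Tonelli: since $\mathrm{supp}(\mu)\subseteq[-1,1]$ we have $|xy|\le 1$, and more is needed to get absolute summability, so I would use Cauchy--Schwarz in the form $\sum_k \frac{|xy|^k}{w_k}|\!\int y^k f\,\mathrm{d}\mu| \le \bigl(\sum_k \frac{1}{w_k}|x|^{2k}\bigr)^{1/2}\bigl(\sum_k \frac{1}{w_k}|\!\int y^k f\,\mathrm{d}\mu|^2\bigr)^{1/2}$, where the second factor is finite because $F_w^*f\in\ell^2(w)$ (this is precisely $\|F_w^*f\|_{\ell^2(w)}$) and the first factor is finite for $|x|<1$ since $\sum_k w_k^{-1}<\infty$ or at least $\sum_k w_k^{-1}|x|^{2k}<\infty$ follows from (\ref{Eqn:SumM}) with $k=0$ (which forces $\sum_j w_j^{-1}|M_j|^2<\infty$, and $M_0 = \mu(\mathbb{R})>0$, giving $\sum_j w_j^{-1}<\infty$). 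This dominated-convergence/Fubini bookkeeping is the main technical obstacle: one must be careful that the exchange is legitimate pointwise in $x$ (or a.e.~$x$ with respect to $\mu$), and that the kernel $k(x,y)=\sum_k (xy)^k/w_k$ is itself a well-defined (indeed $\mu\times\mu$-essentially bounded) function on $[-1,1]^2$ under hypothesis (\ref{Eqn:SumM}).

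Finally I would record that the self-adjointness of $K$ is consistent with the symmetry of the kernel $k(x,y) = k(y,x)$, and note the special case: when no weights are needed ($w_k\equiv 1$), the kernel collapses to $\sum_k (xy)^k = (1-xy)^{-1}$, recovering Theorem \ref{Thm:HilbertKFFM}. The density of the domain of $K$ follows since the domain contains all $f\in L^2(\mu)$ for which $\sum_k w_k^{-1}|\!\int x^k f\,\mathrm{d}\mu|^2<\infty$; by the argument in Lemma \ref{Lemma:FClosable} this includes $L^\infty(\mu)$, which is dense in $L^2(\mu)$ because $\mu$ is finite. This completes the plan; the only genuinely delicate point is the Fubini justification, which I expect to handle exactly as in the proofs of Theorem \ref{Thm:HilbertKFFM} and Lemma \ref{Lem:FFM} but with the weights $w_k$ inserted.
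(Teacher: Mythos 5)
Your overall architecture is the same as the paper's: density of $\mathrm{dom}(\widetilde{F}F_w^*)$ and self-adjointness come from von Neumann's theorem applied to the closed, densely defined $F_w^*$ (whose adjoint is $\widetilde{F}$), and the kernel is obtained by expanding $(F_w^*f)_k$ via (\ref{Eqn:DefnFStar}) and interchanging the sum over $k$ with the integral against $\mathrm{d}\mu$ --- the step the paper compresses into ``a Lebesgue dominated convergence argument.'' The gap is in your justification of that interchange. You claim that (\ref{Eqn:SumM}) with $k=0$, together with $M_0=\mu(\mathbb{R})>0$, forces $\sum_j w_j^{-1}<\infty$. It does not: the hypothesis is $\sum_j w_j^{-1}|M_j|^2<\infty$, and $M_0$ enters only through the single term $j=0$; the remaining $M_j$ may vanish (odd moments of a symmetric measure) or decay fast, so nothing about $\sum_j w_j^{-1}$ follows. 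Indeed the conclusion fails for the Hilbert matrix itself, where $w_j\equiv 1$ satisfies (\ref{Eqn:SumM}) but $\sum_j w_j^{-1}=\infty$. Your fallback claim, that $\sum_k w_k^{-1}|x|^{2k}<\infty$ for every $|x|<1$, is also false in general: if $\mathrm{supp}(\mu)\subset[0,\tfrac12]$ then $|M_j|\le 2^{-j}$, so the geometric weights $w_k=3^{-k}$ satisfy (\ref{Eqn:SumM}), yet $\sum_k w_k^{-1}|x|^{2k}=\sum_k(3x^2)^k$ diverges for $|x|>1/\sqrt{3}$.

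What your Cauchy--Schwarz bound actually needs, and what is true, is that $\varphi(x):=\sum_k w_k^{-1}|x|^{2k}$ is finite for $\mu$-almost every $x$ (i.e.\ on the support, not on all of $(-1,1)$), and this requires a different use of (\ref{Eqn:SumM}): with $t:=\operatorname{ess\,sup}_\mu|x|$ and any $\varepsilon>0$ one has $M_{2i}\ge (t-\varepsilon)^{2i}\,\mu(\{|x|>t-\varepsilon\})$, so the vanishing of the terms of the convergent series in (\ref{Eqn:SumM}) (use $k=0$ for even indices, $k=1$ for odd ones) gives $\limsup_j (1/w_j)^{1/2j}\le 1/t$; hence $\varphi$ has radius of convergence at least $t$ in $|x|$, and a similar estimate handles possible atoms of $\mu$ at $\pm t$, so $\varphi<\infty$ $\mu$-a.e. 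With that repaired, your second Cauchy--Schwarz factor is indeed $\|F_w^*f\|_{\ell^2(w)}$, the Fubini step is legitimate $\mu$-a.e., and the same pointwise bound $|\sum_k c_kx^k|\le\|c\|_{\ell^2(w)}\varphi(x)^{1/2}$ is what lets you identify $\widetilde{F}(F_w^*f)$ with the pointwise series (compare the $L^2$-limit along an approximating sequence with an a.e.-convergent subsequence). So the route is the paper's; the flaw is local, but as stated the key domination step would not survive scrutiny.
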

\begin{proof}
For the weights $w$,  $F$ is closable and the domain of $F_w^*$ is given by 
\begin{equation}
\Biggl\{\phi \in L^2(\mu) \Big| \sum_{j\in\mathbb{N}_0}w_j |(F_w^*\phi)_j|^2 < \infty\Biggr\}.
\end{equation}
We note in particular that the monomials $\{v_j\}_{j \in \mathbb{N}_0}$ are contained in the domain of $F_w^*$.  The domain of $FF^*_w$ is the set of all functions $\phi$ in the domain of $F^*_w$ such that $F^*_w(\phi)$ is in the domain of $F$.  This set is dense by von Neumann's polar decomposition but may not contain all of the polynomials.  

Let $\phi\in \textrm{dom}(FF^*_w)$. Then a Lebesgue dominated convergence argument gives
\begin{equation}
\begin{split}
({F}F_w^*)\phi(x)
& =\sum_{k\in\mathbb{N}_0} (F^*\phi)_k v_k(x)\\
& \underset{(\ref{Eqn:DefnFStar})}{=} \sum_{k\in\mathbb{N}_0}\frac{1}{w_k} \int_{\mathrm{supp}(\mu)} \phi(y) y^k \,\mathrm{d}\mu(y) x^k\\
& \underset{(\ref{Eqn:SumM})}{=} \int_{\textrm{supp}(\mu)} \phi(y) \sum_{k\in\mathbb{N}_0}\frac{(xy)^k}{w_k} \,\mathrm{d}\mu(y). \end{split}
\end{equation}
The requirement that $\mu$ is supported on $[-1,1]$ ensures that the sum above converges for all $x,y$.
\end{proof}

The polar decomposition of $F$ is
\begin{equation}\label{Eqn:TildeFUFStarTildeF}
F = U(F_w^*F)^{1/2},
\end{equation}
where $U$ is a partial isometry.  We therefore also have
\begin{equation}\label{Eqn:TildeFTildeFFStarU}
F = (FF^*)^{1/2}U.
\end{equation}
The partial isometry $U:\ell^2 \rightarrow L^2(\mu)$ is the same in both  (\ref{Eqn:TildeFUFStarTildeF}) and (\ref{Eqn:TildeFTildeFFStarU}).

\begin{corollary}
The two operators $H = F^*_wF$ and $K=FF^*_w$ have the same spectrum, apart from the point $0$.
\end{corollary}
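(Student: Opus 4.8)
The plan is to use the standard fact that for any densely defined closable operator $F$ between Hilbert spaces, the positive self-adjoint operators $F^*F$ and $FF^*$ have the same spectrum except possibly at the point $0$. This is precisely the kind of statement that follows from the polar decomposition $F = U(F^*_wF)^{1/2} = (FF^*_w)^{1/2}U$, which has already been established in Equations (\ref{Eqn:TildeFUFStarTildeF}) and (\ref{Eqn:TildeFTildeFFStarU}), together with the fact that $U$ restricts to a unitary from $\overline{\mathrm{ran}}(F^*_wF) = (\ker F)^{\perp}$ onto $\overline{\mathrm{ran}}(FF^*_w) = (\ker F^*_w)^{\perp}$.

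Concretely, first I would record that $H = F^*_wF$ and $K = FF^*_w$ are self-adjoint by the von Neumann theorem (already invoked for $F$ closable), so their spectra are well defined subsets of $[0,\infty)$. Next I would argue that on the reducing subspaces complementary to the kernels, $U$ intertwines $(F^*_wF)^{1/2}$ and $(FF^*_w)^{1/2}$: from $F = U(F^*_wF)^{1/2}$ and $F = (FF^*_w)^{1/2}U$ we get $U(F^*_wF)^{1/2} = (FF^*_w)^{1/2}U$ on the appropriate domain, and since $U$ is unitary from $(\ker F)^{\perp}$ onto $(\ker F^*_w)^{\perp}$, conjugation by $U$ carries the restriction of $H$ to $(\ker F)^{\perp}$ onto the restriction of $K$ to $(\ker F^*_w)^{\perp}$. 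Unitarily equivalent self-adjoint operators have equal spectra, so
\[
\sigma\bigl(H|_{(\ker F)^{\perp}}\bigr) = \sigma\bigl(K|_{(\ker F^*_w)^{\perp}}\bigr).
\]
Finally, since $H$ acts as the zero operator on $\ker F$ and $K$ acts as the zero operator on $\ker F^*_w$, the full spectra satisfy $\sigma(H) \subseteq \sigma(H|_{(\ker F)^{\perp}}) \cup \{0\}$ and similarly for $K$, with equality unless the respective kernels are trivial; in all cases $\sigma(H) \setminus \{0\} = \sigma(K) \setminus \{0\}$.

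The main obstacle I anticipate is not conceptual but a matter of care with domains: $H$ and $K$ are generally unbounded, so the identity $U(F^*_wF)^{1/2} = (FF^*_w)^{1/2}U$ must be justified on dense cores and then promoted to a genuine unitary equivalence of the self-adjoint operators (for instance by comparing resolvents, $U(H - \lambda)^{-1}U^* = (K-\lambda)^{-1}$ for $\lambda \in \mathbb{C}\setminus[0,\infty)$, restricted to $(\ker F^*_w)^{\perp}$). One should also be slightly careful that, as noted after Theorem \ref{Thm:Kato} and in Proposition \ref{Lem:FStarFisH}, the Kato operator $H = F^*_wF$ may have domain strictly larger than $\mathcal{D}$, but this does not affect the argument since $\mathcal{D}$ is still a core. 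Since this is a well-known fact about $F^*F$ versus $FF^*$ for closable $F$ (see e.g. \cite{ReSi80, Con90}), a short proof citing the polar decomposition already derived in this section suffices.
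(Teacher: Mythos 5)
Your proof is correct and takes essentially the same route as the paper: the paper's own one-line proof simply rearranges the polar decomposition identities (\ref{Eqn:TildeFUFStarTildeF}) and (\ref{Eqn:TildeFTildeFFStarU}) into an intertwining relation between $H$ and $K$ via the partial isometry $U$. Your write-up additionally spells out the unitary equivalence on the orthogonal complements of the kernels and the bookkeeping at the point $0$, details the paper leaves implicit.
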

\begin{proof}A rearrangement of (\ref{Eqn:TildeFUFStarTildeF}) and (\ref{Eqn:TildeFTildeFFStarU}) yields
\begin{equation}\label{Eqn:UHUKStar}
UHU = K^*.
\end{equation} \end{proof}

\begin{example}\label{Ex:ConvexDirac} Convex combinations of Dirac masses. \end{example} 
Let $\mu = \sum_{i=1}^n \alpha_i \delta_{x_i}$, where $\alpha_i \geq 0, \sum_{i=1}^n \alpha_i = 1$ and $x_i,  i=1, 2, \ldots, n$ are distinct real numbers.   Recall from Example \ref{Ex:FStarDelta1} that weights are required for any Dirac measure in order to make $F$ closable.  The same holds for convex combinations of these measures.   It turns out, however, that the integral operator is not so difficult to describe.  We show here that $L^2(\mu)$ has dimension $n$, so the corresponding integral operator must have a representation as a finite matrix.

Suppose the polynomial $f_c(x) = \sum_{k=0}^m c_kx^k$ is equal to the zero vector in $L^2(\mu)$.  In other words, \[ \|f_c\|_{L^2(\mu)}^2 = \int_{\mathbb{R}} \Bigr| \sum_{k=0}^m c_kx^k \Bigr| \mathrm{d}\mu(x) = 0.\]   Using our usual notation, $f_c$ is the image of a sequence $c \in \mathcal{D}$ under the map $F_Q$ from the Hilbert space $\mathcal{H}_Q$.  We recall from Lemma \ref{Lem:KatoIsometry} that $F_Q$ is an isometry, hence \[ \|f_c\|_{L^2(\mu)}^2 = Q_M(c) = \|c\|_{\mathcal{H}_Q}^2 \] where $M=M^{(\mu)}$ is the moment matrix for $\mu$.   The matrix $M$ is of the form \[ M_{j,k} = \sum_{ i=1}^n \alpha_i x_i^{j+k}.\]  
Therefore, if $f_c = 0$ in $L^2(\mu)$, we have
\begin{eqnarray*} \int_{\mathbb{R}} \Bigr| \sum_{k=0}^m c_k x^k \Bigr|^2\mathrm{d}\mu(x) &=& \sum_{k,j=0}^m \overline{c_j}c_k M_{j,k} \\ &=& \sum_{j,k=1}^m \overline{c_j}c_k \sum_{i=1}^n \alpha_i x_i^{j+k} \\ &=& \sum_{i=1}^n \alpha_i \Bigr| \sum_{k=0}^m c_kx_i^k \Bigr|^2 \\&=&0. \end{eqnarray*}
Since each $\alpha_i \geq 0$ and $\sum_{i=1}^n \alpha_i = 1$,  the above sum is zero if and only if \[ \sum_{k=0}^m c_kx_i^k = 0\] for all $i=1, 2, \ldots, n$.  Therefore, the distinct real numbers $x_1, \ldots, x_n$ are roots of any polynomial which is equal to zero in $L^2(\mu)$.  

Let $p(x)$ be the degree-$n$ polynomial \[ p(x) = \prod_{i=1}^n (x-x_i).\]   Using the Euclidean algorithm, any polynomial $q$ can be written in the form $q = ap+r$, where $a,r$ are polynomials and $r$ has degree less than $n$.  In the equivalence classes of $L^2(\mu)$, we therefore have that $q = r$.  Hence, the dimension of $L^2(\mu)$ is $n$.

 \hfill$\Diamond$

\begin{remark}[\cite{JoOl00}]\label{Rem:Distributions}   We can describe another way to generate the Hilbert space
completion $\mathcal{H}_Q$ of a quadratic form $Q$.  So far we have looked at two ways to generate isomorphic Hilbert spaces
$\mathcal{H}_Q$.  The first is to complete the finite sequences
$\mathcal{D}$ with respect to the quadratic form $Q_M$.  The second is the
closure of the polynomials in $L^2(\mu)$, where $\mu$ is the measure whose
moments are the entries of $M$.  Here, we briefly outline a third method
to generate the same Hilbert space which consists of distributions.  For
now, we will restrict ourselves to the concrete case $M=$ the Hilbert
matrix.

As in our first method, we start with the positive semidefinite quadratic
form generated by the moment matrix $M$:
\begin{equation*}
Q_M: [0,1)\times [0,1) \rightarrow \mathbb{R}.
\end{equation*}
We use $M$ to define linear functionals.  For each $x$ in $[0, 1)$, there
is a linear functional $v_x$ defined by
\begin{equation}\label{Defn:vxLinFun}
v_x(\cdot) = Q_M(\cdot, x).
\end{equation}
(The analog to $v_x$ is a sequence in $\mathcal{D}$ with zeros in every
position except one, where there is a $1$.)
We use the $v_x$s to build more linear functionals with finite sums (here,
the analog is $\mathcal{D}$):
\begin{equation}\label{Defn:uLinFun}
u(\cdot) = \sum_{x\textrm{ finite }}c_x v_x(\cdot) =  \sum_{x\textrm{
finite }} Q_M(\cdot, x).
\end{equation}
Finally, there is an inner product in the space of linear functionals of
the form (\ref{Defn:uLinFun}):
\begin{equation}\label{Defn:HtildeInnerProduct}
\Bigl\langle \sum_{x\textrm{ finite }}c_x v_x, \sum_{y\textrm{ finite
}}c_y v_y\Bigr\rangle = \sum_x \sum_y \overline{c_x} c_y Q_M(x,y).
\end{equation}
Now, we complete the inner product space to the Hilbert space
$\widetilde{\mathcal{H}}$.  The Hilbert space $\widetilde{\mathcal{H}}$ is
a reproducing kernel Hilbert space.  It is fairly easy to check that for
$v_x$ as in (\ref{Defn:vxLinFun}) and $u$ as in (\ref{Defn:uLinFun}),
$\langle v_x, u\rangle_{\widetilde{\mathcal{H}}} = u(x)$.

Recall that the Hilbert matrix is related to the integral kernel
$(1-xy)^{-1}$.  We will next explain how this kernel appears in the
Hilbert space of distributions $\widetilde{\mathcal{H}}$.

The tensor product of two distributions $\overline{u_1}$ and $u_2$ is
straightforward; we apply the linear functional $\overline{u_1}\otimes
u_2$ to the pair of functions $(f,g)$ by
\begin{equation}
(\overline{u_1}\otimes u_2)(f(x),g(y)) = \overline{u_1}(f)u_2(g).
\end{equation}
Then, we can extend this definition to functions $h(x,y)$ because the span
of functions of the form $f(x)g(y)$ is dense in $L^2[0,1]$.

One important example of a tensor product of distributions which is
related to Widom's theorem is the following.  Suppose $\phi_1$ and
$\phi_2$ are locally integrable functions on $[0,1)$, and $u_1 = \phi_1
\mathrm{d}x$ and $u_2 = \phi_2 \mathrm{d}x$.  Then
\begin{equation}
(\overline{u_1}\otimes u_2)\Biggl(\frac{1}{1-xy}\Biggr) = \int_0^1
\int_0^1 \frac{\overline{\phi_1}(x)\phi_2(y)}{1-xy} \,\mathrm{d}x \mathrm{d}y.
\end{equation}

The Hilbert space $\widetilde{\mathcal{H}}$ is spanned by distributions
$u$ on such that
\[
(\overline{u}\otimes u) \Biggl(\frac{1}{1-xy}\Biggr) < \infty.
\]
It can be shown that the Dirac mass at $0$ and all its derivatives belong
to $\widetilde{\mathcal{H}}$; moreover, derivatives of the Dirac mass at
$0$ (when scaled appropriately) form an orthonormal basis for
$\widetilde{\mathcal{H}}$ with respect to the inner product
(\ref{Defn:HtildeInnerProduct}).  The specific ONB is
\[
u_n = \frac{(-1)^n}{n!} \delta_0^{(n)}, n = 0, 1, 2, \ldots.
\]
\end{remark}

\chapter{Boundedness and spectral properties}\label{Sec:Spectrum}
In cases such as the Hilbert matrix from Section \ref{Sec:Hilbert}, the operators $F$ and $F^*$ (which might be weighted or unweighted) are bounded operators, although in general they will be unbounded densely defined operators.   In the first part of this chapter, we give sufficient conditions such that these operators are bounded operators, and hence the Kato operator $F^*F$ is also bounded.   In Section \ref{Sec:Spectra}, we use the theory of projection-valued measures described in Section \ref{Sec:pvm} to analyze connections among the spectrum of the Kato operator for a moment matrix $M^{(\mu)}$, the measure $\mu$ itself, and the associated integral operator.  We will demonstrate these connections via the examples in Section \ref{Subsec:SpectrumExamples}.  We define the rank of a measure in Section \ref{Subsec:RankTransformation} and demonstrate with examples which are convex combinations of Dirac measures.

\section{Bounded Kato operators}
Given a measure $\mu$ with finite moments of all orders, denote its moment matrix by $M=M^{(\mu)}$.  We will use the generating function $G_{\mu}(x)$ having the even moments $M_{k,k}$ as coefficients to express conditions which ensure the operator $F^*_w$ on the weighted space $\ell^2(w)$ is bounded.   

\begin{proposition}Let $\mu$ be a measure with compact support on $\mathbb{R}$ with moments of all orders.  Let the generating function
\[
G_{\mu}(x) = \sum_{k=0}^{\infty} M_{k,k} x^k
\]
have a finite and positive radius of convergence $R$.  Select $t$ such that $t > R$ and $\frac{1}{t} < R$.  If we choose weights $w_k := t^k$, then $F^*_w$ is a bounded operator with 
\[
\|F_w^*\|_{op} \leq \Biggl( G_{\mu}\Biggl(\frac{1}{t}\Biggr)\Biggr)^{1/2}.
\]
\end{proposition}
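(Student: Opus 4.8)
The plan is to estimate $\|F_w^*f\|_{\ell^2(w)}$ directly from the explicit formula for the adjoint together with a single application of the Cauchy--Schwarz inequality in $L^2(\mu)$. Recall from Equation (\ref{Eqn:DefnFStar}) that for $f\in L^2(\mu)$ we have $(F_w^*f)_k=\frac{1}{w_k}\int_{\mathbb{R}}x^kf(x)\,\mathrm{d}\mu(x)$, and that this makes sense for every $f\in L^2(\mu)$ because each monomial $x^k$ lies in $L^2(\mu)$ when $\mu$ has compact support. With the choice $w_k=t^k$,
\[
\|F_w^*f\|_{\ell^2(w)}^2=\sum_{k}w_k\,|(F_w^*f)_k|^2=\sum_{k}\frac{1}{t^k}\Bigl|\int_{\mathbb{R}}x^kf\,\mathrm{d}\mu\Bigr|^2 .
\]
First I would bound each term by Cauchy--Schwarz,
\[
\Bigl|\int_{\mathbb{R}}x^kf\,\mathrm{d}\mu\Bigr|^2\le\Bigl(\int_{\mathbb{R}}x^{2k}\,\mathrm{d}\mu\Bigr)\|f\|_{L^2(\mu)}^2=M_{k,k}\,\|f\|_{L^2(\mu)}^2 .
\]

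Substituting this into the previous display gives
\[
\|F_w^*f\|_{\ell^2(w)}^2\le\|f\|_{L^2(\mu)}^2\sum_{k}\frac{M_{k,k}}{t^k}=\|f\|_{L^2(\mu)}^2\,G_\mu\!\Bigl(\tfrac{1}{t}\Bigr).
\]
The one point that must be checked is that the series $\sum_k M_{k,k}t^{-k}$ converges; this is exactly where the hypothesis enters, since $R$ is the radius of convergence of $G_\mu$ and the condition $1/t<R$ places the evaluation point strictly inside the disc of convergence, so $G_\mu(1/t)$ is finite (and, since $M_{0,0}=\mu(\mathbb{R})>0$, strictly positive). Such a $t$ exists because $R\in(0,\infty)$ forces $\max(R,1/R)<\infty$, so any $t>\max(R,1/R)$ works; the additional requirement $t>R$ is recorded for later use and plays no role in the estimate. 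Taking square roots yields $\|F_w^*f\|_{\ell^2(w)}\le\bigl(G_\mu(1/t)\bigr)^{1/2}\|f\|_{L^2(\mu)}$ for all $f\in L^2(\mu)$, which is the assertion.

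I do not expect a serious obstacle here: the argument is one Cauchy--Schwarz estimate followed by summation, and the compact-support hypothesis guarantees every integral in sight is finite. The only thing to be mildly careful about is that the bound $|\int x^kf\,\mathrm{d}\mu|^2\le M_{k,k}\|f\|_{L^2(\mu)}^2$ holds for all $f\in L^2(\mu)$, not merely on a dense subdomain, so the displayed chain simultaneously shows $\mathrm{dom}(F_w^*)=L^2(\mu)$ and that $F_w^*$ is bounded on it; no separate closure or extension step is needed. As an immediate consequence, $F$ (or its closure) is bounded with the same norm, and the Kato operator $H=F_w^*F$ is bounded with $\|H\|_{op}\le\|F\|_{op}\|F_w^*\|_{op}$, which is presumably the point of stating the proposition.
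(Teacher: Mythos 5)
Your argument is correct and is essentially the paper's own proof: the termwise Cauchy--Schwarz bound $\bigl|\int x^k f\,\mathrm{d}\mu\bigr|^2 \le M_{k,k}\|f\|^2_{L^2(\mu)}$ followed by summation against the weights $w_k = t^k$ and evaluation of $G_{\mu}$ at $1/t$ is exactly the estimate used there. Your closing remarks (that the bound holds on all of $L^2(\mu)$ and that boundedness of $F$, $F^*_wF$, $FF^*_w$ follows) match the corollary the paper draws immediately after.
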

\begin{proof}  Note that since $\frac{1}{t} < R$, then $\frac{1}{t}$ is within the radius of convergence, hence
\[
G_{\mu}\Bigr(\frac{1}{t}\Bigr):=\sum_{k=0}^{\infty} \frac{M_{k,k}}{t^k} < \infty.
\]
We take $w_k:=t^k$ for our weights. 

Now,
\begin{equation}\label{Eqn:FStarWPhi}
(F^*_{w}\phi)_k = \frac{1}{w_k} \int x^k \phi(x) \,\mathrm{d}\mu(x), k\in\mathbb{N}_0.
\end{equation}
So
\[
\Big|
\int x^k \phi(x) \,\mathrm{d}\mu(x)
\Big|^2 
\leq M_{2k}\|\phi\|^2_{L^2(\mu)}
\]
and  
\begin{equation} 
\begin{split}
\|F^*_{w}\phi\|^2_{\ell^2(w)} 
&=\sum_{k=0}^{\infty} \frac{1}{w_k} \Big|\int x^k \phi(x) \,\mathrm{d}\mu(x) \Big|^2\\
&\leq \sum_{k=0}^{\infty} \frac{1}{w_k} M_{2k} \|\phi\|^2_{L^2(\mu)}\\
&=  G_{\mu}\Bigr(\frac{1}{t}\Bigr) \|\phi\|^2_{L^2(\mu)}.
\end{split}
\end{equation}
Therefore, $F^*_w$ is a bounded operator with 
\[
\|F_{w}^*\|^2_{\ell^2(w)\rightarrow L^2(\mu)} 
= \sup_{\|\phi\|=1} \|F_{w}^*\phi\|^2_{\ell^2(w)}
\leq G_{\mu}\Bigr(\frac{1}{t}\Bigr).
\]
Using the properties of the adjoints of bounded operators, we also can conclude that $F$, $F^*_wF$ and $FF^*_w$ are bounded operators with norms given by
\[
\|F_{w}^*\|_{op}^2 = \|F\|_{op}^2 = \|F_{w}^* F_{w}\|_{op} = \|F_{w} F^*_{w}\|_{op} \leq G_{\mu}\Bigr(\frac{1}{t}\Bigr).
\]
\end{proof}

\begin{lemma}Suppose $\mu$ is a measure on $\mathbb{R}$ with compact support and finite moments of all orders.  Suppose in addition that $\textrm{supp}(\mu) \subset [-1,1]$, with $\,\mathrm{d}\mu(x) = B(x) \,\mathrm{d}x$, where $B$ is a nonnegative bounded function.  Then the operators $F$, $F^*$, $F^*F$ and $FF^*$ are bounded, and 

\[ \|F^*\|_{op}^2 = \|F\|_{op}^2 = \|F^* F\|_{op} = \|FF^*\|_{op} \leq \pi \|B\|_{\infty}.\]
\end{lemma}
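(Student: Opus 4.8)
The plan is to reach the bound through the integral‑operator picture of Section~\ref{Subsec:Integral} together with a Schur‑test estimate, so that the $L^\infty$‑norm of $B$ enters only through the reference measure and the sharp constant $\pi$ survives from the classical Hilbert‑kernel estimate. First I would check that Proposition~\ref{Lemma:FStar4} applies with the trivial weights $w_j\equiv 1$: since $\textrm{supp}(\mu)\subseteq[-1,1]$ and $B$ is bounded, every monomial $v_j$ lies in $L^1(\mu)$, and
\[
|M_{j+k}|\le\|B\|_\infty\int_{-1}^1|x|^{j+k}\,\mathrm{d}x=\frac{2\|B\|_\infty}{j+k+1},
\]
so $\sum_{j}|M_{j+k}|^2\le 4\|B\|_\infty^2\sum_j(j+k+1)^{-2}<\infty$ for each $k$; in particular $F$ is closable and $F^*$ densely defined without any weighting. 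Proposition~\ref{Lemma:FStar4} then gives that $K=FF^*$ is a self‑adjoint, densely defined operator on $L^2(\mu)$ which, on its domain, is the integral operator
\[
(Kf)(x)=\int_{-1}^1\frac{f(y)}{1-xy}\,\mathrm{d}\mu(y)=\int_{-1}^1\frac{B(y)\,f(y)}{1-xy}\,\mathrm{d}y .
\]

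Next I would run a Schur test on this kernel. On $[-1,1]\times[-1,1]$ the kernel $\kappa(x,y)=(1-xy)^{-1}$ is nonnegative and symmetric, and the weight $h(x)=(1-x^2)^{-1/2}$ is positive and finite for $\mu$‑a.e.\ $x$ (recall $\mu\ll\mathrm{d}x$, so $\mu(\{\pm 1\})=0$). Using the classical evaluation
\[
\int_{-1}^1\frac{\mathrm{d}y}{(1-xy)\sqrt{1-y^2}}=\int_0^\pi\frac{\mathrm{d}\theta}{1-x\cos\theta}=\frac{\pi}{\sqrt{1-x^2}}\qquad(|x|<1),
\]
we obtain
\[
\int_{-1}^1\kappa(x,y)\,h(y)\,\mathrm{d}\mu(y)=\int_{-1}^1\frac{B(y)}{(1-xy)\sqrt{1-y^2}}\,\mathrm{d}y\le\|B\|_\infty\cdot\frac{\pi}{\sqrt{1-x^2}}=\pi\|B\|_\infty\,h(x),
\]
and by symmetry the same estimate holds with $x$ and $y$ interchanged. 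Schur's lemma then shows the integral operator with kernel $\kappa$ is bounded on $L^2(\mu)$ with norm at most $\pi\|B\|_\infty$; since $K$ is self‑adjoint and coincides with this integral operator on its dense domain, it follows that $K=FF^*$ is bounded with $\|FF^*\|_{op}\le\pi\|B\|_\infty$.

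Finally, boundedness of $FF^*$ forces $F^*$, $F$, and $F^*F$ to be bounded as well, with the standard operator‑theoretic identities $\|F^*\|_{op}^2=\|F\|_{op}^2=\|F^*F\|_{op}=\|FF^*\|_{op}$ (for instance, $\|F^*f\|^2=\langle FF^*f\,|\,f\rangle\le\|FF^*\|_{op}\,\|f\|^2$ gives $\|F^*\|_{op}^2\le\|FF^*\|_{op}$, and the reverse and the remaining equalities are routine). Combining with the previous paragraph yields exactly the asserted $\|F^*\|_{op}^2=\|F\|_{op}^2=\|F^*F\|_{op}=\|FF^*\|_{op}\le\pi\|B\|_\infty$.

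I expect the only genuine subtlety to be the domain bookkeeping: Proposition~\ref{Lemma:FStar4} identifies $K$ with the integral operator only on a dense domain, so one must argue that a self‑adjoint operator bounded on its domain is bounded everywhere with the same norm (equivalently, that $K$ equals the bounded closure of the Schur‑estimated integral operator); this is elementary but should be stated. The integral identity for $\int_0^\pi(1-x\cos\theta)^{-1}\,\mathrm{d}\theta$ is the only computation required, and it is classical (Weierstrass substitution or residues).
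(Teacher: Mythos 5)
Your proof is correct, but it reaches the bound by a genuinely different route than the paper. The paper identifies $FF^*$ as the integral operator with kernel $(1-xy)^{-1}$ on $L^2(\mu)$ (via Widom, as a straightforward generalization of Theorem \ref{Thm:HilbertKFFM}) and then estimates the quadratic form by transporting it to Lebesgue measure: writing $\langle \phi \,|\, FF^*\phi\rangle_{L^2(\mu)} = \langle \phi B \,|\, K \phi B\rangle_{L^2[-1,1]}$ with $K$ the Hilbert-kernel operator of Section \ref{Sec:Hilbert}, invoking the known norm $\|K\|_{op}=\pi$, and using $B^2 \leq \|B\|_\infty B$ to convert $\|\phi B\|^2_{L^2(\mathrm{d}x)}$ back into $\|B\|_\infty\|\phi\|^2_{L^2(\mu)}$. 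You instead stay entirely inside $L^2(\mu)$ and run a Schur test with the weight $h(x)=(1-x^2)^{-1/2}$, deriving the constant $\pi$ from the classical evaluation
\[
\int_{-1}^1\frac{\mathrm{d}y}{(1-xy)\sqrt{1-y^2}}=\frac{\pi}{\sqrt{1-x^2}},\qquad |x|<1,
\]
which is legitimate since the kernel is positive on $(-1,1)^2$, $h$ is finite $\mu$-a.e.\ because $\mu\ll\mathrm{d}x$, and the kernel and measure are symmetric in $x$ and $y$. What your approach buys is self-containment: you do not need the external fact that the Hilbert-kernel operator has norm $\pi$ on Lebesgue $L^2$ (indeed, your Schur estimate with $B\equiv 1$ reproves that bound), and your verification that no weights are needed (via $|M_{j+k}|\leq 2\|B\|_\infty/(j+k+1)$ and Proposition \ref{Lemma:FStar4} with $w\equiv 1$) is a useful explicit check the paper leaves implicit. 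What the paper's argument buys is brevity and a cleaner conceptual link between the measure $\mu=B\,\mathrm{d}x$ and the Lebesgue prototype, the multiplication map $\phi\mapsto \phi B$ doing all the work. Your closing remark on domain bookkeeping is the right one to flag: the Schur test bounds the bilinear form for all of $L^2(\mu)$, and since $\mathrm{dom}(FF^*)$ is a core for $F^*$ (von Neumann), boundedness of $F^*$, $F$, and $F^*F$ with the stated equal norms follows as you say.
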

\begin{proof} By \cite{Wid66} (a straightforward generalization of Theorem \ref{Thm:HilbertKFFM}), we know $FF^*$ is an integral operator of the form
\[
(FF^* \phi)(x)
=\int_{-1}^1 \frac{\phi(y)}{1-xy}\,\mathrm{d}\mu(y) 
=\int_{-1}^1  \frac{\phi(y)B(y)}{1-xy}\,\mathrm{d}y.
\] 
Let $K$ be the positive integral operator defined in Section \ref{Sec:Hilbert} which is equivalent to the moment matrix for Lebesgue measure.  Then
\begin{equation*}
\begin{split}
\langle \phi | FF^*\phi \rangle_{L^2(\mu)}
& = \int_{-1}^1 \int_{-1}^1 \frac{\overline{\phi(x)}B(x)\phi(y)B(y)}{1-xy}\,\mathrm{d}y \mathrm{d}x\\
& = \langle \phi B | K\phi B \rangle_{L^2[-1,1]} \\
& = \langle K^{1/2} \phi B | K^{1/2} \phi B \rangle_{L^2[-1,1]} \\
& \leq \|K^{1/2}\|_{op} \|\phi B\|_{L^2[-1,1]} \\
& \leq \pi \int_{-1}^1 |\phi(x) B(x)|^2 \mathrm{d}x.
\end{split}
\end{equation*}
We recall that the operator norm of $K$ is $\pi$.  In turn, the last expression is bounded above by
\[
\pi\|B\|_{\infty}\int_{-1}^{1}|\phi(x)|^2 B(x) \,\mathrm{d}x = \pi\|B\|_{\infty}\|\phi\|_{L^2(\mu)}.
\]
Therefore, $\|FF^*\|_{op}\leq \pi \|B\|_{\infty}$, and $\|F\|=\|F^*\| \leq \sqrt{\pi\|B\|_{\infty}}$.
\end{proof}

\begin{theorem}\label{Thm:HKBound}Suppose there exists a finite $t>1$ such that $\textrm{supp}(\mu)\subset [-t, t]$, $\mu$ has finite moments of all orders, and $\,\mathrm{d}\mu(x) = B(x) \,\mathrm{d}x$, with $B$ a bounded nonnegative function.  Define weights $w = \{w_k\}_{k=0}^{\infty}$ where $w_k:=t^{2k}$ on $\ell^2(w)$.  Then the operators $F$, $F^*$, $F^*_wF$, $FF^*_w$ are bounded, and the operator norm for $F^*_wF$ and $FF^*_w$ is bounded above by $t\pi \|B\|_{\infty}$.
\end{theorem}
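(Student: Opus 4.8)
The plan is to reduce the statement to the already-established $[-1,1]$ case (the lemma immediately preceding this theorem) by a dilation change of variables, and then transport the conclusion back through an explicit unitary equivalence. The factor $t$ in front of $\pi\|B\|_\infty$ is exactly what the dilation produces: a Jacobian $t$ from rescaling the density, combined with the weights $w_k=t^{2k}$ which absorb the $t^{2k}$ growth of the moments of a measure supported in $[-t,t]$.

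First I would introduce the affine map $\tau(x)=x/t$ and the pushforward measure $\nu:=\mu\circ\tau^{-1}$. Since $\mathrm{supp}(\mu)\subset[-t,t]$ we get $\mathrm{supp}(\nu)\subset[-1,1]$, and the substitution $y=x/t$ in $\int g\,d\nu=\int g(x/t)\,B(x)\,dx$ shows $d\nu(y)=\widetilde B(y)\,dy$ with $\widetilde B(y)=t\,B(ty)$; hence $\widetilde B$ is a nonnegative bounded Lebesgue density with $\|\widetilde B\|_\infty=t\|B\|_\infty$. Because $\mu$ has compact support and finite moments of all orders, so does $\nu$. Thus the preceding lemma applies to $\nu$: the operators $F_\nu$, $F_\nu^*$, $F_\nu^*F_\nu$, $F_\nu F_\nu^*$ are all bounded with
\[
\|F_\nu^*F_\nu\|_{op}=\|F_\nu F_\nu^*\|_{op}=\|F_\nu\|_{op}^2\le \pi\|\widetilde B\|_\infty=\pi t\|B\|_\infty .
\]

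Next I would set up two unitaries. Let $V:\ell^2\to\ell^2(w)$ be the unitary sending the standard ONB $\{e_k\}$ to the weighted ONB $\{e_k^w\}=\{e_k/\sqrt{w_k}\}$ (with $w_k=t^{2k}$, so $\sqrt{w_k}=t^k$), and let $W:L^2(\nu)\to L^2(\mu)$ be composition with $\tau$, $(W\psi)(x)=\psi(x/t)$; the identity $\|W\psi\|_{L^2(\mu)}^2=\int|\psi(x/t)|^2B(x)\,dx=\int|\psi(y)|^2\widetilde B(y)\,dy$ shows $W$ is unitary. For $c\in\mathcal{D}$ a direct computation gives $(FVc)(x)=\sum_k c_k t^{-k}x^k=\sum_k c_k (x/t)^k=(WF_\nu c)(x)$, so $FV=WF_\nu$ on the dense domain $\mathcal{D}$, hence (passing to closures) as closed operators. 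Taking adjoints, $F_\nu=W^{-1}FV$ yields $F_\nu^*F_\nu=V^*(F_w^*F)V$ and $F_\nu F_\nu^*=W^*(FF_w^*)W$; that is, the weighted Kato operator $F_w^*F$ is unitarily equivalent to $F_\nu^*F_\nu$ and $FF_w^*$ is unitarily equivalent to $F_\nu F_\nu^*$. In particular all four operators $F$, $F_w^*$, $F_w^*F$, $FF_w^*$ are bounded and
\[
\|F_w^*F\|_{op}=\|FF_w^*\|_{op}=\|F\|_{op}^2=\|F_w^*\|_{op}^2=\|F_\nu^*F_\nu\|_{op}\le \pi t\|B\|_\infty ,
\]
which is the asserted bound.

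The only real work is the bookkeeping in the third step: checking that $V$ and $W$ genuinely are unitary on the relevant (weighted and pushforward) spaces and that the intertwining $FV=WF_\nu$ survives passing to closures, so the adjoint identities and domains match up; the pitfall to watch is keeping the Jacobian factor $t$ separate from the weight factor $t^{-k}$ so that they combine into the single power of $t$ in the final constant. As an independent cross-check one can argue directly: by the generalized Widom computation (Proposition \ref{Lemma:FStar4}), $FF_w^*$ is the integral operator on $L^2(\mu)$ with kernel $\sum_k (xy)^k/w_k=(1-xy/t^2)^{-1}$ (the series converges since $|xy|\le t^2$ off the Lebesgue-null corners), and substituting $x=tu$, $y=tv$ in $\langle\phi|FF_w^*\phi\rangle_{L^2(\mu)}$, together with $B(tu)=t^{-1}\widetilde B(u)$, reduces the quadratic form to $\langle\psi\widetilde B\,|\,K(\psi\widetilde B)\rangle_{L^2[-1,1]}$ where $K$ is the Hilbert integral operator ($\|K\|_{op}=\pi$) and $\psi(u)=\phi(tu)$; estimating as in the preceding lemma and undoing the substitution gives again $\langle\phi|FF_w^*\phi\rangle_{L^2(\mu)}\le\pi t\|B\|_\infty\|\phi\|_{L^2(\mu)}^2$.
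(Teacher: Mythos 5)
Your proposal is correct and is essentially the paper's own argument: the paper carries out the same dilation $x=tu$ directly inside the estimate of $\|F_w^*\phi\|^2_{\ell^2(w)}$ and then applies the preceding lemma to the rescaled density on $[-1,1]$, which is precisely your unitaries $V$ and $W$ written out at the level of vectors, with the Jacobian factor $t$ and the weights $t^{2k}$ combining into the single factor $t$ exactly as you describe. The only difference is presentational: packaging the substitution as an explicit unitary equivalence ($F_w^*F=VF_\nu^*F_\nu V^*$ and $FF_w^*=WF_\nu F_\nu^*W^*$) yields in passing the slightly stronger conclusion that the weighted operators are unitarily equivalent to the unweighted operators of the pushforward measure, but the key lemma and the reduction to $[-1,1]$ are the same.
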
 
\begin{proof}For each $k\in\mathbb{N}_0$,
\begin{equation*}
\begin{split}
(F_w^*\phi)_k
& = \frac{1}{w_k}\int_{-t}^t x^k \phi(x) \,\mathrm{d}\mu(x)\\
& = \frac{1}{t^{2k}}\int_{-t}^t x^k \phi(x) B(x) \,\mathrm{d}x\\
& = \frac{t}{t^k}\int_{-1}^1 u^k \phi(tu) B(tu) \,\mathrm{d}u,
\end{split}
\end{equation*}
and
\begin{equation*}
\begin{split}
\|F_w^*\phi\|^2_{\ell^2(w)} 
& = \sum_k \frac{1}{w_k} \Bigg| \int_{-t}^t x^k \phi(x) B(x) \,\mathrm{d}x\Bigg|^2\\
& = t^2 \sum_k \Bigg| \int_{-1}^1 u^k \phi(tu) B(tu)\,\mathrm{d}u\Bigg|^2.
\end{split}
\end{equation*}
But by the previous lemma, this last expression is less than or equal to
\begin{equation*}
\begin{split}
&  t^2 \pi \|B\|_{\infty} \int_{-1}^1 |\phi(tu)|^2 B(tu) \,\mathrm{d}u\\
& = t\pi \|B\|_{\infty} \int_{-t}^t |\phi(x)|^2 B(x) \,\mathrm{d}x\\
& = t\pi \|B\|_{\infty} \|\phi\|^2_{L^2(\mu)}.
\end{split}
\end{equation*}
So, $\|F_w^*\|_{op}^2 = \|FF^*_w\|_{op} \leq t\pi\|B\|_{\infty}$. \end{proof}

The following is an immediate result of Theorem \ref{Thm:HKBound}.
\begin{corollary}\label{Cor:BWeights}
Suppose there exists a finite $t>1$ such that $\textrm{supp}(\mu)\subset [-t, t]$, $\mu$ has finite moments of all orders, and $\,\mathrm{d}\mu(x) = B(x) \,\mathrm{d}x$, with $B$ an nonnegative bounded function.  Define weights $w=\{w_k\}_{k=0}^{\infty}$ where $w_k:=t^{2k}$ on $\ell^2(w)$.  The operator  $FF_w^*$ is an integral operator of the form
\[
(FF^*_w\phi)(x) 
= \int_{-t}^t \frac{1}{1- \frac{xy}{t^2}} \phi(y) B(y)\,\mathrm{d}y.\]
\end{corollary}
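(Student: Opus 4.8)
The plan is to combine the boundedness conclusion of Theorem \ref{Thm:HKBound} with the generating-function/kernel computation already carried out in Proposition \ref{Lemma:FStar4}, adapted to the present rescaled situation. Since Theorem \ref{Thm:HKBound} guarantees that, with weights $w_k=t^{2k}$, the maps $F:\ell^2(w)\to L^2(\mu)$ and $F^*_w:L^2(\mu)\to\ell^2(w)$ are everywhere-defined bounded operators, the composition $FF^*_w$ is a bounded self-adjoint operator on $L^2(\mu)$, and it suffices to identify its action on an arbitrary $\phi\in L^2(\mu)$.

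First I would write, using (\ref{Eqn:DefnFStar}) and (\ref{Eqn:DefnF}) together with $\mathrm{d}\mu(y)=B(y)\,\mathrm{d}y$ and $\mathrm{supp}(\mu)\subset[-t,t]$,
\[
(FF^*_w\phi)(x)=\sum_{k\in\mathbb{N}_0}(F^*_w\phi)_k\,x^k=\sum_{k\in\mathbb{N}_0}\frac{x^k}{t^{2k}}\int_{-t}^{t}y^k\phi(y)B(y)\,\mathrm{d}y .
\]
The key step is to interchange the sum and the integral. For $|x|<t$ and $|y|\le t$ one has $|xy|/t^2<1$, so $\sum_k |x|^k|y|^k/t^{2k}=(1-|xy|/t^2)^{-1}<\infty$; combined with the boundedness of $B$ and the fact that $\phi\in L^2(\mu)\subset L^1(\mu)$ (a finite measure), this supplies an integrable dominating function on $[-t,t]$ for each fixed $x$ with $|x|<t$, so Fubini--Tonelli (equivalently, dominated convergence applied to the partial sums) yields
\[
(FF^*_w\phi)(x)=\int_{-t}^{t}\phi(y)B(y)\sum_{k\in\mathbb{N}_0}\frac{(xy)^k}{t^{2k}}\,\mathrm{d}y=\int_{-t}^{t}\frac{\phi(y)B(y)}{1-\frac{xy}{t^2}}\,\mathrm{d}y
\]
for a.e.\ $x$, the excluded set $\{|x|=t\}$ being Lebesgue-null, hence $\mu$-null. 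Formally this is the integral kernel $k(x,y)=\sum_j (xy)^j/w_j$ of Proposition \ref{Lemma:FStar4} specialized to $w_j=t^{2j}$.

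Alternatively, and perhaps more transparently, I would reuse the substitution $x=tu$, $y=tv$ already made inside the proof of Theorem \ref{Thm:HKBound}: there it is shown that $(F^*_w\phi)_k=(t/t^k)\int_{-1}^{1}u^k\phi(tu)B(tu)\,\mathrm{d}u$, so that
\[
(FF^*_w\phi)(x)=t\int_{-1}^{1}\Bigl(\sum_k (xu/t)^k\Bigr)\phi(tu)B(tu)\,\mathrm{d}u=t\int_{-1}^{1}\frac{\phi(tu)B(tu)}{1-\frac{xu}{t}}\,\mathrm{d}u ,
\]
and the change of variables $y=tu$ returns exactly the claimed kernel against $B(y)\,\mathrm{d}y$ on $[-t,t]$. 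The only real obstacle is the justification of the sum--integral interchange, which as noted is routine once one observes that $|xy|/t^2<1$ off a null set and $B$ is bounded; no further estimates are needed, since Theorem \ref{Thm:HKBound} has already supplied boundedness of all the operators involved.
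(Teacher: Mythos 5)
Your proposal is correct and follows essentially the route the paper intends: the paper states the corollary as an ``immediate result'' of Theorem \ref{Thm:HKBound}, the point being that those weights make $F$ and $F^*_w$ bounded, after which the kernel identification is exactly the computation of Proposition \ref{Lemma:FStar4} (the formula (\ref{Eqn:DefnFStar}) for $F^*_w$, a geometric series, and a dominated-convergence/Fubini interchange justified by $|xy|/t^2<1$ off a $\mu$-null set). Your writeup, including the rescaling $x=tu$, $y=tv$ borrowed from the proof of Theorem \ref{Thm:HKBound}, simply makes this explicit, and the domination you supply (a bound uniform in $y$ for fixed $|x|<t$, together with $\phi\in L^1(\mu)$) is sound.
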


We can use Theorem \ref{Thm:HKBound} to study operators from the examples in Chapter \ref{Sec:MomentTheory} where the moments were related to the Catalan numbers. 
\begin{corollary}
Suppose $\mu$ is the Wigner semicircle measure in Example \ref{Ex:Wigner}.  Then under the weights $w$ from Corollary \ref{Cor:BWeights}, $FF^*_w$ is bounded, with $\|FF^*_w\|_{op} \leq 4\pi$.
\end{corollary}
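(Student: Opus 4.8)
The plan is to derive the statement as an immediate consequence of Theorem \ref{Thm:HKBound}, so the only genuine work is to verify that the Wigner semicircle measure meets the three hypotheses of that theorem and then to read off the constant.

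First I would collect the relevant data from Example \ref{Ex:Wigner}. There $\mathrm{d}\mu(x) = B(x)\,\mathrm{d}x$ with $B(x) = \frac{\sqrt{4-x^2}}{2\pi}$ on $(-2,2)$ and $B(x)=0$ elsewhere; in particular $\textrm{supp}(\mu) = [-2,2]$, so I may take $t = 2$, which satisfies $t>1$ and is precisely the value for which Corollary \ref{Cor:BWeights} yields the weight sequence $w_k = t^{2k} = 4^k$ named in the statement. The density $B$ is nonnegative and bounded: since $\sqrt{4-x^2}\le 2$ for $|x|\le 2$, we get $\|B\|_{\infty} = \tfrac{1}{\pi}$, the supremum being attained at $x=0$. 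Finally, $\mu$ has finite moments of all orders --- this is automatic from compact support, and is also visible in Example \ref{Ex:Wigner}, where $m_{2k} = C_k$ and $m_{2k+1}=0$.

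With these points checked, Theorem \ref{Thm:HKBound} applies with $t=2$ and weights $w_k = 4^k$: the operators $F$, $F^*$, $F^*_wF$ and $FF^*_w$ are all bounded, and
\[
\|FF^*_w\|_{op} \;\le\; t\pi\|B\|_{\infty} \;=\; 2\pi\cdot\tfrac{1}{\pi} \;=\; 2 \;\le\; 4\pi ,
\]
which gives the asserted estimate. (Using instead the crude bound $\|B\|_{\infty} \le 2$, valid since $\tfrac{1}{\pi}<2$, the constant $4\pi$ is reproduced verbatim from Theorem \ref{Thm:HKBound}.) By Corollary \ref{Cor:BWeights}, $FF^*_w$ is moreover realized as the integral operator on $L^2(\mu)$ with kernel $\bigl(1-\tfrac{xy}{4}\bigr)^{-1}$.

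I do not anticipate a real obstacle here: the argument is a substitution into Theorem \ref{Thm:HKBound}. The only step deserving a line of care is confirming that the semicircle density lies in $L^{\infty}$ and that $\textrm{supp}(\mu)\subset[-t,t]$ for an admissible $t>1$, since that choice of $t$ is what pins down the weight sequence referred to in the statement.
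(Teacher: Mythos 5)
Your proposal is correct and follows exactly the route the paper intends: the corollary is a direct substitution into Theorem \ref{Thm:HKBound} with $t=2$, $w_k=4^k$, and the bounded density $B(x)=\frac{\sqrt{4-x^2}}{2\pi}$. In fact your careful computation $\|B\|_{\infty}=\frac{1}{\pi}$ gives the sharper bound $\|FF^*_w\|_{op}\le 2$, which of course implies the stated estimate $\le 4\pi$ (the paper's constant evidently comes from a cruder bound on $\|B\|_{\infty}$).
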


We would also like to apply Theorem \ref{Thm:HKBound} to Example \ref{Ex:Secant}, but we cannot since the function $B$ is not in $L^{\infty}$.  Still, we can provide an estimate.   We use a generating function $G$ defined by
\begin{equation}\label{Eqn:DefnGmu}
G(\zeta) = \sum_{k\in\mathbb{N}_0} \zeta^k \int |x|^k \,\mathrm{d}\mu(x) = \sum \zeta^k \mathcal{M}_k,
\end{equation}
where we recall the use of the notation $\mathcal{M}_k$ from Lemma \ref{Lemma:RClosable} in Section \ref{Sec:GeneralA}.   

\begin{lemma}
Suppose $\textrm{supp}(\mu)\subset [-t, t]$, where $\mu$ has finite moments of all orders.  Then the radius of convergence $R$ of the generating function $G$ from Equation (\ref{Eqn:DefnGmu}) satisfies $R \geq \frac{1}{t}$.
\end{lemma}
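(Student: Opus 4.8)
The plan is to bound the coefficients $\mathcal{M}_k$ directly from the support hypothesis and then apply the Cauchy--Hadamard formula for the radius of convergence. First I would observe that since $\mathrm{supp}(\mu)\subset[-t,t]$, we have $|x|\le t$ for $\mu$-almost every $x$, so that for every $k\in\mathbb{N}_0$,
\[
\mathcal{M}_k=\int_{[-t,t]}|x|^k\,\mathrm{d}\mu(x)\le t^k\int_{[-t,t]}\mathrm{d}\mu(x)=t^k\,\mathcal{M}_0,
\]
where $\mathcal{M}_0=\mu([-t,t])$ is finite because $\mu$ has finite moments of all orders (in particular a finite zeroth moment).

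Next I would take $k$-th roots and pass to the $\limsup$. If $\mathcal{M}_0=0$ then $\mu$ is the zero measure, all $\mathcal{M}_k$ vanish, and $G$ is entire, so $R=\infty\ge 1/t$ trivially; hence we may assume $\mathcal{M}_0>0$. Then $\mathcal{M}_0^{1/k}\to 1$ as $k\to\infty$, so
\[
\limsup_{k\to\infty}\mathcal{M}_k^{1/k}\le\limsup_{k\to\infty}\bigl(t^k\mathcal{M}_0\bigr)^{1/k}=t\cdot\lim_{k\to\infty}\mathcal{M}_0^{1/k}=t.
\]
By the Cauchy--Hadamard theorem, the radius of convergence of $G(\zeta)=\sum_k\zeta^k\mathcal{M}_k$ is
\[
R=\Bigl(\limsup_{k\to\infty}\mathcal{M}_k^{1/k}\Bigr)^{-1}\ge\frac{1}{t},
\]
which is the desired conclusion.

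There is essentially no serious obstacle here; the only points requiring a word of care are the degenerate case $\mathcal{M}_0=0$ (handled above) and the remark that $\mathcal{M}_0<\infty$, which is exactly the standing hypothesis that $\mu$ has finite moments of all orders. I would present the argument in this order: (i) the pointwise/integral bound $\mathcal{M}_k\le t^k\mathcal{M}_0$; (ii) the root-test computation of $\limsup\mathcal{M}_k^{1/k}$; (iii) the invocation of Cauchy--Hadamard to read off $R\ge 1/t$.
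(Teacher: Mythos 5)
Your proof is correct and follows essentially the same route as the paper: bound $\mathcal{M}_k \le t^k\mathcal{M}_0$ from the support hypothesis, take $k$-th roots, and conclude absolute convergence of $G(\zeta)$ for $|\zeta|<1/t$ via the root test. The only cosmetic difference is that the paper invokes the normalization $\mathcal{M}_0=1$ (treating $\mu$ as a probability measure), whereas you handle a general finite zeroth moment, including the degenerate case $\mathcal{M}_0=0$.
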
 
\begin{proof}We have
\[
\Biggl( \int_{-t}^t |x|^k \,\mathrm{d}\mu\Biggr)^{1/k} \leq t ,
\]
where we use the fact that $t := \textrm{ess supp}|x|$ on $L^{\infty}(\mu)$ and also the fact that $\mu$ is a probability measure.  Therefore, we are assured that $G$ converges absolutely for $|\xi| t < 1$.   Therefore, $R \geq \frac{1}{t}$.  \end{proof}

Pick $w_k:=s^k$ for some $t^2 < s < \infty$.   We can now establish an estimate for $\|F^*\phi\|_{\ell^2(w)}$ in terms of the generating function $G$:

\begin{equation*}
\begin{split}
\|F_w^* \phi\|^2_{\ell^2(w)}
& = \sum_{k} \frac{1}{s^k}\Bigg|  \int x^k \phi(x) \,\mathrm{d}\mu(x) \Bigg|^2\\ & \leq \sum_k \frac{1}{s^k} \Biggr(\int_{-t}^t |x|^k |\phi(x)|\mathrm{d}\mu(x) \Biggr)^2. \end{split}\end{equation*}
Then, using Cauchy-Schwarz, we have
\begin{equation*} \begin{split} \|F^*_w\phi\|^2_{\ell^2(w)}  & \leq \sum_k \frac{1}{s^k} \mathcal{M}_k \int |x|^k |\phi(x)|^2 \mathrm{d}\mu(x)\\
& = \int_{-t}^t G\Biggl(\frac{|x|}{s}\Biggr) |\phi(x)|^2 \,\mathrm{d}\mu(x).
\end{split}
\end{equation*}
The switch of the sum and integral above is justified by a Fubini argument, since the power series $G(x)$ is continuous.

We apply this observation to Example \ref{Ex:Secant}, the secant measure.  In this case, $G(x) = 2(\sqrt{1-4x^2})^{-1}$ and $t = 2$.  Choose $s > 4$ and set $w=\{w_k\}_{k=0}^{\infty}$ where $w_k = s^k$.  Then in $\ell^2(w)$, we have 
\[
\|F_w^* \phi\|^2_{\ell^2(w)} \leq \int_{-2}^2 \frac{2}{\sqrt{1-(\frac{2x}{s})^2}} |\phi(x)|^2 \,\mathrm{d}\mu(x).
\]

We next describe a result related to affine iterated function system measures.  In particular, we show a sufficient condition under which the matrix $A$ (see Section \ref{Sec:GeneralA}) encoding an affine map $\tau$ is a Hilbert-Schmidt operator.

\begin{proposition}Consider the transformation $\tau(z) = cx+b$, and suppose $|c|\leq |b| < \frac{1}{4}$.  Then the operator defined by the matrix $A$ which satisfies \[M^{(\mu \circ \tau)} = A^* M^{(\mu)}A,\] with is Hilbert-Schmidt; i.e.  $A$ is bounded, and $\textrm{trace}(A^*A) < \infty$.
\end{proposition}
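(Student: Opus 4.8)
The plan is to read off the explicit formula for $A$ from Lemma \ref{Lemma:Amatrix} and estimate its Hilbert--Schmidt norm by a direct computation. Recall that $A$ is upper triangular with entries $a_{i,j} = \binom{j}{i}c^i b^{j-i}$ for $i\le j$ and $a_{i,j}=0$ otherwise. Since an operator on $\ell^2$ whose matrix entries are square-summable is Hilbert--Schmidt (hence automatically bounded, indeed compact, with $\mathrm{trace}(A^*A)=\|A\|_{HS}^2$), it suffices to show
\[
\|A\|_{HS}^2 \;=\; \sum_{i,j}|a_{i,j}|^2 \;=\; \sum_{j=0}^{\infty}\sum_{i=0}^{j}\binom{j}{i}^2 |c|^{2i}|b|^{2(j-i)} \;<\;\infty .
\]

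First I would use the hypothesis $|c|\le |b|$ to absorb the $c$-dependence: for $0\le i\le j$ we have $|c|^{2i}|b|^{2(j-i)}\le |b|^{2i}|b|^{2(j-i)} = |b|^{2j}$, so the double sum is dominated by $\sum_{j}|b|^{2j}\sum_{i=0}^{j}\binom{j}{i}^2$. Next I would invoke the Vandermonde identity $\sum_{i=0}^{j}\binom{j}{i}^2 = \binom{2j}{j}$, which reduces the estimate to $\sum_{j}\binom{2j}{j}|b|^{2j}$. Then, using the generating function $\sum_{j\ge 0}\binom{2j}{j}x^{j} = (1-4x)^{-1/2}$ valid for $|x|<\tfrac14$ (or, more crudely, $\binom{2j}{j}\le 4^j$), the hypothesis $|b|<\tfrac14$ — which forces $4|b|^2 < \tfrac14 < 1$ — gives convergence, with the clean bound $\|A\|_{HS}^2 \le (1-4|b|^2)^{-1/2}$.

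Finally I would record the conclusions: since $\|A\|_{HS}<\infty$, the matrix $A$ defines a Hilbert--Schmidt operator on $\ell^2$, which is in particular bounded, and $\mathrm{trace}(A^*A)=\|A\|_{HS}^2<\infty$; this is exactly the assertion, and it also makes the triple product $A^*M^{(\mu)}A$ legitimate as a product of an $\ell^2$-bounded operator with (the unbounded) $M^{(\mu)}$ on a suitable dense domain. I do not expect a genuine obstacle here: the only care needed is the combinatorial identity and the observation that $|c|\le |b|$ is precisely what makes the estimate uniform over the whole parameter region $|c|\le|b|<\tfrac14$, so no assumption on $\mu$ beyond finiteness of all moments (needed only so that $M^{(\mu)}$ and $M^{(\mu\circ\tau)}$ exist) enters the argument.
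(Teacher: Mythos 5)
Your proposal is correct and follows essentially the same route as the paper: summing $|a_{i,j}|^2$ over all entries is exactly the paper's computation of $\mathrm{trace}(A^*A)$ via its diagonal entries, and both arguments then use $|c|\le|b|$ to reduce to $\sum_j \binom{2j}{j}|b|^{2j}$ (via $\sum_{i=0}^{j}\binom{j}{i}^2=\binom{2j}{j}$), which converges because $|b|^2<\tfrac{1}{16}<\tfrac14$, the radius of convergence of that series. Your explicit bound $(1-4|b|^2)^{-1/2}$ is a harmless extra.
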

\begin{proof}Let $k$ denote the row index in $A$, and let $j$ denote the column index in $A$; as before (\ref{Eqn:UpTriA}), we have 
\begin{equation*}
A_{k,j}=
\begin{cases} \binom{k}{j} c^k b^{j-k} & 0 \leq k \leq j\\
                                                              0 & k > j\\
\end{cases}.
\end{equation*}
The $(i,j)^{\textrm{th}}$ entry of the infinite matrix $A^*A$ is therefore
\begin{equation*}
(A^*A)_{i,j}
= \sum_{0\leq k \leq i\wedge j} \overline{A_{k,i}}A_{k,j}
= \sum_{0\leq k \leq i\wedge j} \binom{i}{k} \binom{j}{k} |c^k|^2 \overline{b^{i-k}}b^{j-k},
\end{equation*}
and if $i=j$, the diagonal entries are
\begin{equation}
(A^*A)_{j,j} = \sum_{k=0}^j \binom{j}{k}^2 |c^k|^2 |b^{j-k}|^2.
\end{equation}
Set $\alpha:=|c|^2$ and $\beta:=|b|^2$.
Then, using the assumption that $\frac{\alpha}{\beta}\leq 1$, we have
\begin{equation}\begin{split}
\textrm{trace}(A^*A) 
& = \sum_{j=0}^{\infty} (A^*A)_{j,j}
    = \sum_{j=0}^{\infty} \sum_{k=0}^j \binom{j}{k}^2 \alpha^k\beta^{j-k}\\
&\leq \sum_{j=0}^{\infty} \beta^j \sum_{k=0}^j \binom{j}{k}^2
    = \sum_{j=0}^{\infty} \beta^j \binom{2j}{j}
\end{split}
\end{equation}
The last series has radius of convergence $\frac{1}{4}$.
\end{proof}

\section{Projection-valued measures}\label{Sec:pvm}
This section provides an overview of the properties of
projection-valued measures, leading to the spectral decomposition
for unbounded self-adjoint operators on a Hilbert space
$\mathcal{H}$.  These ideas will be used in Section
 \ref{Sec:Spectra} and later in Chapter \ref{Ch:Extensions} in order to discuss the
Kato-Friedrichs extension of operators.  For far more detailed
treatment of this theory, we refer the reader to
\cite{Bag92,ReSi80,Rud91}.

Recall from Definition \ref{Def:SelfAdjoint} that an operator $H$
with dense domain $\mathrm{dom}(H)$ in a Hilbert space
$\mathcal{H}$ is said to be \textit{self adjoint} if $H^*=H$ and
$\mathrm{dom}(H^*) = \mathrm{dom}(H)$.

\begin{definition} A Borel \textit{projection-valued measure} $E$ on $\mathbb{R}$ is a
map from the $\sigma$-algebra $\mathcal{B}$ of Borel subsets of
$\mathbb{R}$ to the orthogonal projections on a Hilbert space
$\mathcal{H}$ such that
\begin{enumerate}[(i)] \item $E(\emptyset) = 0$, \item if $S =
\cup_{i=1}^{\infty} S_i$ and $S_i \cap S_j = \emptyset$ for $S_i
\in \mathcal{B}$ and $i \neq j$, then $$E(S) = \sum_{i=0}^{\infty}
E(S_i) = \lim_{i\rightarrow \infty} \sum_{j=1}^i E(S_j),$$ where
the limit is in the strong operator topology.\end{enumerate}   A projection-valued measure $E$ is said to be \textit{orthogonal} if
\begin{equation}\label{Eqn:pvmMult} E(S_1 \cap S_2) = E(S_1)E(S_2)
\end{equation} for all Borel sets $S_1,S_2$.  We will assume all projection-valued measures are orthogonal unless otherwise stated.
\end{definition}
Recall from the definition of an orthogonal projection that for
every Borel set $S$, we have $E(S) = E(S)^* = E(S)^2$.

There is a well-known theory which develops the notion of
integration of measurable real-valued functions against a
projection-valued measure $E$, resulting in an operator on the
Hilbert space $\mathcal{H}$.  We will denote such an integral by
$\int_{\mathbb{R}} f(\lambda) E(\mathrm{d}\lambda)$. Quite
naturally, we start by defining integrals of characteristic
functions,
$$E(S) = \int_{\mathbb{R}} \chi_S(\lambda) E(\mathrm{d}\lambda),$$
then extend appropriately to measurable functions.  The operator
is a bounded operator if and only if there exists an $M >0 $ such
that $E(|f|^{-1}(M,\infty)) = 0$ (the trivial projection). We say
that $E$ is a \textit{resolution of the identity} if
\begin{equation} I_{\mathcal{H}} = \int_{\mathbb{R}}
E(\mathrm{d}\lambda) = E(\mathbb{R}). \end{equation}

\begin{example}\cite{Bag92} Given $\mu$ a $\sigma$-finite Borel measure on $\mathbb{R}$, let
$\mathcal{H} = L^2(\mu)$.  Given a Borel set $S \subseteq
\mathbb{R}$, define $E(S)$ to be a multiplication operator which
multiplies by the characteristic function $\chi_S$: $$E(S)(f) =
\chi_S f.$$ Then $E$ is a projection-valued measure on
$\mathbb{R}$, called the \textit{canonical projection-valued
measure}. Given $f$ a measurable real-valued function on
$\mathbb{R}$, $\int_{\mathbb{R}} f(\lambda) E(\mathrm{d}\lambda)$
has domain $\{g \in \mathcal{H}\,:\, fg \in \mathcal{H} \}$ and on
that domain,
$$\int_{\mathbb{R}} f(\lambda) E(\mathrm{d}\lambda)(g) = fg. $$ \hfill
$\Diamond$
\end{example}

One useful property of projection-valued measures is that
integration is multiplicative, where the product of operators is
composition.

$$\int f(\lambda)g(\lambda)E(\mathrm{d}\lambda) = \int f(\lambda)
E(\mathrm{d}\lambda) \int g(\lambda)E(\mathrm{d}\lambda) $$
 Corresponding to a projection-valued measure $E$, there is a parameterized family of real-valued
measures $\{\nu_v\}_{v \in \mathcal{H}}$ given by the
$\mathcal{H}$-inner product:
\begin{equation}\label{Eqn:RealMeasure} \nu_v(S) = \langle v | E(S)v \rangle. \end{equation}
Since orthogonal projections are positive operators, the quantity
$\langle v | E(S)v \rangle$ is nonnegative for any choice of $v$
and any set $S$.  Given a projection-valued measure which is a
resolution of the identity, we have $$ \int_{\mathbb{R}}
\mathrm{d}\nu_v(\lambda) = \langle v | E(\mathbb{R})v \rangle =
\|v\|^2.$$

The following theorem gives a spectral decomposition for
self-adjoint operators.

\begin{theorem}[\cite{DS88,Rud91}] \label{Thm:pvm}An operator $H$ in a Hilbert
space $\mathcal{H}$ is self adjoint if and only if there exists an
orthogonal projection-valued measure $E$ supported on the spectrum
$\sigma(H)$ such that
\begin{equation}\label{Eqn:pvmH} Hv =\left[\int_{\mathbb{R}}\lambda
E(\mathrm{d}\lambda)\right]v =  \left[\int_{\sigma(H)}\lambda
E(\mathrm{d}\lambda)\right]v \end{equation} for all $v \in
\mathrm{dom}(H)$.  Moreover, $v \in \mathrm{dom}(H)$ if and only
if \begin{equation}\label{Eqn:pvmNorm} \|Hv\|^2 =
\int_{\sigma(H)}\lambda^2 \mathrm{d}\nu_v(\lambda) < \infty,
\end{equation} where the measure $\nu_v$ is defined in Equation (\ref{Eqn:RealMeasure}).
\end{theorem}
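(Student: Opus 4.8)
The plan is to treat the two implications separately, with the reverse implication (existence of $E$ implies $H$ self-adjoint) as the routine direction and the forward implication carrying the bulk of the work via the Cayley transform.

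For the reverse direction, assuming a projection-valued measure $E$ on $\mathbb{R}$ is given, I would set $H = \int_{\mathbb{R}} \lambda\, E(\mathrm{d}\lambda)$ on the domain $\{v : \int_{\mathbb{R}} \lambda^2\, \mathrm{d}\nu_v(\lambda) < \infty\}$, where $\nu_v$ is the scalar measure from Equation (\ref{Eqn:RealMeasure}). From the functional calculus for projection-valued measures one checks directly that this domain is dense, that $\langle Hu | v \rangle = \langle u | Hv \rangle$ there (the integrand $\lambda$ being real), and — the one point requiring care — that $\mathrm{dom}(H^*)$ is no larger than $\mathrm{dom}(H)$; this comes from writing $\langle Hu | v \rangle = \int_{\mathbb{R}} \lambda\, \mathrm{d}\langle E(\cdot)u | v \rangle$, controlling the complex measure $\langle E(\cdot)u | v \rangle$ by the positive measures $\nu_u, \nu_v$ via Cauchy–Schwarz, and using the resolution-of-identity property $E(\mathbb{R}) = I$. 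This argument also yields the norm identity $\|Hv\|^2 = \int_{\sigma(H)} \lambda^2\, \mathrm{d}\nu_v(\lambda)$, hence the domain characterization (\ref{Eqn:pvmNorm}), since that integral being finite is exactly the defining condition on the domain.

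For the forward direction, given $H = H^*$, I would form the Cayley transform $U = (H - iI)(H + iI)^{-1}$. Self-adjointness of $H$ gives that $H \pm iI$ map $\mathrm{dom}(H)$ bijectively onto $\mathcal{H}$ with bounded inverses, so $U$ is a unitary operator on $\mathcal{H}$, and since $I - U = 2i(H+iI)^{-1}$ has trivial kernel, $1$ is not an eigenvalue of $U$. I would then invoke the spectral theorem for unitary operators (equivalently, the bounded spectral theorem for the normal operator $U$) to obtain a projection-valued measure $F$ on the unit circle $\mathbb{T}$ with $U = \int_{\mathbb{T}} z\, F(\mathrm{d}z)$ and $F(\{1\}) = 0$. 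Transporting $F$ along the Möbius homeomorphism $\varphi : \mathbb{T}\setminus\{1\} \to \mathbb{R}$, $\varphi(z) = i\frac{1+z}{1-z}$, by $E(S) := F(\varphi^{-1}(S))$ produces a projection-valued measure on $\mathbb{R}$. One verifies first on the dense subspace $\mathrm{ran}((H+iI)^{-1}) = \mathrm{dom}(H)$, by direct computation with the two functional calculi, that $\int_{\mathbb{R}} \lambda\, E(\mathrm{d}\lambda)$ agrees with $H$, and then that the full domains coincide using the norm identity from the previous paragraph. Finally, for the support statement: if $\lambda_0 \notin \sigma(H)$ then $(H - \lambda_0 I)^{-1}$ is bounded, and feeding the function $(\lambda - \lambda_0)^{-1}$ restricted to a small interval $J \ni \lambda_0$ into the functional calculus forces $E(J) = 0$, so $E(\mathbb{R} \setminus \sigma(H)) = 0$.

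I expect the main obstacle to be the careful bookkeeping in the Cayley-transform transfer: proving that $H \pm iI$ are genuinely surjective (the deficiency-index fact equivalent to self-adjointness), and then showing that the operator reconstructed from $E$ equals $H$ rather than being a proper extension of the restriction of $H$ to a dense subspace — this is exactly where the domain characterization (\ref{Eqn:pvmNorm}) and the fact that a self-adjoint operator admits no proper symmetric extension enter. All the ingredients are classical, so the write-up is mostly a matter of assembling them in the right order and using the bounded (unitary) spectral theorem as a black box; for a complete treatment I would refer to \cite{DS88, Rud91} as cited in the statement.
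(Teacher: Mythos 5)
Your outline is correct, but it is doing substantially more than the paper does: the paper does not prove Theorem \ref{Thm:pvm} at all, deferring the existence statement to the cited sources \cite{DS88,Rud91}, and only records the computation establishing the norm identity (\ref{Eqn:pvmNorm}) --- namely $\|\int \lambda E(\mathrm{d}\lambda)v\|^2 = \langle [\int \lambda E(\mathrm{d}\lambda)]^2 v \mid v\rangle = \int \lambda^2\,\mathrm{d}\nu_v(\lambda)$, using self-adjointness of the integral and the multiplicative property of integration against $E$. Your reverse direction reproduces exactly that computation (plus the routine density and adjoint-domain checks), while your forward direction is the classical Cayley-transform construction that is in fact the proof in Rudin, one of the references the paper points to: $U=(H-iI)(H+iI)^{-1}$ unitary with $1$ not an eigenvalue, the bounded spectral theorem for $U$, transport of the circle measure along $\varphi(z)=i\frac{1+z}{1-z}$, and then maximality of self-adjoint operators to rule out a proper extension --- you correctly identify that last point, together with (\ref{Eqn:pvmNorm}), as the place where the domains are pinned down. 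The only spot I would tighten is the support argument: rather than ``feeding $(\lambda-\lambda_0)^{-1}$ into the calculus,'' take a unit vector $v\in \mathrm{ran}\,E(J)$ for a small interval $J\ni\lambda_0$ and use the norm identity to get $\|(H-\lambda_0)v\|\le |J|$, contradicting boundedness of $(H-\lambda_0)^{-1}$ once $|J|<\|(H-\lambda_0)^{-1}\|^{-1}$; this gives $E(J)=0$ and hence $E(\mathbb{R}\setminus\sigma(H))=0$ by covering the open complement. With that phrasing fixed, your sketch is a complete and faithful reconstruction of the cited proof, whereas the paper's text only supplies the one identity it actually uses later.
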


While we leave the details of the proof to the cited literature,
it may be helpful here to include the computation which
demonstrates the equality given in Equation (\ref{Eqn:pvmNorm}).
Given $v \in \mathrm{dom}(H)$,
\begin{eqnarray*}  \left\|\int \lambda E(\mathrm{d}\lambda)v\right\|^2 &=&
\left\langle \int \lambda E(\mathrm{d}\lambda)v \Bigr| \int
\lambda E(\mathrm{d}\lambda)v \right\rangle \\ &=& \left\langle
\Bigr[\int \lambda E(\mathrm{d}\lambda)\Bigr]^* \Bigr[\int \lambda
E(\mathrm{d}\lambda)\Bigr]v \Bigr| v \right\rangle \\ &=&
\left\langle\Bigr[\int \lambda E(\mathrm{d}\lambda)\Bigr]^2 v
\Bigr| v \right\rangle \quad \text{since the op. is self adjoint}
\\ &=& \left\langle \Bigr[\int
\lambda^2 E(\mathrm{d}\lambda)\Bigr]v \Bigr|v \right\rangle \quad \text{multiplicative prop. of integrals}\\
&=& \int \lambda^2 \mathrm{d}\nu_v(\lambda). \end{eqnarray*}
\hfill $\Box$

The formulas from Theorem \ref{Thm:pvm} together help us define
the domains of (possibly unbounded) self-adjoint operators defined
by integrals of measurable real-valued functions $f$ against a
projection-valued measure $E$.   We say that $v$ is in the domain
of an operator $H = \int f(\lambda) E(\mathrm{d}\lambda)$ if
\begin{equation} \left\|Hv- \Bigr[\int_{-n}^n f(\lambda) E(\mathrm{d}\lambda)\Bigr] v\right\| \rightarrow 0 \end{equation} as
$n \rightarrow \infty$.  If $H$ is a self-adjoint operator, we can
use the equality in (\ref{Eqn:pvmNorm}) which gives
\begin{equation*} \left\|Hv- \Bigr[\int_{-n}^n
\lambda E(\mathrm{d}\lambda)\Bigr] v \right\|^2 =
\int_{|\lambda|>n} \lambda^2 \mathrm{d}\nu_v(\lambda).
\end{equation*}
Therefore, a given vector $v \in \mathcal{H}$ is in
$\mathrm{dom}(H)$ if and only if \begin{equation} \lim_{n
\rightarrow \infty} \int_n^{\infty} \lambda^2
\mathrm{d}\nu_v(\lambda) = 0.
\end{equation}

\section{Spectrum of the Kato operator}\label{Sec:Spectra}

In this section, we show how spectral analysis of the Kato-Friedrichs operator translates into spectral data for the initially given moment matrix $M^{(\mu)}$ and the measure $\mu$.  We then illustrate the theory in several examples.  Our main result (Theorem \ref{Thm:AtomsPair}) is that atoms in the spectrum of the Kato-Friedrichs operator pair up with atoms in the spectrum of the measure $\mu$ itself. Using this we arrive at our own proof of the well-known fact (Theorem \ref{Thm:HilbertSpectrum}) that the Hilbert matrix has continuous spectrum.

In fact, in their spectral picture, the Kato-Friedrichs operators
are directly related to the associated moment matrices
$M^{(\mu)}$; this correspondence is especially transparent for
Examples
\ref{Subsec:KFExampleDelta1} and \ref{Subsec:KFConvex2Dirac}.  In
these examples, the agreement of  the rank of the Kato-Friedrichs
operators with the rank of the measure, which we define in Section \ref{Subsec:RankTransformation}, can be seen by inspection.

\begin{theorem}\label{Thm:AtomsPair} Let $\mu$ be a measure with compact support on the real line. Then the atoms in the spectrum of the Kato-Friedrichs operator of the moment matrix $M =M^{(\mu)}$ (if any) pair up with atoms in the spectrum of the measure $\mu$ itself.
\end{theorem}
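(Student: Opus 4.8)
The plan is to move the problem off the Kato--Friedrichs operator $H=F_w^{*}F$ on $\ell^{2}(w)$ and onto the integral operator $K=FF_w^{*}$ on $L^{2}(\mu)$, where ``atoms in the spectrum'' becomes a statement about eigenfunctions of an explicit kernel. Since $\mathrm{supp}(\mu)$ is compact, Stone--Weierstrass gives $\mathcal{P}=L^{2}(\mu)$, so $F$ has dense range, $F^{*}_w$ is injective, and (after an affine rescaling of the support into $[-1,1]$ if necessary) $K$ is a densely defined self-adjoint operator on all of $L^{2}(\mu)$ with kernel $k(x,y)=\sum_{j}(xy)^{j}/w_{j}$ as in Proposition \ref{Lemma:FStar4}. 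The polar-decomposition identity $UHU=K^{*}$ from the Corollary following (\ref{Eqn:TildeFUFStarTildeF})--(\ref{Eqn:TildeFTildeFFStarU}) shows $H$ and $K$ are unitarily equivalent off their kernels; hence a nonzero atom $\lambda$ of $\sigma(H)$ is a nonzero atom of $\sigma(K)$ with the same eigenspace dimension, the unitary carrying eigenvectors to eigenvectors. (The value $0$ is not part of the pairing: it occurs with infinite multiplicity exactly when $\mathrm{supp}(\mu)$ is finite, and plays the complementary role.) So it is enough to prove that each nonzero eigenvalue of $K$ forces an atom of $\mu$, and that the nonzero eigenvalues (counted with multiplicity) match injectively with atoms of $\mu$ --- indeed bijectively when $\mu$ is itself purely atomic, since then $K$ is injective on the finite-dimensional space $L^{2}(\mu)$, cf.\ Example \ref{Ex:ConvexDirac}.

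Next I would record the structural fact that drives the matching. If $K\phi=\lambda\phi$ with $\lambda\neq0$, then $\phi=\lambda^{-1}FF_w^{*}\phi=F\bigl(\lambda^{-1}F_w^{*}\phi\bigr)$, so $\phi$ is the $L^{2}(\mu)$-class of a genuine power series $\Phi(z)=\sum_{j}c_{j}z^{j}$ with $c_{j}=\lambda^{-1}w_{j}^{-1}\int x^{j}\phi\,\mathrm{d}\mu$, which converges in a disc strictly larger than $\mathrm{supp}(\mu)$ once the weights are chosen as in Lemma \ref{Lemma:FClosable}; thus $\phi$ agrees $\mu$-a.e.\ with a function holomorphic near $\mathrm{supp}(\mu)$. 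Because $\mathrm{supp}(\mu)$ is infinite when $\mu$ is not a finite convex combination of Dirac masses, the eigen-equation $\lambda\Phi(z)=\int g(zy)\Phi(y)\,\mathrm{d}\mu(y)$, with $g(t)=\sum_{j}t^{j}/w_{j}$, holds as an identity between holomorphic functions. Taking $w_{j}=s^{j}$ this reads $\lambda\Phi(z)=s\int\Phi(y)(s-zy)^{-1}\,\mathrm{d}\mu(y)$, so the right-hand side continues $\Phi$ analytically to $\mathbb{C}\setminus\{s/y:y\in\mathrm{supp}(\mu)\}$, and the nature of its singularities is read off from $\mu$: an atom $x_{i}$ contributes an isolated pole at $z=s/x_{i}$ with residue a nonzero multiple of $\Phi(x_{i})$, whereas a non-atomic part of $\mu$ contributes only a (non-isolated) cut.

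The argument then splits along $\mu=\mu_{c}+\sum_{i}\alpha_{i}\delta_{x_{i}}$ with $\mu_{c}$ continuous. First I would treat the purely continuous case $\mu=\mu_{c}$ and show that $K$ then has no nonzero eigenvalue at all: the continuation of $\Phi$ above has no isolated singularity, and one derives a contradiction with $\Phi$ being represented near $0$ by $\sum c_{j}z^{j}$ --- equivalently, the Cauchy-type transform of the nonzero holomorphic density $\Phi\,\mathrm{d}\mu_{c}$ cannot satisfy that fixed-point relation. This step uses only atomlessness, not absolute continuity, so it also covers singular-continuous equilibrium measures such as Cantor IFS measures. Granting it, the general case is a finite-rank perturbation argument: a short computation gives
\[
K=K_{c}+\sum_{i}\sqrt{\alpha_{i}}\,\bigl|\,g(x_{i}\,\cdot\,)\,\bigr\rangle\bigl\langle\,\alpha_{i}^{-1/2}\mathbf{1}_{\{x_{i}\}}\,\bigr|,
\]
where $K_{c}=FF_{c,w}^{*}$ is the integral operator built from $\mu_{c}$ (no nonzero point spectrum, by the continuous case) and the perturbation has exactly one rank-one channel per atom; the theory of rank-one perturbations --- the one advertised in the abstract --- then shows every nonzero eigenvalue of $K$ is produced by one of these channels, which gives the injective assignment $\lambda\mapsto x_{i}$. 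As an immediate corollary, Lebesgue measure on $[0,1]$ is atomless, so its Kato--Friedrichs operator --- the Hilbert matrix --- has empty point spectrum; this is Theorem \ref{Thm:HilbertSpectrum}.

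I expect the main obstacle to be precisely the purely continuous case: proving that $\lambda\Phi=K_{c}\Phi$ has no nonzero holomorphic solution for an arbitrary atomless compactly supported $\mu$. The rest is essentially bookkeeping --- the passage $H\leftrightarrow K$ is the polar decomposition already established, the analyticity of eigenfunctions is immediate from $\phi\in\mathrm{ran}\,F$, and deducing the pairing from the continuous case is standard finite-rank perturbation theory.
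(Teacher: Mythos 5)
Your outline has a genuine gap, and in the framework you chose the gap cannot be closed. Everything is reduced to the claim that for an atomless, compactly supported $\mu$ the integral operator $K=FF_w^*$ has no nonzero eigenvalue, and you explicitly leave that claim unproved (``one derives a contradiction''); but that claim is just the contrapositive of Theorem \ref{Thm:AtomsPair}, so the proposal is circular at its core. Worse, with the geometric weights $w_j=s^j$ you introduce (to get the kernel $s/(s-xy)$ and closability), the claim is actually false: for any compactly supported $\mu$ --- Lebesgue measure on $[0,1]$, or the middle-thirds Cantor measure --- the kernel is bounded and continuous on $\mathrm{supp}(\mu)\times\mathrm{supp}(\mu)$, so $K$ is Hilbert--Schmidt on $L^2(\mu)$, hence compact; being positive and nonzero, its operator norm is a nonzero eigenvalue, and by your own unitary equivalence the weighted Kato--Friedrichs operator then has an atom in its spectral resolution even though $\mu$ has none. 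The spectral type of the Kato--Friedrichs operator depends on the choice of weights; the theorem (and its application in Theorem \ref{Thm:HilbertSpectrum}) concerns the unweighted operator acting in $\ell^2$, which is where the paper's proof lives. So passing to the weighted integral operator does not merely postpone the hard step --- it moves to a setting in which the desired dichotomy fails. The ``finite-rank perturbation'' step has secondary problems as well ($\mu$ may have countably many atoms, and ``$0$ has infinite multiplicity exactly when $\mathrm{supp}(\mu)$ is finite'' is not correct), but these are minor by comparison.

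What your proposal never uses is the Hankel structure, and that is the engine of the paper's argument. There, if $\lambda_1$ is an atom of the spectral resolution $E$ of $H$, one picks a unit vector $\xi_1$ in the range of $E(\{\lambda_1\})$; the Hankel property of $M$ is equivalent on $\mathcal{D}$ to the intertwining $HS=S^*H$ with the right shift $S$, and this forces $\xi_1$ to be a geometric sequence $(1,b,b^2,\ldots)$, i.e.\ the moment vector of a Dirac mass $\delta_b$. Since $H$ is the Kato--Friedrichs operator of $Q_M$ and $\lambda_1>0$, there is $A_1>0$ with $A_1|\xi_1\rangle\langle\xi_1|\leq H$, which reads $A_1\int|f_c|^2\,\mathrm{d}\delta_b\leq\int|f_c|^2\,\mathrm{d}\mu$ for all $c\in\mathcal{D}$; compact support makes the polynomials dense in $L^2(\mu)$, so the inequality passes to characteristic functions and gives $A_1\delta_b\leq\mu$, hence $\mu(\{b\})>0$. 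If you want to rescue your analytic-continuation route, you must import exactly this ingredient --- a mechanism that ties a spectral atom to a specific point $b$ of $\mathrm{supp}(\mu)$ and produces a domination $A\,\delta_b\leq\mu$ --- rather than hoping that atomlessness alone kills the point spectrum of the associated integral operator.
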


Before we begin the proof, we will a few key points about
spectral resolutions and operator theory.

Let  $H$ be a densely defined, positive self-adjoint operator on the
Hilbert space $\mathcal{H}$.  Then there exists (unique, up to
unitary equivalence) a projection-valued measure $E$ such that

\[I_{\mathcal{H}} = \int E(\mathrm{d}\lambda)\] and
\[H = \int_{\sigma(H)} \lambda E(\mathrm{d}\lambda).\]
In addition, for any $x \in \mathcal{H}$, given the spectral measure $\nu_x$ defined in as in Equation (\ref{Eqn:RealMeasure}), we have
\[ \|Hx\|^2 = \int_{\sigma(H)} \lambda^2 \mathrm{d}\nu_x.\]
\begin{definition}\label{Defn:Atom}
An \textit{atom} in $H$ or in $E$ is a point $\lambda_1\in\mathbb{R}$ such
that $E(\{\lambda_1\})\neq 0$, i.e. is not the trivial projection.
\end{definition}

If $\mathcal{H}$ is a complex Hilbert space, then we can define an
order on self-adjoint operators on $\mathcal{H}$:
\[
H\leq K \textrm{ if and only if } \langle x|Hx\rangle \leq \langle
x|Kx\rangle \textrm{ for all } x\in\mathcal{H}.
\]  With this order, we observe that
 if $E_1$ and $E_2$ are projections on $\mathcal{H}$, then
\[ E_1\leq E_2 \Leftrightarrow E_1 = E_1E_2 = E_2E_1.\]
In this case, we say that $E_1$ is a subprojection of $E_2$.

\begin{proof} Let $\textrm{supp}(\mu)\subset\mathbb{R}$, where the measure $\mu$ has compact support.  Denote  the Kato-Friedrichs operator for the moment matrix $M=M^{(\mu)}$ by
$H$.  Let $E$ be the projection-valued measure for $H$.  Recall that in the unweighted case, the mapping $F^*:L^2(\mu)\rightarrow
\ell^2$ which takes $f_c(x)=\sum_{i\in\mathbb{N}_0} c_ix^i$ to
$\{c_i\}_{i\in\mathbb{N}_0}$ is an isometry.

Next, assume that $\lambda_1$ is an atom in the spectrum of $H$.  Therefore, $E({\lambda_1})$ is a nontrivial projection, hence there exists $\xi_1\in
\mathrm{Range}(E({\lambda_1}))$, $\|\xi_1\|_{\ell^2} = 1$.  We denote the rank-one projection onto $\xi_1$ (using Dirac notation;
see Chapter \ref{Sec:Notation}) by
\begin{equation}\label{Eqn:EKetBra}
E_{\lambda_1} = |\xi_1\rangle\langle\xi_1|.
\end{equation}

 $H$ is a positive operator, hence $\lambda_1 > 0$.  Since $H$ is the Kato-Friedrichs operator, we have \[\int |f_c|^2\mathrm{d}\mu = \|H^{1/2}c\|^2_{\ell^2}\] for all $c \in \mathcal{D}$.   Therefore,  there exists
$A_1\in \mathbb{R}^+$ such that
\begin{equation}\label{Eqn:AEH}
A_1E_{\lambda_1} \leq H.
\end{equation}
   Equivalently, by the idempotent property of projections, for all
$c\in\mathcal{D}$ we have
\begin{equation}\label{Eqn:AEH2}
A_1\|E_{\lambda_1}c\|_{\ell^2}^2 \leq \|H^{1/2}c\|_{\ell^2}^2 = \int
|f_c|^2\,\mathrm{d}\mu.
\end{equation}
 Recall that for all $c\in\ell^2$,
\begin{equation}
\langle c|E_{\lambda_1}c\rangle_{\ell^2} = \|E_{\lambda_1} c\|_{\ell^2}^2 = |
\langle\xi_1|c\rangle_{\ell^2}|^2.
\end{equation}

We now make use of the Hankel property from Definition \ref{Defn:HankelN0d} in the moment matrix $M$.  Let
\[ S:\{c_0, c_1, c_2, \ldots\} \rightarrow \{0, c_0, c_1, \ldots\}\]
be the right shift operator in $\ell^2$.  Then the Hankel condition is equivalent
on $\mathcal{D}$ to
\begin{equation}
HS = S^*H.
\end{equation}
By the spectral theorem, then
\begin{equation}
E_{\lambda_1}S = S^*E_{\lambda_1}.
\end{equation}
Looking ahead to the argument in Lemma \ref{Lem:indices}, we see that this implies that the vector $\xi_1$ is of the form \[ \xi_1 = [\begin{matrix} 1&b&b^2&b^3&\cdots \end{matrix}]^{tr}. \]

Equation (\ref{Eqn:AEH2}) gives that for all $c \in \mathcal{D}$, \begin{equation}\label{Eqn:NormEc} A_1\|E_{\lambda_1}c\|_{\ell^2}^2 = A_1 \sum_{i=0}^{\infty} \left| \sum_{j=0}^{\infty} \overline{\xi(i)}\xi(j)c_j \right|^2 = A_1 \sum_{i=0}^{\infty} \left| \sum_{j=0}^{\infty} b^{i+j}c_j \right|^2 \leq \int|f_c|^2\mathrm{d}\mu.\end{equation}

Using Example \ref{Ex:Rank1} regarding the Dirac point-mass measure $\delta_b$ at the point $b$, we have
\[ A_1 \int |f_c|^2\mathrm{d}\delta_b = A_1|f_c(b)|^2 = A_1\left|\sum_{j=0}^{\infty} c_jb^j \right|^2.\]
Since the expression above is equal to the $i=0$ term from Equation (\ref{Eqn:NormEc}), we also have
 \begin{equation}\label{Ineq:Aff}
A_1\int |f_c|^2 \,\mathrm{d}\delta_b \leq \int |f_c|^2\,\mathrm{d}\mu \textrm{ for all
}c\in\mathcal{D}.
\end{equation}

Since the support of $\mu$ is compact, the set
$\{f_c\}_{c\in\mathcal{D}}$ is dense in $L^2(\mu)$.  Therefore, for all Borel sets
$E\in\mathcal{B}(\mathbb{R})$ there exists a sequence $\{c_k\}_{k \in \mathbb{N}_0} \subset \mathcal{D}$ such that
\[
f_{c_k}(x) \underset{L^2(\mu)}{\longrightarrow} \chi_{E}(x).
\]
Therefore, using the sequence $\{f_{c_k}\}$ to approximate
$\chi_{E}$, we get
\begin{equation}
A_1\int_E \,\mathrm{d}\delta_b \leq \int_E \,\mathrm{d}\mu,
\end{equation}
or equivalently,
\begin{equation}\label{Ineq:Adm}
A_1\delta_b(E) \leq \mu(E).
\end{equation}
Since $\delta_b(\{b\}) = 1$, it must be true that $\mu(\{b\}) > 0$.  Therefore, when the Kato-Friedrichs operator $H$ has an atom, the measure $\mu$ has a corresponding atom.  \end{proof}

       The Kato-Friedrichs operator for the moment matrix for Lebesgue measure on the interval $[0,1]$ is the  bounded operator defined relative to the standard ONB in $\ell^2$ by the Hilbert matrix.  In particular, we need not introduce weights into the $\ell^2$ space in order to produce a closable quadratic form.  This is a case when the Kato-Friedrichs operator has infinite rank (see Section \ref{Subsec:RankTransformation}).   We now see that the well-known fact that the spectrum of this operator is purely continuous follows as an immediate corollary of Theorem \ref{Thm:AtomsPair}.  (This result was first shown in \cite{Mag50}.)

\begin{theorem}\label{Thm:HilbertSpectrum}
The spectrum of the Hilbert matrix is continuous, i.e., there are
no atoms in the spectral resolution of the Hilbert matrix.
\end{theorem}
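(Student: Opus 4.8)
The plan is to derive Theorem \ref{Thm:HilbertSpectrum} as an immediate consequence of Theorem \ref{Thm:AtomsPair}. The Hilbert matrix is the moment matrix $M^{(\mu)}$ for $\mu = $ Lebesgue measure on $[0,1]$, which has compact support, so Theorem \ref{Thm:AtomsPair} applies: every atom in the spectrum of the Kato--Friedrichs operator $H$ associated with $M$ pairs up with an atom of $\mu$. But Lebesgue measure on $[0,1]$ is nonatomic, i.e. $\mu(\{b\}) = 0$ for every $b \in \mathbb{R}$. Hence $H$ has no atoms in its spectral resolution. Finally, in this case (as observed in the text preceding the statement and in Lemma \ref{Lem:FFM} and Theorem \ref{Thm:HilbertKFFM}), no weights are needed, the Kato--Friedrichs operator is exactly the bounded operator $M = F^*F$ on $\ell^2$ itself, so the conclusion transfers directly to the Hilbert matrix.

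Concretely, I would carry out the following steps. First, recall that by Lemma \ref{Lem:FFM} the Hilbert matrix operator $M$ equals $F^*F$ and is bounded on $\ell^2$ with norm $\pi$, so no renormalization is required and $H = M$ is literally the Kato--Friedrichs operator of the quadratic form $Q_M$ on $\ell^2$. Second, note $\mathrm{supp}(\mu) = [0,1]$ is compact, so the hypotheses of Theorem \ref{Thm:AtomsPair} are satisfied. Third, suppose for contradiction that the spectral resolution $E$ of $M$ has an atom $\lambda_1$, i.e. $E(\{\lambda_1\}) \neq 0$. By Theorem \ref{Thm:AtomsPair} there is then a point $b$ with $\mu(\{b\}) > 0$. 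Fourth, observe that this is impossible: Lebesgue measure on $[0,1]$ assigns measure $0$ to every singleton. This contradiction shows $E(\{\lambda_1\}) = 0$ for all $\lambda_1 \in \mathbb{R}$, i.e. the spectrum of the Hilbert matrix is purely continuous, with no atoms.

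There is essentially no obstacle here once Theorem \ref{Thm:AtomsPair} is in hand; the only point worth a sentence of care is the compatibility of the "Kato--Friedrichs operator of $M$" in the statement of Theorem \ref{Thm:AtomsPair} with the honest bounded operator $M$ on unweighted $\ell^2$ in the Hilbert matrix case. This is already addressed in the discussion just before the theorem (and in Lemma \ref{Lem:FFM}): for Lebesgue measure on $[0,1]$ the weights may be taken trivial, so $Q_M$ is closable on $\ell^2$ directly and its Kato operator is $F^*F = M$. I would therefore state the proof in just a few lines, emphasizing that the "main obstacle" anticipated in the general setting — controlling the interaction between the shift-covariance (Hankel) structure and the spectral projections — has already been handled inside the proof of Theorem \ref{Thm:AtomsPair}, and here we simply invoke its conclusion together with the elementary fact that Lebesgue measure is nonatomic.
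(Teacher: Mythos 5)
Your proposal is correct and follows essentially the same route as the paper: identify the Hilbert matrix as the Kato--Friedrichs operator of $Q_M$ on unweighted $\ell^2$ (bounded, no renormalization needed), invoke Theorem \ref{Thm:AtomsPair}, and conclude from the nonatomicity of Lebesgue measure on $[0,1]$ that no atom can occur in the spectral resolution. The paper's own proof is this same contradiction argument, so nothing further is needed.
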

\begin{proof}
       We saw that the action of the Hilbert matrix $M$ on $\ell^2$ defines a bounded (see \cite{Hal67}) self-adjoint operator, and $M$ is the moment matrix $M^{(\mu)}$ where  $\mu$ is taken to be Lebesgue measure on the unit
interval $(0,1)$.  Moreover, the quadratic forms below coincide on the space
$\mathcal{D}$ of finite vectors in $\ell^2$:  
\begin{equation}
\| f_c \|_{L^2(\mu)} = Q_M(c) = \langle c | Mc\rangle_{\ell^2}
\textrm{ for all } c \in \mathcal{D}.
\end{equation}
Therefore, $M$ is the Kato-Friedrichs operator for the quadratic form $Q_M$.  

If $M=M^{(\mu)}$ contained an atom, then there would be a
rank-one projection in the spectral resolution for $M^{(\mu)}$, and by Theorem \ref{Thm:AtomsPair} this would imply that Lebesgue measure restricted to $(0,1)$ would
contain an atom.   That is a clear 
contradiction, and the therefore $M$ has continuous spectrum. \end{proof}

In Section \ref{Subsec:SpectrumExamples}, we explicitly compute the Kato-Freidrichs operators in several examples and study their spectra.  Recall that in general, the Kato-Friedrichs operator $H$ is a  self-adjoint operator on $\ell^2(w)$ for some set of weights $w=\{w_k\}_{k \in \mathbb{N}_0}$.  Note that the operator $H (= H_w)$ depends on the choice of $w$, and the choice of $w$ depends on the measure $\mu$.  The lemma below demonstrates how the weights in a  Hilbert space $\ell^2(w)$ arise in the equations governing the point spectrum of the Kato-Friedrichs operator.

Recall our notation for the weighted Hilbert spaces, where $w=\{w_i\}_{i \in \mathbb{N}_0}$ are  the weights:
\[
\ell^2(w) = \Bigl\{c = \{c_i\}_{i\in\mathbb{N}_0} \Big|
\sum_{i\in\mathbb{N}_0} w_i|c_i|^2 < \infty\Bigr\}
\]
and
\[
\langle b|c\rangle_{\ell^2(w)}:=\sum_{i\in\mathbb{N}_0} w_i \overline{b_i}
c_i.
\]
In the following computations, we assume that  \begin{enumerate} \item the measure $\mu$ is a
positive Borel measure on $\mathbb{R}$ with compact support having
moments of all orders,  \item the weights $\{w_i\}_{i \in \mathbb{N}_0}$ satisfy  $w_i > 0$ for all $i\in\mathbb{N}_0$, \item the monomials $x^i$ are in $L^2(\mu)\cap L^1(\mu)$ for all
$i \in \mathbb{N}_0$. \end{enumerate}
\begin{lemma}\label{Lemma:FundEqns}  Let $\mu$ be a Borel measure on $\mathbb{R}$ with  compact support and finite moments of all orders.  Let $M = M^{(\mu)}$ be the moment matrix for $\mu$, and let $H$ be the Kato-Friedrichs operator for $M$ under an appropriate choice of weights $w = \{w_i\}_{i \in \mathbb{N}_0}$.  There exists $\lambda$ in the point spectrum of $H$, i.e. 

\[
Hc = \lambda c 
\]
for some nonzero $c$ in the domain of $H$, if and only if for each $i \in \mathbb{N}_0$,
\begin{equation}\label{Eqn:FundEqnPtSpec}
\sum_{j\in\mathbb{N}_0}M_{i,j}c_j = \lambda w_ic_i.
\end{equation}
\end{lemma}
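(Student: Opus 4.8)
The plan is to unwind the definition of the Kato-Friedrichs operator $H$ in the weighted space $\ell^2(w)$ and simply read off what the eigenvalue equation $Hc=\lambda c$ means componentwise. By Proposition \ref{Lem:FStarFisH}, we know $H=F_w^*F$, where $F:\ell^2(w)\to L^2(\mu)$ sends $c\in\mathcal{D}$ to the polynomial $f_c(x)=\sum_j c_jx^j$, and $F_w^*$ is given by Equation (\ref{Eqn:DefnFStar}), namely $(F_w^*\phi)_k=\frac{1}{w_k}\int_{\mathbb{R}}x^k\phi(x)\,\mathrm{d}\mu(x)$. First I would compute, for $c$ in the domain of $H$ (which by Proposition \ref{Lem:FStarFisH} contains $\mathcal{D}$ and, more generally, whichever $c$ have $Fc\in\mathrm{dom}(F_w^*)$),
\[
(Hc)_i=(F_w^*Fc)_i=\frac{1}{w_i}\int_{\mathbb{R}}x^i\Bigl(\sum_{j\in\mathbb{N}_0}c_jx^j\Bigr)\,\mathrm{d}\mu(x)=\frac{1}{w_i}\sum_{j\in\mathbb{N}_0}c_j\int_{\mathbb{R}}x^{i+j}\,\mathrm{d}\mu(x)=\frac{1}{w_i}\sum_{j\in\mathbb{N}_0}M_{i,j}c_j,
\]
where the interchange of sum and integral is justified exactly as in the proofs of Lemma \ref{Lem:FFM} and Theorem \ref{Thm:QClosable} (this is automatic for $c\in\mathcal{D}$; for general $c$ in $\mathrm{dom}(H)$ it follows from $Fc\in L^2(\mu)$ and $x^i\in L^2(\mu)$ via Cauchy–Schwarz).

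Having this formula, the equivalence is immediate: the eigenvalue equation $Hc=\lambda c$ in $\ell^2(w)$ holds if and only if $(Hc)_i=\lambda c_i$ for every $i\in\mathbb{N}_0$, i.e.
\[
\frac{1}{w_i}\sum_{j\in\mathbb{N}_0}M_{i,j}c_j=\lambda c_i\qquad\text{for all }i\in\mathbb{N}_0,
\]
which, multiplying through by $w_i>0$, is exactly Equation (\ref{Eqn:FundEqnPtSpec}). For the converse direction I would note that if the nonzero $c$ satisfies (\ref{Eqn:FundEqnPtSpec}) for all $i$, then in particular $\frac{1}{w_i}\sum_j M_{i,j}c_j=\lambda c_i$, so the sequence $(\frac{1}{w_i}\sum_j M_{i,j}c_j)_i$ equals $\lambda c$, which lies in $\ell^2(w)$; this shows $Fc\in\mathrm{dom}(F_w^*)$, hence $c\in\mathrm{dom}(H)$, and then $Hc=\lambda c$. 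So $\lambda$ is in the point spectrum with eigenvector $c$.

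The only genuine subtlety — and the step I would be most careful about — is the domain bookkeeping: one must ensure that the formal matrix–vector product $\sum_j M_{i,j}c_j$ converges for each $i$ and that the convergence-of-series manipulations are legitimate for the relevant class of $c$. For $c\in\mathcal{D}$ this is trivial since the sums are finite. For general $c\in\mathrm{dom}(H)$, one uses that $f_c=Fc\in L^2(\mu)$ together with $x^i\in L^2(\mu)$ (guaranteed by hypothesis (3), that the monomials lie in $L^2(\mu)\cap L^1(\mu)$) to get absolute convergence of $\int x^i f_c\,\mathrm{d}\mu=\sum_j c_j\int x^{i+j}\,\mathrm{d}\mu$ by dominated convergence, exactly the Fubini-type argument already used repeatedly in Chapters \ref{Sec:IntOperators} and \ref{Ch:Kato}. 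Since the lemma as stated asserts the equivalence "there exists $\lambda$ in the point spectrum ... if and only if (\ref{Eqn:FundEqnPtSpec}) holds for some nonzero $c$ in the domain of $H$," this bookkeeping is the whole content, and I expect it to be short given the machinery already in place.
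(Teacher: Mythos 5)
Your proposal is correct and is essentially the paper's argument: both reduce the eigenvalue equation to the single identity $\sum_{j}M_{i,j}c_j = w_i(Hc)_i$ for $c\in\mathrm{dom}(H)$ and then divide by $w_i>0$. The only cosmetic difference is that the paper obtains this identity by evaluating the polarized sesquilinear form $S_M(e_i,c)=\langle e_i|Hc\rangle_{\ell^2(w)}$, whereas you obtain it from the factorization $H=F_w^*F$ of Proposition \ref{Lem:FStarFisH} together with the adjoint formula (\ref{Eqn:DefnFStar}) — the same computation, since $\langle e_i|F_w^*Fc\rangle_{\ell^2(w)}=\int x^i f_c\,\mathrm{d}\mu$.
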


\begin{proof}[Proof of Lemma \ref{Lemma:FundEqns}:  ]
The (nonnegative) real number $\lambda$ is in the point spectrum of $H$ if and only if there is a nonzero $c$ in the domain of $H$ such that $Hc = \lambda c$.  By definition of the Kato-Friedrichs operator, $H$ is related to the quadratic form $Q_M$ arising from $M$ via 
\[ Q_M(c) = \sum_j\sum_k \overline{c_j}M_{j,k}c_k = \|H^{1/2}c\|^2_{\ell^2(w)}, \quad \forall c\in\mathcal{D}.\]
When we pass to the completion, given $c\in \textrm{dom}(H^{1/2})$ and $f_c\in
L^2(\mu)$, we have the identity 
\[ \int |f_c|^2 \,\mathrm{d}\mu = \| H^{1/2} c\|^2_{\ell^2(w)}.\]
Recall that $f_c\in L^2(\mu)$ if and only if $c\in
\textrm{dom}(H^{1/2})$; this follows from the completion in the
Kato-Friedrichs construction.

There is also a sesquilinear form $S_M$ determined by $M$ (connected to $Q_M$ by the polarization identity) which satisfies 
\begin{equation}\label{Eqn:Sesq} S_M(b,c) = \sum_j\sum_k \overline{b}_j M_{j,k} c_k = \langle b|Hc \rangle_{\ell^2(w)} \end{equation} for all $b \in \mathcal{D}, c \in \mathrm{dom}(H)$.  

For each $i \in \mathbb{N}_0$, let $b = e_i$, the $i$-th standard basis vector.  Equation (\ref{Eqn:Sesq}) becomes 
\[
\sum_j M_{i,j}c_j = \langle e_i|Hc\rangle_{\ell^2(w)} = w_i(Hc)_i .\]  We now have that $Hc=\lambda c$ for some nonzero vector $c$ in the domain of $H$ 
if and only if
\[\sum_j M_{i,j} c_j = w_i(Hc)_i = w_i \lambda c_i.\]
 \end{proof}

In Equations (\ref{Eqn:FundEqnPtSpec}), we will seek
solutions for $\lambda$, $w$, and $c$.  We require
\[ c\in \ell^2(w) \quad \textrm{and}\quad \|c\|_{\ell^2(w)}=1,\]
i.e.,
\begin{equation}\label{Eqn:SumCWOne}
\sum_{i\in\mathbb{N}_0} w_i |c_i|^2 = 1.
\end{equation}

In the examples shown in Section \ref{Subsec:SpectrumExamples},  we will find that it is relatively easy to solve
(\ref{Eqn:SumCWOne}) for a single Dirac mass $\mu = \delta_b$ for any $b\in \mathbb{R}$.
However, in the case of convex combinations of point masses,
finding solutions to (\ref{Eqn:FundEqnPtSpec}) and
(\ref{Eqn:SumCWOne}) is more tricky.

\section{Rank of measures}\label{Subsec:RankTransformation}
Consider a probability measure $\mu$ on $\mathbb{C}$ or on
$\mathbb{R}^d$, and assume that $\mu$ has finite moments of all orders.
Pick an order of the index set for the monomials, and let
$\{p_k\}_{k \in \mathbb{N}_0^d}$ be the associated orthogonal polynomials in $L^2(\mu)$
defined from $\mu$; further, let $G$ be the corresponding Gram
matrix.  Let $\mathcal{P}$ be the closed subspace in $L^2(\mu)$
spanned by the monomials.
\begin{definition}\label{Defn:Rank}
  We say that the \textit{rank of $G$}, and hence the \textit{rank of the measure $\mu$}, is the
dimension of the Hilbert space $\mathcal{P}$.
\end{definition}

We note two things in Definition \ref{Defn:Rank}.  First,
$\{p_k\}_{k \in \mathbb{N}_0^d}$ is an ONB in $\mathcal{P}$.  Second, $\mathcal{P} =
L^2(\mu)$ if and only if the monomials are dense in $L^2(\mu)$;
this is known to be true if $\mu$ has compact support, but it need
not be true in general.  We will see this more in Chapter \ref{Ch:Extensions}.

\begin{example}\label{Ex:Rank1} A Dirac measure has rank 1.  \end{example}
Let $b$ be a fixed complex number, and let $\mu:=\delta_b$ be the
corresponding Dirac measure.  Then the moments $m_k$ of $\mu$ are
\begin{equation}
m_k = \int_{\mathbb{C}} z^k \,\mathrm{d}\mu(z) = b^k, \:\:k\in\mathbb{N}_0;
\end{equation}
the moment matrix is
\begin{equation}
M^{(\mu)}_{j,k} = \overline{b}^jb^k.
\end{equation}
If $\mathcal{D}$ is the space of finite sequences, $c =
\{c_j\}_{j\in\mathbb{N}_0}$, then
\begin{equation}\label{Eqn:DiracQFb}
\langle c|M^{(\mu)}c\rangle_{\ell^2} = \Big|
\sum_{j\in\mathbb{N}_0}c_jb^j\Big|^2.
\end{equation}

We showed in Example \ref{Ex:ConvexDirac} that $\text{dim}(L^2(\mu)) = 1$, so it follows that $p_0(z) \equiv
1$ and $p_k \equiv 0$ for $k\geq 1$.  We use Equations (\ref{Eqn:GramMatrix}) and (\ref{eqn:grammatrix}) together with Lemma \ref{Lem:GramInverse} to compute the
following formulas for the infinite matrices $G$
and $G^{-1}$:

\begin{equation}
G = \begin{bmatrix}
1 & 0 & 0 & \cdots\\
0 & 0 & 0 & \cdots \\
0 & 0 & 0 & \\
\vdots & \vdots &  & \ddots
\end{bmatrix}
\quad \text{and}\quad G^{-1} = \begin{bmatrix}
1 & 0 & 0 & \cdots\\
b & 0 & 0 & \cdots \\
b^2 & 0 & 0 & \\
\vdots & \vdots &  & \ddots
\end{bmatrix}.\end{equation}
We see here that $G$ has rank $1$.  Note that we are using the notion of inverse described in Section \ref{Sec:Inverses}.  We see that $GG^{-1}$ is an idempotent rank-one matrix.  A direct computation further yields
\begin{equation}
G^{-1}(G^{-1})^* = (b^j\overline{b}^k) = (M^{(\mu)})^{tr},
\end{equation}
as predicted by Lemma \ref{Lem:GramInverse} and Equation (\ref{Eqn:RStarR}). 

\begin{example}\label{Ex:Rank2}A measure with rank 2.\end{example}
Let $\mu:=\frac{1}{2}(\delta_0 + \delta_1)$.  The first moment of
$\mu$ is $1$, and all the other moments are all $\frac{1}{2}$.
The associated orthogonal polynomials are
\begin{equation}
p_0(z) \equiv 1, \qquad p_1(z)  = 2z - 1, \quad \text{and}\quad
p_2(z) = p_3(z) = \ldots  \equiv 0.
\end{equation}
We then compute the Gram matrix and an inverse (in the sense of Definition \ref{def:inverse} and Example \ref{Ex:Inverses}): 
\begin{equation}
G = \begin{bmatrix}
1         & 0         & 0 & \cdots\\
-1        & 2         & 0 & \cdots \\
0         & 0         & 0 &\cdots\\
\vdots & \vdots &       &\ddots
\end{bmatrix}
\quad\text{and}\quad G^{-1} = \begin{bmatrix}
1 & 0 & 0  & \cdots\\
\frac{1}{2} & \frac{1}{2}  & 0 &\cdots \\
\frac{1}{2} & \frac{1}{2}  & 0 &\cdots \\
\vdots        & \vdots        &\vdots &\ddots\\
\end{bmatrix}.
\end{equation}
Again we can compute directly the two idempotents $E_1$
and $E_2$.
\begin{equation}
E_1 = GG^{-1} = \begin{bmatrix}
1 & 0 & 0 & \cdots\\
0 & 1 & 0  & \cdots\\
0 & 0 & 0  & \cdots\\
\vdots & \vdots &\vdots & \ddots
\end{bmatrix},
\end{equation}
and
\begin{equation}
E_2 = G^{-1}G = \begin{bmatrix}
1 & 0 & 0 & 0 & \cdots\\
0 & 1 & 0  & 0 & \cdots\\
0 & 1 & 0  & 0 & \cdots\\
0 & 1 & 0  & 0 & \cdots\\
\vdots & \vdots & \vdots & \vdots &\ddots\\
\end{bmatrix}.
\end{equation}
Note that $E_1$ is a projection; i.e., $E_1 = E_1^* = E_1^2$,
while $E_2$ is only an idempotent; i.e., $E_2 ^2 = E_2$.  Note
that these rules are statements about infinite matrices.  In fact,
if $E_2$ is viewed as an operator in $\ell^2$, it is unbounded
($E_2(\textbf{e}_2)\not\in\ell^2$.) So,
\[\textrm{rank}(G) = \textrm{rank}(\mu) = \textrm{rank}(M^{(\mu)}) = \textrm{dim}(L^2(\mu)) = 2.\]
\hfill$\Diamond$

\section{Examples}\label{Subsec:SpectrumExamples}

In Example \ref{Exam:LebBounded}, we use Lebesgue measure and the corresponding Hilbert matrix to illustrate the case where the moment matrix is a bounded operator.  We also can demonstrate the IFS techniques from Chapter \ref{Sec:Exist} here.  Following that,  we use the measures  $\mu = \delta_1$ and $\mu = \frac12 (\delta_0 + \delta_1)$ to illustrate the spectral results from this chapter.   In both cases, we find appropriate choices for the weights which allow us to find Kato-Friedrichs operators on the Hilbert space $\ell^2(w)$ space.  Since the operator depends on the choice of weights, we will denote the Kato-Friedrichs operator by $H_w$.  In Example \ref{Subsec:KFExampleDelta1}, for a particular choice of $w$,  we get the associated Kato-Friedrichs operators to be a rank-one projection.  We similarly analyze the spectrum of the Kato-Friedrichs operator (which has rank $2$) in Example \ref{Subsec:KFConvex2Dirac}.

\begin{example}\label{Exam:LebBounded}  Lebesgue measure and the Hilbert matrix. \end{example}
Let $\mu$ be Lebesgue measure restricted to $[0,1]$, hence the moment matrix $M^{(\mu)}$ is the Hilbert matrix previously discussed in Example \ref{Ex:Lebesgue} and Section \ref{Sec:Hilbert}.   Recall that it is known (\cite{Wid66}) that the Hilbert matrix represents a bounded operator on $\ell^2$ with operator norm $ \|M^{(\mu)}\|_{op} = \pi$.  

It is well known that $\mu$ is exactly the equilibrium measure arising from the real affine IFS $\{\tau_i\}_{i=0,1}$ where \[\tau_0(x) = \frac{x}{2} \; \textrm{and} \; \tau_1(x) = \frac{x+1}{2}, \] together with equal weights $\frac12$.   The IFS invariance property for moment matrices is \begin{equation}\label{Eqn:HilbertInvar} M^{(\mu)} = \frac12 \Bigr( A_0^*M^{(\mu)}A_0 + A_1^*M^{(\mu)}A_1\Bigr), \end{equation} where Lemma \ref{Lemma:Amatrix} gives the matrices $A_0$ and $A_1$ (from \cite{EST06}):

\[A_0 =  \left[ \begin{matrix} 1&0&0&\cdots\\ 0&\frac12 & 0& \cdots\\0&0& \frac14 & \cdots\\ \vdots & \vdots & & \ddots \end{matrix}\right] \qquad A_1 = \left[ \begin{matrix}  1&\frac12&\frac14&\cdots\\ 0&\frac12 & \frac12& \cdots\\0&0& \frac14 & \cdots\\ \vdots & \vdots & & \ddots \end{matrix}\right].  \]
Since the infinite matrices $A_0$ and $A_1$ are upper triangular (and hence $A_0^*$ and $A_1^*$ are lower triangular), the matrix multiplication computations in (\ref{Eqn:HilbertInvar}) involve only \textit{finite} sums.  In addition, when we check the appropriate convergence issues, each of the matrices $A^*_iM^{(\mu)}A_i = M^{(\mu \circ \tau_i^{-1})}$, for $i=0,1$, represents a bounded operator on $\ell^2$.    We compute directly $A_0^*M^{(\mu)}A_0$:  
\begin{eqnarray*}  (A_0^*M^{(\mu)}A_0)_{i,j} &=&  \sum_{k=0}^{\infty} \sum_{\ell=0}^{\infty} (A_0^*)_{i,k}M^{(\mu)}_{k,\ell}(A_0)_{\ell,j} \\ &=& (A^*_0)_{i,i} \frac{1}{1+i+j} (A_0)_{j,j}\\ &=& \Bigr(\frac{1}{2^{i+j}} \Bigr)\frac{1}{1+i+j} = M^{(\mu \circ \tau_0^{-1})}. \end{eqnarray*}
The entries of the matrix $A_0M^{(\mu)}A_0$ together with the IFS invariance  (\ref{Eqn:HilbertInvar}) property 
\begin{eqnarray*} \frac12 \Bigr( A_0^*M^{(\mu)}A_0 + A_1^*M^{(\mu)}A_1\Bigr)_{i,j} &=& \frac12 \Bigr[ \Bigr(\frac{1}{2^{i+j}}\Bigr) \frac{1}{1+i+j} + \Bigr(1-\frac{1}{2^{i+j}}\Bigr)\frac{1}{1+i+j} \Bigr]\\ &=& \frac{1}{1+i+j} = M^{(\mu)}_{i,j} \end{eqnarray*}
allow us to compute the entries of the matrix $A_1^*M^{(\mu)}A_1$:
\begin{eqnarray}\label{Eqn:A1Matrix}   (A_1^*M^{(\mu)}A_1)_{i,j} &=&  \sum_{k=0}^{\infty} \sum_{\ell=0}^{\infty} (A_1^*)_{i,k}M^{(\mu)}_{k,\ell}(A_1)_{\ell,j}\nonumber\\ &=& \frac{1}{2^{i+j}}\sum_{k=0}^{i} \sum_{\ell=0}^{j} \binom{i}{k} \binom{j}{\ell} \frac{1}{1+k+\ell} \\ &=&
\Bigr(1-\frac{1}{2^{i+j}} \Bigr) \frac{1}{1+i+j}.\nonumber\end{eqnarray}

We note that (\ref{Eqn:A1Matrix}) yields an interesting identity on binomial coefficients.  Given $i,j \in \mathbb{N}_0$, 
\begin{equation}\label{Eqn:Binomial} \sum_{k=0}^i \sum_{\ell=0}^j \binom{i}{k}\binom{j}{\ell} \frac{1}{1+k+\ell} = \frac{2^{i+j}-1}{1+i+j}. \end{equation}

The infinite matrices $A_0$ and $A_1$ represent bounded operators on the Hilbert space $\ell^2$ with \[ \|A_i\|_{op} = \|A_i^*\|_{op} = 1\quad i=0,1 .\]  These are therefore contractive operators on $\ell^2$.
\hfill $\Diamond$

\begin{example}\label{Subsec:KFExampleDelta1} The spectrum of the Kato-Friedrichs operator for  $\mu = \delta_1$. \end{example}

Let $\mu = \delta_1$, the Dirac point mass at $1$.  We computed the Kato-Freidrichs operator $H_w$ in Example \ref{Ex:FStarDelta1}, where the weights $w = \{w_j\}_{j \in \mathbb{N}_0} \subset \mathbb{R}^+$ are chosen such that $\sum_j \frac{1}{w_j} < \infty$.   We found that $H_w$ is a (bounded) rank-one operator on $\ell^2(w)$ defined by
\[ (H_wc)_j = \frac{1}{w_j} \sum_k c_k.\]
Recall that we also know from Example \ref{Ex:ConvexDirac} that the dimension of $L^2(\mu)$ must be exactly $1$.

If we choose weights $\{w_i\}\subset\mathbb{R}^+$
such that
\[ \sum_{i\in\mathbb{N}_0} \frac{1}{w_i} = 1,\]
then $H_w:\ell^2(w)\rightarrow \ell^2(w)$
is in fact a rank-one projection---that is,
\begin{equation}
H_w^2 = H_w = (H_w)^*.
\end{equation}

  In this case, we know from the spectral theory of projections that the spectrum of $H_w$ is exactly $\{0,1\}$, and the operator norm of $H_w$ is $1$.

If we generalize to the case where $\mu = \delta_b$ for some real value $b$, then the moment matrix is \[M_{j,k} = b^{j+k}, \] and the Kato-Friedrichs operator $H_w$ for weights $w$ is a rank-one operator with range the span of \[ \xi_b = \left[ \begin{matrix} \frac{1}{w_0} & \frac{b}{w_1} & \frac{b^2}{w_2} & \cdots \end{matrix}\right]. \]
\hfill $\Diamond$

\begin{example}\label{Subsec:KFConvex2Dirac} The Kato-Friedrichs operator associated with $\mu = \frac{1}{2}(\delta_0 + \delta_1)$ \end{example}
When $\mu = \frac{1}{2}(\delta_0 + \delta_1)$, the moment matrix
$M$ is given by
\begin{equation}\label{Eqn:MConvex2Dirac}
M = \frac{1}{2}
\begin{bmatrix}
1 & 0 & 0 & 0 & \cdots\\
0 & 0 & 0 & 0 & \cdots\\
0 & 0 & 0 & 0 & \\
\vdots &\vdots & & & \ddots
\end{bmatrix}
+ \frac{1}{2}\begin{bmatrix}
1 & 1 & 1 & 1 & \cdots\\
1 & 1 & 1 & 1 & \cdots\\
1 & 1 & 1 & 1 & \\
\vdots &\vdots & & & \ddots
\end{bmatrix}.
\end{equation}
We wish to solve the eigenvalue equations
(\ref{Eqn:FundEqnPtSpec}), which in this case become
\begin{align}\label{Eqn:FirstSystem}
c_0 + \frac{1}{2}(c_1+c_2+\cdots)  &= \lambda w_0 c_0, \quad k =0 \notag\\
\frac{1}{2}(c_0+c_1 + c_2 + \cdots)  &= \lambda w_kc_k, \quad k \geq 1.\notag\\
\end{align}
which can be transformed into
\begin{equation}\label{Eqn:SecondSystem}
\frac{1}{2}c_0 = \lambda(w_0c_0 - w_kc_k), \quad k \geq 1.
\end{equation}
by subtracting the later equations in (\ref{Eqn:FirstSystem}) from
the first.

To solve for $c$ in Equation (\ref{Eqn:SecondSystem}), we consider the
cases where $\lambda = 0$ and where $\lambda\neq 0$.

\noindent \textbf{Case 1:  }If $\lambda = 0$, $c_0$ must also be
$0$.  In addition, referring back to the equations in the system
(\ref{Eqn:FirstSystem}), we easily see that
Equation (\ref{Eqn:FundEqnPtSpec}) is true whenever
\begin{equation}\label{Eqn:SumK1Ck}
 \sum_{k = 1}^{\infty} c_k = 0.
\end{equation}
(For example, $c = (0, 1, -1, 0, 0, \ldots)$.)  There is no restriction on the weights
$w= \{w_k\}_{k \in \mathbb{N}_0}$ coming from the equations here.  So for any choice of weights, the eigenspace for $\lambda=0$ is infinite-dimensional. 

\noindent \textbf{Case 2:  }If $\lambda \neq 0$,  the
$k^{\mathrm{th}}$ equation for $k > 0$ in the system of equations
(\ref{Eqn:SecondSystem}) can be solved for $c_k$ in terms of $\lambda$, $c_0$, and the weights $w
= \{w_k\}_{k \in \mathbb{N}}$:
\begin{equation}\label{Eqn:CkDirac2Convex}
c_k = \frac{c_0}{w_k}\Bigl(w_0 - \frac{1}{2\lambda}\Bigr), \quad k
\geq 1.
\end{equation}
We see that  $c_0 = 0$ implies that $c_k = 0$ for all $k$, so we
require $c_0 \neq 0$.  Substituting (\ref{Eqn:CkDirac2Convex})
into the first equation in (\ref{Eqn:FirstSystem}), and cancelling
$c_0$ from both sides, we obtain a condition on $\lambda$ and the
weights $w$:
\begin{equation}\label{Eqn:lambdaW}
1 +
\frac{1}{2}\Bigl(\sum_{k=1}^{\infty}\frac{1}{w_k}\Bigr)\Bigl(w_0
- \frac{1}{2\lambda}\Bigr) = \lambda w_0.
\end{equation}
Define $T = \sum_{k=1}^{\infty} \frac{1}{w_k}$, where this sum being finite imposes a restriction on our choice of weights $w$.  Then the equation becomes \[ 1+\frac{1}{2}T\Bigr(w_0-\frac{1}{2\lambda}\Bigr) = \lambda w_0.\]

Observe that the two conditions
\[ 1 = \lambda w_0 \quad \textrm{ and }\quad 2\lambda w_0 = 1\]
will lead to inconsistent systems, so we must rule these conditions out.  Otherwise, there are many solutions to Equations (\ref{Eqn:FundEqnPtSpec}). 

We now demonstrate that there will be two distinct nonzero eigenvalues for the Kato-Friedrichs operator $H_w$, where the weights $w$ are chosen so that $T$ is finite.  Without loss of generality, take $w_0 = c_0 = 1$.  Our eigenvalues $\lambda \neq 0$ must satisfy Equation (\ref{Eqn:lambdaW}), which is now \begin{equation}\label{Eqn:SimplelambdaW}1+\frac12 T \Bigr(1-\frac{1}{2\lambda} \Bigr) =\lambda. \end{equation}
This yields the quadratic equation in $\lambda$:
\[
\lambda^2 - \Bigl( 1 + \frac{T}{2} \Bigr)\lambda + \frac{T}{4} = 0.
\]
This equation has distinct positive roots $\lambda_+, \lambda_-$ for $T>0$, and they are given by
\begin{equation}\label{Eqn:lambdapm}
\lambda_{\pm} = \frac{1+ \frac{T}{2} \pm \sqrt{1 +
\Bigl(\frac{T}{2}\Bigr)^2}}{2};
\end{equation}
and we also see 
\begin{equation}\label{Eqn:TraceDet}
\lambda_+\lambda_- = \frac{T}{4} \textrm{ and } \lambda_+
+\lambda_- = 1 + \frac{T}{2}.
\end{equation}

Since the vector $c$ satisfying $Hc = \lambda c$ (where $\lambda \neq 0$) is uniquely determined from $\lambda, c_0$, and $w$, given a choice of weights and taking $c_0=1$, each eigenspace for nonzero $\lambda$ is one-dimensional.  Denote by $c_+$ and $c_-$ these eigenvectors in the eigenspaces for $\lambda_+$ and $\lambda_-$ respectively.  


Going back to (\ref{Eqn:CkDirac2Convex}) we have the
simplification for the eigenvectors:
\begin{equation}\label{Eqn:CkSub}
c_k = \frac{1}{w_k}\Bigl(1 - \frac{1}{2\lambda}\Bigr) =
\frac{1}{w_k}\Bigl(\frac{\lambda - \frac{1}{2}}{\lambda}\Bigr),
\quad k \geq 1.
\end{equation}
With respect to  our eigenvalues $\lambda_{\pm}$, this gives 
\[
c_{\pm} = \Bigl( 1,
\frac{\lambda_{\pm}-\frac{1}{2}}{w_1\lambda_{\pm}},
\frac{\lambda_{\pm}-\frac{1}{2}}{w_2\lambda_{\pm}},\ldots\Bigr);
\]
then
\[
\|c_{\pm}\|^2_{\ell^2(w)} \underset{(\ref{Eqn:CkSub})}
{=} 1 +
\Biggl(\frac{\lambda_{\pm}-\frac{1}{2}}{\lambda_{\pm}}\Biggr)^2 T.
\]
The reader can verify the orthogonality of the eigenvectors. 

In order to do more explicit computations, let us choose the weights  $w =
\{w_k\}_{k\in\mathbb{N}_0}$ such that $T = 2$.  This yields
\[
 \lambda_{\pm} = 1 \pm
\frac{1}{\sqrt{2}},
\]
which are both positive. 

Substituting back into (\ref{Eqn:CkSub}) we get
\begin{equation}\label{Eqn:NotNormEigv}
c_{\pm} = \Bigl( 1, \pm\frac{1}{\sqrt{2}w_1},
\pm\frac{1}{\sqrt{2}w_2}, \ldots\Bigr),
\end{equation}
with
\[
\|c_{\pm}\|^2_{\ell^2(w)} = 2.
\]
Normalize to obtain unit vectors
\begin{equation}\label{Eqn:NormEigv}
\xi_{\pm}: = \frac{1}{\sqrt{2}}c_{\pm}
\end{equation}
which yield the rank-one projections
\[
E_{\pm}:=|\xi_{\pm}\rangle \langle\xi_{\pm}|
\]  onto the eigenspaces.  In other words, the spectral resolution of the self-adjoint operator $H$ can
be written
\begin{equation}\label{Eqn:SpecResConvex2Dirac}
H = \Bigl(1 + \frac{1}{\sqrt{2}}\Bigr)E_+ + \Bigl(1 -
\frac{1}{\sqrt{2}}\Bigr)E_-.
\end{equation}

We next make two observations regarding the connections between $H^{1/2}$, the square root of the Kato-Friedrichs opertator, and the Hilbert space $L^2(\mu)$.  First, we recall from Equation (\ref{Eqn:KatoProperties}) and Lemma \ref{Lem:KatoIsometry} the isometry
\begin{equation}\label{Eqn:KatoIsometry}
\int |f_c|^2\,\mathrm{d}\mu = \|H^{1/2} c\|^2_{\ell^2(w)};
\end{equation}
that is, the identification $f_c \leftrightarrow H^{1/2}c$ is
isometric.  By choosing specific weights, we can verify this isometry using our Kato-Friedrichs operator for $\mu = \frac12(\delta_0+\delta_1)$.  

Let $w = \{w_k\}$ be given by the sequence with $w_0 = 1$ and 
\[
w_k = 2^{k-1}, \quad k \geq 1,
\]
which gives $T = 2$.  Denote $F(\xi_{\pm})$ by the shorthand  $f_{\pm}$.  Using (\ref{Eqn:NotNormEigv}) and (\ref{Eqn:NormEigv}), we get
\begin{equation}
f_{\pm}(x) = \sum_{k=0}^{\infty} (\xi_{\pm})_k x^k =
\frac{1}{\sqrt{2}} \pm \frac{x}{2-x},
\end{equation}
and, because $\mu = \frac{1}{2}(\delta_0 + \delta_1)$, we have
\[
\int |f_{\pm}(x)|^2\,\mathrm{d}\mu(x) = 1 \pm \frac{1}{\sqrt{2}} =
\frac{\sqrt{2}\pm 1}{\sqrt{2}}.
\]
Using Equation (\ref{Eqn:SpecResConvex2Dirac}), we have
\[
H^{1/2} = \Bigl(1 + \frac{1}{\sqrt{2}}\Bigr)^{1/2}E_+ + \Bigl(1 -
\frac{1}{\sqrt{2}}\Bigr)^{1/2}E_-.
\]
Moreover,
\begin{equation}
H^{1/2}\xi_{\pm} = \Bigl( 1 \pm
\frac{1}{\sqrt{2}}\Bigr)^{1/2}\xi_{\pm}
\end{equation}
and hence
\[
\|H^{1/2}\xi_{\pm}\|^2_{\ell^2(w)} = 1 \pm \frac{1}{\sqrt{2}} \]
which verifies the isometry in (\ref{Eqn:KatoIsometry}).

Define the map $W: L^2(\mu) \rightarrow \ell^2(w)$ given by
\begin{equation}\label{Eqn:WConvex2Dirac}
W(f_c) = H^{1/2}c \textrm{ for }c\in\mathcal{D}\subset \ell^2(w).
\end{equation}
Recall from Example \ref{Ex:ConvexDirac} that the dimension of $L^2(\mu)$ is exactly $2$.  Our second observation is that the map $W$ is an isometry into a two-dimensional subspace of $\ell^2(w)$.

We will check this fact directly. Recall $E_{\pm}
= |\xi_{\pm}\rangle \langle \xi_{\pm}|$, with $\xi_{\pm}$ as in
(\ref{Eqn:NormEigv}).  We only need to check that $W$ takes an ONB
in $L^2(\mu)$ to an orthonormal family in $\ell^2(w)$.  First, observe that the orthogonal
polynomials in $L^2(\mu)$ are $p_0(x)\equiv 1$, $p_1(x) = 2x -1$,
and $p_k(x)\equiv 0$ for $k \geq 2$.  Moreover,
\begin{equation*}
\begin{split}
Wp_0
& = Wf_{(1, 0, 0, \ldots)}\\
& \underset{(\ref{Eqn:WConvex2Dirac})}{=} \Bigl( 1 +
\frac{1}{\sqrt{2}} \Bigr)^{1/2}E_+(1, 0, 0, \ldots) +
\Bigl( 1 - \frac{1}{\sqrt{2}} \Bigr)^{1/2}E_-(1, 0, 0, \ldots)\\
& \underset{(\ref{Eqn:NormEigv})}{=} \Bigl( 1 + \frac{1}{\sqrt{2}}
\Bigr)^{1/2} \frac{1}{\sqrt{2}}\xi_+ + \Bigl( 1 -
\frac{1}{\sqrt{2}} \Bigr)^{1/2} \frac{1}{\sqrt{2}}\xi_-,
\end{split}
\end{equation*}
and
\begin{equation*}
\begin{split}
Wp_1
& = Wf_{(-1, 2, 0, 0, \ldots)}\\
& = \Bigl( 1 + \frac{1}{\sqrt{2}}\Bigr)^{1/2} \Bigl(1 -
\frac{1}{\sqrt{2}}\Bigr)\xi_+ +
\Bigl( 1 - \frac{1}{\sqrt{2}}\Bigr)^{1/2} \Bigl(-1 - \frac{1}{\sqrt{2}}\Bigr)\xi_-\\
& = \Bigl( 1 - \frac{1}{\sqrt{2}} \Bigr)^{1/2}
\frac{1}{\sqrt{2}}\xi_+ - \Bigl( 1 + \frac{1}{\sqrt{2}}
\Bigr)^{1/2} \frac{1}{\sqrt{2}}\xi_-.
\end{split}
\end{equation*}

Since $Wp_0$ and $Wp_1$ are orthonormal in $\ell^2(w)$, we have that $W$ is an isometry.  

 \hfill $\Diamond$
 
\begin{example} \label{Subsec:KFExampleDeltab} The Kato-Friedrichs operator associated with $\mu = \frac{1}{2}(\delta_0 + \delta_b)$, $0 < b <1$ \end{example}

This example is a more general case where $\mu$ is a finite convex combination of Dirac masses, hence the associated infinite Hankel matrix will be of finite rank. To simplify matters we pick the atoms in $\mu$ in such a way that no weights will be needed. When we work with
an unweighted $\ell^2$ space, $w_k = 1$ for all $k \in
\mathbb{N}_0$, and $\ell^2 = \ell^2(w)$.
The moment matrix $M = M^{(\mu)}$ for
the convex combination $\mu$ is equal to the Kato-Friedrichs
operator, as in the case of the Hilbert matrix. 
This last example also has the advantage of illustrating an ONB in $L^2(\mu)$ consisting of rational functions; not
polynomials. The choice of the rational functions is dictated by
our Kato-Friedrichs operator $H = M$.

As in the previous Example \ref{Subsec:KFConvex2Dirac},
$H$ has a spectrum consisting of $0$ and two points on the
positive real line, and $L^2(\mu)$ is two-dimensional. The point
$0$ has infinite multiplicity, and the two positive eigenvalues
are simple, i.e., have multiplicity one.

For $\mu = \frac{1}{2}(\delta_0 + \delta_b)$, $0 < b < 1$, the
moment matrix $M$ is
\begin{equation}
M = H  =
\begin{bmatrix}
1 & \frac{1}{2}b & \frac{1}{2}b^2 & \cdots\\
\frac{1}{2}b & \frac{1}{2} b^2 & \frac{1}{2} b^3 & \cdots\\
\frac{1}{2}b^2 & \frac{1}{2} b^3 & \frac{1}{2} b^4 & \cdots\\
\vdots & \vdots & & \ddots\\
\end{bmatrix}.
\end{equation}
The orthogonal polynomials in $L^2(\mu)$ are
\begin{equation}
p_0(x)\equiv 1,\quad  p_1(x) = 1 - \frac{2x}{b},\quad p_k(x)
\equiv 0, k \geq 2.
\end{equation}

However, we also have an orthogonal basis in $L^2(\mu)$ consisting of the two
rational functions $f_{\pm}(x)$, where
\begin{equation}
f_{\pm}(x):= \alpha_{\pm} + \frac{bx}{1 - bx}.
\end{equation}
These correspond to the eigenvectors for $H$.  We see that if \[ \xi_{\pm} = [ \begin{matrix} \alpha_{\pm} & b &b^2&b^3& \cdots \end{matrix}]^{tr}, \] then $H\xi_{\pm} = \lambda \xi_{\pm}$, where  the parameters $\alpha_{\pm}$ are given by 
\begin{equation*} \alpha_{\pm} = 1-\frac{p}{2} \pm \sqrt{ \left(\frac{p}{2}\right)^2 + 1} \end{equation*} and the two positive eigenvalues of $H$ are \begin{equation*}
\lambda_{\pm} = \frac{1}{2}\left( 1+\frac{p}{2} \pm \sqrt{\left(\frac{p}{2}\right)^2+1} \right), \end{equation*}
where $p$ is the constant  \[ p = \frac{b^2}{1-b^2}.\]
The functions above are the images of $\xi_{\pm}$ under the operator $F$:
\[ f_{\pm}(x) = (F\xi_{\pm})(x) = \alpha_{\pm} + \frac{bx}{1-bx}.\] 
The eigenvectors $\xi_{\pm}$ are orthogonal, and it follows that $f_{\pm}$ are orthogonal in $L^2(\mu)$:
\begin{eqnarray*} \langle f_+|f_-\rangle_{L^2(\mu)} &=& \langle F\xi_+| F\xi_- \rangle_{L^2(\mu)}\\
 &=& \langle \xi_+|F^*F \xi_- \rangle_{\ell^2} \\ &=& \langle \xi_+|H\xi_- \rangle_{\ell^2}\\
 &=& \lambda_- \langle \xi_+|\xi_- \rangle_{\ell^2} = 0 \end{eqnarray*} 
Direct computation of $\langle f_+|f_-\rangle_{L^2(\mu)}$ also verifies that these functions are orthogonal.

This example also demonstrates the correspondence from Theorem \ref{Thm:AtomsPair} between atoms of the measure $\mu$ and atoms in the projection-valued measure $E$ associated to $H$.   The measure $\mu$ has two atoms at $0$ and $b$, while the projection valued measure $E$ for $H$ has atoms at the two nonzero eigenvalues $\lambda_{\pm}$.  
\hfill $\Diamond$

\chapter{The moment problem revisited}\label{Ch:Extensions}

 Given a positive definite
Hankel matrix $M$ with real entries, we showed in Section
\ref{Subsec:exist} that  there exists a measure $\mu$ (not unique in general) such that $M$ is the moment matrix of $\mu$, i.e., $M =
 M^{(\mu)}$.  In this chapter, we will describe a setting in which
 the solution measure is not unique.  Differing measures solving the same moment problem will arise from nontrivial
 self-adjoint extensions of a symmetric shift operator $S$.  In
 fact, we will show that if $\mu$ is not unique, the self-adjoint extensions of $S$ yield a one-parameter family of  measures
 satisfying the moment problem for $M$.  This will yield a necessary condition for non-uniqueness of measure for a given Hankel matrix $M$.  For more details on the
 theory of self-adjoint extensions of unbounded symmetric
 operators and moments, see \cite{Con90,ReSi75,Rud91}.

Given a Hankel matrix $M$ with real entries, let $Q_M$ be the
quadratic form from Equation (\ref{Def:QM}) and let
$\mathcal{H}_Q$ be the Hilbert space completion of $Q_M$. Then the
inner product on $\mathcal{H}_Q$ is defined on the finite
sequences $\mathcal{D}$ by $$\langle c | d \rangle_{\mathcal{H}_Q} =
\sum_i\sum_j \overline{c}_i M_{i,j} d_j.$$

We define the shift operator $S$ by
\begin{equation}\label{Eqn:DefnS}
Sc = S(c_0,c_1,\ldots,) = (0,c_0,c_1,\ldots).
\end{equation} The domain of $S$ contains
$\mathcal{D}$ which is dense in $\mathcal{H}_Q$, so $S$ is densely
defined.

In Section \ref{Sec:ThreeInc} we observe some of the interplay
between the matrix $M$, the shift operator $S$, and the isometry
$F$.  Then, in Sections \ref{Sec:Extensions} and \ref{Sec:Nonunique} we describe how the
self-adjoint extensions of $S$ yield solutions to the moment
problem $M = M^{(\mu)}$.  We also use the shift operator in Section \ref{Sec:Banded} in order to find a Jacobi matrix corresponding to a given Hankel matrix $M$.

\section{The shift operator and three incarnations of symmetry}\label{Sec:ThreeInc}

Suppose $M$ is a positive definite Hankel matrix and $\mu$ is a measure such that $M = M^{(\mu)}$.  Recall that we have defined an isometry $F:\mathcal{H}_Q\rightarrow L^2(\mu)$ by
\[Fc(x) = f_c(x) = \sum_{n\in\mathbb{N}_0}c_n x^n \textrm{ for all
}c\in\mathcal{D}.\]
  The shift operator $S$
plays a major role in this chapter, and in this section we study
how $F$ and $S$ behave with respect to each other.  The operators
$F$ and $S$ reveal three different incarnations of symmetry in the
Hankel matrix $M$.  In turn, these incarnations will be used in
subsequent sections to prove results about non-uniqueness of
measures which solve the moment problem, particularly in Theorem
\ref{Thm:measures}.

\noindent\textbf{Incarnation 1:  A symmetric operator.  } We see
here the connection between the Hankel property of $M$ and the
shift operator in the Hilbert space $\mathcal{H}_Q$.
\begin{lemma}\label{Lemma:Symmetry} The shift operator $S$ is symmetric in
$\mathcal{D}\subset\mathcal{H}_Q$ if and only if the matrix $M$ is a Hankel matrix.
\end{lemma}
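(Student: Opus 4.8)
The plan is to reduce the symmetry of $S$ to a single pointwise identity on the entries of $M$ and then recognize that identity as the Hankel condition. The key point is that, on the dense subspace $\mathcal{D}$, the inner product of $\mathcal{H}_Q$ is just the finite bilinear pairing $\langle c\,|\,d\rangle_{\mathcal{H}_Q}=\sum_{i,j}\overline{c_i}M_{i,j}d_j$, and that $c,d$ finitely supported makes all the sums below finite and freely re-indexable.

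First I would compute both pairings directly. Using $(Sc)_0=0$ and $(Sc)_i=c_{i-1}$ for $i\ge 1$, the substitution $k=i-1$ gives
\[
\langle Sc\,|\,d\rangle_{\mathcal{H}_Q}=\sum_{k\ge 0}\sum_{j\ge 0}\overline{c_k}\,M_{k+1,j}\,d_j,
\]
while the substitution $\ell=j-1$ gives
\[
\langle c\,|\,Sd\rangle_{\mathcal{H}_Q}=\sum_{i\ge 0}\sum_{\ell\ge 0}\overline{c_i}\,M_{i,\ell+1}\,d_\ell.
\]
Hence $S$ is symmetric on $\mathcal{D}$ exactly when $\sum_{i,j}\overline{c_i}\bigl(M_{i+1,j}-M_{i,j+1}\bigr)d_j=0$ for all $c,d\in\mathcal{D}$.

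Next I would extract the entrywise condition: specializing to $c=e_i$ and $d=e_j$ (the standard basis vectors, which lie in $\mathcal{D}$) collapses the double sum to a single term, so symmetry of $S$ is equivalent to $M_{i+1,j}=M_{i,j+1}$ for all $i,j\in\mathbb{N}_0$. A short induction on $k$ then promotes this nearest-neighbour relation to the full family $M_{i,j}=M_{i-k,j+k}=M_{i+k,j-k}$ for all admissible $k$, which is precisely Definition~\ref{Defn:HankelN0d}; conversely, if $M$ is Hankel then $M_{i+1,j}=M_{i,j+1}$ holds by that definition (the case $k=1$), so the displayed bilinear form vanishes identically and $S$ is symmetric. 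I do not expect any genuine obstacle: the only points needing minor care are the index shifts for $S$ and the trivial induction turning the nearest-neighbour identity into the general Hankel symmetry.
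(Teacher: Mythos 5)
Your proposal is correct and follows essentially the same route as the paper: compute both pairings on $\mathcal{D}$ via an index shift, test on the standard basis vectors $e_i,e_j$ to extract the nearest-neighbour relation $M_{i+1,j}=M_{i,j+1}$, and identify that with the Hankel condition. The only difference is cosmetic — you spell out the induction promoting the nearest-neighbour identity to the full condition of Definition~\ref{Defn:HankelN0d}, which the paper leaves implicit.
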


\begin{proof}
($\Rightarrow$):  Setting $b = e_i$, $c = e_j$, and $\langle
 Sb|c\rangle_{\mathcal{H}_Q} = \langle b|Sc \rangle_{\mathcal{H}_Q}$, we see that $M_{i-1, j} = M_{i,
 j-1}$.

($\Leftarrow$):  Let $b,c\in\mathcal{D}$.  Then
\begin{equation}
\begin{split}
\langle Sb|c\rangle_{\mathcal{H}_Q} & = \sum_i \sum_j \overline{b_{i-1}} M_{i+j}
c_j = \sum_{i}\sum_{j}
 \overline{b_i} M_{i+1+j}c_j\\
& = \sum_i \sum_j \overline{b_i} M_{i+j} c_{j-1} = \langle
 b|Sc\rangle_{\mathcal{H}_Q}.
\end{split}
\end{equation}\end{proof}

\noindent\textbf{Incarnation 2:  Multiplication by $x$.  }We notice that if $p$ and $q$ are polynomials in $L^2(\mu)$, then
\[
\int \overline{x p(x)} q(x) \,\mathrm{d}\mu(x) = \int \overline{p(x)} xq(x) \,\mathrm{d}\mu(x).
\]
We can therefore state the interaction between the isometry $F$
and the shift $S$.
\begin{lemma}\label{Lemma:MultiplicationByX}Define $\widetilde{S}:=FSF^*$.  Then $\widetilde{S}$ is a
symmetric operator on $\mathcal{P}\subset L^2(\mu)$, and in addition, $\widetilde{S}$ is a restriction {\rm(}to its domain{\rm)}
of the multiplication operator $M_x$:\[[M_xf](x) = xf(x).\]
\end{lemma}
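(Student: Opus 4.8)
The plan is to transport the shift operator $S$ across the isometry $F$ and recognize that it becomes multiplication by $x$ on polynomials. The key computational identity is that $F$ intertwines $S$ with $M_x$ on the dense domain $\mathcal{D}$: for $c = (c_0, c_1, c_2, \ldots) \in \mathcal{D}$ we have $Sc = (0, c_0, c_1, \ldots)$, hence
\[
(FSc)(x) = \sum_{n \geq 1} c_{n-1}\, x^n = x \sum_{n \geq 0} c_n x^n = x\,(Fc)(x) = (M_x F c)(x).
\]
Thus $FS = M_x F$ on $\mathcal{D}$, and since $x f_c$ is again a polynomial it lies in $L^2(\mu)$ (finite moments of all orders), so both sides make sense.

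Next I would invoke Lemma \ref{Lem:KatoIsometry}: $F$ is a surjective isometry from $\mathcal{H}_Q$ onto the polynomial subspace $\mathcal{P} \subseteq L^2(\mu)$, so $F^*F = I_{\mathcal{H}_Q}$ and $F^*$ restricted to $\mathcal{P}$ is the inverse isometry (extended by zero on $\mathcal{P}^{\perp}$). Taking $\widetilde{S} = FSF^*$ with $\mathrm{dom}(\widetilde{S}) = F(\mathrm{dom}(S)) \supseteq F(\mathcal{D})$, for any polynomial $f_c = Fc$ with $c \in \mathcal{D}$ we get $\widetilde{S} f_c = F S F^* F c = F S c = x f_c$. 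Hence $\widetilde{S}$ agrees with $M_x$ on the set of polynomials, which is dense in $\mathcal{P}$, so $\widetilde{S}$ is a restriction of the multiplication operator $M_x$ to its domain — the second assertion of the lemma.

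For symmetry I would give the direct argument: since $x$ is real-valued, for polynomials $p, q$ in $L^2(\mu)$,
\[
\langle \widetilde{S} p \,|\, q \rangle_{L^2(\mu)} = \int \overline{x p(x)}\, q(x)\, \mathrm{d}\mu(x) = \int \overline{p(x)}\, x q(x)\, \mathrm{d}\mu(x) = \langle p \,|\, \widetilde{S} q \rangle_{L^2(\mu)},
\]
so $\widetilde{S}$ is symmetric on $\mathcal{P}$. (Equivalently, this follows at once from Lemma \ref{Lemma:Symmetry}: $S$ is symmetric on $\mathcal{D} \subset \mathcal{H}_Q$ because $M$ is a Hankel matrix, and $\widetilde{S} = FSF^*$ is its conjugate by the unitary $F: \mathcal{H}_Q \to \mathcal{P}$, hence symmetric.)

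There is no genuine obstacle here; the proof is essentially the one-line computation $(FSc)(x) = x(Fc)(x)$ together with the isometry property of $F$. The only point requiring a little care is the bookkeeping of domains: one must fix $\mathrm{dom}(\widetilde{S}) = F(\mathrm{dom}(S))$, observe that $F^*$ annihilates $\mathcal{P}^{\perp}$ so that $\widetilde{S}$ really lives on $\mathcal{P}$, and note that $M_x$ need not be symmetric or self-adjoint as a whole — only its restriction $\widetilde{S}$ to polynomials is being asserted to be symmetric, which is exactly what the computation above delivers.
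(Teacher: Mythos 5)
Your proposal is correct and follows essentially the same route as the paper: the intertwining identity $M_xFc = FSc$ on $\mathcal{D}$, combined with $F^*F = I_{\mathcal{H}_Q}$ and $FF^*$ being the projection onto $\mathcal{P}$, gives that $\widetilde{S}=FSF^*$ agrees with $M_x$ on polynomials, and symmetry follows from the real-valuedness of $x$ (equivalently, from Lemma \ref{Lemma:Symmetry} conjugated by the isometry $F$), exactly as in the paper.
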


\begin{proof}
Let $M_x$ be the operator which takes $f(x)$ to $xf(x)$.  Using the definitions of $F$ and $S$, we see that
\begin{equation}\label{Eqn:MxFFS}
M_x(Fc) = FSc  \textrm{ for all }c\in\mathcal{D},
\end{equation}
or stated equivalently,
\[ xf_c(x) = f_{Sc}(x) \textrm{ for all }c\in\mathcal{D}.\]

Since $F$ is an isometry, we know that $F^*F$ is the identity on
$\mathcal{H}_Q$. As a result, applying $F^*$ on the left to both
sides of Equation (\ref{Eqn:MxFFS}) yields
\[ S = F^*M_x F \textrm{ on } \mathcal{D}.\]

Also, $P = FF^*$ is the projection of $L^2(\mu)$ onto the closure
of the polynomials $\mathcal{P}\subset L^2(\mu)$.  Applying $F^*$
on the right to both sides of Equation (\ref{Eqn:MxFFS}) yields
\[ M_xP = FSF^* = \widetilde{S} \textrm{ on }\mathcal{P},\]
which is the desired conclusion.
\end{proof}

\noindent\textbf{Incarnation 3:  Jacobi matrices and orthogonal
polynomials.  } Looking ahead in Section \ref{Sec:Banded}, we
see that given the space $L^2(\mu)$, there is a
Jacobi matrix $J$ which encodes the multiplication operator $M_x$ in terms of a recursion relation for
orthogonal polynomials $\{p_k\}_{k \in \mathbb{N}_0}$ in $L^2(\mu)$.  Specifically,  \[J \left[\begin{matrix} p_0\\p_1\\ \vdots \end{matrix} \right] = \left[ \begin{matrix} M_x p_0 \\ M_xp_1\\ \vdots \end{matrix}\right] . \]  Lemma \ref{Lemma:WIntertwine} then gives an intertwining relationship between the shift $S$ and this Jacobi matrix $J$.


\section{Self-adjoint extensions of a shift
operator}\label{Sec:Extensions} Recall from Definition
\ref{Def:SelfAdjoint} that a densely defined operator $S$ on a
Hilbert space $\mathcal{H}$ is \textit{symmetric} on
$\textrm{dom}(S) \subseteq \mathcal{H}$ if $\langle Sh| k\rangle =
\langle h| Sk\rangle$ for all $h,k\in
\textrm{dom}(S)$.  Also recall that we can define the
\textit{adjoint} $S^*$ of $S$, as in Definition \ref{Def:Adjoint},
and we call $S$ a \textit{self-adjoint} operator if $S=S^*$, in
particular, if $\mathrm{dom}(S) = \mathrm{dom}(S^*)$.

\begin{definition}\label{Defn:SAExt} Suppose $S$ is a densely defined
  operator on $\mathcal{D}$ in a Hilbert space $\mathcal{H}$.  A
  \textit{self-adjoint} extension $T$ of $S$ satisfies the following
  properties:
\begin{enumerate}
\item $T$ is self-adjoint with $\textrm{dom}(T) =
\textrm{dom}(T^*)$ \item $Tc = Sc$ for all $c \in \mathcal{D}$.
\end{enumerate}
It may be the case that no self-adjoint extensions of $S$ exist.
\end{definition}
If $S$ is essentially self-adjoint, we call the closure of $S$ a
trivial self-adjoint extension of $S$. Finally, we say that $S$ is
\textit{maximally symmetric} if $S$ has no proper self-adjoint
extensions.

Let $S$ be the closure of the shift operator from Equation (\ref{Eqn:DefnS}).   By
Lemma \ref{Lemma:Symmetry}, $S$ is symmetric in $\mathcal{D}$ if
and only if the matrix $M$ is a Hankel matrix.  The deficiency indices of $S$ will allow us to describe its self-adjoint extensions, if any exist.

\begin{definition}\label{Def:DeficiencySpace} Let $S$ be a closed
symmetric operator on a Hilbert space $\mathcal{H}$ and let
$\alpha \in \mathbb{C}$ with $\mathrm{Im}(\alpha) \neq 0$.  We
define the \textit{deficiency subspace} of $S$ at $\alpha$ by
\begin{equation}\label{Eqn:Halpha} \mathcal{L}(\alpha) = \mathrm{null}(S^*-\alpha) = \{\xi \in \mathrm{dom}(S^*)\,:\,
S^*\xi = \alpha \xi\}.\end{equation} \end{definition}

Due to a beautiful argument of von Neumann, the dimension of the
deficiency subspace will be the same for any $\alpha$ with
$\mathrm{Im}(\alpha) > 0$, and the dimension will be the same for any
$\alpha$ with $\mathrm{Im}(\alpha) < 0$.  It is therefore sufficient
to consider $\alpha = \pm i$.  We will denote the respective
deficiency subspaces for the shift operator $\mathcal{L}_+$ and
$\mathcal{L}_-$.

\begin{definition}
The dimensions of $\mathcal{L}_+$ and
 $\mathcal{L}_-$ for a closed symmetric operator $S$ are called the \textit{deficiency indices} of $S$ and are denoted by the ordered pair $(\textrm{dim}(\mathcal{L}_+),
 \textrm{dim}(\mathcal{L}_-))$.  Note that the indices can take
 on any value in $\mathbb{N}_0$ or $\infty$.
\end{definition}

\begin{definition} Let $S$ and $T$ be linear operators with dense domains in a Hilbert space, and let $\mathrm{Gr}(S), \mathrm{Gr}(T)$ be their corresponding graphs.  We say that $S \subseteq T$ if $\mathrm{Gr}(S) \subseteq \mathrm{Gr}(T)$.  \end{definition}

Note that an operator $S$ is self-adjoint if and only if $S \subseteq S^*$.  If $T$ is a self-adjoint extension of $S$, then \[ S \subseteq T \subseteq T^* \subseteq S^*.\]  Given a bounded operator $J$, we write $JS \subseteq SJ$ if $J$ maps the domain of $S$ into itself and $JSv = SJv$ for all $v \in \mathrm{dom}(S)$.  

Another theorem of von Neumann gives a
condition under which the deficiency indices of a symmetric
operator are equal.
\begin{theorem}\rm (von Neumann, as stated in \cite[Prop. 7.2, p. 343]{Con90}) \label{Thm:EqualIndices} \it Given an
operator $S$ on a Hilbert space $\mathcal{H}$, if there exists a
function $J: \mathcal{H} \rightarrow \mathcal{H}$ satisfying the
following properties:

\begin{enumerate}[(1)]
\item $J^2$ is the identity on $\mathcal{H}$,  \item $J$ is
conjugate linear---that is, for all $\alpha\in\mathbb{C}$,
 $J(\alpha h) = \overline{\alpha}J(h)$,
\item $\|Jh\| = \|h\|$ for all $h\in\mathcal{H}$,  \item $J
 \mathrm{dom}(S) \subseteq \mathrm{dom}(S)$ and $JS \subseteq SJ$,
\end{enumerate}
then $S$ has equal deficiency indices, i.e.
 $\dim(\mathcal{L}_+) = \dim(\mathcal{L}_-)$.
\end{theorem}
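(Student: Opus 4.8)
The plan is to reduce the theorem to the standard fact that a conjugation commuting with a symmetric operator interchanges its two deficiency subspaces. First I would recall the setup: $S$ is a closed symmetric operator on $\mathcal{H}$ with adjoint $S^*$, and the deficiency subspaces are $\mathcal{L}_{\pm} = \mathrm{null}(S^* \mp i) = \{\xi \in \mathrm{dom}(S^*) : S^*\xi = \pm i\,\xi\}$. The goal is to produce a conjugate-linear bijective isometry from $\mathcal{L}_+$ onto $\mathcal{L}_-$, which immediately forces $\dim(\mathcal{L}_+) = \dim(\mathcal{L}_-)$.

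The key step is to show that the given map $J$ maps $\mathrm{dom}(S^*)$ into itself and satisfies $JS^* \subseteq S^*J$; then $J$ will carry $\mathcal{L}_+$ to $\mathcal{L}_-$. To see the adjoint intertwining, I would start from hypothesis (4), $JS \subseteq SJ$, and dualize. Concretely, for $\xi \in \mathrm{dom}(S^*)$ and any $v \in \mathrm{dom}(S)$, using that $J$ is a conjugate-linear isometry with $J^2 = I$ (hypotheses (1), (2), (3)) one has the polarization-type identity $\langle Jh \mid Jk \rangle = \langle k \mid h \rangle = \overline{\langle h \mid k\rangle}$ for all $h,k$. Then compute
\begin{equation*}
\langle Sv \mid J\xi \rangle = \overline{\langle J(Sv) \mid \xi\rangle} = \overline{\langle S(Jv) \mid \xi \rangle} = \overline{\langle Jv \mid S^*\xi\rangle} = \langle J(S^*\xi) \mid v \rangle = \langle v \mid J(S^*\xi)\rangle^{-} \,,
\end{equation*}
so that $J\xi \in \mathrm{dom}(S^*)$ with $S^*(J\xi) = J(S^*\xi)$; here I am using $J(Sv) = S(Jv)$ from (4) and the conjugate-linearity to move $J$ across the inner product, and I would write this out carefully with the correct conjugations rather than the schematic version above. (The last equality sign should read $\langle Sv \mid J\xi\rangle = \langle v \mid J S^*\xi\rangle^{-}$, i.e.\ $\overline{\langle v \mid J S^*\xi\rangle}$, which is exactly the defining relation $\langle Sv \mid J\xi\rangle = \langle v \mid S^*(J\xi)\rangle$ once one is careful that the inner product is conjugate-linear in the first slot.)

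With the intertwining $S^* J = J S^*$ on $\mathrm{dom}(S^*)$ in hand, the conclusion is immediate: if $\xi \in \mathcal{L}_+$, i.e.\ $S^*\xi = i\xi$, then $S^*(J\xi) = J(S^*\xi) = J(i\xi) = \overline{i}\,J\xi = -i\,J\xi$ by conjugate-linearity (2), so $J\xi \in \mathcal{L}_-$. Symmetrically $J$ maps $\mathcal{L}_-$ into $\mathcal{L}_+$, and since $J^2 = I$ these are mutually inverse, so $J|_{\mathcal{L}_+} : \mathcal{L}_+ \to \mathcal{L}_-$ is a bijection. Because $J$ is a (conjugate-linear) isometry by (3), it carries an orthonormal basis of $\mathcal{L}_+$ to an orthonormal basis of $\mathcal{L}_-$, hence $\dim(\mathcal{L}_+) = \dim(\mathcal{L}_-)$, including the case where both are infinite.

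The main obstacle is purely bookkeeping: getting the complex conjugations right when pushing the conjugate-linear $J$ through an inner product that is itself conjugate-linear in its first argument (the paper's convention from Section~1.1). The danger is a sign or conjugation slip that would spuriously send $\mathcal{L}_+$ to itself rather than to $\mathcal{L}_-$; the safe route is to verify the identity $\langle Jh \mid Jk\rangle = \overline{\langle h \mid k\rangle}$ first as a lemma from (1)--(3) (this is the standard fact that a conjugate-linear isometric involution is ``anti-unitary''), and then apply it mechanically. Everything else --- density of $\mathrm{dom}(S)$, closedness of $S$ so that $S^{**} = S$, and von Neumann's theorem that equal deficiency indices is equivalent to the existence of self-adjoint extensions --- is either hypothesis or already cited, so no further work is needed there.
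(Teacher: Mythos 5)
Your proposal is correct and follows exactly the route the paper indicates (the paper itself only cites Conway and remarks that the main idea is that $J$ restricts to an isometry between $\mathcal{L}_+$ and $\mathcal{L}_-$): you show $J\,\mathrm{dom}(S^*)\subseteq \mathrm{dom}(S^*)$ with $S^*J = JS^*$ there, so $J$ swaps the deficiency subspaces and, being an isometric involution, forces $\dim(\mathcal{L}_+)=\dim(\mathcal{L}_-)$. The only blemish is the stray bar in your parenthetical ``correction'': applying your lemma $\langle Jh\,|\,Jk\rangle = \overline{\langle h\,|\,k\rangle}$ mechanically gives $\langle Sv\,|\,J\xi\rangle = \langle v\,|\,J(S^*\xi)\rangle$ with no conjugation, which is precisely the defining relation of the adjoint, and the conclusion $S^*(J\xi)=J(S^*\xi)$ that you state is the right one.
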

The main idea in the proof of this theorem is that the operator
$J$ restricts as an
 isometry between the deficiency subspaces $\mathcal{L}_+$ and
 $\mathcal{L}_-$, thus showing they have equal dimension.

We apply this theorem to our shift operator $S$ on the space
$\mathcal{H}_Q$.  Let $J$ be the conjugation operator:
\begin{equation}\label{Eqn:DefnJ}
J:\mathcal{H}_Q \rightarrow \mathcal{H}_Q \textrm{ with }J(c_0,
c_1,
 c_2, \ldots):=(\overline{c_0}, \overline{c_1}, \overline{c_2},
 \ldots),.
\end{equation}
If $\xi \in \mathcal{L}_+$, then $J\xi \in \mathcal{L}_-$.  It
is readily verified that $J$ satisfies the properties in Theorem
\ref{Thm:EqualIndices}. In particular we see that on $\mathcal{D}
= \mathrm{dom}(S)$, $J$ commutes with $S$.

We now calculate the deficiency indices for $S$, the closure of
the shift operator.
\begin{lemma}\label{Lem:indices}
The closed shift operator $S$ {\rm(}\ref{Eqn:DefnS}{\rm)} defined on
$\mathcal{D}\subset
 \mathcal{H}_{M}$ is either self-adjoint or
has deficiency indices $(1, 1)$---i.e.
\[
\dim\{\xi \in \mathrm{dom}(S^*)\,:\, S^*\xi = i\xi\}=1.
\]
 In fact,
if $\xi\in\mathcal{L}_+$ with $\xi \neq 0$, then $H\xi$ is a
multiple of the vector $(1, i, i^2, i^3, \ldots)$, where $H$ is
the self-adjoint Kato operator for the quadratic form $Q_M$  on $\mathcal{H}_Q$.
\end{lemma}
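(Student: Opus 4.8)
The plan is to turn the eigenvalue equation $S^{*}\xi=i\xi$ into a recursion for the sequence $M\xi$ and read off from it that the deficiency space $\mathcal{L}_{+}$ is at most one–dimensional. First I would dispatch the general structure: by Lemma \ref{Lemma:Symmetry} the Hankel property of $M$ makes the shift symmetric on $\mathcal{D}$, so its closure $S$ is a closed symmetric operator in $\mathcal{H}_{Q}$, and the conjugation $J$ of Equation (\ref{Eqn:DefnJ}) is an anti–unitary involution commuting with $S$ on $\mathcal{D}$, so Theorem \ref{Thm:EqualIndices} forces $\dim\mathcal{L}_{+}=\dim\mathcal{L}_{-}$. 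It therefore suffices to prove $\dim\mathcal{L}_{+}\le 1$: if this common dimension is $0$ then $S=S^{*}$, and otherwise it must be $1$, which gives deficiency indices $(1,1)$.

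The computational heart is the identity $\langle e_{k}\,|\,d\rangle_{\mathcal{H}_{Q}}=(Md)_{k}$ for $d\in\mathcal{H}_{Q}$, where $(Md)_{k}:=\sum_{j}M_{k,j}d_{j}$. Working in the $\ell^{2}(w)$–realization of $\mathcal{H}_{Q}$ furnished by Kato's Theorem \ref{Thm:Kato} and by Proposition \ref{Lem:FStarFisH}, for $c\in\mathcal{D}\subseteq\mathrm{dom}(H)$ one has $\langle c\,|\,d\rangle_{\mathcal{H}_{Q}}=\langle H^{1/2}c\,|\,H^{1/2}d\rangle_{\ell^{2}(w)}=\langle Hc\,|\,d\rangle_{\ell^{2}(w)}$, and evaluating at $c=e_{k}$ (using $He_{k}=(M_{i,k}/w_{i})_{i}$) collapses this to $\langle e_{k}\,|\,d\rangle_{\mathcal{H}_{Q}}=\sum_{i}M_{k,i}d_{i}$, the series converging absolutely by Cauchy–Schwarz together with the weight bound (\ref{Eqn:FCriterion}). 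Then for $\xi\in\mathcal{L}_{+}$ and each $k$,
\[
(M\xi)_{k+1}=\langle e_{k+1}\,|\,\xi\rangle_{\mathcal{H}_{Q}}=\langle Se_{k}\,|\,\xi\rangle_{\mathcal{H}_{Q}}=\langle e_{k}\,|\,S^{*}\xi\rangle_{\mathcal{H}_{Q}}=i\,\langle e_{k}\,|\,\xi\rangle_{\mathcal{H}_{Q}}=i\,(M\xi)_{k},
\]
so $M\xi=(M\xi)_{0}\,(1,i,i^{2},i^{3},\dots)$; this sequence is precisely $H\xi$ (literally in the unweighted realizations, and otherwise up to the weight normalization built into $H$), which is the asserted description.

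Finally I would extract the dimension count. The map $d\mapsto Md$ is injective on $\mathcal{H}_{Q}$: if $Md=0$ then $\langle e_{k}\,|\,d\rangle_{\mathcal{H}_{Q}}=0$ for all $k$, hence $d\perp\mathcal{D}$, and $d=0$ because $\mathcal{D}$ is dense in $\mathcal{H}_{Q}$. Consequently a vector $\xi\in\mathcal{L}_{+}$ is determined by the single scalar $(M\xi)_{0}$, so $\dim\mathcal{L}_{+}\le 1$, and moreover $(M\xi)_{0}\neq 0$ whenever $\xi\neq 0$; combined with the first paragraph this gives the dichotomy and the explicit form of $H\xi$.

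The step I expect to be most delicate is the passage of $\langle e_{k}\,|\,d\rangle_{\mathcal{H}_{Q}}=(Md)_{k}$ from $\mathcal{D}$ to the completion, since one must pin down which concrete model of $\mathcal{H}_{Q}$ is being used and control convergence of the defining series; one may instead simply \emph{define} $(Md)_{k}$ to be $\langle e_{k}\,|\,d\rangle_{\mathcal{H}_{Q}}$, which agrees with the formal matrix product on $\mathcal{D}$. A cleaner finesse is to transport everything through the isometry $F_{Q}$ of Lemma \ref{Lem:KatoIsometry} (with $\mu$ supplied by Theorem \ref{thm:real}), under which $S$ becomes the restriction of multiplication by $x$ to the polynomials in $\mathcal{P}\subseteq L^{2}(\mu)$; the relation $S^{*}\xi=i\xi$ then reads $\langle x^{k+1}\,|\,g\rangle_{L^{2}(\mu)}=i\langle x^{k}\,|\,g\rangle_{L^{2}(\mu)}$ for $g=F_{Q}\xi$, whence $\langle x^{k}\,|\,g\rangle=i^{k}\langle 1\,|\,g\rangle$, and $g=0$ as soon as $\langle 1\,|\,g\rangle=0$ since then $g$ is orthogonal to every polynomial, hence to $\mathcal{P}$.
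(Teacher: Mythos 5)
Your argument is essentially the paper's own proof: the recursion $\langle e_{k+1}\,|\,\xi\rangle_{\mathcal{H}_Q}=\langle Se_k\,|\,\xi\rangle_{\mathcal{H}_Q}=\langle e_k\,|\,S^*\xi\rangle_{\mathcal{H}_Q}=i\langle e_k\,|\,\xi\rangle_{\mathcal{H}_Q}$ identifying $H\xi$ (equivalently your $M\xi$) with a multiple of $(1,i,i^2,\ldots)$, injectivity of the map $\xi\mapsto H\xi$ to force $\dim\mathcal{L}_+\le 1$, and the conjugation $J$ with Theorem \ref{Thm:EqualIndices} to equate the two indices. You also supply two details the paper leaves implicit---the density argument behind ``$H$ has trivial kernel'' and the definitional care needed to interpret $\langle e_k\,|\,\xi\rangle_{\mathcal{H}_Q}$ on the completion (or the equivalent $L^2(\mu)$ finesse)---which strengthen rather than change the proof.
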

\begin{proof}  If $S$ is self-adjoint, its deficiency indices are $(0,0)$.  Suppose there exists $\xi \neq 0$
  such that $\xi\in\mathcal{L}_+$.
We will compute $(H\xi)_i$ via the inner product $\langle \cdot |
 \cdot\rangle_{\mathcal{H}_Q}$ to show that
\begin{equation}\label{Eqn:AlphaVector}
H\xi \in\mathbb{C}(1, i, i^2, i^3, \ldots).
\end{equation}
Note $e_i$ belongs to $\mathcal{D} = \textrm{dom}(S)$ for every
 $i\in\mathbb{N}_0$.  We now compare $(H\xi)_i=\langle e_i | \xi\rangle_{\mathcal{H}_Q}$
 and $(H\xi)_{i+1}= \langle e_{i+1} | \xi\rangle_{\mathcal{H}_Q}$:
\begin{equation}
 \langle e_{i+1} | \xi\rangle_{\mathcal{H}_Q}
= \langle S e_i | \xi\rangle_{\mathcal{H}_Q} = \langle e_i | S^*\xi \rangle_{\mathcal{H}_Q} =
i \langle e_i | \xi\rangle_{\mathcal{H}_Q},
\end{equation}
which implies by induction that $H\xi$ is a multiple of the vector
 (\ref{Eqn:AlphaVector}).

Since $H$ has trivial kernel in $\mathcal{H}_Q$, we can conclude
that $\dim \mathcal{L}_+ = 1$ and
 by Theorem \ref{Thm:EqualIndices}, $\dim \mathcal{L}_- = 1$ as well.  \end{proof}

We next use a theorem, again quoted almost verbatim from \cite{Con90}, which states that the
self-adjoint extensions of $S$ are determined by the partial
isometries from $\mathcal{L}_+$ to $\mathcal{L}_-$.
\begin{theorem}\cite[Theorem 2.17, p. 314]{Con90}\label{Thm:Isom} Let $S$ be a closed
  symmetric operator.  If $W$ is a partial isometry with initial space
  in $\mathcal{L}_+$ and final space in $\mathcal{L}_-$, then there
  is a closed symmetric extension $S_W$ of $S$ on the domain
  \[\{f+g+Wg\,:\, f \in \mathrm{dom}(S), g \in \mathrm{initial}(W)\}\]
  given by
 \[S_W(f+g+Wg) = Sf+ig-iWg.\]  Conversely, if $T$ is any closed
 symmetric extension of $S$, then there is a unique partial
 isometry $W$ such that $T=S_W$ as defined above.
 \end{theorem}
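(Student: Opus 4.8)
The statement to prove is Theorem \ref{Thm:Isom} (von Neumann's classification of self-adjoint, or rather closed symmetric, extensions of a closed symmetric operator via partial isometries between deficiency subspaces). Since the excerpt explicitly says ``quoted almost verbatim from \cite{Con90}'', the intended ``proof'' is really a citation, so my plan is to present the standard von Neumann argument in the self-contained form appropriate to this Memoir, leaning on the deficiency-space decomposition already set up in Section \ref{Sec:Extensions}.

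\textbf{Plan of proof.}  First I would recall von Neumann's fundamental formula: for a closed symmetric operator $S$ on a Hilbert space $\mathcal{H}$, one has the orthogonal decomposition of the graph (equivalently of $\mathrm{dom}(S^*)$ with respect to the graph inner product)
\[
\mathrm{dom}(S^*) = \mathrm{dom}(S) \oplus \mathcal{L}_+ \oplus \mathcal{L}_-,
\]
where $\mathcal{L}_\pm = \mathrm{null}(S^* \mp i)$ are the deficiency subspaces from Definition \ref{Def:DeficiencySpace}, and the direct sum is orthogonal in the graph norm $\|v\|_S^2 = \|v\|^2 + \|S^*v\|^2$.  I would cite this as the key structural input (it is itself in \cite{Con90, ReSi75}).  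Given this, every element of $\mathrm{dom}(S^*)$ is uniquely $f + g + h$ with $f \in \mathrm{dom}(S)$, $g \in \mathcal{L}_+$, $h \in \mathcal{L}_-$, and $S^*(f+g+h) = Sf + ig - ih$.

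Next, for the forward direction, given a partial isometry $W$ with initial space $\mathrm{initial}(W) \subseteq \mathcal{L}_+$ and final space $\mathrm{final}(W) \subseteq \mathcal{L}_-$, I would define $S_W$ on $\{f + g + Wg : f \in \mathrm{dom}(S),\ g \in \mathrm{initial}(W)\}$ by $S_W(f+g+Wg) = Sf + ig - iWg$, and then verify: (i) this domain is a subspace of $\mathrm{dom}(S^*)$ on which $S_W$ agrees with the restriction of $S^*$, so $S_W \subseteq S^*$; (ii) $S_W$ is symmetric --- this is the main computation, namely expanding $\langle S_W(f_1+g_1+Wg_1) \mid f_2 + g_2 + Wg_2 \rangle - \langle f_1 + g_1 + Wg_1 \mid S_W(f_2+g_2+Wg_2)\rangle$ and using $\langle Wg_1 \mid Wg_2\rangle = \langle g_1 \mid g_2\rangle$ (the isometry property on the initial space) together with the orthogonality of $\mathcal{L}_+$, $\mathcal{L}_-$ and $\mathrm{ran}(S \pm i)$; (iii) $S_W$ is closed, because its domain is the direct sum of $\mathrm{dom}(S)$ (closed in graph norm, $S$ being closed) and the graph of $1 + W$ restricted to the finite-defect-indexed, hence closed, subspace $\mathrm{initial}(W)$, and $S_W$ is a graph-norm-continuous restriction of the closed operator $S^*$.

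For the converse, given a closed symmetric extension $T$ with $S \subseteq T \subseteq T^* \subseteq S^*$, I would intersect $\mathrm{dom}(T)$ with the von Neumann decomposition: any $v \in \mathrm{dom}(T)$ writes as $f + g + h$ with $g \in \mathcal{L}_+$, $h \in \mathcal{L}_-$, and the symmetry of $T$ forces $\|g\| = \|h\|$ for each such element (this is exactly where symmetry bites --- compute $\langle Tv \mid v\rangle - \langle v \mid Tv\rangle = 2i(\|g\|^2 - \|h\|^2) = 0$).  This shows the assignment $g \mapsto h$ is well defined on a subspace of $\mathcal{L}_+$ and isometric, hence extends (or rather is already, being closed because $T$ is closed) to a partial isometry $W$ with $h = Wg$, and then $\mathrm{dom}(T) = \{f + g + Wg\}$, i.e. $T = S_W$.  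Uniqueness of $W$ is immediate: $W$ is recovered from $T$ as $W g = $ the $\mathcal{L}_-$-component of any $v \in \mathrm{dom}(T)$ whose $\mathcal{L}_+$-component is $g$.

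\textbf{Main obstacle.}  Since the cited source does the bookkeeping, there is no deep obstacle, but the fiddliest part is verifying symmetry of $S_W$ cleanly --- one must track the cross terms between $f_i$ and $g_j, h_j$ and use that $\mathrm{ran}(S+i) \perp \mathcal{L}_+$ and $\mathrm{ran}(S-i) \perp \mathcal{L}_-$ (equivalently $\langle Sf \mid g\rangle = \langle f \mid S^*g\rangle = -i\langle f \mid g\rangle$ for $g\in\mathcal{L}_+$, etc.) rather than getting lost in the algebra.  In the write-up I would either carry out this short computation or, consistent with the surrounding exposition, simply invoke \cite[Theorem 2.17]{Con90} and note that the proof is the standard Cayley-transform argument.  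Given that the theorem is already stated as quoted from Conway, I would keep the ``proof'' to a brief paragraph recording the von Neumann formula, the $\|g\|=\|h\|$ symmetry identity, and the reference, rather than reproducing the full argument.
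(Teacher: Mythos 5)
Your proposal is correct, and it is essentially the argument of the cited source: the paper itself gives no proof of Theorem \ref{Thm:Isom} (it is quoted from \cite{Con90}), and your sketch --- the graph-orthogonal von Neumann decomposition $\mathrm{dom}(S^*)=\mathrm{dom}(S)\oplus\mathcal{L}_+\oplus\mathcal{L}_-$, the symmetry computation forcing $\|g\|=\|h\|$, and the recovery of $W$ from $\mathrm{dom}(T)$ --- is exactly the standard proof that reference supplies. One small correction: the closedness of $S_W$ should not be attributed to finite deficiency indices (the theorem is stated in full generality); the initial space of a partial isometry is closed by definition, so $\{g+Wg : g\in\mathrm{initial}(W)\}$ is graph-closed and graph-orthogonal to the graph-closed $\mathrm{dom}(S)$, which is all that is needed.
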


 In the case of our shift operator $S$, we see that the only
 nontrivial partial isometries from $\mathcal{L}_+$ to
 $\mathcal{L}_-$ are isometries between the one-dimensional spaces.
 These isometries are given by multiplication by $z \in \mathbb{C}$ where $|z|=1$.
 \begin{theorem}\label{Thm:extensions} Given a Hankel matrix $M$ and the associated
 Hilbert space $\mathcal{H}_Q$, let $S$ be the closure of the symmetric shift operator
 {\rm(}\ref{Eqn:DefnS}{\rm)}.  If $S$ is not self-adjoint, then it has
self-adjoint extensions $T_z$ which have domain
$\mathcal{D}+\mathcal{L}_+ + \mathcal{L}_-$ and are exactly
given by
\[T_z(c+\xi+J\xi) = Sc+i\xi-izJ\xi \quad \mathrm{for}\, z \in
\mathbb{C}.\]
\end{theorem}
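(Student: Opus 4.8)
\textbf{Proof proposal for Theorem \ref{Thm:extensions}.}

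The plan is to apply Theorem \ref{Thm:Isom} (von Neumann's parametrization of self-adjoint extensions) to the closed symmetric shift operator $S$ on $\mathcal{H}_Q$, using the computation of deficiency indices already carried out in Lemma \ref{Lem:indices}. First I would invoke Lemma \ref{Lem:indices}: since $S$ is not self-adjoint by hypothesis, its deficiency indices are $(1,1)$, so both $\mathcal{L}_+$ and $\mathcal{L}_-$ are one-dimensional. Pick a nonzero generator $\xi$ of $\mathcal{L}_+$; then, using the conjugation operator $J$ from Equation (\ref{Eqn:DefnJ}) — which by Theorem \ref{Thm:EqualIndices} and the discussion following it restricts to an (antilinear) isometry $\mathcal{L}_+ \to \mathcal{L}_-$ — the vector $J\xi$ is a nonzero generator of $\mathcal{L}_-$. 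Thus $\{c + \xi + J\xi\}$-type vectors, as $c$ ranges over $\mathrm{dom}(S) = \mathcal{D}$ (or its closure) and $\xi$ over $\mathcal{L}_+$, already exhaust one extension; the remaining extensions come from composing with a phase.

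Next I would enumerate the partial isometries $W : \mathcal{L}_+ \to \mathcal{L}_-$. Because both deficiency spaces are one-dimensional, the only partial isometries are either the zero map (giving the trivial/minimal closed symmetric extension, which in the deficiency-$(1,1)$ setting is not self-adjoint) or genuine isometries of the one-dimensional spaces. An isometry from the line $\mathbb{C}\xi$ onto the line $\mathbb{C}J\xi$ is determined by where it sends the unit generator, hence is of the form $\xi \mapsto z\, J\xi$ for a unimodular scalar $z \in \mathbb{C}$, $|z| = 1$; conversely each such $z$ gives an isometry. Feeding $W_z : \xi \mapsto z J\xi$ into the formula of Theorem \ref{Thm:Isom}, the closed symmetric extension $S_{W_z}$ has domain $\{c + \xi + W_z \xi : c \in \mathrm{dom}(S),\ \xi \in \mathcal{L}_+\} = \mathcal{D} + \mathcal{L}_+ + \mathcal{L}_-$ (the last equality because $\xi \mapsto W_z\xi$ is a bijection of the relevant lines, so $\xi + W_z\xi$ ranges over a complement that together with $\mathcal{D}$ spans $\mathcal{D} + \mathcal{L}_+ + \mathcal{L}_-$), and acts by
\[
S_{W_z}(c + \xi + W_z\xi) = Sc + i\xi - i W_z \xi = Sc + i\xi - i z\, J\xi.
\]
Relabeling $S_{W_z}$ as $T_z$ gives exactly the asserted formula, with the domain as claimed.

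Finally I would note that each $T_z$ so produced is in fact self-adjoint, not merely closed symmetric: in the equal-and-finite deficiency-index case, the extensions indexed by the \emph{full} unitary group $U(\mathcal{L}_+, \mathcal{L}_-)$ — here the unit circle $\{z : |z| = 1\}$ — are precisely the self-adjoint ones (this is the standard refinement of Theorem \ref{Thm:Isom}; cf.\ \cite{Con90}), since the isometry is then defined on all of $\mathcal{L}_+$ and has range all of $\mathcal{L}_-$. The main obstacle — really the only nonroutine point — is making the identification of the domain clean: one must check that $\mathcal{D} \oplus \mathcal{L}_+ \oplus \mathcal{L}_-$ is indeed a direct sum decomposition of $\mathrm{dom}(S^*)$ (von Neumann's formula), so that writing a general domain element of $T_z$ as $c + \xi + J\xi$ is unambiguous and the action is well defined; this follows from von Neumann's decomposition theorem together with the one-dimensionality from Lemma \ref{Lem:indices}, and I would cite \cite{Con90} rather than reprove it. Everything else is substitution into the cited theorems.
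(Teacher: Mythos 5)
Your proposal is correct and follows essentially the same route as the paper: invoke Lemma \ref{Lem:indices} for deficiency indices $(1,1)$, use the conjugation $J$ to identify $\mathcal{L}_-$ with $J\mathcal{L}_+$, observe that every isometry between the one-dimensional deficiency spaces is $\xi \mapsto zJ\xi$ with $|z|=1$, and conclude via Theorem \ref{Thm:Isom}. Your extra remarks---that surjectivity of the isometry upgrades ``closed symmetric'' to ``self-adjoint,'' and that von Neumann's decomposition makes the domain description unambiguous---are details the paper leaves implicit in ``the result follows,'' but they do not change the argument.
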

\begin{proof}
By Lemma \ref{Lem:indices}, if $S$ is not self-adjoint, it has
deficiency indices $(1,1)$.  If $\xi \in \mathcal{L}_+$, then
$J\xi \in \mathcal{L}_-$.  Given $z \in \mathbb{C}$ with
$|z|=1$, we can define an isometry $T_z:\mathcal{L}_+ \rightarrow
\mathcal{L}_-$ by $T\xi = zJ\xi$.  In fact, every such isometry
is of this form since the spaces are one-dimensional.  The result follows from Theorem \ref{Thm:Isom}.
\end{proof}

\section{Self-adjoint extensions and the moment problem}\label{Sec:Nonunique}
Next, we describe how the self-adjoint extensions $T_z$ to $S$
described in Theorem \ref{Thm:extensions} yield solutions $\mu_z$
to the moment problem $M=M^{(\mu)}$.  (For background, see
\cite{Con90}.)

As we discussed in Section \ref{Sec:pvm}, every self-adjoint
operator $T$ can be written in terms of a projection-valued
measure $E$ such that
\begin{equation}\label{Eqn:ResolutionT}
T = \int_{\mathbb{R}} \lambda E(\mathrm{d}\lambda).
\end{equation}

\begin{proposition}\cite[Prop. 7.2, p. 343]{Con90}\label{Prop:NonuniqueMeasure}
Given an infinite Hankel matrix $M$, suppose $T$ is a self-adjoint
nontrivial extension of the shift operator $S$ on the Hilbert
space $\mathcal{H}_Q$ with corresponding
 projection-valued measure $E$ {\rm(}\ref{Eqn:ResolutionT}{\rm)}.  Given $E$, we can define a real measure as we did in Equation {\rm(}\ref{Eqn:RealMeasure}{\rm)}, by
\[
\mu(\cdot)  := \langle e_0 | E(\cdot)e_0\rangle_{\mathcal{H}_Q},
\]
where $e_0$ is the first standard basis vector $(1, 0, 0,
\ldots)$.  Then the real-valued measure $\mu$ is a solution to the moment
problem for $M$---that is,
\begin{equation*}
\int x^i \,\mathrm{d}\mu(x) = M_i \textrm{ for all }i\in\mathbb{N}_0.
\end{equation*}
\end{proposition}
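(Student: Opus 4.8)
The plan is to push everything through the single algebraic identity $e_i = S^i e_0$ in $\mathcal{H}_Q$, together with the spectral theorem for the self-adjoint extension $T$. Here $e_i$ denotes the standard basis vector and $e_0 = (1,0,0,\dots)$.

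First I would record the elementary fact that on $\mathcal{D}$ the shift acts by $S e_i = e_{i+1}$, so that $e_i = S^i e_0$ for every $i \in \mathbb{N}_0$. Since $T$ extends $S$ and each $e_j$ lies in $\mathcal{D} \subseteq \mathrm{dom}(S) \subseteq \mathrm{dom}(T)$, an immediate induction gives $e_0 \in \mathrm{dom}(T^i)$ with $T^i e_0 = e_i$ for all $i$: indeed $T e_0 = S e_0 = e_1 \in \mathcal{D}$, and if $T^i e_0 = e_i \in \mathcal{D}$ then $T^{i+1}e_0 = T e_i = S e_i = e_{i+1}$. In particular $e_0$ belongs to the domain of every power of $T$.

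Next I would identify $M_i$ with an inner product in $\mathcal{H}_Q$. By the definition of the $\mathcal{H}_Q$-inner product induced by the quadratic form $Q_M$, $\langle e_0 \,|\, e_i\rangle_{\mathcal{H}_Q} = \sum_{k}\sum_{l} \overline{(e_0)_k}\, M_{k,l}\, (e_i)_l = M_{0,i} = M_i$. Combining this with the previous step yields $M_i = \langle e_0 \,|\, T^i e_0\rangle_{\mathcal{H}_Q}$. Then I would invoke the spectral resolution $T = \int_{\mathbb{R}} \lambda\, E(\mathrm{d}\lambda)$ of Theorem \ref{Thm:pvm} and the multiplicativity of integration against a projection-valued measure recalled in Section \ref{Sec:pvm} to obtain $T^i = \int_{\mathbb{R}} \lambda^i E(\mathrm{d}\lambda)$ on $\mathrm{dom}(T^i)$, hence $\langle e_0 \,|\, T^i e_0\rangle_{\mathcal{H}_Q} = \int_{\mathbb{R}} \lambda^i\, \mathrm{d}\nu_{e_0}(\lambda)$, where $\nu_{e_0}(\cdot) = \langle e_0 \,|\, E(\cdot) e_0\rangle_{\mathcal{H}_Q}$ is precisely the measure $\mu$ of the statement. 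This gives $\int x^i\,\mathrm{d}\mu(x) = M_i$ for all $i$. Finiteness of the moments is automatic: $e_0 \in \mathrm{dom}(T^i)$ gives $\int \lambda^{2i}\,\mathrm{d}\nu_{e_0} < \infty$ by Theorem \ref{Thm:pvm}, and $\nu_{e_0}$ is a finite measure (of total mass $\langle e_0 \,|\, E(\mathbb{R})e_0\rangle_{\mathcal{H}_Q} = \|e_0\|_{\mathcal{H}_Q}^2 = M_0$), so Cauchy--Schwarz bounds $\int |\lambda|^i\,\mathrm{d}\nu_{e_0}$ and every manipulation above is legitimate.

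The only real obstacle is the bookkeeping with unbounded operators: one must verify that $e_0$ lies in the domain of each $T^i$ so that both $\langle e_0 \,|\, T^i e_0\rangle_{\mathcal{H}_Q}$ and the corresponding spectral integral are well defined, and that the functional-calculus identity $T^i = \int \lambda^i E(\mathrm{d}\lambda)$ holds on exactly that domain. Both points are handled by the extension property $S \subseteq T$ (which propagates $\mathcal{D}$ through all powers of $T$) and by the multiplicative property of the projection-valued calculus already available from Section \ref{Sec:pvm}; no estimates beyond Cauchy--Schwarz are needed.
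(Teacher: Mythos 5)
Your proposal is correct and follows essentially the same route as the paper: identify $\int x^i\,\mathrm{d}\mu$ with $\langle e_0\,|\,T^ie_0\rangle_{\mathcal{H}_Q}$ via the multiplicative property of the projection-valued calculus, then use $T\supseteq S$ and $S^ie_0=e_i$ to reduce this to $\langle e_0\,|\,e_i\rangle_{\mathcal{H}_Q}=M_{0,i}=M_i$. The only difference is that you spell out the domain bookkeeping ($e_0\in\mathrm{dom}(T^i)$ and finiteness of the moments) that the paper compresses into the remark that $T$ preserves $\mathcal{D}$, which is a welcome but not essentially different refinement.
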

\begin{proof} From the multiplicative property of integrals against projection-valued measures,
\[\int_{\mathbb{R}} \lambda^i E(\mathrm{d}\lambda) = T^i\] for each $i \in
\mathbb{N}_0$.  This gives the corresponding result for $\mu$:\[
\int_{\mathbb{R}} x^i \mathrm{d}\mu(x) = \langle e_0 | T^i e_0
\rangle .\]  Since $e_0 \in \mathcal{D}$ and $T$ maps $\mathcal{D}$ to $\mathcal{D}$, we have the following
calculation:

\begin{equation}
\begin{split}
\int x^i \,\mathrm{d}\mu(x) & = \langle e_0 | T^i e_0\rangle_{\mathcal{H}_Q}
 = \langle e_0 |S^i e_0\rangle_{\mathcal{H}_Q}
= \langle e_0|e_i\rangle_{\mathcal{H}_Q}\\
& = M_{0, i} = M_i.
\end{split}
\end{equation}\end{proof}

We can now associate to each self-adjoint extension to the shift operator $S$ a measure which satisfies
the moment problem for the matrix $M$, in the case where the shift
operator is not essentially self-adjoint.

\begin{theorem}\label{Thm:measures} Given an infinite Hankel matrix $M$ with $M_{0,0} = 1$, let $S$ be
the closure of the shift operator on $\mathcal{H}_Q$.
Then the
following statements are equivalent. \begin{enumerate}
\item $S$ is not self-adjoint.

\item  The set of distinct solutions to the moment problem $M=M^{(\mu)}$ is a one-parameter family of probability measures $\{\mu_z\,:\, z\in \mathbb{C}, |z|=1\}$.

\item There exist two nonequivalent probability measures $\mu_1$ and $\mu_2$ which are both solutions to the moment problem, i.e.  $M=M^{(\mu_1)} = M^{(\mu_2)}$.

\item Given any measure $\mu$ solving the moment problem $M = M^{(\mu)}$, the polynomials are not dense in the space $L^2(\mu)$.

\end{enumerate}

\end{theorem}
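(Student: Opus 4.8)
The plan is to prove the cyclic chain of implications $(1)\Rightarrow(2)\Rightarrow(3)\Rightarrow(4)\Rightarrow(1)$, using the machinery built up in Sections \ref{Sec:ThreeInc}--\ref{Sec:Nonunique} together with the three incarnations of symmetry. The backbone is the correspondence between self-adjoint extensions $T_z$ of the shift $S$ (Theorem \ref{Thm:extensions}) and the measures $\mu_z(\cdot) = \langle e_0 \mid E_z(\cdot) e_0\rangle_{\mathcal{H}_Q}$ produced in Proposition \ref{Prop:NonuniqueMeasure}, where $E_z$ is the projection-valued measure of $T_z$.

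For $(1)\Rightarrow(2)$: assume $S$ is not self-adjoint, so by Lemma \ref{Lem:indices} it has deficiency indices $(1,1)$, and by Theorem \ref{Thm:extensions} the self-adjoint extensions form the family $\{T_z : |z|=1\}$. By Proposition \ref{Prop:NonuniqueMeasure} each $T_z$ yields a probability measure $\mu_z$ (probability because $M_{0,0}=1$ gives $\mu_z(\mathbb{R}) = \langle e_0 \mid E_z(\mathbb{R}) e_0\rangle = \|e_0\|^2_{\mathcal{H}_Q} = M_{0,0} = 1$) solving $M = M^{(\mu_z)}$. The substantive point is that $z \mapsto \mu_z$ is injective as a map to equivalence classes of measures, and that \emph{every} solution arises this way. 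For the first part, I would argue that distinct $T_z$ have distinct measures by showing that the measure determines the extension: given $\mu_z$, realize $\mathcal{H}_Q \cong \mathcal{P} \subseteq L^2(\mu_z)$ via the isometry $F$ of Lemma \ref{Lem:KatoIsometry}, and under the hypothesis that $S$ is not self-adjoint the polynomials are \emph{not} dense (this is exactly implication $(4)$, which I will have in hand), so $\mathcal{P}$ is a proper closed subspace; the extension $T_z$ is recovered as $F^* M_x F$ restricted appropriately, and different boundary conditions at the non-dense directions give genuinely different $\mu_z$ supported differently. For the second part --- that every solution $\mu$ of the moment problem is some $\mu_z$ --- one uses that $\mu$ gives an isometry $F : \mathcal{H}_Q \to L^2(\mu)$, multiplication by $x$ is a self-adjoint operator on $L^2(\mu)$ extending $\widetilde S = F S F^*$ (Lemma \ref{Lemma:MultiplicationByX}), hence pulls back to a self-adjoint extension of $S$ on $\mathcal{H}_Q$, which must be one of the $T_z$; then $\mu = \mu_z$ follows from Proposition \ref{Prop:NonuniqueMeasure} and the uniqueness of the spectral measure of $e_0$. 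The implication $(2)\Rightarrow(3)$ is immediate: a one-parameter family with $|z|=1$ contains at least two distinct members, say $\mu_1$ (i.e. $z=1$) and $\mu_{-1}$.

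For $(3)\Rightarrow(4)$: suppose $\mu_1 \not\sim \mu_2$ both solve $M = M^{(\mu)}$, and suppose for contradiction that the polynomials \emph{are} dense in $L^2(\mu_1)$. Then $F_1 : \mathcal{H}_Q \to L^2(\mu_1)$ is a unitary (onto), and under $F_1$ the closure of $\widetilde S = F_1 S F_1^*$ is a restriction of multiplication by $x$ whose closure is all of $M_x$ --- i.e. $S$ is essentially self-adjoint and its unique self-adjoint extension corresponds, via Proposition \ref{Prop:NonuniqueMeasure}, to $\mu_1$ alone. But the same reasoning with $\mu_2$ shows $\mu_2$ also solves the moment problem, and since $M^{(\mu_1)} = M^{(\mu_2)} = M$ the inner products $\langle e_0 \mid S^i e_0\rangle_{\mathcal{H}_Q}$ coincide; if $S$ is essentially self-adjoint then there is only one self-adjoint extension and hence only one spectral measure for $e_0$, forcing $\mu_1 = \mu_2$ as measures, contradicting $\mu_1 \not\sim \mu_2$. (I should be careful here: density of polynomials in $L^2(\mu_1)$ yields essential self-adjointness, and essential self-adjointness then forces \emph{uniqueness}; this is the classical "determinate $\Rightarrow$ polynomials dense, and conversely" circle, which I can cite from \cite{Akh65} or reprove via von Neumann's theory as already developed.) Hence the polynomials are not dense in any solution space.

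For $(4)\Rightarrow(1)$: suppose $S$ is self-adjoint (essentially self-adjoint after closure). Take any solution $\mu$ of the moment problem; $F : \mathcal{H}_Q \to L^2(\mu)$ is an isometry onto $\mathcal{P} = \overline{\mathrm{span}}\{x^k\}$. Multiplication by $x$ restricted to $\mathcal{P}$ is an operator extending $\widetilde S$, hence $F^* M_x|_{\mathcal{P}} F$ is a symmetric (indeed self-adjoint on $\mathcal{P}$) extension of $S$ on $\mathcal{H}_Q$; since $S$ is already self-adjoint on $\mathcal{H}_Q$ this extension equals $S$, which via the spectral theorem forces $\mathcal{P}$ to carry a cyclic self-adjoint operator whose cyclic vector $1 = Fe_0$ generates all of $\mathcal{P}$ under $M_x$ --- but $1$ and powers of $x$ applied to it span exactly the polynomials, so $\mathcal{P}$ is the $L^2(\mu)$-closure of the polynomials \emph{and} the self-adjointness forces this closure to be invariant and co-cyclic, i.e. the polynomials are dense in $L^2(\mu)$, contradicting $(4)$. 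Thus $S$ is not self-adjoint.

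The main obstacle I anticipate is the careful bookkeeping in $(1)\Rightarrow(2)$ --- specifically, proving that the assignment $z \mapsto \mu_z$ is \emph{injective} on the unit circle and that it is \emph{surjective} onto all moment-problem solutions. Injectivity requires showing two distinct boundary-condition parameters give genuinely inequivalent measures, which means tracking how the parameter $z$ in $T_z(c+\xi+J\xi) = Sc + i\xi - izJ\xi$ manifests in the spectral measure $\langle e_0 \mid E_z(\cdot)e_0\rangle$; this is where I would lean on the explicit description of $\mathcal{L}_+$ from Lemma \ref{Lem:indices} (namely $H\xi \in \mathbb{C}(1,i,i^2,\dots)$) to write down the resolvent of $T_z$ and read off the Nevanlinna/Stieltjes transform of $\mu_z$ as a Möbius function of $z$, which is manifestly injective. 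Surjectivity is the classical fact that every solution corresponds to a self-adjoint extension --- standard, but it must be phrased in the language of $\mathcal{H}_Q$ and $F$ already set up here rather than invoked as a black box.
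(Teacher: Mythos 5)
Your overall architecture is the same as the paper's (the cyclic chain $(1)\Rightarrow(2)\Rightarrow(3)\Rightarrow(4)\Rightarrow(1)$, built on Lemma \ref{Lem:indices}, Theorem \ref{Thm:extensions} and Proposition \ref{Prop:NonuniqueMeasure}), but several of your individual steps would fail. The most serious is in $(3)\Rightarrow(4)$: from density of the polynomials in $L^2(\mu_1)$ you conclude that the closure of $\widetilde S=F_1SF_1^*$ ``is all of $M_x$,'' i.e.\ that $S$ is essentially self-adjoint. That inference is a non sequitur: a densely defined symmetric restriction of a self-adjoint operator need not be essentially self-adjoint, even when its domain is a core for nothing larger than the whole space (deficiency indices $(1,1)$ are perfectly compatible with being a restriction of $M_x$ with dense range of polynomials). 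This is precisely the delicate point of the whole theorem, and the paper does not assume it; it computes it: for $\xi\in\mathcal{L}_+$ one shows $\int(x-i)\overline{p(x)}\,F\xi(x)\,\mathrm{d}\mu=0$ for every polynomial $p$, and then density is applied to $F\xi/(x-i)\in L^2(\mu)$ (bounded multiplier) to force $F\xi=0$. Appealing instead to the ``classical determinate/dense circle'' from \cite{Akh65} makes the argument circular, since that circle is essentially the statement being proved. A similar hand-wave occurs in your $(4)\Rightarrow(1)$: ``self-adjointness forces this closure to be invariant and co-cyclic'' is not an argument. Note that to contradict (4) you only need \emph{one} solution with dense polynomials, and the spectral measure $\mu=\langle e_0|E(\cdot)e_0\rangle_{\mathcal{H}_Q}$ of the self-adjoint $S$ itself supplies it at once, because cyclicity of $e_0$ makes $F$ onto $L^2(\mu)$; the paper instead proves the implication directly, pulling a bounded $\psi\perp\mathcal{P}$ back through $F^*$ via $\xi_\alpha=\psi/(x-\alpha)$ to produce a nonzero deficiency vector.

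In $(1)\Rightarrow(2)$ there are two further problems. Your injectivity sketch invokes ``(4), which I will have in hand''---it is not in hand at that point of the cycle, and worse, the fact you want from it is false for the very measures $\mu_z$: since $T_z^ke_0=e_k$, the vector $e_0$ is cyclic for $T_z$, so $F$ extends to a unitary from $\mathcal{H}_Q$ \emph{onto} $L^2(\mu_z)$ and the polynomials \emph{are} dense in $L^2(\mu_z)$. The paper's distinctness argument runs through exactly this cyclicity: $\mu_z$ determines $T_z$ (via $(T_z-\alpha I)^{-1}e_0\mapsto \frac{1}{x-\alpha}$ and the density of these vectors and functions), so distinct extensions give distinct measures. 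Your fallback---computing the Cauchy/Nevanlinna transform of $\mu_z$ as a M\"obius function of $z$---would also work, but it is a substantial Weyl-circle computation you have not carried out, and it is much heavier than the soft argument the paper uses. Finally, your surjectivity step is incorrect as stated: when $\mathcal{P}\subsetneq L^2(\mu)$, multiplication by $x$ does not ``pull back to a self-adjoint extension of $S$ on $\mathcal{H}_Q$''; the compression of a self-adjoint operator to a non-reducing subspace is only symmetric. (For comparison, the paper's proof of (2) never attempts surjectivity---it only establishes that the $\mu_z$ are a one-parameter family of distinct solutions---so nothing in the paper rests on that step.)
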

\begin{proof}

  \noindent $(1) \Rightarrow (2)$: If $S$ is not self-adjoint, then by
  Theorem \ref{Thm:extensions} there is a one-parameter family
  $\{T_z\,:\, z \in \mathbb{C}, |z|=1\}$ of distinct (and not
  unitarily equivalent) self-adjoint extensions to $S$.  For each
  $T_z$, we can define a measure $\mu_z$ as in Proposition
  \ref{Prop:NonuniqueMeasure} which satisfies the moment problem
  $M=M^{(\mu)}$.  It remains to be shown that these measures are
  distinct.

  The isometry $F:\mathcal{H}_Q \rightarrow L^2(\mu_z)$ which maps
  $T_z^ke_0$ to $x^k$ extends to map $\psi(T_z)(e_0)$ to the function
  $\psi$.  Thus, because the span of the functions
  $\{\frac{1}{x-\alpha}\,:\, \alpha \in \mathbb{C},
  \mathrm{Im}(\alpha) \neq 0\}$ is dense in each space $L^2(\mu_z)$,
  the measure $\mu_z$ is uniquely determined by these functions.  The
  extensions $T_z$ are cyclic operators on $\mathcal{H}_Q$, which
  means (see \cite{Con90}) that the set of vectors
  $\{(T_z-\alpha I)^{-1}e_0 \,:\, \alpha \in \mathbb{C},
  \mathrm{Im}(\alpha) \neq 0\}$ is dense in $\mathcal{H}_Q$.  It
  follows that $\mu_z$ is determined uniquely by the isomorphism $F$
  mapping the vector $(T_z-\alpha)^{-1}e_0 \in \mathcal{H}_Q$ to the
  function $\frac{1}{x-\alpha} \in L^2(\mu)$.  Since the operators
  $T_z$ are distinct, this proves the measures $\{\mu_z\,:\, z \in
  \mathbb{D}, |z|=1\}$ are distinct.

\noindent $(2) \Rightarrow (3)$: Follows because each $\mu_z$ is distinct.

\noindent $(3) \Rightarrow (4)$: We prove the contrapositive. Suppose
the space of polynomials $\mathcal{P}$ is dense in $L^2(\mu)$ where
$M=M^{(\mu)}$.  Recall the map $F:\mathcal{D} \rightarrow \mathcal{P}$
given by $Fc = \sum_i c_ix^i$ is an isometry on $\mathcal{D}$ which
extends to an isometry on $\mathcal{H}_Q$.  Assume $\xi \in
\mathcal{L}_+$, so $S^*\xi = i\xi$ and let $c \in \mathcal{D}$.  Then
\begin{equation*}
\begin{split}
& \int_{\mathbb{R}}\overline{xFc(x)}F\xi(x) \mathrm{d}\mu(x)
  = \langle FSc|F\xi \rangle_{L^2(\mu)} \\
& = \langle Sc | \xi \rangle_{\mathcal{H}_Q}
  = \langle c| S^*\xi \rangle_{\mathcal{H}_Q}
  = i \langle c|\xi \rangle_{\mathcal{H}_Q}\\
& = i\langle Fc | F\xi \rangle_{L^2(\mu)}
  =  i \int_{\mathbb{R}} \overline{Fc}(x)F\xi(x)\mathrm{d}\mu(x).
\end{split}
\end{equation*} Because $F$ gives a one-to-one correspondence between
$\mathcal{D}$ and $\mathcal{P}$, we can say for any polynomial $p \in
\mathcal{P}$ and $\xi \in \mathcal{L}_+$,
\begin{equation}\label{Eqn:XP} \int_{\mathbb{R}}
  (x-i)\overline{p(x)}F\xi(x) \mathrm{d}\mu(x) = 0.\end{equation}
The function $\frac{1}{x-i} \in L^{\infty}(\mu)$, hence
$\frac{F\xi}{x-i} \in L^2(\mu)$.  Let $\{p_n\} \subset \mathcal{P}$
converge in $L^2(\mu)$ to $\frac{F\xi}{x-i} \in L^2(\mu)$.  Substituting into Equation (\ref{Eqn:XP}) then gives \[\int_{\mathbb{R}}
|F\xi(x)|^2 \mathrm{d}\mu(x) = 0,\] hence $\xi = 0$.  Therefore, $S$
has deficiency indices $(0,0)$.

\noindent $(4) \Rightarrow (1)$: Assume the polynomials are not dense in
$L^2(\mu)$.  As before, $M = M^{(\mu)}$.  Let $\psi \in L^2(\mu)$ be a
nonzero bounded vector orthogonal to the subspace $\mathcal{P}$
spanned by the polynomials.  Recall that $L^2(\mu)\ominus \mathcal{P}\neq 0$ if and only if $\mathcal{P}$ is not dense in $L^2(\mu)$.  Given $\alpha \in \mathbb{C}$ with
 $\mathrm{Im}(\alpha) \neq 0$, define \[\xi_{\alpha} =
\frac{\psi(x)}{x-\alpha}.\] Then, because each function
$\frac{1}{x-\alpha}$ is bounded, we have $\xi_{\alpha} \in
L^2(\mu)$.  Note that the function $[M_x\xi_{\alpha}](x) =
x\xi_{\alpha}(x)$ is also an $L^2(\mu)$ function because
$x\xi_{\alpha}(x) = \psi(x)+\alpha \xi_{\alpha}(x)$.

The isometry $F$ between $\mathcal{H}_Q$ and $L^2(\mu)$ resulting
from Equation (\ref{Eqn:DefnF}) ensures that $FF^*$ is the projection
onto the range of $F$, which is the closed space spanned by the
polynomials in $L^2(\mu)$.  Therefore, we know $F^*\psi = 0$.  Given
the function $\xi_{\alpha}$, let $\phi_{\alpha} \in \mathcal{H}_Q$ be
defined by $\phi_{\alpha} = F^*\xi_{\alpha}$.  We first must show that
for at least one choice of $\alpha$, $\phi_{\alpha} \neq 0$.

Suppose that for all $\alpha \in \mathbb{C} \setminus \mathbb{R}$,
$\xi_{\alpha}$ is orthogonal to the polynomials.  Define $\Psi$ to be
the linear span of the functions $\{\frac{1}{x-\alpha}: \alpha \in
\mathbb{C}, \mathrm{Im}(\alpha) \neq 0\}$.  It is well-known that
$\Psi$ is dense in $L^2(\mu)$ when $\mu$ is a finite real measure, so
let $\{\psi_k\}_{k \in \mathbb{N}_0} \subset \Psi$ be a sequence of functions which
converges to $\overline{\psi}$ in $L^2(\mu)$.  This gives
\begin{equation*}
\begin{split}
& \int_{\mathbb{R}} x^{\ell} \psi(x) [\overline{\psi(x)}- \psi_k(x)]\mathrm{d}\mu(x)\\
& =\int_{\mathbb{R}} x^{\ell} |\psi(x)|^2 \mathrm{d}\mu(x) - \int_{\mathbb{R}} x^{\ell} \psi(x)\psi_k(x) \mathrm{d}\mu(x)\\
&\rightarrow 0 \; \mathrm{as}\, k \rightarrow \infty.
\end{split}
\end{equation*}
The terms $\psi(x)\psi_k(x)$ are linear combinations of $\xi_{\alpha}$
functions, so the second integral in the sum above is zero for all
$k,\ell \in \mathbb{N}_0$.  Therefore, for all $\ell \in
\mathbb{N}_0$, \[\int_{\mathbb{R}} x^{\ell} |\psi(x)|^2
\mathrm{d}\mu(x)=0.\] In particular, the $\ell=0$ case implies that
$\psi = 0$.  This contradicts our choice of $\psi$, hence we know
there must be some function $\xi_{\alpha}$ which is not orthogonal to
the polynomial space $\mathcal{P}$.

  Because \[ \int_{\mathbb{R}} (x-\alpha)\overline{p(x)}\xi_{\alpha}(x) \mathrm{d}\mu(x) = \int_{\mathbb{R}} \overline{p(x)} \psi(x) \mathrm{d}\mu(x) = 0\] for all $p \in \mathcal{P}$, we have \[
\langle M_xp|\xi_{\alpha} \rangle_{L^2(\mu)} = \alpha \langle p| \xi_{\alpha} \rangle_{L^2(\mu)},\] where $M_x$ is the multiplication operator by $x$.  Every polynomial $p$ is the image of a finite sequence $d \in \mathcal{D}$ under $F$.  Recall that $FS=M_xF$ for all $d \in \mathcal{D}$, and let $\phi_{\alpha} = F^*\xi_{\alpha} \neq 0$.  Applying $F^*$ in the above equation gives \[\langle F^*M_xp|F^*\xi_{\alpha} \rangle_{\mathcal{H}_Q} = \langle Sd|\phi_{\alpha} \rangle_{\mathcal{H}_Q} = \alpha\langle d|\phi_{\alpha} \rangle_{\mathcal{H}_Q} = \alpha\langle F^*p|F^*\xi_{\alpha} \rangle_{\mathcal{H}_Q}. \]Therefore, $\phi_{\alpha}$ satisfies the equation $S^*\phi = \alpha\phi$, hence $\phi_{\alpha} \in \mathcal{L}(\alpha)$ for $S$ which proves $S$ is not self-adjoint.

\end{proof}

The following example shows a Hankel matrix $M$ which does not
have a unique moment problem solution.

\begin{example}A non-unique measure.\end{example}\label{Ex:NotUnique}
Set
\[
f(x) := \int_{\mathbb{R}} \cos(xt) e^{-(t^2 + 1/t^2)} \,\mathrm{d}t.
\]
Then
\[
\int x^i f(x) \,\mathrm{d}x
= \Bigl(\frac{\mathrm{d}}{\,\mathrm{d}t}\Bigr)^i e^{-(t^2 +1/t^2)}\Big|_{t=0}
= 0 \textrm{ for all }i\in\mathbb{N}_0.
\]
Let $f_{+}$ be the function $\max (f,0)$ and let $f_{-}$ be the
 function $-\min (f,0)$.
Set $f = f_{+} - f_{-}$ and $\,\mathrm{d}\mu_{\pm}(x) = f_{\pm}(x) \,\mathrm{d}x.$ Then
\[ \int x^i \mathrm{d}\mu_{+}(x) = \int x^i \mathrm{d}\mu_{-}(x).\]\hfill$\Diamond$

Next, we see that the converse of Lemma \ref{Lem:indices} also holds.

\begin{theorem} Let $M$ be an infinite Hankel matrix with
$M_{0,0}=1$.  The following statements are equivalent.

\begin{enumerate}

\item The moment problem $M=M^{(\mu)}$ does not have a unique solution.

\item Given any $\alpha \in \mathbb{C}$ with $0 < \mathrm{Im}(\alpha)< 1$, there exists a vector $\xi \in \mathcal{H}_Q$ such that $H\xi = \lambda(1, \alpha, \alpha^2, \alpha^3, \cdots)$ for some $\lambda \in \mathbb{C}$.
\end{enumerate}

\end{theorem}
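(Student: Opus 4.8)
The plan is to connect the two stated conditions through the deficiency-index analysis of the shift operator $S$ carried out in Lemma \ref{Lem:indices} and Theorem \ref{Thm:measures}. First I would observe that by Theorem \ref{Thm:measures}, statement (1) (nonuniqueness of the measure) is equivalent to $S$ not being self-adjoint, which by Lemma \ref{Lem:indices} is equivalent to $S$ having deficiency indices $(1,1)$; in particular there is a nonzero $\xi \in \mathrm{dom}(S^*)$ with $S^*\xi = \alpha\xi$ for each fixed $\alpha$ in the open upper half-plane (by von Neumann's argument the dimension of $\mathcal{L}(\alpha)$ is constant on each half-plane). The restriction $0 < \mathrm{Im}(\alpha) < 1$ in statement (2) is harmless: the deficiency subspace is nontrivial for \emph{every} $\alpha$ with $\mathrm{Im}(\alpha)\neq 0$, so there is no loss in restricting to that strip (the strip is just a convenient bounded region).

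Next I would compute, exactly as in the proof of Lemma \ref{Lem:indices}, the coordinates of $H\xi$ for $\xi \in \mathcal{L}(\alpha)$. Writing $(H\xi)_i = \langle e_i \mid \xi \rangle_{\mathcal{H}_Q}$ and using that $e_i \in \mathcal{D} = \mathrm{dom}(S)$ together with $Se_i = e_{i+1}$ and $S^*\xi = \alpha\xi$, one gets
\[
(H\xi)_{i+1} = \langle e_{i+1}\mid \xi\rangle_{\mathcal{H}_Q} = \langle Se_i \mid \xi \rangle_{\mathcal{H}_Q} = \langle e_i \mid S^*\xi \rangle_{\mathcal{H}_Q} = \alpha\,\langle e_i \mid \xi \rangle_{\mathcal{H}_Q} = \alpha\,(H\xi)_i,
\]
so by induction $H\xi = (H\xi)_0\cdot(1,\alpha,\alpha^2,\alpha^3,\ldots)$, i.e. $H\xi = \lambda(1,\alpha,\alpha^2,\ldots)$ with $\lambda = (H\xi)_0 \in \mathbb{C}$. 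This proves $(1)\Rightarrow(2)$: nonuniqueness gives a nonzero $\xi\in\mathcal{L}(\alpha)$, and $H$ has trivial kernel on $\mathcal{H}_Q$ (shown in Lemma \ref{Lem:indices}), so $\lambda \neq 0$ and the geometric series vector $(1,\alpha,\alpha^2,\ldots)$ is honestly in the range of $H$; note also it lies in $\ell^2(w)$-type completion since $|\alpha|$ may exceed $1$ but convergence is measured in $\mathcal{H}_Q$, not $\ell^2$ — this is a point to be careful about.

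For the converse $(2)\Rightarrow(1)$, I would reverse the computation: given $\xi$ with $H\xi = \lambda(1,\alpha,\alpha^2,\ldots)$ and $\lambda\neq 0$ (if $\lambda = 0$ then $H\xi = 0$ forces $\xi = 0$ by injectivity of $H$, contradicting the implicit requirement that $\xi$ be a witness to non-self-adjointness — I should state (2) as producing a \emph{nonzero} such $\xi$, or argue the $\lambda = 0$ case is vacuous), I claim $\xi$ is an eigenvector of $S^*$ with eigenvalue $\overline{\alpha}$ or $\alpha$ depending on conventions. The verification is: for each $i$, $\langle Se_i \mid \xi\rangle_{\mathcal{H}_Q} = \langle e_{i+1}\mid\xi\rangle_{\mathcal{H}_Q} = (H\xi)_{i+1} = \alpha(H\xi)_i = \alpha\langle e_i\mid\xi\rangle_{\mathcal{H}_Q}$, and since $\mathcal{D}$ is a core for $S$, this extends to $\langle Sc\mid\xi\rangle_{\mathcal{H}_Q} = \alpha\langle c\mid\xi\rangle_{\mathcal{H}_Q}$ for all $c\in\mathrm{dom}(S)$, which is precisely the statement $\xi\in\mathrm{dom}(S^*)$ and $S^*\xi = \overline{\alpha}\,\xi$ (matching the convention in Definition \ref{Def:DeficiencySpace} with $\langle\cdot\mid\cdot\rangle$ conjugate-linear in the first slot — I need to be scrupulous about whether it is $\alpha$ or $\overline\alpha$, and since $\mathrm{Im}(\alpha)\neq 0$ either way the deficiency subspace is nontrivial). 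Hence $\mathcal{L}(\alpha)\neq 0$, so $S$ is not self-adjoint, and by Theorem \ref{Thm:measures} the moment problem for $M$ has more than one solution.

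\textbf{Main obstacle.} The delicate point is the exact correspondence between ``$\xi$ is a deficiency vector'' and ``$H\xi$ equals the geometric vector,'' since it hinges on $H$ being injective on $\mathcal{H}_Q$ and on getting the complex-conjugation bookkeeping in the inner product and in $S^*$ consistent; I would also want to double-check that the restriction $0 < \mathrm{Im}(\alpha) < 1$ is genuinely harmless (it is, because non-self-adjointness of $S$ yields nontrivial $\mathcal{L}(\alpha)$ for all non-real $\alpha$ by von Neumann's constancy-of-dimension theorem, already invoked in the excerpt). The rest is the same two-line induction used in Lemma \ref{Lem:indices}, run forwards and backwards.
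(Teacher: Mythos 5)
Your proposal is correct and follows essentially the same route as the paper: both directions reduce to the deficiency-index characterization via Theorem \ref{Thm:measures} and Lemma \ref{Lem:indices}, with $(1)\Rightarrow(2)$ being the Lemma \ref{Lem:indices} computation run with $\alpha$ in place of $i$, and $(2)\Rightarrow(1)$ reversing that two-line shift computation to conclude $\xi\in\mathcal{L}(\alpha)$. Your added caveats (nonzero $\xi$ and $\lambda$, the conjugation bookkeeping, and the harmlessness of the strip $0<\mathrm{Im}(\alpha)<1$ via von Neumann's constancy of the deficiency dimension) are exactly the points the paper handles implicitly.
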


\begin{proof}
Let $\mathcal{H}_Q$ be the Hilbert space completion of the
quadratic form $Q_M$ as defined previously, and let $S$ be the
closure of the shift operator on $\mathcal{H}_Q$.  The solution to
the moment problem is unique if and only if the shift operator has
no self-adjoint extensions, by Theorem \ref{Thm:measures}. Using Lemma
\ref{Lem:indices}, this is true if and only if $S$ is self-adjoint, i.e. its deficiency indices are $(0,0)$.

\noindent $(1 \Rightarrow 2)$: This is a restatement of Lemma \ref{Lem:indices}, using $\alpha$ for the deficiency spaces instead of $i$.

\noindent $(2 \Rightarrow 1)$: Fix $\alpha \in \mathbb{C}$ such that $0< \mathrm{Im}(\alpha) < 1$ and denote \[s = (1,\alpha,\alpha^2, \alpha^3, \ldots ) \in \mathcal{H}_Q.\]  Assume there exists a nonzero $\xi \in \mathcal{H}_Q$ such that $H\xi = \lambda s$ for some nonzero scalar $\lambda \in \mathbb{C}$.  Then for $e_k$ a standard basis vector, \[
\langle Se_k|\xi \rangle_{\mathcal{H}_Q} = \langle Se_k|H\xi \rangle_{\ell^2} = \lambda \alpha^{k+1},\] where we have selected $\alpha$ so that $H\xi$ is in $\ell^2$ as well as in $\mathcal{H}_Q$.  We then compute \[\langle e_k|\xi\rangle_{\mathcal{H}_Q} = \langle e_k|H\xi \rangle_{\ell^2} = \lambda \alpha^k.\]  By linearity, we have for every $d \in \mathcal{D}$,\[ \langle Sd|\xi \rangle_{\mathcal{H}_Q} = \alpha \langle d|\xi\rangle_{\mathcal{H}_Q}, \] hence $\xi \in \mathcal{L}(\alpha)$ and we know the moment problem for $\mu$ does not have a unique solution.
\end{proof}


\section{Jacobi representations of matrices}\label{Sec:Banded}
The big picture in our work is an analysis of measures, passing from
moments to spectra. It turns out that a number of our problems may be
studied with the use of unbounded operators in Hilbert space, which
fits in with our multi-faceted operator-theoretic approach to moment
problems.

In this section we focus on the special relationship
between banded matrices which represent unbounded operators and
their associated moment problems.  Beginning with a Hankel matrix $M$, we find a 
(nonunique) banded Jacobi matrix $T$ which encodes the information in $M$ (Theorem \ref{Thm:MTClass}).  On the other hand, given a banded matrix $T$, we can find an associated moment matrix $M$ (Theorem \ref{Thm:JIndepOfMu}).  We conclude with a discussion of higher-dimensional analogues and banded matrices which arise in quantum mechanics.

\begin{definition}
We say that a matrix $T$ indexed by $\mathbb{N}_0 \times
\mathbb{N}_0$ is \textit{banded} if its nonzero entries are
restricted to the main diagonal and some number of diagonal bands
adjacent to the main diagonal.  Specifically, $T$ is banded if
there exist $b_1,b_2$ such that for all $j,k \in \mathbb{N}_0$, \[
T_{j,k} \neq 0 \Rightarrow  j-b_1 \leq k \leq j+b_2.\]
\end{definition}
When $T_1$ and $T_2$ are banded matrices, then
\[(T_1T_2)_{s,t}:= \sum_{n\in S} (T_1)_{s,n} (T_2)_{n, t}\] for each pair
$(s,t)\in \mathbb{N}_0\times \mathbb{N}_0$.  It is not hard to show that the banded matrices
form an algebra with composition as the multiplication operation.

Recall that a matrix $A$ with real entries is called
\textit{symmetric} if $A = A^{\mathrm{tr}}$ and if $A$ has complex
entries, it is called \textit{hermitian} if $A =
\overline{A}^{\mathrm{tr}}$.  Banded hermitian (or symmetric)
matrices $T:\mathbb{N}_0\times \mathbb{N}_0 \rightarrow
\mathbb{C}$ define symmetric \textit{operators} on
$\ell^2(\mathbb{N}_0)$ with $\mathcal{D}$ as dense domain.

\begin{example}\label{Ex:P}The matrix $P$ which represents the momentum operator in quantum mechanics is a banded symmetric matrix:
\begin{equation}\label{Eqn:MomentumP}
P=\frac{1}{2}
\begin{bmatrix}
0 & 1        & 0       & 0        &        &           &   &&\\
1 & 0        & \sqrt{2}&0         &        &           &   &&\\
0 & \sqrt{2} & 0       & \sqrt{3}        &        &           &   &&\\
  &          &\ddots   & \ddots   &  \ddots&           &   &&\\
  &          &         &\sqrt{n-2}& 0      & \sqrt{n-1}&   &&\\
  &          &         & 0        & \sqrt{n-1}& 0      &\sqrt{n} &&\\
  &          &         &          &\ddots    & \ddots    &  \ddots&         &\\
\end{bmatrix}.
\end{equation}

Given $v \in \ell^2$, we have
\[(Pv)_n = \frac{1}{2}(\sqrt{n-1}v_{n-1} + \sqrt{n}v_{n+1}).\]
\hfill$\Diamond$
\end{example}

The following definition generalizes to higher dimensions, but for
clarity we will state everything in a one-dimensional form.  We begin with a Hankel matrix $M$, and again work in the Hilbert space  $\mathcal{H}_Q$.  We
denote the standard orthonormal basis in $\ell^2(\mathbb{N}_0)$ by
$\{\delta_n\}_{n \in \mathbb{N}_0}$.  Even though $\{\delta_n\}_{n \in\mathbb{N}_0}$ is not an ONB in $\mathcal{H}_Q$, we also use $\delta_n$ to denote the element of $\mathcal{H}_Q$ with $0$'s in every place except the $(n+1)^{\textrm{st}}$, which contains a $1$.

\begin{definition}We say that a Hankel matrix $M$ is of \textit{$T$-class} if there
exists some banded hermitian matrix $T$, representing a symmetric
operator, such that
\begin{equation}\label{Eqn:TClass}
 M_{j,k} = \langle \delta_0 | T^{j+k}\delta_0\rangle_{\ell^2}.
\end{equation}
\end{definition}

Recall from Proposition \ref{Prop:NonuniqueMeasure} that if $M$ is
Hankel and positive semidefinite, then $M$ satisfies the equation \[ M_{j,k} = \langle \delta_0|\widetilde{S} 
^{j+k}\delta_0 \rangle _{\mathcal{H}_Q}, \]
where $\tilde{S}$ is a self-adjoint extension
of the shift operator on the Hilbert space $\mathcal{H}_Q$.  We will demonstrate the correspondence between this shift operator $S$ (when a moment matrix $M$ is given) and a symmetric Jacobi matrix $T$.  

\begin{lemma}\label{Lem:TClassPD}Every Hankel matrix of $T$-class is positive
semidefinite.
\end{lemma}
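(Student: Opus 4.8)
The plan is to expand the quadratic form $\langle c \mid Mc\rangle_{\ell^2}$ for a finite sequence $c\in\mathcal{D}$ directly from the defining relation (\ref{Eqn:TClass}) and recognize it as a squared norm in $\ell^2(\mathbb{N}_0)$. First I would record the elementary observation that a banded matrix $T$ maps $\mathcal{D}$ into itself: each column of $T$ has only finitely many nonzero entries, so $T$ applied to a finite linear combination of the standard basis vectors $\delta_n$ is again such a combination. Hence $T^n\delta_0\in\mathcal{D}$ for every $n\in\mathbb{N}_0$, and every power, iterated image, and inner product appearing below is literally a finite sum; no unbounded-operator domain subtleties arise.

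Next I would use that $T$ is symmetric on $\mathcal{D}$, i.e.\ $\langle Tu\mid v\rangle_{\ell^2}=\langle u\mid Tv\rangle_{\ell^2}$ for all $u,v\in\mathcal{D}$ (equivalently, $T$ is a banded hermitian matrix). Writing $T^{j+k}\delta_0=T^j(T^k\delta_0)$ and iterating this identity $j$ times with $v=T^k\delta_0\in\mathcal{D}$ gives
\[
\langle \delta_0 \mid T^{j+k}\delta_0\rangle_{\ell^2}
= \langle T^j\delta_0 \mid T^k\delta_0\rangle_{\ell^2}
\qquad\text{for all } j,k\in\mathbb{N}_0 .
\]
Combining this with (\ref{Eqn:TClass}), bilinearity of the inner product, and the fact that the sums over $j,k$ are finite, I obtain for every $c=\{c_j\}\in\mathcal{D}$
\[
\langle c \mid Mc\rangle_{\ell^2}
= \sum_{j,k} \overline{c_j}\,M_{j,k}\,c_k
= \sum_{j,k} \overline{c_j}\,c_k\,\langle T^j\delta_0 \mid T^k\delta_0\rangle_{\ell^2}
= \Bigl\| \sum_j c_j\,T^j\delta_0 \Bigr\|_{\ell^2}^{2}
\ \geq\ 0 .
\]
This is exactly the positive semidefinite condition of Definition \ref{Defn:PD}, so $M$ is positive semidefinite.

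I do not anticipate a genuine obstacle here; the only point requiring care is the justification that all the displayed rearrangements are valid termwise, which is supplied once and for all by the remark that banded matrices preserve $\mathcal{D}$. (I would also note in passing that the $T$-class formula already forces $M$ to be Hankel, since $M_{j,k}$ depends only on $j+k$, so the Hankel hypothesis in the statement is automatic; the substantive content of the lemma is precisely the positivity obtained above.)
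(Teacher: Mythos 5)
Your proof is correct and follows essentially the same route as the paper: expand $\langle c\mid Mc\rangle_{\ell^2}$ using the $T$-class formula, move powers of $T$ across the inner product via the hermitian symmetry of $T$ (with all sums finite by bandedness), and recognize the result as $\bigl\|\sum_j c_j T^j\delta_0\bigr\|_{\ell^2}^2\geq 0$. The only difference is that you spell out explicitly the preservation of $\mathcal{D}$ and the identity $\langle\delta_0\mid T^{j+k}\delta_0\rangle_{\ell^2}=\langle T^j\delta_0\mid T^k\delta_0\rangle_{\ell^2}$, which the paper leaves implicit.
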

\begin{proof} Let $v\in\mathcal{D}$.  Then because $T$ is banded, all
summations are finite:
\begin{equation}
\begin{split}
\sum_j \sum_k \overline{v_j} M_{j,k} v_k
& = \sum_j \sum_k \overline{v_j}\Bigl\langle \delta_0|T^{j+k}\delta_0\Bigr\rangle_{\ell^2} v_k\\
& = \Bigl\langle \sum_j v_j T^j\delta_0 | \sum_k v_k T^k\delta_0\Bigr\rangle_{\ell^2}\\
& = \Big\| \sum_{j} v_j T^j\delta_0\Big\|_{\ell^2}^2\geq 0.
\end{split}
\end{equation}
\end{proof}

\begin{lemma}[One-dimensional version]\label{Lemma:WIsometry}
Suppose $M$ is Hankel of $T$-class.  Define $W:\mathcal{H}_Q \rightarrow \ell^2(\mathbb{N}_0)$ by
\[W(\delta_k):=T^k\delta_0 \qquad \forall k \in \mathbb{N}_0.\]
Then $W$ is an isometry.
\end{lemma}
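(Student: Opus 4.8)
\textbf{Proof plan for Lemma \ref{Lemma:WIsometry}.}

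The plan is to show that $W$ preserves inner products on the dense subspace $\mathcal{D} \subset \mathcal{H}_Q$ (the finitely supported sequences, i.e.\ finite linear combinations of the $\delta_k$), and then conclude by density that $W$ extends to an isometry on all of $\mathcal{H}_Q$. First I would recall that the inner product on $\mathcal{H}_Q$ is determined on $\mathcal{D}$ by the matrix $M$ via $\langle c | d \rangle_{\mathcal{H}_Q} = \sum_{i}\sum_{j} \overline{c_i} M_{i,j} d_j$; in particular $\langle \delta_j | \delta_k \rangle_{\mathcal{H}_Q} = M_{j,k}$. On the other side, since $M$ is of $T$-class, Equation (\ref{Eqn:TClass}) gives $M_{j,k} = \langle \delta_0 | T^{j+k}\delta_0 \rangle_{\ell^2}$, and because $T$ is hermitian (representing a symmetric operator), we can split the power $T^{j+k} = (T^j)^* T^k$ on the domain $\mathcal{D}$ — here using that $T$ is banded, so $T^k \delta_0 \in \mathcal{D}$ and all the relevant expressions make sense without convergence worries. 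Hence $M_{j,k} = \langle T^j \delta_0 | T^k \delta_0 \rangle_{\ell^2} = \langle W\delta_j | W\delta_k \rangle_{\ell^2}$.

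From this basis-vector computation, linearity gives the general case: for $c = \sum_j c_j \delta_j$ and $d = \sum_k d_k \delta_k$ in $\mathcal{D}$,
\[
\langle Wc | Wd \rangle_{\ell^2} = \sum_j \sum_k \overline{c_j} d_k \langle W\delta_j | W\delta_k \rangle_{\ell^2} = \sum_j \sum_k \overline{c_j} M_{j,k} d_k = \langle c | d \rangle_{\mathcal{H}_Q},
\]
all sums being finite. In particular $\|Wc\|_{\ell^2}^2 = \|c\|_{\mathcal{H}_Q}^2$ for every $c \in \mathcal{D}$, so $W$ is isometric on $\mathcal{D}$. Since $\mathcal{D}$ is dense in $\mathcal{H}_Q$ by construction of the Hilbert space completion, the bounded (norm-one) operator $W$ extends uniquely to an isometry $W : \mathcal{H}_Q \to \ell^2(\mathbb{N}_0)$.

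I do not expect a serious obstacle here; the only point requiring a little care is the justification of $T^{j+k} = (T^j)^* T^k$ as an identity of operators applied to $\delta_0$, which is where the bandedness hypothesis is essential — it guarantees $T$ maps $\mathcal{D}$ into $\mathcal{D}$, so the powers compose unambiguously and the adjoint relation $\langle T u | v\rangle = \langle u | T v\rangle$ for $u,v \in \mathcal{D}$ can be iterated. One should also note explicitly that $W$ need not be surjective (hence ``isometry'' rather than ``unitary''), and that the positive semidefiniteness supplied by Lemma \ref{Lem:TClassPD} is what makes $\mathcal{H}_Q$ a genuine Hilbert space in the first place, so the statement is not vacuous.
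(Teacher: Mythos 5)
Your proposal is correct and follows essentially the same route as the paper: both verify the isometric identity via the $T$-class formula $M_{j,k}=\langle\delta_0|T^{j+k}\delta_0\rangle_{\ell^2}$ together with the symmetry and bandedness of $T$, first on the $\delta_k$ and then on finite linear combinations (the paper cites the computation in Lemma \ref{Lem:TClassPD} for this bilinear expansion), extending to $\mathcal{H}_Q$ by density. Your explicit justification of the splitting $T^{j+k}=(T^j)^*T^k$ on $\mathcal{D}$ and of the density extension is just a more spelled-out version of the same argument.
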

\begin{proof}We first show that $\|W(\delta_k)\|_{\ell^2}^2 = \|\delta_k\|^2_{\mathcal{H}_Q}$.
Recall from the norm on $\mathcal{H}_Q$ that
\[\|\delta_k\|^2_{\mathcal{H}_Q} = Q_M(\delta_k),\] and
\[Q_M(\delta_k) = M_{k,k} = \langle \delta_0 | T^{k+k}\delta_0\rangle_{\ell^2}.\]  On the other hand,
\[ \|W(\delta_k)\|_{\ell^2}^2 = \langle T^k \delta_0 |T^k\delta_0\rangle_{\ell^2} = \langle \delta_0|T^{k+k}\delta_0\rangle_{\ell^2}.\]

Since every element $v$ of $\mathcal{D}$ is a finite linear
combination of $\delta_k$s, the rest of the proof follows from the
same computation used in the proof of Lemma
\ref{Lem:TClassPD}.\end{proof}

As in Equation (\ref{Eqn:DefnS}), let $S$ will refer to the
closure of the shift operator, which is defined on $\mathcal{D}$
by
\[ S(c_0, c_1, c_2, \ldots ) := (0, c_0, c_1, c_2, \ldots ).\]

Because we will consider deficiency indices of two different
operators $S$ and $T$, we let $\mathcal{L}_{\pm}(S)$ denote the dimension
of the deficiency subspace of $S$, and we
let $\mathcal{L}_{\pm}(T)$ denote the dimension of the deficiency
subspace of $T$.

\begin{lemma}\label{Lemma:WIntertwine} Let $M$ be of $T$-class and let the Hilbert space $\mathcal{H}_Q$ be as defined above.
 On the dense domain $\mathcal{D} \subset \mathcal{H}_Q$, $WS =
 TW$.
\end{lemma}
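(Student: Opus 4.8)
The plan is to verify the intertwining identity first on the distinguished spanning vectors $\{\delta_k\}_{k\in\mathbb{N}_0}$ of $\mathcal{D}$ and then extend by linearity. Recall that the shift acts on these vectors by $S\delta_k = \delta_{k+1}$ (this is just Equation (\ref{Eqn:DefnS}) applied to a standard basis vector), and that $W$ is defined by $W\delta_k = T^k\delta_0$. Since $T$ is a banded matrix, $T^k\delta_0$ is a finitely supported vector, hence lies in $\mathcal{D}\subset\ell^2(\mathbb{N}_0)$, which is precisely the dense domain on which the symmetric operator $T$ acts; thus $T(W\delta_k)$ is well defined and equals $T^{k+1}\delta_0$. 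This is the first and essentially only bookkeeping point to make — that every composition written below is legitimately defined.

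Next I would carry out the direct computation. For each $k\in\mathbb{N}_0$,
\[
WS\delta_k = W\delta_{k+1} = T^{k+1}\delta_0 = T\bigl(T^k\delta_0\bigr) = T(W\delta_k) = TW\delta_k .
\]
Because every $v\in\mathcal{D}$ is a finite linear combination $v = \sum_k v_k\delta_k$, and both $WS$ and $TW$ are linear on $\mathcal{D}$ (here using that $W$ is defined on all of $\mathcal{H}_Q$ by Lemma \ref{Lemma:WIsometry}, so that $W$ restricted to $\mathcal{D}$ is linear, and that $S$ and $T$ are linear on their domains), it follows that $WSv = TWv$ for all $v\in\mathcal{D}$. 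Since $S$ maps $\mathcal{D}$ into $\mathcal{D}$, the left side $WSv$ makes sense; since $Wv = \sum_k v_k T^k\delta_0\in\mathcal{D}$, the right side $TWv$ makes sense as well.

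I do not expect any genuine obstacle here: the statement reduces to the observation that $W$ is built out of powers of $T$ applied to the cyclic vector $\delta_0$, so that applying the shift on the $\mathcal{H}_Q$ side corresponds exactly to multiplying by one more factor of $T$ on the $\ell^2$ side. If anything merits a sentence of care, it is the remark that the $T$-class hypothesis (and the resulting positive semidefiniteness from Lemma \ref{Lem:TClassPD}) is what guarantees $\mathcal{H}_Q$ is a genuine Hilbert space in which $\mathcal{D}$ embeds, so that $W$ and $S$ are both densely defined there; beyond that, the proof is the one-line identity displayed above together with linear extension.
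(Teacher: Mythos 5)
Your proof is correct and follows essentially the same route as the paper: the paper simply writes the computation for a general finite sequence $c\in\mathcal{D}$ at once, expanding $WS(c)=c_0T\delta_0+c_1T^2\delta_0+\cdots$ and factoring out $T$, which is exactly your basis-vector identity $WS\delta_k=T^{k+1}\delta_0=TW\delta_k$ plus linearity. Your extra remarks on bandedness keeping $T^k\delta_0$ in $\mathcal{D}$ and on well-definedness are harmless added bookkeeping, not a departure in method.
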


\begin{proof} Let $c=(c_0, c_1, \ldots)\in\mathcal{D}$. Then
\begin{equation}
\begin{split}
WS(c)
& = W(0, c_0, c_1, \ldots)\\
& = c_0 T\delta_0 + c_1 T^2 \delta_0 + c_2 T^3 \delta_0 + \cdots\\
& = T( c_0 \delta_0 + c_1 T \delta_0 + c_2 T^2 \delta_0 + \cdots
=TW(c).
\end{split}
\end{equation}\end{proof}
 An immediate result of Lemma \ref{Lemma:WIntertwine} is the
following:
\begin{lemma}Suppose the set $\{Wc\}_{ c\in \mathcal{D}}$ is dense in $\text{dom}(T)$.
Then the isometry $W$ maps the $S$-defect subspaces
$\mathcal{L}_{\pm}(S)$ into the $T$-defect subspaces
$\mathcal{L}_{\pm}(T)$.
\end{lemma}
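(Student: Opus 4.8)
The plan is to show that $W$ carries the deficiency subspaces of $S$ into those of $T$ by combining the intertwining relation from Lemma \ref{Lemma:WIntertwine} with the density hypothesis, and then passing to adjoints. First I would recall that $\mathcal{L}_{\pm}(S) = \mathrm{null}(S^* \mp i)$ and $\mathcal{L}_{\pm}(T) = \mathrm{null}(T^* \mp i)$, so it suffices to show that if $\xi \in \mathcal{H}_Q$ satisfies $S^*\xi = \pm i\,\xi$, then $W\xi$ (which lies in $\ell^2(\mathbb{N}_0)$ since $W$ is an isometry by Lemma \ref{Lemma:WIsometry}) satisfies $T^* (W\xi) = \pm i\, (W\xi)$.

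The key computation is a dual version of the intertwining $WS = TW$ on $\mathcal{D}$. For any $c \in \mathcal{D}$ and any $\xi \in \mathrm{dom}(S^*)$, I would write
\[
\langle Tc' \,|\, W\xi \rangle_{\ell^2}
\]
where $c' = Wc \in \{Wc : c \in \mathcal{D}\}$, and use $TWc = WSc$ together with the fact that $W$ is an isometry, so $\langle Wu \,|\, Wv \rangle_{\ell^2} = \langle u \,|\, v \rangle_{\mathcal{H}_Q}$ for $u,v \in \mathcal{D}$ (and by continuity for $v \in \mathcal{H}_Q$). This gives $\langle WSc \,|\, W\xi \rangle_{\ell^2} = \langle Sc \,|\, \xi \rangle_{\mathcal{H}_Q} = \langle c \,|\, S^*\xi \rangle_{\mathcal{H}_Q} = \langle Wc \,|\, W S^*\xi \rangle_{\ell^2}$. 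Hence, writing $\eta = Wc$, we have $\langle T\eta \,|\, W\xi \rangle_{\ell^2} = \langle \eta \,|\, W S^*\xi \rangle_{\ell^2}$ for all $\eta$ in the set $\{Wc : c \in \mathcal{D}\}$. This is precisely the defining property of the adjoint: since $\{Wc : c \in \mathcal{D}\}$ is assumed dense in $\mathrm{dom}(T)$ (hence the pairing extends to all of $\mathrm{dom}(T)$ and $W\xi$ thereby lands in $\mathrm{dom}(T^*)$), we conclude $W\xi \in \mathrm{dom}(T^*)$ and $T^*(W\xi) = W S^*\xi$.

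Once this adjoint intertwining relation $T^* W = W S^*$ (on $\mathrm{dom}(S^*)$) is established, the conclusion is immediate: if $\xi \in \mathcal{L}_{\pm}(S)$, i.e.\ $S^*\xi = \pm i \xi$, then $T^*(W\xi) = W(S^*\xi) = W(\pm i \xi) = \pm i (W\xi)$, so $W\xi \in \mathcal{L}_{\pm}(T)$. Thus $W$ maps $\mathcal{L}_+(S)$ into $\mathcal{L}_+(T)$ and $\mathcal{L}_-(S)$ into $\mathcal{L}_-(T)$, as claimed.

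The main obstacle I anticipate is the careful handling of the domain of $T^*$ and the density hypothesis. The relation $WS = TW$ only holds a priori on $\mathcal{D}$, and to promote the pairing identity to a statement about $T^*$ one needs the test vectors $\eta = Wc$ to be dense in $\mathrm{dom}(T)$ in the graph norm (or at least dense enough that the adjoint relation is characterized) — this is exactly what the hypothesis ``$\{Wc\}_{c \in \mathcal{D}}$ is dense in $\mathrm{dom}(T)$'' supplies, and it must be invoked precisely. A secondary subtlety is ensuring $W$ extends to a genuine isometry on all of $\mathcal{H}_Q$ (not just $\mathcal{D}$) so that $W\xi$ makes sense for $\xi \in \mathrm{dom}(S^*) \subseteq \mathcal{H}_Q$; this follows from Lemma \ref{Lemma:WIsometry} by continuity. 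Everything else is routine manipulation of inner products and adjoints.
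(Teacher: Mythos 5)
Your proof is correct and follows essentially the same route as the paper: the chain $\langle TWc\,|\,W\xi\rangle_{\ell^2}=\langle WSc\,|\,W\xi\rangle_{\ell^2}=\langle Sc\,|\,\xi\rangle_{\mathcal{H}_Q}=\langle c\,|\,S^*\xi\rangle_{\mathcal{H}_Q}$, followed by the density hypothesis to identify $W\xi$ as an eigenvector of $T^*$. The only cosmetic difference is that you first extract the general adjoint intertwining $T^*W=WS^*$ on $\mathrm{dom}(S^*)$ and then specialize, while the paper substitutes $S^*f_{\pm}=\pm i f_{\pm}$ directly into the same computation; your explicit attention to graph-norm density and to extending $W$ by continuity is exactly the care the paper's brief argument presupposes.
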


\begin{proof} Suppose $f_{\pm}$ denotes an element of
$\mathcal{L}_{\pm}(S)$. We know that $S^*f_{\pm} =  \pm if_{\pm}$
if and only if $\langle Sc|f_{\pm}\rangle_{\mathcal{H}_Q} = \pm i\langle
c|f_{\pm}\rangle_{\mathcal{H}_Q}$ for all $c\in\mathcal{D}$.  With the assumption
of density, $T^*Wf_{\pm} = \pm iWf_{\pm}$ if and only if
\[\langle TWc|Wf_{\pm}\rangle_{\ell^2} = \pm i \langle Wc|Wf_{\pm}\rangle_{\ell^2}\]
for all $c\in\mathcal{F}$.

Let $c\in\mathcal{D}$.  Then by Lemmas \ref{Lemma:WIntertwine} and \ref{Lemma:WIsometry},
\begin{equation}
\begin{split}
\langle TWc|Wf_{+}\rangle_{\ell^2}
& = \langle WSc|Wf_{\pm}\rangle_{\ell^2} = \langle Sc|f_{\pm}\rangle_{\mathcal{H}_Q} \\
& = \langle c|S^*f_{\pm}\rangle_{\mathcal{H}_Q} = \pm i\langle c|f_{\pm}\rangle_{\mathcal{H}_Q} = \pm i\langle Wc|Wf_{\pm}\rangle_{\ell^2}.
\end{split}
\end{equation}\end{proof}

Recall that we established in Lemma \ref{Lem:indices} that the deficiency indices of $S$ are either $(0,0)$ or $(1,1)$.
\begin{corollary}\label{Cor:STdefects}Suppose the set $\{Wc \:\:|\:\: c\in \mathcal{F}\}$ is dense in $\text{dom}(T)$.  Then
\[  \mathcal{L}_{\pm}(T) \geq \mathcal{L}_{\pm}(S).\]
\end{corollary}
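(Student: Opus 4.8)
The statement to prove is Corollary \ref{Cor:STdefects}, which asserts that under the density hypothesis $\overline{\{Wc : c \in \mathcal{D}\}} = \mathrm{dom}(T)$, we have $\mathcal{L}_{\pm}(T) \geq \mathcal{L}_{\pm}(S)$ for the deficiency dimensions. The plan is to exploit the isometry $W$ from the immediately preceding lemma: $W$ maps $\mathcal{L}_{\pm}(S)$ injectively into $\mathcal{L}_{\pm}(T)$, so the dimension of the target is at least the dimension of the source. Since by Lemma \ref{Lem:indices} the left-hand sides are either $0$ or $1$, there is really only one substantive case to check, namely when $S$ is not self-adjoint.

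First I would recall that the previous (unnamed) lemma, proved using Lemmas \ref{Lemma:WIntertwine} and \ref{Lemma:WIsometry}, establishes precisely that if $f_{\pm} \in \mathcal{L}_{\pm}(S)$ then $Wf_{\pm} \in \mathcal{L}_{\pm}(T)$; this is the key input. Next I would observe that $W$ is an isometry by Lemma \ref{Lemma:WIsometry}, hence injective, so it carries a linearly independent set in $\mathcal{L}_{\pm}(S)$ to a linearly independent set in $\mathcal{L}_{\pm}(T)$. Therefore $\dim \mathcal{L}_{\pm}(T) \geq \dim \mathcal{L}_{\pm}(S)$, which is exactly the claimed inequality. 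Concretely: if $\dim \mathcal{L}_+(S) = 0$ the inequality is trivial; if $\dim\mathcal{L}_+(S) = 1$, pick a nonzero $\xi \in \mathcal{L}_+(S)$, note $W\xi \neq 0$ by the isometry property and $W\xi \in \mathcal{L}_+(T)$ by the preceding lemma, so $\dim\mathcal{L}_+(T) \geq 1$. The same argument handles the minus sign via $\mathcal{L}_-$.

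I do not anticipate a real obstacle here, since the corollary is a direct packaging of the preceding lemma together with injectivity of an isometry; the density hypothesis on $\{Wc : c \in \mathcal{D}\}$ is only needed to invoke that lemma (it is what guarantees that the adjoint computation transfers correctly to $T$), and it has already been assumed. The one point worth stating explicitly for completeness is that $W$, being an isometry $\mathcal{H}_Q \to \ell^2(\mathbb{N}_0)$, is bounded and injective, and that injectivity preserves linear independence, so restricting $W$ to the finite-dimensional subspace $\mathcal{L}_{\pm}(S)$ yields an injection into $\mathcal{L}_{\pm}(T)$. Combined with the earlier determination of the deficiency indices of $S$ in Lemma \ref{Lem:indices}, the statement follows immediately.

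\begin{proof}
By Lemma \ref{Lemma:WIsometry}, $W:\mathcal{H}_Q \to \ell^2(\mathbb{N}_0)$ is an isometry, hence in particular injective. By the preceding lemma (using the density of $\{Wc : c \in \mathcal{D}\}$ in $\mathrm{dom}(T)$), $W$ maps $\mathcal{L}_{\pm}(S)$ into $\mathcal{L}_{\pm}(T)$. Thus the restriction of $W$ to $\mathcal{L}_{\pm}(S)$ is an injective linear map into $\mathcal{L}_{\pm}(T)$, which carries any linearly independent subset of $\mathcal{L}_{\pm}(S)$ to a linearly independent subset of $\mathcal{L}_{\pm}(T)$. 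Therefore $\mathcal{L}_{\pm}(T) \geq \mathcal{L}_{\pm}(S)$.
\end{proof}
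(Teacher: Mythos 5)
Your proof is correct and follows exactly the route the paper intends: the corollary is stated there as an immediate consequence of the preceding lemma (that $W$ maps $\mathcal{L}_{\pm}(S)$ into $\mathcal{L}_{\pm}(T)$ under the density hypothesis) combined with the injectivity of the isometry $W$ from Lemma \ref{Lemma:WIsometry}, which is precisely your argument. Your extra remark that Lemma \ref{Lem:indices} restricts the indices of $S$ to $0$ or $1$ is harmless but not needed, since injectivity already gives the dimension inequality in general.
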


Table \ref{Table:STW} shows the relationships among $S$, $T$, and $W$.

\begin{table}\caption{Relationships among $S$, $T$, and $W$.}
\begin{tabular}{cccc}\label{Table:STW}
   $\mathcal{H}_{\text{min}}$ & $\overset{W}{\longrightarrow}$ & $\ell^2(\mathbb{N}_0)$           \\
$S \curvearrowright$         &                              & $\curvearrowright T$\\
   $\mathcal{H}_{\text{min}}$& $\overset{W}{\longrightarrow}$ & $\ell^2(\mathbb{N}_0)$            \\
\end{tabular}
\end{table}

Because
\begin{equation}\label{Eqn:TWWS}
TW = WS,
\end{equation}
when $S(c_0, c_1, \ldots):=(0, c_0, c_1, \ldots)$ in $\mathcal{H}_{\text{min}}$, we can take adjoints in (\ref{Eqn:TWWS}) to see that
\begin{equation}\label{Eqn:WTSW}
W^*T^* = S^*W^*.
\end{equation}
Finally, $W^*$ maps $\mathcal{L}_{\pm}(T)$ into $\mathcal{L}_{\pm}(S)$.  To see this, suppose $T^* f_{\pm} = \pm i f_{\pm}$.  Apply $W^*$ and use (\ref{Eqn:WTSW}):
\[W^*T^*f_{\pm} = \pm iW^*f_{\pm}= S^*W^*f_{\pm},\]
which implies that $W^*f_{\pm} \in \mathcal{L}_{\pm}(S)$.

We now come to the main result of this section:  every positive definite Hankel matrix $M$ is of $T$-class, and the matrix $T$ such that
\[M_{j,k} = \langle \delta_0 | T^{j+k}\delta_0\rangle_{\ell^2}\]
can be chosen to be banded with respect to the canonical ONB in
$\ell^2(\mathbb{N}_0)$.
\begin{theorem}\label{Thm:MTClass}
Let $M$ be a positive definite Hankel matrix, where $M$ is normalized so that $M_{0,0} = 1$.
\begin{enumerate}[\rm(a)]
\item Then there exists a banded symmetric matrix $T$ operating on $\ell^2(\mathbb{N}_0)$ such that (\ref{Eqn:TClass}) is satisfied.
\item We may choose $T$ of the banded form
\begin{equation}
T=
\begin{bmatrix}
b_0    & a_0    & 0   & 0   & 0  &\cdots &\\
\overline{a_0}    & b_1    & a_1 & 0   & 0  &\cdots &\\
0      & \overline{a_1}    & b_2 & a_2 & 0  &       &\\
0      & 0      & \overline{a_2} & b_3 & a_3&       &\\
\vdots & \vdots &     &     &    &\ddots &\\
\end{bmatrix}.
\end{equation}

\end{enumerate}
\end{theorem}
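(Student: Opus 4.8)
The plan is to reduce the statement to the classical theory of orthogonal polynomials on the real line, since every positive definite Hankel matrix $M$ with $M_{0,0}=1$ is the moment matrix of some probability measure $\mu$ (Theorem \ref{thm:real}), and the Jacobi matrix $T$ is then exactly the matrix of the multiplication operator $M_x$ in the orthonormal basis of polynomials obtained from $\mu$ by Gram-Schmidt.

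\medskip

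First I would invoke Theorem \ref{thm:real} to produce a positive Borel measure $\mu$ on $\mathbb{R}$ with $M = M^{(\mu)}$; normalizing with $M_{0,0}=1$ makes $\mu$ a probability measure. Next I would perform Gram-Schmidt on the monomials $\{v_k\}_{k\in\mathbb{N}_0}$ in $L^2(\mu)$ to obtain the orthonormal polynomials $\{p_k\}$ (if there are finite linear dependence relations, as in Example \ref{Ex:dependent}, one works in the closed span $\mathcal{P}$ and the construction truncates; I would handle the generic full-rank case and remark on the degenerate one). The key classical fact I would then use is the three-term recurrence: because $x p_k(x)$ lies in the span of $p_0,\dots,p_{k+1}$ and is orthogonal to $p_0,\dots,p_{k-2}$, there exist real $b_k$ and (real, after a phase normalization) $a_k$ with
\[
x\, p_k(x) = a_{k-1} p_{k-1}(x) + b_k p_k(x) + a_k p_{k+1}(x),
\]
where $a_k = \langle p_{k+1} \mid x p_k \rangle_{L^2(\mu)}$ and $b_k = \langle p_k \mid x p_k \rangle_{L^2(\mu)}$; here $a_k > 0$ can be arranged by choosing the sign of the leading coefficient of each $p_k$, and $b_k$ is automatically real since $\mu$ is a real measure and $M_x$ is symmetric on the dense domain of polynomials. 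Define $T$ to be the tridiagonal matrix with diagonal $(b_k)$ and off-diagonal $(a_k)$ as displayed in part (b); this $T$ is banded (bandwidth one) and symmetric/hermitian, and it represents a symmetric operator on $\ell^2(\mathbb{N}_0)$ with dense domain $\mathcal{D}$.

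\medskip

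It remains to verify (\ref{Eqn:TClass}), i.e. $M_{j,k} = \langle \delta_0 \mid T^{j+k}\delta_0\rangle_{\ell^2}$. For this I would set up the isometry that identifies $\delta_k \in \ell^2(\mathbb{N}_0)$ with $p_k \in L^2(\mu)$; call it $V$, so $V\delta_k = p_k$ and $V$ extends the coefficient map. By construction $T$ is precisely the matrix of multiplication by $x$ in the $\{p_k\}$-basis, so $VT = M_x V$ on $\mathcal{D}$, hence $V T^n \delta_0 = M_x^n V\delta_0 = M_x^n p_0 = x^n$ (using $p_0 \equiv 1$). Therefore
\[
\langle \delta_0 \mid T^{j+k}\delta_0\rangle_{\ell^2}
= \langle \delta_0 \mid T^j T^k \delta_0\rangle_{\ell^2}
= \langle T^j\delta_0 \mid T^k\delta_0\rangle_{\ell^2}
= \langle x^j \mid x^k\rangle_{L^2(\mu)}
= \int_{\mathbb{R}} x^{j+k}\,\mathrm{d}\mu(x)
= M_{j,k},
\]
using that $T^j$ is symmetric (it is a power of a symmetric operator, with $\delta_0$ in the domain of all powers because $T$ is banded so $T^k\delta_0$ is a finite vector). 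This establishes part (a), and part (b) is already built into the choice of $T$.

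\medskip

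The main obstacle I anticipate is not the recurrence itself, which is standard, but bookkeeping around two technical points: (1) the possible finite linear dependence among monomials (degenerate $\mu$, finite rank as in Examples \ref{Ex:ConvexDirac} and \ref{Ex:Rank2}), where the Gram-Schmidt process terminates and $T$ should be taken as a finite matrix padded appropriately — I would state the result for the full-rank case and indicate that the finite-rank case follows by the same computation on the finite-dimensional space $\mathcal{P}$; and (2) justifying that $T^j$ is symmetric on the relevant domain and that the manipulations $\langle \delta_0 \mid T^{j+k}\delta_0\rangle = \langle T^j\delta_0 \mid T^k\delta_0\rangle$ are legitimate — this is clean precisely because $T$ is banded, so all vectors $T^m\delta_0$ lie in $\mathcal{D}$ and every sum in sight is finite, exactly as in the proof of Lemma \ref{Lem:TClassPD}. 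A secondary subtlety is the non-uniqueness of $T$: different choices of signs for the leading coefficients of the $p_k$ (or, more substantively, different solution measures $\mu$ when $S$ is not essentially self-adjoint, cf. Theorem \ref{Thm:measures}) give different banded $T$, which is why the theorem only claims existence; I would note this explicitly.
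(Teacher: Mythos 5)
Your proposal is correct and follows essentially the same route as the paper: produce $\mu$ via Theorem \ref{thm:real}, apply Gram--Schmidt to get the orthonormal polynomials and the classical three-term recurrence, let $T$ be the resulting Jacobi matrix, and transfer the computation to $L^2(\mu)$ through the coefficient isometry. The only cosmetic difference is that you verify $M_{j,k}=\langle T^j\delta_0\,|\,T^k\delta_0\rangle_{\ell^2}$ directly using the symmetry of the banded $T$, whereas the paper first invokes the Hankel property to reduce to the entries $M_{0,k}$ and then computes $\langle\delta_0\,|\,T^k\delta_0\rangle_{\ell^2}=\int x^k\,\mathrm{d}\mu$.
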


\begin{proof}By Theorem \ref{thm:real} there exists $\mu$ such that
\[M_{j,k} = \int_{\mathbb{R}} x^{j+k} \,\mathrm{d}\mu(x).\]
Now select the orthogonal polynomials $p_0(x), p_1(x), p_2(x), \ldots$ in $L^2(\mu)$ with $p_0(x) = 1$.  Then the mapping
\begin{equation}\label{Eqn:Fisometry}
\sum_{k=0}^{\infty} c_k p_k(x) \mapsto \{c_k\}_{k\in\mathbb{N}_0}
\end{equation}
is an isometry of a subspace in $L^2(\mu)$ onto $\ell^2(\mathbb{N}_0)$.  Specifically,
\begin{equation}\label{Eqn:SquareSum}
\Big\|\sum_k c_k p_k \Big\|^2_{L^2(\mu)} = \sum_{k}|c_k|^2.
\end{equation}

It is well-known (see, for example \cite{Akh65, AAR99}) that there exist sequences $\{a_n\}_{n \in \mathbb{N}_0}$ and $\{b_n\}_{\mathbb{N}_0}$ such that the following three-term recursion formulas are satisfied:
\begin{align*}
xp_0   & = b_0 p_0 + \overline{a_0}p_1\\
xp_1   & = a_0 p_0 + b_1p_1 + \overline{a_1} p_2\\
\vdots & \qquad \qquad \vdots\\
xp_j   &= a_{j-1}p_{j-1} + b_jp_j + \overline{a_j}p_{j+1}\\
\vdots & \qquad \qquad \vdots
\end{align*}  Because of the
Hankel property assumed for $M$, in proving (\ref{Eqn:TClass}), it
is enough to show that
\begin{equation}\label{Eqn:44}
M_{0,k} = \langle \delta_0 | T^k\delta_0\rangle_{\ell^2}.
\end{equation}
Using (\ref{Eqn:Fisometry}) and (\ref{Eqn:SquareSum}), we have for all $k\in\mathbb{N}_0$,
\begin{equation*}
\begin{split}
\langle \delta_0 | T^k\delta_0\rangle_{\ell^2}
& = \int_{\mathbb{R}} p_0 x^k p_0 \,\mathrm{d}\mu(x)\\
& = \int_{\mathbb{R}} x^k\,\mathrm{d}\mu(x) = M_{0,k},
\end{split}
\end{equation*}
which is the desired conclusion.\end{proof}

\begin{remark} There are many other choices of banded symmetric or hermitian matrices which solve (\ref{Eqn:TClass}).  The candidates for $T$ are dictated by applications. \end{remark}

Consider a fixed positive definite Hankel matrix $M$ such that the associated symmetric operator $S$ has deficiency indices $(1,1)$.  Then by Theorem \ref{Thm:measures}, there is a one-parameter family of inequivalent measures $\{\mu_z: |z| = 1\}$ such that $M^{(\mu_z)} = M$.  Further, for each measure $\mu_z$, we may compute an associated symmetric Jacobi matrix $T_z$, as in Theorem \ref{Thm:MTClass}.  The question we ask next is whether the Jacobi matrix $T_z$ depends on the measure $\mu_z$ used to compute the orthogonal polynomials.

Consider the Hilbert space $\mathcal{H}_Q$, in which the shift operator $S$ has dense domain.  The Jacobi matrix $T_{\mu}$ we just computed is also a symmetric operator, this time with dense domain in $\ell^2(\mathbb{N}_0)$.  Moreover, for each $\mu$ solving the $M$-moment problem, we have an isometry $F = F_{\mu}$ which maps $\mathcal{H}_Q$ into $L^2(\mu)$ and which intertwines $S$ with multiplication by $x$ (Lemma \ref{Lemma:MultiplicationByX}).  Recall also that $T_{\mu}$ encodes multiplication by $x$.

By looking at the relevant formulas for the orthogonal polynomials $\{p_n\}\subset L^2(\mu_z)$, we see that $T_z$ does not depend on $z$.  In particular, we use the following well-known formulas for the orthogonal polynomials to justify our answer \cite{Akh65}.

Define \[D_k = \det\begin{bmatrix}m_0 & \ldots &m_k\\
                          m_1 &\ldots &m_{k+1}\\
                          \vdots&      &\vdots\\
                          m_k & \ldots & m_{2k}\end{bmatrix}\]
and let $S_M = \{k\in \mathbb{N}: D_k\neq 0\}$.  Then if $k\in S_M$, then $p_k(x) = (D_{k-1}{D_k})^{-1/2}D_k(x)$, where
\[D_k(x) = \det\begin{bmatrix}m_0 &  m_1 &\ldots &m_k\\
                              m_1 &  m_2 &\ldots &m_{k+1}\\
                          \vdots  &      &       &\vdots\\
                          m_{k-1} & m_k &\ldots & m_{2k-1}\\
                          1 & x& \ldots & x^k\end{bmatrix}.\]

As a result, the isometry $F_{\mu}$ is also independent of $\mu$; the closed subspace spanned by the polynomials in $L^2(\mu_z)$ is independent of $z$, and it is only the relative orthogonal complement in $L^2(\mu_z)$ which depends on $z$.  The orthogonal complement can be described by
\[ \Bigl\{\psi\in L^2(\mu) \Big| \int \psi(x)x^k \textrm{d}\mu(x) = 0 \textrm{ for all }k\in\mathbb{N}_0\Bigr\}.\]  Lemma \ref{Lemma:MultiplicationByX} tells us that $S$ and $T$ are unitarily equivalent.  Moreover,
\[ \langle e_0 | S^ke_0\rangle_{\mathcal{H}_Q}  = \langle \delta_0 | T^k\delta_0\rangle_{\ell^2},\]
so the spectral measures derived from the self-adjoint extensions $\widetilde{S}$ of $S$ and $\widetilde{T}$ of $T$ produce the same measures $\mu$ which solve the $M$-moment problem.  Thus, if we are thus given the Jacobi matrix $T$, the self-adjoint extensions of $T$ in $\ell^2(\mathbb{N}_0)$ correspond to spectral measures $\mu$ which will have a common moment matrix $M$.   

We summarize with a theorem:
\begin{theorem}\label{Thm:JIndepOfMu}
Let $M$ be a positive definite Hankel matrix, and let $\mu$ be a measure such that $M^{(\mu)} = M$.  The associated Jacobi matrix $T$ is independent of the choice of measure $\mu$ which solves the $M$-moment problem.  Conversely, every symmetric Jacobi matrix $T$ gives rise to a moment problem. 

The following three conditions are equivalent:
\begin{enumerate}[\rm(a)]
\item The solution $\mu$ to the $M$-moment problem is unique.
\item The deficiency indices for $T$ are $(0,0)$.
\item The polynomials are dense in $L^2(\mu)$.
\end{enumerate}
\end{theorem}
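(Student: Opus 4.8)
The plan is to assemble this theorem from the ingredients already developed, reducing everything to the shift operator $S$ on $\mathcal{H}_Q$ and its deficiency indices. First I would handle the $T$-independence claim. Given $M$ positive definite and any measure $\mu$ solving $M^{(\mu)} = M$, the three-term recursion for the orthogonal polynomials $\{p_n\}$ in $L^2(\mu)$ produces the Jacobi coefficients $a_n, b_n$ as in Theorem \ref{Thm:MTClass}. The point is that these coefficients are given by the Hankel determinant formulas for $D_k$ and $D_k(x)$ recorded just above the theorem statement, and those determinants depend only on the entries $m_k = M_{0,k}$ of $M$, not on which particular $\mu$ we chose. So $T = T_\mu$ is literally computed from $M$ alone: if $\mu_1, \mu_2$ both solve the $M$-moment problem, their Gram--Schmidt procedures on the monomials yield the same polynomials (as elements determined by the common inner products $M_{j,k}$) and hence the same recursion coefficients. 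This gives the first sentence. The converse — every symmetric Jacobi matrix $T$ gives rise to a moment problem — follows from Lemma \ref{Lem:TClassPD}: setting $M_{j,k} := \langle \delta_0 | T^{j+k}\delta_0\rangle_{\ell^2}$ produces a positive semidefinite Hankel matrix, which by Theorem \ref{thm:real} is the moment matrix of some measure.

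Next I would prove the equivalence of (a), (b), (c). The equivalence of (a) and (c) is already contained in Theorem \ref{Thm:measures}: statement (1) there ($S$ on $\mathcal{H}_Q$ not self-adjoint) is equivalent to statement (4) there (polynomials not dense in $L^2(\mu)$ for any solution $\mu$), so by negation, uniqueness of the solution is equivalent to density of the polynomials. (One should note the small bookkeeping point that Theorem \ref{Thm:measures} phrases non-uniqueness as "a one-parameter family"; uniqueness is the complementary case where $S$ is essentially self-adjoint, which by Lemma \ref{Lem:indices} means deficiency indices $(0,0)$ and a single self-adjoint extension, hence — via Proposition \ref{Prop:NonuniqueMeasure} applied to that unique extension — a unique solution measure.) So it remains to link (b) to the shift operator picture. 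Here I would invoke the intertwining machinery of Section \ref{Sec:Banded}: Lemma \ref{Lemma:WIntertwine} gives $WS = TW$ with $W$ an isometry (Lemma \ref{Lemma:WIsometry}), and the subsequent discussion shows $W$ maps $\mathcal{L}_\pm(S)$ into $\mathcal{L}_\pm(T)$ while $W^*$ maps $\mathcal{L}_\pm(T)$ into $\mathcal{L}_\pm(S)$ — the latter using $W^*T^* = S^*W^*$ from taking adjoints of $TW = WS$. Combined with the discussion after Theorem \ref{Thm:MTClass} that the Jacobi matrix $T$ encodes multiplication by $x$ and is unitarily equivalent to $S$ (via the isometry $F_\mu$ of Lemma \ref{Lemma:MultiplicationByX}, which identifies $\mathcal{H}_Q$ with the polynomial subspace $\mathcal{P}\subseteq L^2(\mu)$), one concludes that $S$ and $T$ have the same deficiency indices. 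Since by Lemma \ref{Lem:indices} the indices of $S$ are either $(0,0)$ or $(1,1)$, we get: indices of $T$ are $(0,0)$ $\iff$ indices of $S$ are $(0,0)$ $\iff$ $S$ is self-adjoint $\iff$ (by Theorem \ref{Thm:measures}) the $M$-moment problem has a unique solution. This closes the loop (a) $\iff$ (b) $\iff$ (c).

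The main obstacle I anticipate is the unitary-equivalence step linking $S$ and $T$, i.e. making rigorous that $\mathcal{H}_Q$ (completion of finite sequences under $Q_M$) and $\ell^2(\mathbb{N}_0)$-with-the-Jacobi-structure are the "same" up to the natural isometries, so that their defect subspaces genuinely correspond. The subtlety is that $\mathcal{H}_Q$ may be strictly smaller than $L^2(\mu)$ — it is only the closed span $\mathcal{P}$ of the polynomials — and the isometry $F_\mu: \mathcal{H}_Q \to \mathcal{P}$ intertwines $S$ with the compression $M_x P$ of multiplication by $x$, not with $M_x$ on all of $L^2(\mu)$. I would deal with this by working entirely inside $\mathcal{P}$: the orthonormal polynomials $\{p_k\}$ form an ONB of $\mathcal{P}$, the map $p_k \mapsto \delta_k$ is a unitary $\mathcal{P} \to \ell^2(\mathbb{N}_0)$ carrying $M_x|_{\mathcal{P}}$ to $T$, and composing with $F_\mu$ gives a unitary $\mathcal{H}_Q \to \ell^2(\mathbb{N}_0)$ carrying $S$ to $T$; deficiency indices are unitary invariants, so we are done. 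Once this identification is set up cleanly, the rest is just transcribing results already proved in the excerpt.
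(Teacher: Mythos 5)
Your proposal follows essentially the same route as the paper: the determinant formulas for the orthogonal polynomials give the $\mu$-independence of $T$, Lemma \ref{Lem:TClassPD} (with the Riesz--Haviland theorem) gives the converse, and the equivalences are obtained from Theorem \ref{Thm:measures} together with the transfer of deficiency indices between $S$ and $T$ via the intertwining isometry $W$ and Corollary \ref{Cor:STdefects}. Your explicit construction of the unitary $\mathcal{H}_Q \to \ell^2(\mathbb{N}_0)$ through $F_\mu$ and $p_k \mapsto \delta_k$ is a careful spelling-out of what the paper leaves to its preceding discussion, but it is the same argument in substance.
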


\begin{proof}  The forward direction is shown in the discussion preceding the statement of the theorem.  Given a symmetric  Jacobi matrix $T$, define a Hankel matrix $M$ by taking \[M_{j,k} = \langle \delta_0|T^{j+k}\delta_0\rangle_{\ell^2}.\]  Since powers of banded matrices remain banded, the entries of $M$ are all finite.  By Lemma \ref{Lem:TClassPD}, $M$ is a positive-definite Hankel matrix, hence yielding a moment problem $M = M^{(\mu)}$ which has at least one solution.

The equivalent statements follow from Theorem \ref{Thm:measures}, Corollary \ref{Cor:STdefects}, and the discussion above showing that $\mathcal{L}_{\pm}(T) \leq \mathcal{L}_{\pm}(S)$.
\end{proof}

\section{The triple recursion relation and extensions to higher dimensions}
Let $p_0(x) \equiv 1,p_1(x), p_2(x), \ldots$ be the orthogonal polynomials with respect to $\mu$, where $\textrm{deg}(p_k) = k$.  (Here we assume that $\mu$ corresponds to a positive definite, not positive semidefinite, linear functional on $L^2(\mu)$.)  We can use Gram-Schmidt on $\{1, x, x^2, \ldots\}$, so that
\[
\textrm{span}\{1, x, x^2, \ldots\} = \textrm{span}\{p_0(x), p_1(x), p_2(x), \ldots\}
\]
and
\[
\int p_j(x) p_k(x) \,\mathrm{d}\mu(x) = \delta_{j,k} \textrm{ (the Kronecker delta)}.
\]
Then for $k\in\mathbb{N}_0$ we have
\begin{equation}\label{Eqn:PolyRecur1}
xp_{n-1}(x) = b_{n-1}p_{n-2}(x) + a_{n-1} p_{n-1}(x) + \overline{b_n}p_n(x)
\end{equation}
and
\begin{equation}\label{Eqn:PolyRecur2}
x p_n(x) = b_n p_{n-1}(x) + a_n p_n(x) + \overline{b_{n+1}}p_{n+1}(x).
\end{equation}
Note that the $\overline{b_n}$ appearing in (\ref{Eqn:PolyRecur1}) is the conjugate of the $b_n$ appearing in (\ref{Eqn:PolyRecur2}).  To see this that this is true, suppose
\begin{align*}
x p_n(x)      &= Bp_{n-1}(x) + \textrm{  terms in }p_{n}(x) \textrm{ and }p_{n+1}(x) \\
xp_{n-1}(x) & = Cp_n(x) + \textrm{ terms in }p_{n-2}(x) \textrm{ and }p_{n-1}(x).
\end{align*}
We now take advantage of the orthogonality:
\begin{equation*}
\begin{split}
B
& = \langle p_{n-1}(x) | xp_n(x)\rangle_{L^2(\mu)}
    = \langle xp_{n-1}(x) | p_n(x)\rangle_{L^2(\mu)}\\
& = \langle Cp_n(x) | p_n(x)\rangle_{L^2(\mu)}
    = \overline{C}\langle p_n(x) | p_n(x)\rangle_{L^2(\mu)} = \overline{C}.
\end{split}
\end{equation*}
The triple recursion relation can be considered in terms of projections onto finite-dimensional subspaces.  Set
\[
\mathcal{H}_n:
=\textrm{span}\{1, x, \ldots, x^n\}
= \textrm{span}\{p_0(x), p_1(x), \ldots, p_n(x)\}
\]
and let $Q_n$ be the projection onto $\mathcal{H}_n$ with
\[
Q_n^* = Q_n = Q_n^2
\]
and
\[ Q_n(L^2(\mu)) = \mathcal{H}_n.
\]
(Note:  $Q_n$ is a projection, and is \textit{not} related to our earlier quadratic form.)  We are interested in the shift operator, which is the same as multiplication by $x$ on $\mathcal{P}\subset L^2(\mu)$.  We have
\[
x\mathcal{H}_n \subset \mathcal{H}_{n+1},
\]
so
\[
Q_{n+1} xQ_n = xQ_n.
\]
Set $Q_n^{\perp}:=I - Q_n$---that is, $Q_n^{\perp}$ is the projection onto $\mathcal{H}_n^{\perp}$, where $\mathcal{H}_n^{\perp} = L^2(\mu) \ominus \mathcal{H}_n$.

If $k < n-1$, then $\langle p_k, xp_n\rangle_{L^2(\mu)} = 0$.  To see this, write
\[
\langle p_k, xp_n\rangle_{L^2(\mu)} = \langle xp_k, p_n\rangle_{L^2(\mu)},
\]
and $p_n$ is orthogonal to any polynomial with degree less than $n$.

Now,
\begin{equation*}
\begin{split}
xp_n & = Q_n(xp_n) + Q_n^{\perp}(xp_n)\\
& = \underbrace{b_n p_{n-1} + a_n p_n}_{Q_n(xp_n)} + \underbrace{\overline{b_{n+1}}p_{n+1}}_{Q_n^{\perp}(xp_n)}.
\end{split}
\end{equation*}

If we restrict $J^{*}$ to $\mathcal{P}$, the subspace of all polynomials, then
\[
J^{*}|_{\mathcal{P}} = J,
\]
since $J\subset J^*$.  Moreover, since the projections $Q_n$ are self-adjoint,
\[
Q_{n+1} J Q_n = JQ_n \Rightarrow  Q_n J Q_{n+1} = Q_n J.
\]
Finally,
\[
Q_{n-2} J (Q_n - Q_{n-1}) =0,
\]
since
\[
Q_{n-2} (JQ_n) = Q_{n-2} J Q_{n-1} Q_n  = Q_{n-2} J Q_{n-1}.
\]

The projection approach to the recursion relation can be extended to $\mathbb{R}^d$ for $d > 1$.  For each of the $d$ coordinate directions, there is a Jacobi matrix
\[
J_k =
 \begin{bmatrix}
a_0^{(k)}   &b_1^{(k)}   &0          &0      &0       &\cdots\\
\overline{b_1^{(k)} }   &a_1^{(k)}   &b_2^{(k)}      &0       &0       &\cdots\\
0        &\overline{b_2^{(k)} }  &a_2^{(k)}     &b_3^{(k)}    &0       &\cdots\\
0        &0      & \overline{b_3^{(k)} }    &a_3^{(k)}    &b_4^{(k)}   &         \\
0        &0      &0          &\overline{b_4^{(k)} }  &a_4^{(k)}    &     \\
\vdots  &\vdots & \vdots           &          &         &\ddots \\
\end{bmatrix}
\]
and a shift operator $S_k$, which is realized by multiplication in the $k^{th}$ coordinate
\[
S_k p(x_1, \ldots, x_d) = x_k p(x_1, \ldots,x_d).
\]
Recall that the degree of $x^{\alpha}=x_1^{\alpha_1}\cdots x_d^{\alpha_d}$ is $\alpha_1 + \ldots + \alpha_d$.  With this notation, the finite-dimensional subspaces are
\[
\mathcal{H}_n = \{ p \in \mathcal{P} | \textrm{deg}(p) \leq n\}.
\]

\section{Concrete Jacobi matrices}

We examine the momentum
and position operators from quantum mechanics for one degree of freedom and then study some
Hamiltonian operators (for example, the polynomials
in the momentum and position operators in Table
\ref{Table:Polynomials}).  The operators we consider all have a common
property: in a natural orthonormal basis their representations
take the form of infinite banded matrices; that is, the matrices have
zeros outside a band around the diagonal of finite width.

The banded property of the matrices makes matrix multiplication
easy; under multiplication, the banded matrices form an algebra of
unbounded operators.  While such banded matrices follow simple
algebraic rules, their spectral theory can be subtle. For example,
we show that these operators may not have a well-defined spectral
resolution. Using von Neumann's deficiency indices, we showed above the
connection of Jacobi matrices to the theory of extensions of
symmetric operators with dense domain, and thereby to moment problems.

One often encounters problems in physics, such as the Heisenberg
banded matrices $T$, where the nature of the bands is dictated by
applications.  A particular infinite matrix $T$ represents an
operator in an $\ell^2$ sequence space.  In fact, in a particular
application, $T$ may be realized in a different Hilbert space, for
example an $L^2$ function space, but the function version will be
unitarily equivalent to the matrix model.  In particular, we refer to the fact that
Heisenberg's matrix formulation of quantum mechanics is unitarily
equivalent to Schr\"{o}dinger's wave formulation in function
space.  For example, the momentum operator $P$ is represented by
the matrix (\ref{Eqn:MomentumP}) in $\ell^2$ and by the operator
$\frac{1}{i}\frac{\mathrm{d}}{\,\mathrm{d}x}$ on a dense subspace of
$L^2(\mathbb{R})$.  See Examples \ref{Ex:P} and \ref{Ex:Q} and the
remark following the two examples. 

From our banded matrix $T$ we then get a Hankel matrix $M$, and we
apply our theory to $M$. In particular, we find the measures $\mu$
which solve the moment problem for $M$.  We use operator theory in
constructing the family of measures $\mu$ which solve the moment
problem at hand. 

\begin{table}\caption{Two approaches to moments and banded matrices.}
\begin{tabular}{l}\label{Table:AB}
\boxed{\begin{minipage}{.15\linewidth}Banded $T$\end{minipage}}$\longrightarrow$\boxed{\begin{minipage}{.2\linewidth}Hankel $M_T$\end{minipage}}$\longrightarrow$\boxed{\begin{minipage}{.18\linewidth} {measures $\mu$}\end{minipage}}
\\

\\

\boxed{\begin{minipage}{.15\linewidth}Hankel $M$\end{minipage}}$\longrightarrow$\boxed{\begin{minipage}{.15\linewidth}$M = M^{(\mu)}$\end{minipage}}$\longrightarrow$\boxed{\begin{minipage}{.17\linewidth}Banded $T_M$\end{minipage}}
\end{tabular}
\end{table}

We emphasize further that in applications, one
typically encounters a much richer family of banded symmetric or
hermitian infinite matrices $T$; for example those from
Heisenberg's quantum mechanics.  In these matrices, the band-size
will typically be more than three.  In fact the band can be any
size, and the deficiency indices can be anything. However this
wider class of banded matrices, including for example anharmonic
oscillators, may be studied with the aid of the associated Jacobi
matrices.

We begin with two Jacobi matrices, the matrix $P$ given in Example \ref{Ex:P} and $Q$ here.

\begin{example}\label{Ex:Q}
The operator $Q$ is represented by multiplication by $x$ on $L^2(\mathbb{R})$.  The operator $Q$ can be represented on $\ell^2$ by a matrix defined by
\[(Qv)_n = \frac{1}{2i}(\sqrt{n-1}v_{n-1} - \sqrt{n}v_{n+1}).\]
\end{example}

\begin{remark}The two operators $P$ and $Q$ in Examples \ref{Ex:P} and \ref{Ex:Q} have dense domains in $L^2(\mathbb{R})$:
\[ \textrm{dom}(P) = \{f\in L^2(\mathbb{R}) : f'\in L^2(\mathbb{R})\}\]
and
\[ \textrm{dom}(Q) = \{f\in L^2(\mathbb{R}) : xf(x)\in L^2(\mathbb{R})\}.\]
The operators $P$ and $Q$ are both self-adjoint, and they are unitarily equivalent via the Fourier transform in $L^2(\mathbb{R})$.

Setting
\[ A_{\pm}:=\frac{1}{\sqrt{2}}(P\pm i Q),\]
we get $A_{+}^* = A_{-}$, and the commutator
\begin{equation}\label{Eqn:Commutator}
[A_{+}, A_{-}] =-I.
\end{equation}  The Hermite function $h_0(x) = c_0 e^{-x^2/2}$ satisfies
\[ A_{-}h_0 = 0.\]

An application of Equation (\ref{Eqn:Commutator}) yields
\[(A_{+}A_{-})A_{+}^n h_0 = n A_{+}^n h_0 \textrm{ for each }n\in\mathbb{N}.\]
The functions $h_n := c_n A_{+}h_0$ diagonalize the harmonic oscillator Hamiltonian
\[H:=A_{+}A_{-} = \frac{1}{2}(P^2 + Q^2 - I),\]
and the constants $c_n$ can be chosen so that $\{h_n|n\in\mathbb{N}_0\}$ is an orthonormal basis in $L^2(\mathbb{R})$ consisting of Hermite functions.

Using this ONB, we arrive at the two matrix representations for $P$ and $Q$ in Examples \ref{Ex:P} and \ref{Ex:Q}.  Specifically,
\[\langle h_{n-1}|Ph_n\rangle = \frac{1}{2}\sqrt{n},\]
\[\langle h_{n}|Ph_n\rangle = 0,\]
and
\[\langle h_{n+1}|Ph_n\rangle = \frac{1}{2}\sqrt{n+1},\]
which is the Jacobi matrix in (\ref{Eqn:MomentumP}).
\end{remark}
\begin{example}If $T = QPQ$ in $\ell^2(\mathbb{N}_0)$, then $T$ has deficiency indices both equal to $1$.

\begin{proof}A differential equations problem.
\end{proof}
\end{example}

\begin{table}\caption{Some polynomials in the position and momentum operators.}
\begin{tabular}{c|c|c}\label{Table:Polynomials}
   $T$    & index   & spectrum\\
&&\\
\hline
$QPQ$    & $(1,1)$ & depends on the choice \\
         &         & of selfadjoint extension\\
&&\\

$P^2+Q^4$&$(0.0)$  & discrete, anharmonic oscillator\\

&&\\
$P^2-Q^4$ & $(2,2)$& repulsive potential\\ && quantum particle shoots to infinity in finite time \\

&&\\
$P^2+Q^2$&$(0,0)$  & $\{2n+1 \:\:|\:\: n\in\mathbb{N}_0\}$\\

&&\\

$P^2-Q^2$&$ (0,0)$ & continuous $\mathbb{R}$ (easier to see in $L^2(\mathbb{R})$\\
&&than by using  matrix calculations)\\
\hline
\end{tabular}
\end{table}



\backmatter
\bibliographystyle{amsalpha}
\bibliography{momentpaper}

\end{document}